\definecolor{my-blue}{rgb}{0.0,0.0,0.6}
\definecolor{my-red}{rgb}{0.5,0.0,0.0}
\definecolor{my-green}{rgb}{0.0,0.5,0.0}
\definecolor{nicos-red}{rgb}{0.75,0.0,0.0}
\definecolor{nicos-green}{rgb}{0.0,0.75,0.0}
\definecolor{light-gray}{gray}{0.6}
\definecolor{really-light-gray}{gray}{0.8}
\definecolor{sussexg}{rgb}{0.0,0.5,0.5}
\definecolor{sussexp}{rgb}{0.5,0.0,0.5}
\newtheorem{theorem}{\color{my-red}{\sc Theorem}}[section]
\newtheorem{lemma}[theorem]{\color{my-red} \sc Lemma}
\newtheorem{proposition}[theorem]{\color{my-red} \sc Proposition}
\newtheorem{corollary}[theorem]{\color{my-red} \sc Corollary}
\newtheorem{conjecture}[theorem]{\color{my-red} \sc Conjecture}
\numberwithin{equation}{section}
\theoremstyle{remark}
\newtheorem{remark}[theorem]{\color{my-red} Remark}
\newcommand{\be}{\begin{equation}}
\newcommand{\ee}{\end{equation}}
\providecommand{\abs}[1]{\vert#1\vert}
\newcommand{\eone}{\textup{e}_1}
\newcommand{\etwo}{\textup{e}_2}
\newcommand{\TV}[1]{{\lVert #1 \rVert}_{\normalfont
\text{TV}}}
\def\mix{\textup{mix}}
\def\bE{\mathbb{E}}
\def\bN{\mathbb{N}}
\def\bP{\mathbb{P}}
\def\bR{\mathbb{R}}
\def\bZ{\mathbb{Z}}
\def\cL{\mathcal{L}}
\def\TF{\textup{TF}}
\def\TR{\textup{TR}}
\def\c{\complement} 
\def\g{\textup{g}} 
 \def\Z{\bZ} 
\def\R{\bR}
\def\N{\bN}
\def\P{\bP}
\newcommand{\dif}{\textup{d}}
\DeclareMathOperator{\Var}{Var}
\def\E{\bE}
\def\P{\bP} 
\definecolor{partcolor1}{rgb}{0.0,0.5,0.0}
\definecolor{partcolor2}{rgb}{0.0,0.5,0.0}
\definecolor{darkgreen}{rgb}{0.0,0.5,0.0}
\definecolor{darkblue}{rgb}{0.5,0.1,0.5}
\definecolor{deepblue}{rgb}{0.25,0.41,0.88}
\definecolor{nicosred}{rgb}{0.65,0.1,0.1}
\definecolor{light-gray}{gray}{0.7}
\begin{document}

\begin{frontmatter}
\title{Mixing times for the TASEP on the circle}
\runtitle{Mixing times for TASEP on the circle}

\begin{aug}
\author[Dominik Schmid]{\fnms{Dominik} \snm{Schmid}\ead[label=e1]{d.schmid@uni-a.de}}
\author[Allan Sly]{\fnms{Allan} \snm{Sly}\ead[label=e2]{asly@princeton.edu}}
\address[Dominik Schmid]{University of Augsburg, Germany, \printead{e1}}
\address[Allan Sly]{Princeton University, United States, \printead{e2}}
\end{aug}

\begin{abstract}
We study mixing times for the totally asymmetric simple exclusion process (TASEP) on a circle of length $N$ with  $k$ particles. We show that the mixing time is of order $N^{2}\min(k,N-k)^{-1/2}$, and that the cutoff phenomenon does not occur. This confirms behavior which was separately predicted  by Jara, Lacoin and Peres, and it is more broadly believed to hold for integrable models in the KPZ universality class. Our arguments rely on a connection to periodic last passage percolation with a detailed analysis of flat geodesics, as well as a novel random extension and time shift argument for  last passage percolation.
\end{abstract}

\begin{keyword}[class=MSC2020]
\kwd[Primary ]{60K35}
\kwd[; secondary ]{60K37, 60J27}
\end{keyword}

\begin{keyword}
\kwd{totally asymmetric simple exclusion process}
\kwd{mixing times}
\kwd{last passage percolation}
\kwd{flat geodesics} 
\kwd{KPZ universality class} 
\kwd{second class particles}
\end{keyword}

\end{frontmatter}

\section{Introduction} \label{sec:Introduction}

Over the last decades, exclusion processes are among the most investigated particle systems. Motivations and applications to study exclusion processes come from statistical mechanics, probability theory and combinatorics; see  \cite{BSV:SlowBond,BE:Nonequilibrium,CW:TableauxCombinatorics,C:KPZReview,L:Book2}. In this article, we focus on the speed of convergence to equilibrium for the totally asymmetric simple exclusion process (TASEP) on the circle. This model is widely studied in the mathematics and physics literature; see \cite{BL:Subscale,BL:TASEPring,BL:Multipoint,BL:PeriodicGeneral,
DL:LDPexclusion,GM:SpectrumTASEP,GS:SixVertex,L:HeightOnRing,P:FluctuationsTASEP} among many other references. For symmetric simple exclusion processes on the circle, Lacoin obtained sharp convergence results \cite{L:CutoffCircle,L:CycleDiffusiveWindow}. When the number of particles and empty sites increases with the length $N$ of the circle, an abrupt convergence to equilibrium, called the cutoff phenomenon, occurs. For asymmetric exclusion processes on the circle much less is known. Fill showed that the mixing time is at most of order $N^{3}$ \cite{F:EVBoundsSEP}. For the totally asymmetric simple exclusion process, where particles can only move in one direction on the circle, we determine the correct leading order $N^{2}\min(k,N-k)^{-1/2}$ of the mixing time for a general number of particles $k$. Moreover, in contrast to the symmetric case, cutoff does not occur. \\

The TASEP on the circle has the following intuitive description: a collection of $k$ indistinguishable particles are placed on different sites of the circle of length $N$. Each site is endowed with a Poisson clock, independently of all others, which rings at rate 1. Whenever the clock at an occupied site rings, we move its particle in clock-wise order, provided that the target site is vacant. The last condition is called the exclusion rule.

\subsection{Model and results}  \label{sec:ModelResults}

Formally, we define the TASEP on a discrete circle $\Z_{N}:=\Z/N\Z$ for some $N \in \N$ with $k\in [N-1]:=\{1,\dots,N-1\}$ particles. It is the continuous-time Markov chain $(\eta_t)_{t\geq 0}$ on the state space
\begin{equation}
\Omega_{N,k} :=\left\{ \eta \in \{0,1\}^{\Z_N} \colon \sum_{x\in \Z_N} \eta(x) = k \right\}
\end{equation} with generator
\begin{align*}
\cL f(\eta) &=  \sum_{x \in \Z_N}  \eta(x)(1-\eta(x+1))\left[ f(\eta^{x,x+1})-f(\eta) \right]
\end{align*}
 for all measurable functions $f\colon \Omega_{N,k} \rightarrow \R$. Here, we use the standard notation
\begin{equation}\label{def:Swap}
\eta^{x,y} (z) = \begin{cases}
 \eta (z) & \textrm{ for } z \neq x,y \\
 \eta(x) &  \textrm{ for } z = y\\
 \eta(y) &  \textrm{ for } z = x \, ,
 \end{cases}
\end{equation}
to denote swapping of values in a configuration $\eta \in \Omega_{N,k}$ at sites $x,y \in [N]$. We say that site $x$ is \textbf{occupied} if $\eta(x)=1$, and \textbf{vacant} otherwise. A visualization of the TASEP on the circle is given in Figure \ref{fig:Circle}. \\

\begin{figure}

\begin{tikzpicture}[
scale=0.8, <-,thick,
  main node/.style={thin,circle, scale=1.2, draw}
]
  \newcommand*{\MainNum}{7}
  \newcommand*{\MainRadius}{2.5cm}
  \newcommand*{\MainStartAngle}{90}

  \path
    (0, 0) coordinate (M)
    \foreach \t [count=\i] in { , , , , , ,} {
      +({\i-1)*360/\MainNum + \MainStartAngle}:\MainRadius)
      node[main node, align=center] (p\i) {\t}
    } ;

  \foreach \i in {1, ..., \MainNum} {
    \pgfextracty{\dimen0 }{\pgfpointanchor{p\i}{north}}
    \pgfextracty{\dimen2 }{\pgfpointanchor{p\i}{center}}
    \dimen0=\dimexpr\dimen2 - \dimen0\relax
    \ifdim\dimen0<0pt \dimen0 = -\dimen0 \fi
    \pgfmathparse{2*asin(\the\dimen0/\MainRadius/2)}
    \global\expandafter\let\csname p\i-angle\endcsname\pgfmathresult
  }

  \foreach \i [evaluate=\i as \nexti using {int(mod(\i, \MainNum)+1}]
  in {1, ..., \MainNum} {
    \pgfmathsetmacro\StartAngle{
      (\i-1)*360/\MainNum + \MainStartAngle
      + \csname p\i-angle\endcsname
    }
    \pgfmathsetmacro\EndAngle{
      (\nexti-1)*360/\MainNum + \MainStartAngle
      - \csname p\nexti-angle\endcsname
    }
    \ifdim\EndAngle pt < \StartAngle pt
      \pgfmathsetmacro\EndAngle{\EndAngle + 360}
    \fi
    \draw
      (M) ++(\StartAngle:\MainRadius)
      arc[start angle=\StartAngle, end angle=\EndAngle, radius=\MainRadius]
    ;
  }

 \node[shape=circle,fill=red] at (p1) (t1){} ;
 \node[shape=circle,fill=red] at (p2) (t2){} ;
 \node[shape=circle,fill=red] at (p4) (t4){} ;
 \node[shape=circle,fill=red] at (p6) (t6){} ;

\end{tikzpicture}
\caption{\label{fig:Circle}The TASEP on a circle of length $7$ with $4$ particles.}
\end{figure}
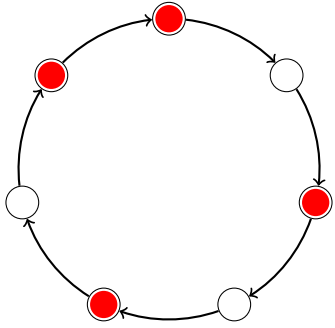

It is easy to verify that the TASEP on the circle is irreducible, and thus has a unique stationary distribution $\mu_{N,k}$, which is the Uniform distribution on the state space $\Omega_{N,k}$. In the following, we are interested in the convergence to $\mu_{N,k}$, which we quantify in terms of total variation mixing times. For a probability measure $\nu$ on $\Omega_{N,k}$, we let
\begin{equation}\label{def:TVDistance}
\TV{ \nu - \mu_{N,k} } := \frac{1}{2}\sum_{x \in \Omega_{N,k}} \abs{\nu(x)-\mu_{N,k}(x)} = \max_{A \subseteq \Omega_{N,k}} \left(\nu(A)-\mu_{N,k}(A)\right)
\end{equation} be the \textbf{total variation distance} between $\nu$ and $\mu_{N,k}$. We define the $\boldsymbol\varepsilon$\textbf{-mixing time} of $(\eta_t)_{t \geq 0}$ as
\begin{equation}\label{def:MixingTime}
t^{N,k}_{\text{\normalfont mix}}(\varepsilon) := \inf\left\lbrace t\geq 0 \ \colon \max_{\eta \in \Omega_{N,k}} \TV{\P\left( \eta_t \in \cdot \ \right | \eta_0 = \eta) - \mu_{N,k}} < \varepsilon \right\rbrace
\end{equation} for all $\varepsilon \in (0,1)$.  Our goal is to study the order of $t^{N,k}_{\text{\normalfont mix}}(\varepsilon)$ when $N$ goes to infinity. Of particular interest is a sharp convergence of the mixing time when the size of the state space grows, i.e.\ when we have for all $\varepsilon\in (0,1)$
\begin{equation}\label{eq:Cutoff}
\lim_{N\rightarrow \infty} \frac{t^{N,k}_{\text{\normalfont mix}}(1-\varepsilon)}{t^{N,k}_{\text{\normalfont mix}}(\varepsilon)} = 1 \, ,
\end{equation}  called the \textbf{cutoff phenomenon}. Cutoff occurs for a variety of families of Markov chains, including the asymmetric simple exclusion process on the segment, and the symmetric simple exclusion on the circle and the segment with a divergent number of particles and empty sites \cite{LL:CutoffASEP,L:CutoffCircle,L:CutoffSEP}. We state now our main result on the mixing time of the TASEP  on the circle.
\begin{theorem}\label{thm:Main}
Fix some $\varepsilon\in (0,1)$. Then the mixing time of the TASEP on the circle of length $N$ with $k=k(N) \in [N-1]$ particles satisfies
\begin{equation}\label{eq:MainUpperBound}
 C_1 \leq \liminf_{N \rightarrow \infty} \frac{t^{N,k}_\mix(\varepsilon)}{N^{2}\min(k,N-k)^{-1/2}} \leq  \limsup_{N \rightarrow \infty} \frac{t^{N,k}_\mix(\varepsilon)}{N^{2}\min(k,N-k)^{-1/2}} \leq C_2
\end{equation}
for some constants $C_1=C_1(\varepsilon)>0$ and $C_2=C_2(\varepsilon)>0$. Moreover, the cutoff phenomenon does not occur.
\end{theorem}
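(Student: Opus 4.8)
The plan is to establish all three assertions --- the matching bounds and the absence of cutoff --- through the last-passage percolation (LPP) representation of the ring TASEP. First I would make two reductions. By the particle--hole symmetry $\eta\mapsto 1-\eta$ (composed with a reflection of $\Z_N$) one may assume $k\le N/2$, so that $\min(k,N-k)=k$ and the target scale is $N^{2}k^{-1/2}$. When $k$ stays bounded the chain is diffusive, $N^{2}k^{-1/2}\asymp N^{2}$, and both $t^{N,k}_\mix\asymp N^{2}$ and the absence of cutoff follow from elementary arguments (comparison with independent random walks, or a direct path coupling), so one may henceforth assume $k\to\infty$. For $k\to\infty$ I would encode the height function $h_t$ of the TASEP run from an arbitrary initial configuration through exponential last-passage times on a cylinder --- periodic LPP --- with the initial configuration appearing as boundary data, and record that the uniform stationary law $\mu_{N,k}$ (the conditioning of product Bernoulli$(k/N)$ on having exactly $k$ particles) can, after a \emph{random extension} of the environment, be realised as a distinguished stationary-type boundary condition for the same LPP.

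For the upper bound I would recover $\eta_t$ on any fixed set of sites from the backward geodesics emanating from the time-$t$ level set of $h_t$; two copies run under the basic coupling differ only through discrepancies that move as second-class particles, but rather than track these I would argue directly in the LPP picture. The key point is that at times $t\ge C_2 N^{2}k^{-1/2}$ the relevant geodesics in the periodic environment are \emph{flat}: their transversal fluctuations are negligible on the spatial scale $N$ over which the boundary data vary, so the geodesics issuing from all initial configurations funnel through a common window in which the environment has already relaxed. Turning this into a distance estimate is the role of the \emph{random extension and time shift} argument: one couples the true LPP environment with the enlarged one carrying the stationary boundary weights, so that the stationary process is realised as the true process observed after a time shift of order $o(N^{2}k^{-1/2})$; combined with the flat-geodesic control this gives $\TV{\P(\eta_t\in\cdot\mid\eta_0)-\mu_{N,k}}\to 0$ uniformly in $\eta_0$, i.e. $t^{N,k}_\mix(\varepsilon)\le C_2 N^{2}k^{-1/2}$. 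I expect this to be the main obstacle: since the ring-stationary law lacks the product structure available on $\Z$, realising it inside an LPP environment and showing that the resulting time shift is of lower order requires a detailed, density-uniform analysis of geodesics in periodic LPP in the flat regime --- the ``detailed analysis of flat geodesics'' advertised in the abstract.

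For the lower bound I would start from a worst-case initial configuration (for instance all $k$ particles in a single block) and exhibit an observable $f$ --- a slowly relaxing density mode, or an increment of the height function singled out through the geodesic structure --- whose law under $\P(\eta_t\in\cdot\mid\eta_0)$ at $t=cN^{2}k^{-1/2}$, with $c$ small, is concentrated far from its equilibrium law under $\mu_{N,k}$, the separation between typical values exceeding the relevant standard deviations. Through the LPP representation this reduces to KPZ one-point estimates for periodic last-passage times --- both an upper-tail (concentration) bound and an anti-concentration lower bound showing that an $\Omega(1)$ fraction of the initial excess of $f$ survives to time $cN^{2}k^{-1/2}$ --- after which a second-moment (Chebyshev/Paley--Zygmund) argument gives $\TV{\P(\eta_t\in\cdot\mid\eta_0)-\mu_{N,k}}\ge\delta(c)>0$, hence $t^{N,k}_\mix(\varepsilon)\ge C_1 N^{2}k^{-1/2}$.

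Finally, for the absence of cutoff I would track $d(t):=\max_{\eta}\TV{\P(\eta_t\in\cdot\mid\eta_0=\eta)-\mu_{N,k}}$ along the scales $cN^{2}k^{-1/2}$. The quantitative forms of the previous two steps give $d(cN^{2}k^{-1/2})\le\Psi(c)$ and $d(cN^{2}k^{-1/2})\ge\psi(c)$ for decreasing functions with $\Psi(c)\to 0$ and $\psi$ positive on an extended interval; together they show that $d(cN^{2}k^{-1/2})$ is bounded away from $0$ and $1$, say lies in $[\varepsilon,1-\varepsilon]$, for all $c$ in a fixed interval $[c_1,c_2]$ with $c_1<c_2$ --- reflecting the fact that the transition of $d$ from near $1$ to near $0$, governed by the only polynomially decaying distinguishing statistic, is spread over a window of scales of full order $N^{2}k^{-1/2}$ rather than being abrupt. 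Consequently, for all large $N$,
\[
t^{N,k}_\mix(1-\varepsilon)\le c_1 N^{2}k^{-1/2}<c_2 N^{2}k^{-1/2}\le t^{N,k}_\mix(\varepsilon),
\]
so that $\limsup_{N\to\infty} t^{N,k}_\mix(1-\varepsilon)/t^{N,k}_\mix(\varepsilon)\le c_1/c_2<1$: the cutoff window has the same order $N^{2}\min(k,N-k)^{-1/2}$ as the mixing time, and the cutoff phenomenon does not occur.
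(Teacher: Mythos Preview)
Your reductions, the lower bound strategy (block initial condition, particle-count observable, Chebyshev), and the cutoff argument (trapping $d(cN^{2}k^{-1/2})$ between two nontrivial functions of $c$) are essentially what the paper does. The substantive gap is in the upper bound.

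You propose to realise $\mu_{N,k}$ inside the LPP picture via ``stationary boundary weights'' and then argue that the stationary process is a time-shifted copy of the process started from $\eta_0$. Two problems. First, the ring stationary law is the uniform measure on $\Omega_{N,k}$, not a product measure, so there is no Burke-type stationary boundary condition in periodic LPP that generates it; the paper never tries to embed $\mu_{N,k}$ this way. Second, even granting a relation $\eta_t=\zeta_{t+S}$ with $\zeta$ stationary and $|S|=o(N^{2}k^{-1/2})$, this does \emph{not} give $\eta_t\stackrel{d}{=}\mu_{N,k}$, because $S$ is a function of the same environment as $\zeta$; a small time shift and a small total-variation distance are different things.

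What the paper actually does for the upper bound is couple two \emph{arbitrary} initial configurations $\eta,\zeta$ in the same periodic environment. The flat-geodesic analysis (plus a barrier construction forcing coalescence of geodesics) shows that with positive probability, by time $\asymp N^{2}k^{-1/2}$, the two processes become exact time-shifted copies, $\eta_t=\zeta_{t+S}$, with $|S|\le CN$. The ``random extension'' is then not about stationary boundaries: one inserts, between two fixed levels of the environment, a number of extra lines chosen (coupled) differently for the $\eta$- and $\zeta$-environments, each marginally uniform on $\{\,i\ell: i\in[k]\,\}$, so that the residual shift drops to $|S|\le 2Nk^{-7/8}$ while both environments remain honest $(N,k)$-periodic environments. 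The final and indispensable step---which your outline does not contain---is a Mermin--Wagner type perturbation: multiply every weight in a window of size $\asymp nk$ by $1+U\,N^{-1}k^{-13/16}$ with $U$ uniform on $[-1,1]$. This rescales the passage times enough to absorb the remaining $O(Nk^{-7/8})$ shift exactly, while Lemma~\ref{lem:MerminWagner} shows the perturbed environment is within $k^{-1/20}$ of the true one in total variation. Only this last step converts ``time-shifted copies'' into ``equal at a common time'', which is what a coupling bound on mixing needs. Your proposal would need to replace the stationary-boundary idea by this two-step random-extension/Mermin--Wagner mechanism.
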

Theorem \ref{thm:Main} establishes, in the totally asymmetric case, a conjecture by Peres who predicted order $N^{3/2}$ and the absence of cutoff when $k=N/2$ for the asymmetric simple exclusion process on the circle. Separately, Lacoin predicted the order $N^{2}\min(k,N-k)^{-1/2}$ for general $k$; see Conjecture \ref{conj:Lacoin}. Moreover, Theorem \ref{thm:Main} covers a special case of a conjecture by Jara for general conservative models on the circle; see also Conjecture \ref{conj:Milton}. \\

 When the number of particles $k$ is constant with respect to $N$, a mixing time of order $N^2$ and the absence of cutoff is well-known. Further, due to the symmetry of particles and empty sites, we have that
\begin{equation}
t^{N,k}_{\text{\normalfont mix}}(\varepsilon) = t^{N,N-k}_{\text{\normalfont mix}}(\varepsilon)
\end{equation} for all $\varepsilon\in [0,1]$, and $N,k$ with $k \in [N-1]$. Hence, without loss of generality, we assume for the rest of this article that $k \leq N/2$. \\

In order to show Theorem \ref{thm:Main}, we study the TASEP on the circle for different starting configurations. One natural way to study the TASEP on the circle for all possible initial states simultaneously is the \textbf{canonical coupling}. Here, we assign rate $1$ Poisson clocks to each site  $v\in \Z_N$. When the clock at $v$ rings, and $\eta(v)=1$ as well as $\eta(v+1)=0$ holds, then move the particle from $v$ to site $v+1 \textup{ mod } N$. We define the \textbf{disagreement process}  $(\xi_t)_{t \geq 0}$ between two TASEPs $(\eta_t)_{t \geq 0}$ and $(\zeta_t)_{t \geq 0}$ on a circle of size $N$, according to the canonical coupling, as 
\begin{equation}\label{def:DisagreementProcess}
\xi_t(x) := \mathds{1}_{\eta_t(x)=\zeta_t(x)=1} + 2 \mathds{1}_{\eta_t(x) \neq \zeta_t(x)}
\end{equation} for all $x \in [N]$, and $t\geq 0$. We say that site $x$ is occupied by a \textbf{second class particle} at time $t$ if $\xi_t(x)=2$, and by a \textbf{first class particle} at time $t$ if $\xi_t(x)=1$. Intuitively, second class particles can be interpreted as perturbations of the original dynamics. They behave as empty sites with respect to first class particles, and act as particles with respect to empty sites. Let $\mathcal{X}_{N,k} \subseteq {\Omega}_{N,k}^2$ be the set of pairs of states $\eta_0,\zeta_0 \in \Omega_{N,k}$ which differ in precisely two positions. Observe that when  starting from  such a pair $(\eta_0,\zeta_0)$ under the canonical coupling, the two second class particles will almost surely annihilate each other in finite time.  For fixed $(\eta_0,\zeta_0)\in \mathcal{X}_{N,k}$, let $\tau$ be the coalescence time of the two second class particles. The next theorem gives a bound on the coalescence time $\tau$ for any possible pair of starting configurations.

\begin{theorem}\label{thm:Coalescence} Let $\varepsilon>0$. Then there exists some $c=c(\varepsilon)>0$ and $k_0=k_0(\varepsilon)>0$ such that for all $k\geq k_0$, and all $N \geq 2k$ sufficiently large, 
\begin{equation}
\max_{(\eta_0,\zeta_0)\in \mathcal{X}_{N,k}}\P(\tau \geq cN^{2}k^{-1/2}) < \varepsilon \, .
\end{equation} Conversely, for all $\varepsilon>0$ there exists some  $\tilde{c}=\tilde{c}(\varepsilon)>0$ and $\tilde{k}_0=\tilde{k}_0(\varepsilon)$ such that for all $k\geq \tilde{k}_0$, and all $N \geq 2k$ sufficiently large, 
\begin{equation}
\max_{(\eta_0,\zeta_0)\in \mathcal{X}_{N,k}}\P(\tau \geq \tilde{c} N^{2}k^{-1/2}) \geq 1- \varepsilon \, .
\end{equation}
\end{theorem}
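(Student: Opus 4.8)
\noindent\emph{Proof proposal for Theorem~\ref{thm:Coalescence}.} Our approach is to recast the coalescence of the two discrepancies as the merging of two geodesic-type paths in periodic last passage percolation (LPP), and then to control the longitudinal distance at which they merge. Concretely, lifting the TASEP on $\Z_N$ to an $N$-periodic configuration on $\Z$ of density $\rho:=k/N$ and reading off its height function realises it as a periodic exponential LPP model; under the usual dictionary the trajectory of the second class particle (the site with $\eta=1,\zeta=0$) and of the second class antiparticle (the site with $\eta=0,\zeta=1$) are the two competition interfaces of the coupling, each pinned between geodesics of the periodic environment, and $\tau$ equals, after the deterministic time/coordinate rescaling, the longitudinal distance at which one of these paths first meets the other. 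Because both discrepancies travel with the common characteristic speed $1-2\rho$, the gap between them along either of the two arcs of $\Z_N$ is a drift-free, KPZ-type functional of the environment, started at some value $s\in\{1,\dots,N-1\}$, and $\tau$ is the first time this gap, or $N$ minus it, reaches $0$. All estimates below must be tracked together with their dependence on $\rho$: the transversal fluctuation of a second class particle after longitudinal distance $\ell$ is of order $\rho^{1/3}\ell^{2/3}$, so the target scale $N^{2}k^{-1/2}=N^{3/2}\rho^{-1/2}$ is exactly the distance on which this fluctuation reaches order $N$.

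For the second (slow coalescence) part, take $\eta_0,\zeta_0$ differing at two antipodal sites, so both arcs of $\Z_N$ have length $\asymp N$. The required input is a one-sided maximal moderate deviation bound: for $\tilde c$ small and $k\ge k_0(\varepsilon)$, with probability at least $1-\varepsilon$ the displacement of each discrepancy from its characteristic never exceeds $N/5$ throughout $[0,\tilde c N^{2}k^{-1/2}]$. This follows from transversal-fluctuation upper bounds for the relevant periodic-LPP geodesics (equivalently, upper-tail estimates for the position of a second class particle), once one verifies that the fluctuation scale is $\asymp\rho^{1/3}\ell^{2/3}$, so that at the endpoint the bound is a small multiple of $N$; a running-maximum inequality and a union bound over the two discrepancies then keep both arc-lengths in $[N/5,4N/5]$ over the whole window, and hence $\tau\ge\tilde cN^{2}k^{-1/2}$.

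For the first (fast coalescence) part, it suffices to show that for every initial pair the two paths coalesce within longitudinal distance $cN^{2}k^{-1/2}$ with probability at least $1-\varepsilon$. Since one of the two arcs has length at most $\lfloor N/2\rfloor$, the gap process in question starts from some $s\le N$ in the worst case, and the block length $N^{3/2}\rho^{-1/2}\asymp N^{2}k^{-1/2}$ is large enough to close any such gap with a fixed positive probability. This uses two estimates: a \emph{local coalescence} estimate --- if at some longitudinal location the two paths lie within transversal distance $\delta$, then after a further distance $C\delta^{3/2}$ they have merged with probability at least a universal $p>0$ --- and a \emph{closing} estimate --- a drift-free KPZ functional started at value $s\le N$ is brought down below a fixed fraction of $s$ within longitudinal distance $O(s^{3/2}\rho^{-1/2})\le O(N^{3/2}\rho^{-1/2})$ with probability bounded below, obtained from the flat/periodic geodesic fluctuation analysis together with the confinement of the gap to $[0,N]$. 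Combining the two gives a fixed lower bound $q>0$ on the probability that coalescence occurs within one block; one then \emph{iterates} over $O(\log(1/\varepsilon))$ consecutive blocks, using the random extension and time-shift construction to make the successive attempts essentially independent and to absorb the boundary effects of the periodic environment, so that the probability of never coalescing is at most $(1-q')^{\Theta(\log(1/\varepsilon))}\le\varepsilon$; translating back through the scaling gives $\max_{(\eta_0,\zeta_0)}\P(\tau\ge cN^{2}k^{-1/2})<\varepsilon$ with $c=c(\varepsilon)$.

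The heart of the matter --- and the main obstacle --- is supplying the local coalescence and closing estimates, together with the iteration, in the present setting. Off-the-shelf geodesic-coalescence results are for the curved (droplet) or stationary regimes on $\Z^{2}$, whereas here one needs flat geodesics in a periodic environment, geodesics that wrap the cylinder a growing number of times, with constants that are explicit and uniform as $\rho=k/N\to 0$. Producing exactly these is the purpose of the flat-geodesic analysis and of the random extension / time-shift device; the delicate point throughout is the dependence on $\rho$, so that the coalescence scale comes out as $N^{3/2}\rho^{-1/2}=N^{2}k^{-1/2}$ and not with a different power of $k$.
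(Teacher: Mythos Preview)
Your outline identifies the correct scale and correctly frames the problem via competition interfaces in periodic LPP, but there is a substantive misattribution of technique and a genuine gap in the upper bound.

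\textbf{Upper bound.} The paper does \emph{not} use the random extension / time-shift construction for Theorem~\ref{thm:Coalescence}; that device is reserved for the mixing time (Theorem~\ref{thm:Main}) in Section~\ref{sec:UpperBounds}. The coalescence upper bound is proved instead via a \emph{barrier argument} in Section~\ref{sec:Coalescence}: one shows that with uniformly positive probability an event $\mathcal{B}(\theta)=\mathcal{B}_{\textup{Con}}(\theta)\cap\mathcal{B}_{\textup{Bar}}(\theta)\cap\mathcal{B}_{\textup{NC}}(\theta)$ holds under which \emph{every} geodesic between two lines at distance of order $N^2k^{-1/2}$ must pass through the periodic translates of a single site $\bar u$ (Proposition~\ref{pro:ThreeEvents}). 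The connection and no-crossing events hold with high probability by the flat-geodesic estimates of Section~\ref{sec:LPPestimates}; the barrier event $\mathcal{B}_{\textup{Bar}}(\theta)$ --- that last passage times restricted to avoid $\gamma_{u_1,u_2}$ are atypically small --- holds with small but positive probability via the lower tail in Lemma~\ref{lem:ShapeTheorem} and the FKG inequality (Lemma~\ref{lem:BarrierEstimate}). One then invokes Lemma~\ref{lem:Coalescence} to transfer geodesic coalescence to competition-interface coalescence, and the iteration over blocks uses only the Markov property (see \eqref{eq:SimplifiedCoal}), with no need for decoupling via random extensions. Your proposed ``local coalescence'' and ``closing'' estimates for a gap process are not supplied, and it is not clear how you would obtain them in the periodic setting without essentially reproducing the barrier construction; in particular, the gap between the two second class particles is not a tractable one-dimensional functional in any obvious way --- the paper bypasses this by working with geodesics rather than interfaces.

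\textbf{Lower bound.} Your sketch is in the right spirit but vaguer than what is needed. The paper (Section~6.2) does not bound running maxima of discrepancy displacements; it fixes two target sites $v_1,v_2$ at height of order $N^2k^{-1/2}\theta^{-1}$ and shows, via Lemma~\ref{lem:PointToLineBound} and Corollaries~\ref{cor:ExpectationVariance}, \ref{cor:SupremumPeriodic}, that the geodesics from the initial interface to $v_1$ and $v_2$ attach to differently coloured arcs with probability at least $1-\varepsilon$. This directly witnesses that the two coloured regions $H_+,H_-$ both persist to that height, hence the competition interfaces have not met. Your approach of controlling each discrepancy separately would require uniform-in-time transversal fluctuation bounds for competition interfaces (not just geodesics) in periodic environments, which is an additional input you have not provided.
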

Note that these asymptotics for the coalescence time are very natural to expect in certain special cases. When the number of particles $k$ is of order $N$, the coalescence time intuitively agrees with the time it takes for two second class particles on $\Z$ of distance $N$ to meet when starting from a Bernoulli-product measure of constant density $k/N$. While the expected distance remains fixed, the fluctuations are shown in \cite{PS:CurrentFluctuations} to be of order $t^{2/3}$ at time $t$; see also \cite{BS:OrderCurrent} for the general asymmetric case, and \cite{LS:Moderate} for very recent moderate deviation results. When $k$ is constant with respect to $N$, it is clear to see a coalescence time of order $N^2$ by a central limit theorem for the particles.

\subsection{Related work} \label{sec:RelatedWork}

The totally asymmetric simple exclusion process is a central model studied over the last decades. Introduced by Spitzer in 1970, the first references for the totally asymmetric exclusion process on the circle in the physics literature date back to the 90s, when the current and the spectrum were studied  \cite{DL:LDPexclusion,GM:SpectrumTASEP,GS:SixVertex,P:FluctuationsTASEP,S:InteractionMP}.
More recently, in a series of articles, Baik and Liu investigate the one-point and multi-point distribution of TASEP on the circle of length $N$, also called periodic TASEP, and obtain the limit function for various initial conditions, including step, flat, and uniformly random initial conditions \cite{BL:Subscale,BL:TASEPring,BL:Multipoint,BL:PeriodicGeneral,L:HeightOnRing}. For a positive density of particles $k\leq N/2$, it turns out that a phase transition occurs at a time of order $N^{3/2}$, called the relaxation time scale, where the behavior changes from Tracy-Widom to Gaussian fluctuations; see also \cite{BLS:LimitingOnePoint} for a recent relation to the KP equation. 
Note that this phase transition is in accordance with our results on the mixing time, where we allow for any possible starting configuration.  \\

 Previously, mixing times for exclusion processes on the circle were studied by Fill \cite{F:EVBoundsSEP} and Lacoin  \cite{L:CutoffCircle,L:CycleDiffusiveWindow}, among others.
For the asymmetric exclusion process on the circle, including the totally asymmetric regime, an upper bound on the mixing time of order $N^{3}$ was shown by Fill using a comparison to a symmetrized chain \cite{F:EVBoundsSEP}. For the symmetric exclusion process on the circle, Lacoin determined that the mixing time on the circle of length $N$ with $k$ particles is of order $\frac{1}{4\pi^2}N^{2}\log(\min(k,N-k))$, provided that the number of particles and empty sites diverges with the length of the circle. Moreover, the exclusion process exhibits cutoff with a diffusive cutoff window and an explicitly known limit profile \cite{L:CycleDiffusiveWindow}.
In particular, the mixing time of the symmetric exclusion process has at least the order of the mixing time of a single particle. This relation is shown in \cite[Proposition 1.7]{HP:EPmixing} for symmetric exclusion processes on any family of graphs, and is strikingly different to our results on the totally asymmetric simple exclusion process on the circle, where the leading order of the mixing time decreases with a growing number of particles and empty sites. \\

In general, sharp results on the mixing time of exclusion processes usually exploit the reversibility of the process. Cutoff results are available for the symmetric, asymmetric, and weakly asymmetric simple exclusion process on segment \cite{BN:CutoffASEP,L:CutoffSEP, LL:CutoffASEP,LL:CutoffWeakly,W:MixingLoz}, see also \cite{LY:RandomEnvironment,S:MixingBallistic} for similar results on  mixing times in random environments, and \cite{HP:EPmixing,O:MixingGeneral} for  symmetric exclusion processes on general graphs. A natural related model is the simple exclusion process on the segment of size $N$ with open boundaries, where particles can enter and exit at the endpoints. When the particles perform symmetric simple random walks, the mixing time was shown by Gantert et al.\ in \cite{GNS:MixingOpen} to be of order $N^{2}\log(N)$, and cutoff in \cite{T:cutoff} by Tran for the non-reversible case. This result was very recently extended by Salez to a simple characterization of cutoff for reversible exclusion processes with reservoirs on general graphs \cite{S:CutoffExclusion}. \\

However, a different behavior of the mixing time shows up when the simple exclusion process with open boundaries is not reversible, and the individual particles have a drift. In \cite{GNS:MixingOpen}, several regimes of mixing times are identified, depending on the current and density under the stationary measure. Of particular interest is the maximal current phase, where the mixing time is governed by the current in the segment, and thus serves as a natural analogue to the asymmetric simple exclusion process on the circle for a positive fraction of particles. For the TASEP with open boundaries in the maximal current phase, the mixing time was determined in \cite{S:MixingTASEP} to be of order at most $N^{3/2}\log(N)$, with the correct order $N^{3/2}$ in the special case of the triple point; see also Section \ref{sec:MixingTASEPOpen} for more details.  \\

While the TASEP can be naturally defined on more general graphs -- see for example  \cite{GGS:TASEP,MWE:TASEPregTree} for trees -- it is its one-dimensional structure, which allows for a connection to planar last passage percolation. Different boundary conditions, i.e. the full line, the half line with one open boundary or the segment with two open boundaries, have a natural interpretation as corner growth models on $\Z^2$; see \cite{BBCS:Halfspace,BFO:HalfspaceStationary,R:PDEresult,S:MixingTASEP}. This allows us to analyze the TASEP with tools from integrable probability. In particular, we make use of exact formulas for many observables; see \cite{BC:PSConjecture,FS:SpaceTimeCovariance,J:KPZ,PS:CurrentFluctuations}. This makes the TASEP one of few models which is provably in the KPZ universality class; see \cite{C:KPZReview} for an illustrative survey. The detailed analysis of last passage percolation using inputs from integrable probability together with probabilistic concepts falls into a more general class of recent articles.  Precise estimates were achieved for example on the coalescence of geodesics and non-existence of bi-infinite geodesics \cite{BSS:Coalescence,SS:Coalescence,Z:OptimalCoalescence}, the correlation of geodesics and last passage times \cite{BBS:NonBiinfinite,BG:TimeCorrelation,BGH:AreaConstraint,
BGHH:Watermelon,BGZ:TemporalCorrelation,BHS:Binfinite}, and on the current and invariant measures for TASEP with a slow bond \cite{BSS:Invariant,BSV:SlowBond}. We will see in Sections~\ref{sec:TASEPLPP} to~\ref{sec:Coalescence} that several of these results have natural analogues in our setup of the TASEP on the circle. \\

In the following, we mostly use the equivalent representation of the TASEP with periodic boundaries as a periodic last passage percolation model. In this case, our bounds on the coalescence of geodesics also have a natural interpretation on the coalescence of second class particles using competition interfaces, introduced by Ferrari and Pimentel in~\cite{FP:CompetitionInterface} for the TASEP on the integers started from the rarefaction fan; see also  \cite{CP:BusemannSecondClass,FMP:CompetitionInterface,
GRS:GeodesicCompetition,S:ExistenceGeodesics,W:Semiinfinite} for further extensions. Note that  asymmetric exclusion processes on the circle with two or more kinds of particles without annihilation, are of independent interest.  In particular, already the invariant measure of a multi-species asymmetric exclusion processes on the circle has in general a non-trivial structure, which can be described using multi-queue systems \cite{A:TASEPring, FM:TASEPmulti,M:StationaryASEP}.

\subsection{Outline of the paper} \label{sec:OutlinePaper}

This paper is structured as follows. In the remainder of this section, we state open questions on mixing times for exclusion processes.  In Section~\ref{sec:TASEPLPP}, we give a representation of the TASEP on the circle as a corner growth model in periodic environments. Furthermore, we recall the interpretation of second class particles as competition interfaces from  \cite{FGM:RareactionFan,FP:CompetitionInterface}, and relate the statement of Theorem \ref{thm:Coalescence} to a question about the coalescence of flat geodesics in periodic environments. In Section~\ref{sec:LPPestimates}, we start by recalling various estimates on last passage times and transversal fluctuations of geodesics in i.i.d.\ environments. These bounds are then transferred to periodic environments. In particular, we focus on moderate deviation estimates for last passage times and transversal fluctuations of flat geodesics; see also \cite{BHS:Binfinite} for a similar approach in i.i.d.\ environments. In Section \ref{sec:Coalescence}, we rely on these insights for periodic environments to study the coalescence of flat geodesics. We use ideas from \cite{BSS:Coalescence} and \cite{BSV:SlowBond} to construct barriers, which allow us to force the coalescence of geodesics. This allows us to obtain the upper bound in Theorem~\ref{thm:Coalescence} at the end of Section~\ref{sec:Coalescence}. The upper bound on the mixing time in Theorem \ref{thm:Main} is the content of  Section \ref{sec:UpperBounds}, which is the main part of this paper. Here, we introduce a novel strategy to perturb the environment in two steps. First, we add a random number of lines to the environment, while keeping control of the last passage times. We then apply a time shift to the environment using a Mermin--Wagner style argument to achieve the coupling. We conclude this paper with a proof of the lower bounds in Theorem~\ref{thm:Main} and Theorem~\ref{thm:Coalescence} in Section \ref{sec:LowerBounds}.

\subsection{Open problems} \label{sec:OpenProblems}

Let us mention two open problems on the mixing time for exclusion processes. The first open question concerns the asymmetric simple exclusion process on the circle. Here, the particles perform biased random walks on the circle under the exclusion constraint, i.e.\ particles move clockwise at rate $p$ and counterclockwise at rate $1-p$ for some $p\in (1/2,1]$. This generalizes the above setup of the TASEP with periodic boundaries. The following conjecture is due to Hubert Lacoin (personal communication):
\begin{conjecture}\label{conj:Lacoin} The mixing time of the asymmetric simple exclusion process on the circle is of order $N^{2}\min(k,N-k)^{-1/2}$. Moreover, the cutoff phenomenon does not occur.
\end{conjecture}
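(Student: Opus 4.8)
The plan is to follow the architecture of the proof of Theorem~\ref{thm:Main}, replacing each ingredient by a partially asymmetric analogue and allowing all constants to depend on $p$. As in the totally asymmetric case one first reduces: the particle--hole symmetry $t^{N,k}_\mix(\e)=t^{N,N-k}_\mix(\e)$ lets one assume $k\le N/2$, and a bounded number of particles is handled by direct random-walk considerations as in the TASEP case, so one may assume $k\to\infty$. The heart of the matter is then an upper bound built from two pieces --- a coalescence estimate for two second-class particles (the exact analogue of Theorem~\ref{thm:Coalescence}) and the random-extension plus Mermin--Wagner time-shift argument that converts coalescence into mixing --- together with a comparatively soft lower bound and a non-degeneracy statement yielding the absence of cutoff.

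For the coalescence estimate one runs two copies of ASEP in the basic coupling. This coupling is still attractive when $p>1/2$, so discrepancies between configurations in $\mathcal X_{N,k}$ form a pair of second-class particles that can only annihilate, and the target is that for $k\ge k_0(\e)$ the coalescence time $\tau$ satisfies $\max_{\mathcal X_{N,k}}\P(\tau\ge cN^2k^{-1/2})<\e$, with a converse lower bound of the same order. Here lies the main obstacle: ASEP with $p\in(1/2,1)$ does not carry the exact last passage percolation representation that drives the flat-geodesic and barrier analysis of Sections~\ref{sec:TASEPLPP}--\ref{sec:Coalescence}. Two routes seem available. The first is to work inside the \emph{colored stochastic six-vertex model} (equivalently the Hall--Littlewood corner growth model), which degenerates to ASEP and for which an LPP-like structure together with KPZ-scale one-point and transversal-fluctuation estimates is accessible through integrable probability; one would then redo the periodic estimates of Section~\ref{sec:LPPestimates} and the barrier construction of Section~\ref{sec:Coalescence} in that environment, tracking the dependence on the background density $\rho=k/N$ so that the bound degrades by exactly the factor $k^{-1/2}$ dictated by the $(2p-1)\rho(1-\rho)$ scaling of the current. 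The second, more robust route is to avoid exact formulas: use the law of large numbers and $t^{2/3}$-order fluctuation bounds for the ASEP second-class particle in a Bernoulli-$(k/N)$ background (speed in the style of Ferrari--Kipnis, variance following Bal\'azs--Sepp\"al\"ainen, and general second-class-particle moderate-deviation estimates) to erect barriers directly at the level of the particle system, as in the adaptation of \cite{BSS:Coalescence,BSV:SlowBond} used here.

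Granting the coalescence estimate, the coupling is completed as in Section~\ref{sec:UpperBounds} by the two-step perturbation: a random extension of the (six-vertex or LPP) environment followed by a time shift justified by a Mermin--Wagner interpolation. For genuinely partially asymmetric models this step should be carried out on the six-vertex/particle configuration rather than on an i.i.d.\ environment; it uses that the uniform stationary measure on $\Omega_{N,k}$ is invariant under rotations of $\Z_N$, so that a small random rotation of the initial data changes the law of the process at the relevant time only by a total variation amount controlled by the Mermin--Wagner estimate, exactly as in the totally asymmetric case. The lower bound and the absence of cutoff are then soft: for $k\to\infty$ one tracks a linear statistic such as the number of particles in a fixed arc (a point of the height function), whose mean evolves by the characteristic speed $j'(\rho)=(2p-1)(1-2\rho)$ and whose fluctuations are of KPZ order with the $\rho(1-\rho)$-dependent constants, and shows that at time $cN^2k^{-1/2}$ it is still order-one standard deviations from its stationary profile, while at time $CN^2k^{-1/2}$ with $C$ large the coalescence time $\tau$ is still positive with probability bounded below; the non-degenerate spread of $\tau$ on the scale $N^2k^{-1/2}$, inherited from the $t^{2/3}$ fluctuations of the second-class particles, then precludes $\lim_N t^{N,k}_\mix(1-\e)/t^{N,k}_\mix(\e)=1$.

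I expect the serious difficulty to be precisely the replacement of Sections~\ref{sec:LPPestimates}--\ref{sec:Coalescence}: obtaining, uniformly in the density $\rho=k/N$ and in the asymmetry $p$, the moderate-deviation and transversal-fluctuation bounds for flat (translation-invariant) initial data strong enough to force two second-class particles to coalesce within time $O(N^2k^{-1/2})$. For TASEP these rest on exact determinantal formulas; in the partially asymmetric setting one must either push the more limited integrable inputs for the stochastic six-vertex model far enough, or prove the required KPZ-order estimates for the ASEP second-class particle by formula-free means --- the latter being of independent interest and, at present, largely open.
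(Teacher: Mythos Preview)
This statement is a \emph{conjecture} in the paper, explicitly listed under ``Open problems'' in Section~\ref{sec:OpenProblems}; the paper does not prove it. Theorem~\ref{thm:Main} establishes only the totally asymmetric case $p=1$, and the extension to $p\in(1/2,1)$ is left open. There is therefore no proof in the paper to compare your proposal against.

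Your proposal is not a proof but a research outline, and you are candid about this: you correctly identify that the entire machinery of Sections~\ref{sec:TASEPLPP}--\ref{sec:Coalescence} rests on the bijection between TASEP and exponential last passage percolation, which has no direct analogue for $p<1$. Both routes you suggest --- pushing integrable inputs from the colored stochastic six-vertex model, or obtaining KPZ-scale moderate deviations for the ASEP second-class particle by non-integrable means --- would require substantial new work, and you say as much in your final paragraph. One additional concern: the random-extension and Mermin--Wagner steps of Section~\ref{sec:UpperBounds} act directly on the i.i.d.\ exponential weights of the LPP environment, and it is not clear what the analogue of ``adding $\ell$ lines to the environment'' or ``multiplying each weight by $1+u\delta N^{-1}k^{-1/4}$'' should be in a six-vertex or particle-system formulation; your suggestion of a random rotation of the initial data is not obviously the right substitute, since the paper's argument perturbs the \emph{dynamics} (the environment above the cut), not the initial condition. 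In short, your outline is a plausible high-level strategy, but as written it is a statement of the problem rather than a proof.
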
  More generally, it is believed that the mixing time of order $N^{3/2}$ is present in a much broader setup. The following conjecture is due to Milton Jara (personal communication):
\begin{conjecture}\label{conj:Milton}
Consider a particle conserving system on the circle with a positive density, (nice) local interactions, and assume that the second derivative of the average current does not vanish. Then the mixing time is of order $N^{3/2}$, possibly up to poly-logarithmic corrections.
\end{conjecture}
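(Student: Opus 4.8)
\emph{Approach.} Conjecture~\ref{conj:Milton} extends Theorem~\ref{thm:Main} from the TASEP to arbitrary particle-conserving dynamics with a genuinely nonlinear flux, so the plan is to isolate the parts of the present proof that do not rely on exact formulas and to pin down where integrability is genuinely used. The first step is the reduction already employed here: under a monotone (basic) coupling the mixing time is governed by the time for two configurations differing in a single pair of adjacent sites to coalesce, that is, by the coalescence time of a pair of discrepancies (a second-class and an anti-second-class particle). Here ``(nice) local interactions'' should be read as including attractiveness, so that such a coupling exists and opposite discrepancies annihilate on contact, while ``positive density'' guarantees a macroscopic density $\rho\in(0,1)$ and hence a well-defined characteristic speed $v=j'(\rho)$.

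\emph{Key mechanism.} On the circle of length $N$ both discrepancies travel macroscopically at speed $v$, so in the comoving frame they start at separation of order $N$ and would never meet if they moved ballistically; they meet only through fluctuations. A single second-class particle sits at $vt+O(t^{\chi})$ for a fluctuation exponent $\chi$, and the hypothesis $j''(\rho)\neq 0$ is precisely genuine nonlinearity of the flux, which places the model in the KPZ class and forces $\chi=2/3$. Two coupled discrepancies then close an $O(N)$ gap once $t^{2/3}\asymp N$, i.e.\ on the scale $t\asymp N^{3/2}$; conversely they cannot merge substantially faster, which yields the matching lower bound on the coalescence time and, via a Wilson-type argument applied to the slowest density mode, on the mixing time, the spectral gap of the generator being of order $N^{-3/2}$ under KPZ scaling. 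Making this rigorous requires three ingredients: (i) matching polynomial bounds of order $t^{2/3}$ for the transversal fluctuations of a second-class particle, equivalently for the time-integrated current; (ii) a local coalescence estimate---once two discrepancies lie within distance $\ell$ they merge within time of order $\ell^{3/2}$ with probability bounded below---which this paper obtains through barrier constructions in the periodic last passage percolation picture; and (iii) a bookkeeping step, either a chain of $O(N)$ single-swap interpolations or the random extension and time-shift argument of Section~\ref{sec:UpperBounds}, upgrading pairwise coalescence to full mixing at the cost of the poly-logarithmic factors the conjecture permits.

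\emph{Main obstacle.} The decisive difficulty is that ingredient (i), together with the spectral-gap lower bound, is at present accessible only through model-specific exact formulas: deriving $\chi=2/3$, or even just matching polynomial bounds on the current fluctuations, from the single assumption $j''(\rho)\neq 0$ is a form of KPZ universality that is wide open. I would therefore expect the conjecture to be provable first for the \emph{integrable} subclass: most naturally the ASEP of Conjecture~\ref{conj:Lacoin}, using its $q$-deformed formulas and the mapping to the stochastic six-vertex model, and then zero-range and misanthrope processes with suitable rates, in each case establishing the periodic analogues of the last passage percolation estimates of Section~\ref{sec:LPPestimates}; the random extension and time-shift scheme should be exactly the robust bridge carrying one-point fluctuation and coalescence estimates to a mixing-time bound. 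A secondary, more technical obstacle is that for non-determinantal models one loses the clean geodesic and competition-interface geometry, so the barrier argument of Section~\ref{sec:Coalescence} that forces coalescence must be replaced by a purely probabilistic local-merging estimate, which in turn appears to require quantitative local central limit behaviour for the discrepancy pair that is again only known in integrable settings.
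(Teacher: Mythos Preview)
The statement you were asked to address is Conjecture~\ref{conj:Milton}, which is explicitly listed in the paper's ``Open problems'' section and is \emph{not} proved in the paper. There is therefore no proof in the paper to compare your proposal against: the paper establishes Theorem~\ref{thm:Main} only for the TASEP, and presents Conjecture~\ref{conj:Milton} as a broad open problem attributed to Jara.

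Your write-up is not a proof and you seem to be aware of this: you correctly identify the conjecture as a statement of KPZ universality for mixing times, you isolate the mechanism (second-class particle fluctuations of order $t^{2/3}$ closing an order-$N$ gap at time $N^{3/2}$), and you name the decisive obstruction, namely that the exponent $\chi=2/3$ and the accompanying moderate deviation and coalescence estimates are currently available only via integrable structure. Your outline of which pieces of the present paper (the basic coupling reduction, the barrier/coalescence argument of Section~\ref{sec:Coalescence}, the random extension and time-shift scheme of Section~\ref{sec:UpperBounds}) are in principle robust and which are not is sensible as a research programme. But as a proof it has a genuine gap that you yourself flag: ingredient~(i) --- polynomial $t^{2/3}$ bounds on discrepancy fluctuations for a general attractive system with $j''(\rho)\neq 0$ --- is an open instance of KPZ universality, and without it the entire scaling argument collapses. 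So what you have submitted is a reasonable heuristic and a list of obstacles, not a proof, and the paper offers nothing further on this conjecture beyond stating it.
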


\section{The TASEP on the circle as a periodic directed LPP model}\label{sec:TASEPLPP}

In this section, we give a description of the TASEP on the circle as an exponential corner growth model on $\Z^2$ in a periodic environment. The presented objects such as last passage times, geodesics and competition interfaces are well-studied for the corner growth representation of the TASEP on the integers, respectively in an i.i.d.\ exponential environment; see for example \cite{S:LectureNotes} for an overview. We use these notions and concepts from i.i.d.\ environments in the upcoming sections in order to derive similar estimates for periodic environments. Moreover, we will allow for flat geodesics, corresponding to the case where $k$ is of smaller order than $N$.

\subsection{Construction of the TASEP on the circle using last passage percolation}\label{sec:DefinitionTASEPLPP}

Consider the integer lattice $\Z^2$, and let $(\omega_v)_{v \in \Z^2}$ be a family of Exponential-$1$-distributed random variables. In the following, we distinguish two kinds of environments. On the one hand, when the exponential random variables $(\omega_v)_{v \in \Z^2}$ are mutually independent and have the same parameter, we say that $(\omega_v)_{v \in \Z^2}$ is an \textbf{i.i.d. environment}. On the other hand, if for  some fixed $N$ and $k$ \begin{equation}
\omega_{(v_1,v_2)}=\omega_{(v_1+N-k,v_2-k)}
\end{equation} holds for all $(v_1,v_2)\in \Z^2$, and the random variables are independent otherwise, we say that  $(\omega_v)_{v \in \Z^2}$ is an $\mathbf{(N,k)}$\textbf{-periodic environment}. We call it simply a periodic environment when $N$ and $k$ are clear from the context.  \\

For $v=(v_1,v_2) \in \Z^2$, we set $|v|:= v_1+v_2$, and let $\succeq$ be the component-wise ordering on $\Z^2$. For $v\succeq u$, we denote by $\pi_{u,v}$ a directed up-right \textbf{lattice path}
\begin{equation*}
\pi_{u,v} = \{ z_0=u, z_{1},\dots, z_{|v-u|}=v \, \colon \, z_{i+1}-z_{i} \in \{ \eone,\etwo\} \text{ for all } i \} \, ,
\end{equation*} where $\eone:=(1,0)$ and $\etwo := (0,1)$. We let $\Pi_{u,v}$ contain all lattice paths connecting $u$ to $v$. For a fixed i.i.d.\ or $(N,k)$-periodic environment $(\omega_v)_{v \in \Z^2}$, and sites  $v \succeq u$, we define the \textbf{last passage time} $T_{u,v}$ between $u$ and $v$ as
\begin{equation}\label{def:LastPassageTime}
T_{u,v}:= \max_{\pi_{u,v} \in \Pi_{u,v}} \sum_{z \in \pi_{u,v} \setminus \{v\}} \omega_z \, .
\end{equation} Note that in contrast to the standard definition, we exclude the weight at $v$ in the definition of $T_{u,v}$ to allow for a simple addition of last passage times along sequences of sites. A path $\gamma_{u,v}$ maximizing the right-hand side in \eqref{def:LastPassageTime} is called a \textbf{geodesic}. Note that by our choice of the environment (as either i.i.d.\ or periodic), the geodesic $\gamma_{u,v}$ is almost surely unique for all $v \succeq u$.
In the remainder of this paper, we use the following notations for last passage times. When $u=(x,x)$ and $v=(y,y)$ for some $x,y \in \Z$, we let $T_{\mathbf{x} ,\mathbf{y}}:=T_{u,v}$. For $u=(u_1,u_2)$  and $v=(v_1,v_2)$ with $u,v\in \R^2$, we set
\begin{equation}
T_{u,v} = T_{ (\lfloor u_1 \rfloor,\lfloor u_2 \rfloor), (\lfloor v_1 \rfloor,\lfloor v_2 \rfloor)} \, .
\end{equation}  For $A,B\subseteq \Z^2$, we let the last passage time $T_{A,B}$ between $A$ and $B$ be given by
\begin{equation}\label{def:LastPassageTimeSets}
T_{A,B} := \sup \Big\{ T_{u,v}  \colon u\in A, v\in B \text{ and } v \succeq u \Big\} \, ,
\end{equation} provided that $A$ and $B$ contain at least one comparable pair of sites, and we let $\gamma
_{A,B}$ be the corresponding geodesic whenever the supremum in \eqref{def:LastPassageTimeSets} is attained.  \\

For i.i.d.\ environments, the correspondence between the TASEP on the integers $\Z$ and last passage percolation on $\Z^2$ is well-known; see \cite{R:PDEresult,S:CouplingMovingInterface,S:LectureNotes}. In fact, a similar relation between last passage percolation in periodic environments and the TASEP on the circle can be found for example in \cite{BL:Subscale}. We recall the correspondence for periodic environments at this point. Fix an initial configuration $\eta_0$ in the state space $\Omega_{N,k}$ for some $k,N\in \N$, and consider an $(N,k)$-periodic environment $(\omega_v)_{v\in \Z^2}$. We let $G_0=\{g_0^{i} \in \Z^2 \colon i \in \Z\}$ be the corresponding \textbf{initial growth interface} with $g^0_0:= (0,0)$, and define recursively
\begin{equation}\label{def:GrowthInterface}
g_0^{i} := \begin{cases} g_0^{i-1} + \eone & \text{ if } \eta_0(i)=0 \\
 g_0^{i-1} - \etwo & \text{ if } \eta_0(i)=1
\end{cases}
\end{equation} for all $i \geq 1$, and similarly for $i \leq -1$. For all $t \geq 0$, let
\begin{equation}\label{def:GrowthInterface2}
G_t := \{ u \in \Z^2 \colon T_{G_0,u} + \omega_{u} \leq t \text{ and }  T_{G_0,u+(1,1)} +\omega_{u+(1,1)} > t\}  = \{g_t^{i} \in \Z^2 \colon i \in \Z\}
\end{equation} with the convention that $g_t^{0}=(x,x)$ for some $x\in \Z$, and
\begin{equation}
g_t^{i} - g_t^{i-1} \in \{ \eone, -\etwo \}
\end{equation} for all $i\in \Z$. The process $(G_t)_{t \geq 0}$ is called the \textbf{growth interface}.
The next lemma describes the TASEP on the circle in terms of last passage times and the growth interface.
\begin{lemma}\label{lem:CurrentVsGeodesic} Let $N\in \N$ and $k\in [N-1]$. Let $(\eta_t)_{t\geq0}$ be a TASEP on $\Z_N$ with $k$ particles. There exists a coupling between $(\eta_t)_{t\geq0}$ and an $(N,k)$-periodic environment $(\omega_v)_{v\in \Z^2}$ such that the respective growth interface $(G_t)_{t \geq 0}$ satisfies
\begin{equation}
\{ \eta_t(i) = 0 \} = \{ g_t^{i} - g_t^{i-1}  = \eone \} 
\end{equation} almost surely for all $t\geq 0$ and $i\in [N]$.
\end{lemma}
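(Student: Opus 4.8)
The plan is to construct the coupling via the canonical clocks and to identify, for each ``ring'' of the TASEP, the corresponding growth step in the corner growth model, then check that the claimed identity of events is preserved under this identification by induction over growth steps. First I would set up the dictionary: a configuration $\eta \in \Omega_{N,k}$ corresponds to a down-right interface $\{g^i\}_{i\in\Z}$ via \eqref{def:GrowthInterface}, where a vacant site $i$ contributes a $\eone$-step and an occupied site a $-\etwo$-step; periodicity of $\eta$ on $\Z_N$ and the shift $\omega_{(v_1,v_2)}=\omega_{(v_1+N-k,v_2-k)}$ are compatible because traversing one period of $\eta$ moves the interface by $(N-k,-k)$, precisely the periodicity vector of the environment. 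The key observation is that a TASEP jump at site $i$ (which requires $\eta(i)=1$, $\eta(i+1)=0$, i.e.\ the interface locally looks like $-\etwo$ followed by $\eone$, forming an inner ``valley'' corner at the vertex $v=g^{i}$) turns that valley into an outer corner, i.e.\ replaces the pair of steps by $\eone$ then $-\etwo$; this is exactly the elementary move of the corner growth model that occurs when the vertex $v$ just in front of the interface gets added to $G_t$, which happens at rate $1$ driven by the clock $\omega_v$.

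Concretely, I would couple so that the Poisson clock attached to site $v\in\Z_N$ in the canonical coupling is realized through the exponential weights $\omega_z$ encountered along geodesics: set $G_t$ as in \eqref{def:GrowthInterface2}, and note that $(G_t)_{t\ge 0}$ is a Markov jump process in which the vertex $v$ lying in an inner corner of $G_t$ is added at rate $1$, independently over the (at most $k$, up to periodicity) available corners, because the residual last-passage increment $T_{G_0,v+(1,1)}-t$ at such a $v$ is Exponential-$1$ by the memorylessness of $\omega_v$ and the fact that the maximizing path to $v+(1,1)$ must pass through $v$. Under \eqref{def:GrowthInterface}, ``$v=g_t^i$ sits in an inner corner'' is the event $\{\eta_t(i)=1,\ \eta_t(i+1)=0\}$, and adding $v$ implements the swap $\eta_t\to\eta_t^{i,i+1}$ while updating $g_t^{i}\to g_t^{i}+\eone$ (so the edge $g_t^{i}-g_t^{i-1}$ becomes $\eone$, reflecting $\eta_t(i)$ becoming $0$) and leaving all other edges fixed. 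Thus the generator of $(G_t)$ pushed through the dictionary is exactly $\cL$, and by uniqueness of the solution to the martingale problem (the state space $\Omega_{N,k}$ is finite) the pushed-forward process is a version of $(\eta_t)$; the identity $\{\eta_t(i)=0\}=\{g_t^i-g_t^{i-1}=\eone\}$ then holds for all $t$ by construction, since it holds at $t=0$ by \eqref{def:GrowthInterface} and is an invariant of the joint dynamics.

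The step I expect to require the most care is the verification that the growth interface dynamics $(G_t)$ defined purely through last passage times in \eqref{def:GrowthInterface2} is genuinely Markovian with the asserted jump rates, uniformly over $t$ and over the (periodically many) active corners simultaneously. This is the standard ``corner growth is a Markov chain'' fact, but in the periodic environment one must check that distinct inner corners of $G_t$ are governed by \emph{distinct} weights $\omega_v$ modulo the period, so that their clocks are independent; this follows because two inner corners of a down-right path at the same time are separated by at least one up-step and one right-step, hence their forward vertices are not related by the periodicity vector $(N-k,-k)$ as long as $k\le N/2$, so no weight is reused. Once this independence and the Exponential-$1$ law of each residual increment are in hand, the rest is the bookkeeping of the dictionary, which is routine. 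I would also remark that one must track the base point $g_t^0=(x,x)$ on the diagonal: it drifts by $\eone$ when an inner corner at index $0$ is filled and by $-\etwo$ otherwise, consistently with $\sum_i(\text{edge at }i)$ over a period equalling $(N-k,-k)$, so the convention $g_t^0\in\{(x,x):x\in\Z\}$ is self-consistent.
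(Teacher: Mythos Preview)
Your proposal is correct and follows essentially the same approach as the paper, which gives only a one-line proof citing the standard correspondence for i.i.d.\ environments from \cite{R:PDEresult} and noting that the argument transfers verbatim to the periodic setting. You supply considerably more detail than the paper does; the one minor slip is that filling the valley at $g_t^i$ moves that vertex by $(1,1)$ rather than by $\eone$ (your conclusion about the edge $g_t^i-g_t^{i-1}$ becoming $\eone$ is nonetheless correct), and the independence of the corner clocks within one period follows simply because $g_t^1,\dots,g_t^N$ are distinct sites lying in a single fundamental domain of the $(N,k)$-periodic environment, so no appeal to $k\le N/2$ is needed there.
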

\begin{proof}
Using the jump times of the particles in $(\eta_t)_{t\geq0}$ to determine the environment $(\omega_v)_{v\in \Z^2}$, the proof follows from the same arguments as for the TASEP on the integers and a corresponding i.i.d.\ environment; see for example \cite{R:PDEresult}.
\end{proof}

\begin{remark}\label{rem:UniquenessPointToSet}
Let us stress that since $k$ is at most $N-1$, there are for every site $u$ only finitely many up-right paths from some site in $G_0$ to $u$. This ensures that the last passage times $T_{G_0,u}$ are almost surely finite.
\end{remark}

\subsection{Second class particles and competition interfaces}\label{sec:CompetitionInterfaces}

Recall the notion of second class particles in the disagreement process $(\xi_t)_{t \geq 0}$ from \eqref{def:DisagreementProcess}. When starting from a pair of configurations in the set $\mathcal{X}_{N,k}$, recall that $\tau$ denotes the coalescence time of the two second class particles. The canonical coupling ensures that $(\xi_t)_{t \geq 0}$ has the following description:  \\

Assign priorities to the particles and empty sites. First class particles have the highest priority, then second class particles, and then empty sites. Suppose that the clock of a site $x$ rings at time $t$. If $\xi_t(x)$ has a higher priority than $\xi_t(x+1)$, we swap their values, and leave the configuration $\xi_t$ unchanged otherwise. However, when the two second class particles are located at $x$ and $x+1$, and we update the site $x$, then replace the two second class particles by an empty site at $x$ and a first class particle at $x+1$. \\

Next, we give a description of two second class particles in the TASEP on the circle in terms of competition interfaces in an $(N+2,k+1)$-periodic environment. A connection of this type was first observed by Ferrari and Pimentel in \cite{FP:CompetitionInterface} for a single second class particle in last passage percolation on $\Z^2$, and was later extended, see \cite{FMP:CompetitionInterface,P:Duality}. In the following, we will extend the competition interface representation from \cite{FMP:CompetitionInterface,FP:CompetitionInterface} for second class particles in i.i.d.\ environments to periodic environments. Let us mention that a similar extension was previously given in \cite{S:MixingTASEP} for second class particles in the open TASEP.  \\

We now turn to the formal construction.
Consider an initial growth interface $G_0=(g_0^{i})_{i \in \Z}$ in an $(N+2,k+1)$-periodic environment. We fix some $j,\tilde{j}\in [N+2]$ with $j<\tilde{j}$ and
\begin{equation}\label{def:Gamma+-}
g^{j}_0 - g^{j-1}_0 = g^{\tilde{j}}_0 - g^{\tilde{j}-1}_0 = \eone \, , \qquad g^{j}_0 - g^{j+1}_0 =  g^{\tilde{j}}_0 - g^{\tilde{j}+1}_0=\etwo \, .
\end{equation}
We partition $G_0$ according to $j$ and $\tilde{j}$ by setting
\begin{equation}
G_+ :=  \{ g^{i}_0 \colon \,  i \text{ mod } (N+2) \in \{j,\dots,\tilde{j}\} \} \quad \text{and} \quad G_- := G_0 \setminus G_+ \, .
\end{equation}
Depending on the last passage times with respect to the initial growth interface $G_0$, we let $H_+$ and $H_-$ be the subsets of $\Z^2$ with
\begin{equation}\label{def:H+-}
\begin{split}
H_+ &:= \left\{ v \in \Z^2 \colon T_{G_+ , v} > T_{G_- , v}\text{ and } v \succeq u+(1,1) \text{ for some } u \in G_0\right\} \\
H_- &:= \left\{ v \in \Z^2 \colon T_{G_- , v} > T_{G_+ , v}\text{ and } v \succeq u+(1,1) \text{ for some } u \in G_0\right\} \, .
\end{split}
\end{equation}
We define now the \textbf{competition interface} $(\phi_n)_{n \in \N}$  recursively  by $\phi_1:=g_0^j$ and
\begin{equation}\label{def:CompetionInterface1}
\phi_{n+1}:= \begin{cases} \phi_n +(0,1) &\text{ if }  \phi_n + (1,1) \in H_+ \\
\phi_n +(1,0) &\text{ if }  \phi_n + (1,1) \in H_- \\
\phi_n &\text{ otherwise} \, ,
\end{cases}
\end{equation} and similarly, we let $(\tilde{\phi}_n)_{n \in \N}$ be the competition interface given by $\tilde{\phi}_1:=g_0^{\tilde{j}}$ and
\begin{equation}\label{def:CompetionInterface2}
\tilde{\phi}_{n+1}:= \begin{cases} \tilde{\phi}_n +(0,1) &\text{ if }  \tilde{\phi}_n + (1,1) \in H_- \\
\tilde{\phi}_n +(1,0) &\text{ if }  \tilde{\phi}_n + (1,1) \in H_+ \\
\tilde{\phi}_n &\text{ otherwise} \, .
\end{cases}
\end{equation}
Intuitively, we color the sets $H_+$ and $H_-$. Each cell receives exactly one color. The competition interfaces $(\phi_n)_{n \in \N}$ and $(\tilde{\phi}_n)_{n \in \N}$ travel along the transition of the two colors; see Figure~\ref{fig:Competition}. \\

Before giving the interpretation as a TASEP on the circle with two second class particles, let us provide some intuition behind this construction. Consider a TASEP with $k-1$ first class particles, two second class particles and $N-k-1$ empty sites. Note that after coalescence of the two second class particles, there are exactly $k$ particles in the system of length $N$. Before coalescence, each second class particle moves at rate $1$ in clockwise order whenever its target is an empty site, and by the particle-hole duality at rate $1$ in anti-clockwise order whenever the target is a first class particle. Hence, intuitively, we can replace each second class particle by a glued particle--empty site pair, where the particle is (in clockwise order) always to the right of the empty site. The trajectory of this glued pair is exactly tracked by the competition interface. In total, this allows us to treat the system as like a TASEP with $N+2$ positions and $k+1$ particles; see also Figure 3 in \cite{FP:CompetitionInterface} for an illustration. \\

Formally, the following lemma relates the two competition interface $(\phi_n)_{n \in \N}=(\phi^1_n,\phi^2_n)_{n \in \N}$ and $(\tilde{\phi}_n)_{n \in \N}=(\tilde{\phi}^1_n,\tilde{\phi}^2_n)_{n \in \N}$ to the two second class particles $(X_t)_{t \geq 0}$ and $(\tilde{X}_t)_{t \geq 0}$ in $(\xi_t)_{t \geq 0}$. Its proof is immediate from the arguments in \cite{FGM:RareactionFan} and \cite{FP:CompetitionInterface} for i.i.d.\ environments, and is therefore omitted.
\begin{figure}
\begin{center}
\begin{tikzpicture}[scale=0.37]

\fill [fill=gray!20] (4,1) rectangle ++(1,1);

\fill [fill=red!20] (14,-5) rectangle ++(5,1);
\fill [fill=deepblue!20] (12,-4) rectangle ++(4,1);
\fill [fill=deepblue!20] (9,-3) rectangle ++(9,1);
\fill [fill=red!20] (5,-2) rectangle ++(5,1);
\fill [fill=deepblue!20] (3,-1) rectangle ++(4,1);
\fill [fill=deepblue!20] (0,0) rectangle ++(9,1);

\fill [fill=red!20] (7,-1) rectangle ++(4,1);
\fill [fill=red!20] (16,-4) rectangle ++(4,1);

\fill [fill=red!20] (9,0) rectangle ++(3,1);
\fill [fill=red!20] (18,-3) rectangle ++(3,1);

\fill [fill=red!20] (11,1) rectangle ++(1,1);
\fill [fill=red!20] (20,-2) rectangle ++(1,1);

\fill [fill=red!20] (0,1) rectangle ++(1,1);
\fill [fill=red!20] (0,2) rectangle ++(2,1);

\fill [fill=deepblue!20] (10,-2) rectangle ++(10,1);
\fill [fill=deepblue!20] (21,-2) rectangle ++(2,1);
\fill [fill=deepblue!20] (11,-1) rectangle ++(12,1);
\fill [fill=deepblue!20] (12,0) rectangle ++(11,1);

\fill [fill=deepblue!20] (1,1) rectangle ++(10,1);
\fill [fill=deepblue!20] (12,1) rectangle ++(11,1);
\fill [fill=deepblue!20] (2,2) rectangle ++(21,1);

\fill [fill=deepblue!20] (21,-3) rectangle ++(2,1);
\fill [fill=deepblue!20] (20,-4) rectangle ++(3,1);
\fill [fill=deepblue!20] (19,-5) rectangle ++(4,1);


	\draw[gray!50,thin](0,2) to (23,2);
	\draw[gray!50,thin](0,1) to (23,1);
	\draw[gray!50,thin](0,0) to (23,0);
	\draw[gray!50,thin](3,-1) to (23,-1);
	\draw[gray!50,thin](5,-2) to (23,-2);
	\draw[gray!50,thin](9,-3) to (23,-3);
	\draw[gray!50,thin](12,-4) to (23,-4);
	\draw[gray!50,thin](14,-5) to (23,-5);

	\draw[gray!50,thin](0,0) to (0,3);
	\draw[gray!50,thin](1,0) to (1,3);
	\draw[gray!50,thin](2,0) to (2,3);
	\draw[gray!50,thin](3,-1) to (3,3);
	\draw[gray!50,thin](4,-1) to (4,3);
	\draw[gray!50,thin](5,-2) to (5,3);
	\draw[gray!50,thin](6,-2) to (6,3);
	\draw[gray!50,thin](7,-2) to (7,3);
	\draw[gray!50,thin](8,-2) to (8,3); 	
	\draw[gray!50,thin](9,-3) to (9,3);
	\draw[gray!50,thin](10,-3) to (10,3);
	\draw[gray!50,thin](11,-3) to (11,3);
	\draw[gray!50,thin](12,-4) to (12,3);
	\draw[gray!50,thin](13,-4) to (13,3);
	\draw[gray!50,thin](14,-5) to (14,3); 	
	\draw[gray!50,thin](15,-5) to (15,3);
	\draw[gray!50,thin](16,-5) to (16,3); 	
	\draw[gray!50,thin](17,-5) to (17,3);		
	\draw[gray!50,thin](18,-5) to (18,3);		
	\draw[gray!50,thin](19,-5) to (19,3); 	
	\draw[gray!50,thin](20,-5) to (20,3);
	\draw[gray!50,thin](21,-5) to (21,3); 	
	\draw[gray!50,thin](22,-5) to (22,3);		
	\draw[gray!50,thin](23,-5) to (23,3);

\fill [fill=gray!20] (4,-2) rectangle ++(1,1);	
\fill [fill=gray!20] (8,-3) rectangle ++(1,1);	
\fill [fill=gray!20] (13,-5) rectangle ++(1,1);

\draw[darkblue,line width =1.2pt] 	 (0,0) -- ++(3,0) -- ++(0,-1) -- ++(2,0) -- ++(0,-1) -- ++ (4,0) -- ++ (0,-1) -- ++ (3,0) -- ++ (0,-1) -- ++(2,0) -- ++ (0,-1) -- ++(4,0);


\draw[deepblue,line width =2pt] 	 (8.5,-2.5) -- ++(1,0) -- ++(0,1) -- ++(1,0) -- ++(0,1) -- ++ (1,0) -- ++ (0,2);

\draw[deepblue,line width =2pt] 	 (18.5,-5) -- ++(0,0.5) -- ++(1,0) -- ++(0,1) -- ++ (1,0) -- ++ (0,2);


\draw[deepblue,line width =2pt] 	 (0,0.5) -- ++(0.5,0)  -- ++(0,1) -- ++ (1,0) -- ++ (0,1.5);

\draw[red,line width =2pt] 	 (4.5,-1.5) -- ++(2,0) -- ++(0,1) -- ++ (2,0) -- ++ (0,1) -- ++ (2,0) -- ++ (0,1) -- ++ (1,0);

\draw[red,line width =2pt] 	 (13.5,-4.5) -- ++(2,0) -- ++(0,1) -- ++ (2,0) -- ++ (0,1) -- ++ (2,0) -- ++ (0,1) -- ++ (1,0);

\draw[deepblue,line width =2pt,dashed] 	 (11.5,1.5) -- ++(0,1.5);	

\draw[deepblue,line width =2pt,dashed] 	 (20.5,-1.5) -- ++(0,4.5);

\draw[red,line width =2pt,dashed] 	 (11.5,1.5) -- ++(11.5,0);	

\draw[red,line width =2pt,dashed] 	 (20.5,-1.5) -- ++(2.5,0);

	\node[scale=1.1] (y) at (7.75,1.5){$\phi_n$} ;	
	\node[scale=1.1] (y) at (11.75,-1.5){$\tilde{\phi}_n$} ;
	

	\node[scale=1.1] (y) at (10.75-4,-2.7){$G_+$} ;	
	\node[scale=1.1] (y) at (10.75-8.5,-1){$G_-$} ;	

	\node[scale=1] (y) at (10.75+4+1,-1){$H_-$} ;	
	\node[scale=1] (y) at (10.75+7,-4.1){$H_+$} ;	

	\node[scale=1.1] (y) at (10.75-2.35,-4.2+0.5){$g_0^{\tilde{j}}$} ;		
	\node[scale=1.1] (y) at (10.75-6.5,-4.2+1.5){$g_0^{j}$} ;	
	\node[scale=1.1] (y) at (10.75+1.1,-4.9){$g_0^{j+N+2}$} ;

	



\end{tikzpicture}
\end{center}
\caption{\label{fig:Competition}Visualization of the competition interfaces $(\phi_n)_{n \geq 1}$ and $(\tilde{\phi}_n)_{n \geq 1}$ within a $(9,3)$-periodic environment and initial growth interface $G_0$. The sites in $H_+$ are depicted in red, while the sites in $H_-$ are colored blue.}
\end{figure}
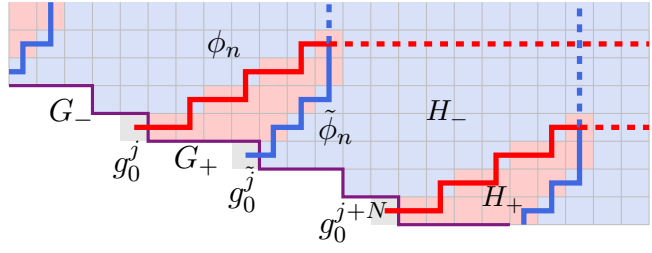

\begin{lemma}\label{lem:SecondClassCompetition} For a given vector $\xi_0 \in \{0,1,2\}^{N}$ with $k-1$ first class and two second class particles, let $\tilde{\xi}_0 \in \{0,1\}^{N+2}$ denote the corresponding vector when replacing the two second class particle in $\xi_0$ by $(0,1)$ pairs. Let $G_0$ be the initial growth interface in an $(N+2,k+1)$-periodic environment corresponding to $\tilde{\xi}_0$. Let $j$ and $\tilde{j}$ as in \eqref{def:Gamma+-} denote the corresponding positions of the $(0,1)$ pairs in $G_0$. There exists a coupling such that for all $n \in \N$
\begin{equation}
X_t - X_0 = (\phi^1_n-\phi^1_1) -  (\phi^2_n-\phi^2_1)
\end{equation}
holds  almost surely for all $t\in [T_{G_0,\phi_n}+\omega_{\phi_n},T_{G_0,\phi_{n+1}}+\omega_{\phi_{n+1}})$, provided that $t< \tau$, and 
\begin{equation}
\tilde{X}_t - \tilde{X}_0 = (\tilde{\phi}^1_n-\tilde{\phi}^1_1) -  (\tilde{\phi}^2_n-\tilde{\phi}^2_1)
\end{equation} holds almost surely for all  $t\in [T_{G_0,\tilde{\phi}_n}+\omega_{\tilde{\phi}_n},T_{G_0,\tilde{\phi}_{n+1}}+\omega_{\tilde{\phi}_{n+1}})$, provided that $t< \tau$. Moreover, 
\begin{equation}
\tau \leq \inf\{ T_{G_0,v}+\omega_v \colon v \in (\phi_n)_{n \in \N} \cap (\tilde{\phi}_n)_{n \in \N} \}
\end{equation} almost surely, i.e.\ the coalescence time of the two second class particles is bounded by the last passage time to the first point at which the two competition interfaces intersect.
\end{lemma}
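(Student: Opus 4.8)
The plan is to lift the canonical coupling to the level of last passage percolation, where the statement becomes the two-second-class-particle version of the Ferrari--Pimentel identity \cite{FP:CompetitionInterface}, and then to check that the periodicity of the environment introduces no essential new difficulty over the i.i.d.\ case of \cite{FGM:RareactionFan,FP:CompetitionInterface}. First I would put everything on one probability space: use the $(N+2,k+1)$-periodic environment $(\omega_v)_{v\in\Z^2}$ and the coupling of Lemma~\ref{lem:CurrentVsGeodesic} for the chain started from $\hat\xi_0$, the configuration on $\Z_{N+2}$ obtained from $\xi_0$ by replacing each site carrying a second class particle by a consecutive $(0,1)$ pair, arranged so that the associated growth interface $G_0$ has corners exactly at the positions $j$ and $\tilde j$ fixed in \eqref{def:Gamma+-}. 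The growth interface $(G_t)_{t\ge0}$ then encodes a TASEP on $\Z_{N+2}$ with $k+1$ particles whose two inserted particles are the marked particles; projecting this enlarged chain back onto $\Z_N$ by the deletion procedure used in \cite{FGM:RareactionFan,FP:CompetitionInterface} returns exactly the disagreement dynamics $(\xi_t)$, with the two marked particles tracking $(X_t)$ and $(\tilde X_t)$ up to the time $\tau$ at which the two marked pairs first overlap, that is, the coalescence time.

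The second ingredient is the identification of $(\phi_n)$ and $(\tilde\phi_n)$ with the colour boundary of the growing cluster. A box $v$ joins the cluster at time $T_{G_0,v}$, reached by a geodesic leaving $G_0$ either through the arc $G_+$ or through $G_-$; colouring $v$ by the side its geodesic comes from reproduces the partition of the cluster into $H_+$ and $H_-$, and by \eqref{def:CompetionInterface1}--\eqref{def:CompetionInterface2} the paths $(\phi_n)$ and $(\tilde\phi_n)$ are precisely the two up-right lattice paths separating these colours and issued from the corners $g_0^j$ and $g_0^{\tilde j}$. The Ferrari--Pimentel correspondence identifies the marked particle associated with $j$ at time $t$ with the corner of $G_t$ lying on this colour boundary; since a point $g=(g^1,g^2)$ of a growth interface occupies the circle-coordinate $g^1-g^2$ up to a time-independent offset pinned by $\phi_1=g_0^j$, this gives $X_t-X_0=(\phi^1_n-\phi^1_1)-(\phi^2_n-\phi^2_1)$ for all $t\in[T_{G_0,\phi_n},T_{G_0,\phi_{n+1}})$ with $t<\tau$, and symmetrically for $\tilde X_t$; the stationary steps $\phi_{n+1}=\phi_n$ correspond to empty time intervals and cause no problem.

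For the last assertion, $(\phi_n)$ and $(\tilde\phi_n)$ follow the $H_+/H_-$ boundary out of the two corners $g_0^j$ and $g_0^{\tilde j}$, so any site $v$ they share is a place where the two colours, and with them the two marked particles, are forced together; this happens no later than the instant $v$ is absorbed into the cluster, namely $T_{G_0,v}$ --- it may happen strictly earlier if a neighbouring box already pins the two particles together, which is why one only gets an inequality. Taking the infimum over all common sites yields $\tau\le\inf\{T_{G_0,v}:v\in(\phi_n)_{n\in\N}\cap(\tilde\phi_n)_{n\in\N}\}$. I expect the genuinely delicate point to be the first step, namely checking that the projection of the $(N+2,k+1)$-periodic chain onto $(\xi_t)$ is faithful up to $\tau$ in the periodic geometry --- in particular that the wrap-around of the two arcs $G_+$ and $G_-$ does not disturb the local description of the colour boundary behind \eqref{def:CompetionInterface1}--\eqref{def:CompetionInterface2} --- after which the displacement identities and the coalescence bound follow from \cite{FP:CompetitionInterface,FGM:RareactionFan} essentially verbatim.
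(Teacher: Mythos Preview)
Your proposal is correct and aligns with the paper's approach: the paper simply states that the proof ``is immediate from the arguments in \cite{FGM:RareactionFan} and \cite{FP:CompetitionInterface} for i.i.d.\ environments, and is therefore omitted.'' You have essentially supplied the outline the authors chose to suppress, including the projection from the $(N+2,k+1)$-periodic chain back to the disagreement process and the observation that periodicity does not affect the local colour-boundary argument.
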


\subsection{Coalescence of second class particles using the coalescence of geodesics}\label{sec:ExitTimesByCoalescence}

We give in the following a sufficient condition for the coalescence of two competition interfaces using the coalescence of geodesics. Using Lemma \ref{lem:SecondClassCompetition}, this yields an upper bound on the coalescence time of two second class particles. At the same time, this sufficient condition serves as a starting point for the proof of Theorem \ref{thm:Main}, where we will show that the coalescence of geodesics implies that exclusion processes with different initial conditions agree after a time of order $N^{2}k^{-1/2}$ up to a time shift. We will make the notation of coalescence more precise in Section~\ref{sec:Coalescence}, where we also establish quantitative bounds. \\

We start by recalling the natural ordering $\succeq_{\g}$ of geodesics in last passage percolation; see also Lemma 11.2 in \cite{BSV:SlowBond}. More precisely, for two lattice paths $\pi_1, \pi_2$, we write
\begin{equation*}
\pi_1 \succeq_{\g} \pi_2  \quad \Leftrightarrow \quad  y \geq z  \text{ for all } (x,y) \in \pi_1, (x,z)\in \pi_2  \text{ with some } x\in \mathbb{Z} \, ,
\end{equation*} i.e., all sites which agree in the first component are ordered according to $\succeq$ on $\Z^2$. The next lemma, which is immediate from drawing a picture, e.g.\ Figure 16 in \cite{BSV:SlowBond}, gives a sufficient condition such that two lattice paths are ordered with respect to $\succeq_{\g}$.
\begin{lemma} \label{lem:OrderingGeodesics} Let $(\omega_v)_{v\in \Z^2}$ be either an i.i.d.\ or a periodic environment. For $i \in [4]$, consider sites $v_i=(v^1_i,v^2_i) \in \Z^2$. If
\begin{equation}\label{eq:OrderingAssumptions}
v^1_1 \leq v^1_2  \text{ and } v^1_4 \leq v^1_3 \   \text{ as well as } \   v^2_2 \leq  v^2_1   \text{ and } v^2_3 \leq  v^2_4
\end{equation} holds, then almost surely
\begin{equation}\label{eq:OrderingStatement}
\gamma_{v_1,v_4}  \succeq_{\g}  \gamma_{v_2,v_3}  \, .
\end{equation}
\end{lemma}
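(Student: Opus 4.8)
The plan is to reduce the claim to a monotonicity statement about last passage geodesics that holds in great generality (for any fixed environment — i.i.d.\ or periodic is irrelevant here), and then to argue it by a crossing/exchange argument. First I would dispose of the degenerate cases where $v_1=v_2$ or $v_3=v_4$, so that both geodesics are genuinely two-endpoint paths. The core statement to establish is: under \eqref{eq:OrderingAssumptions}, the geodesic $\gamma_{v_1,v_4}$ lies (weakly) above $\gamma_{v_2,v_3}$ in the sense of $\succeq_{\g}$. Since the environment gives a.s.\ unique geodesics (as noted after \eqref{def:LastPassageTime}), there is no ambiguity in what the two geodesics are.

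The key step is a planarity argument. Suppose for contradiction that $\gamma_{v_1,v_4} \not\succeq_{\g} \gamma_{v_2,v_3}$. Because $v^1_1\le v^1_2$ and $v^2_2\le v^2_1$, the starting point $v_1$ of the upper geodesic is weakly to the left of and weakly above $v_2$; similarly, because $v^1_4\le v^1_3$ and $v^2_3\le v^2_4$, the endpoint $v_4$ is weakly to the left of and weakly above $v_3$. Thus at the left edge (first shared column) the path $\gamma_{v_1,v_4}$ starts no lower than $\gamma_{v_2,v_3}$, and at the right edge it ends no lower. If the ordering fails somewhere in between, then by discreteness the two up-right paths must \emph{cross}, and two up-right lattice paths that cross must share a common vertex $w$ — indeed, the first column in which $\gamma_{v_1,v_4}$ drops strictly below $\gamma_{v_2,v_3}$ forces a coincidence of heights in the preceding column (an up-right path changes height by at most one per column). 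So there is a site $w$ lying on both geodesics. Now perform the standard surgery: swap the two path-segments after $w$. This produces a path from $v_1$ to $v_3$ and a path from $v_2$ to $v_4$ whose combined weight equals $T_{v_1,v_4}+T_{v_2,v_3}$. Hence $T_{v_1,v_3}+T_{v_2,v_4}\ge T_{v_1,v_4}+T_{v_2,v_3}$. Running the same surgery with the roles reversed (using that $\gamma_{v_1,v_3}$ and $\gamma_{v_2,v_4}$ also must cross, by the ordering of the four endpoints) gives the reverse inequality, so in fact $T_{v_1,v_3}+T_{v_2,v_4}=T_{v_1,v_4}+T_{v_2,v_3}$, and the swapped paths are \emph{both} geodesics for their respective endpoint pairs. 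But then, by a.s.\ uniqueness of geodesics, $\gamma_{v_1,v_4}$ and $\gamma_{v_2,v_3}$ each split at $w$ into pieces of $\gamma_{v_1,v_3}$ and $\gamma_{v_2,v_4}$; iterating this coalescence/splitting structure (or appealing directly to the no-two-crossing-geodesics principle, which is exactly Lemma 11.2 of \cite{BSV:SlowBond}) shows the two geodesics cannot strand-cross at all, contradicting the assumption that the $\succeq_{\g}$ ordering fails. This is really the content of the remark in the statement that the lemma ``is immediate from a picture, e.g.\ Figure 16 in \cite{BSV:SlowBond}'': the picture encodes precisely this crossing-and-exchange dichotomy.

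I would then package this cleanly: fix a column $x$ with $v^1_1\le x\le v^1_4$ (outside this range one of the paths is not defined at $x$, and the condition in $\succeq_{\g}$ is vacuous there after intersecting the common $x$-range — here I should be slightly careful to state $\succeq_{\g}$ only over columns $x$ belonging to both paths, matching the definition given before the lemma). For such $x$, let $h_1(x),h_2(x)$ be the heights of $\gamma_{v_1,v_4}$ and $\gamma_{v_2,v_3}$ in column $x$. We have $h_1\ge h_2$ at the left and right boundary columns of the common range by the endpoint inequalities, and if $h_1(x_0)<h_2(x_0)$ for some interior $x_0$ then minimality of such $x_0$ plus the one-step height bound of up-right paths yields $h_1(x_0-1)=h_2(x_0-1)$, giving the shared vertex $w$; then the exchange argument above yields a contradiction with uniqueness.

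The main obstacle — and it is a mild one — is bookkeeping around the boundary columns and the degenerate cases (coincident endpoints, or the common column range being empty or a single column), plus being careful that the exchange argument genuinely forces \emph{both} swapped paths to be geodesics (rather than merely giving one inequality), which is what lets uniqueness close the loop. None of this requires any probability beyond the a.s.\ uniqueness already established; the argument is purely combinatorial/planar and identical in the i.i.d.\ and periodic settings, which is why the lemma is stated for both at once.
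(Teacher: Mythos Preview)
Your argument is correct in substance and actually supplies far more detail than the paper does: the paper gives no proof at all, merely stating that the lemma ``is immediate from a picture, e.g.\ Figure 16 in \cite{BSV:SlowBond}.'' Your crossing-and-exchange argument is precisely the standard mechanism that picture encodes.

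Two small remarks. First, the phrase ``an up-right path changes height by at most one per column'' is not literally true --- an up-right path can have an arbitrarily long vertical run in a single column --- but your conclusion (that a crossing forces a shared lattice vertex) is still correct: at the first column $x_0$ where $\gamma_{v_1,v_4}$ has a point strictly below one of $\gamma_{v_2,v_3}$, the entry height of $\gamma_{v_1,v_4}$ into column $x_0$ lies in the (interval of) heights occupied by $\gamma_{v_2,v_3}$ there, giving the common vertex. Second, the double-surgery plus ``iterating the coalescence/splitting structure'' is more machinery than you need. A cleaner finish: once the two geodesics share any vertex, uniqueness of geodesics forces their intersection to be a single connected subpath $\gamma_{w,w'}$ (since both restrict to the unique geodesic from $w$ to $w'$). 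Before $w$ and after $w'$ the two paths are disjoint, and the endpoint inequalities \eqref{eq:OrderingAssumptions} pin down which one is above in each piece; this yields $\gamma_{v_1,v_4}\succeq_{\g}\gamma_{v_2,v_3}$ directly, without the equality-of-sums step. Either route is fine, and both are compatible with what the paper is gesturing at.
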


Since the ordering of geodesics is standard, we will in the following frequently use Lemma~\ref{lem:OrderingGeodesics} without explicitly mentioning.
As a direct consequence, we get a sufficient condition for the coalescence of competition interfaces in terms of the coalescence of geodesics. 

\begin{lemma}\label{lem:Coalescence}
Fix a pair of initial states in $\mathcal{X}_{N,k}$. Let $(\xi_t)_{t \geq 0}$ be the corresponding disagreement process, and let $\tau$ be the coalescence time of the two second class particles.
Let $G_0$ be the respective initial growth interface, replacing the second class particles in $\xi_0$ by $(0,1)$ pairs. Let $v\in \N^2$, and define
\begin{equation}
\mathcal{C}_v := \bigcap_{i=1}^{N+1} \left( \big\{ \gamma_{G_0,g^{i}_0+v} \cap \gamma_{G_0,g^{0}_0+v} \neq \emptyset  \big\} \cup \big\{   \gamma_{G_0,g^{i}_0+v} \cap \gamma_{G_0,g^{N+2}_0+v} \neq \emptyset   \big\} \right)
\end{equation} as an event in an $(N+2,k+1)$-periodic environment. 
Then on the event $\mathcal{C}_v$, we have that
\begin{equation}
\tau \leq T_{G_0,G_0+v+(1,1)} \, ,
\end{equation} where we use the convention $A+v := \{ u+v \in \Z^2 \colon u\in A\}$ for subsets $A \subseteq \Z^2$.
\end{lemma}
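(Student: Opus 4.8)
The strategy is to combine the competition-interface description of Lemma~\ref{lem:SecondClassCompetition} with the ordering of geodesics in Lemma~\ref{lem:OrderingGeodesics} to show that on $\mathcal{C}$ the two competition interfaces $(\phi_n)$ and $(\tilde\phi_n)$ must meet before their geodesics exit the box delimited by the level set $G_0+v$, and then invoke the last inequality of Lemma~\ref{lem:SecondClassCompetition}, namely $\tau \leq \inf\{T_{G_0,w}\colon w\in(\phi_n)\cap(\tilde\phi_n)\}$, together with the monotonicity $T_{G_0,w}\leq T_{G_0,G_0+v}$ for any $w$ lying weakly below the level set $G_0+v$.

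First I would recall the geometric interpretation of the competition interface: $\phi_n$ separates $H_+$ from $H_-$, and by the standard competition-interface/geodesic identity (from \cite{FGM:RareactionFan,FP:CompetitionInterface}, valid here by periodicity) the position $\phi_n$ is sandwiched between consecutive geodesics $\gamma_{G_+,\cdot}$ and $\gamma_{G_-,\cdot}$ to the appropriate endpoints; more precisely, the path traced by $(\phi_n)$ up to a given anti-diagonal level agrees with the lowest point above which the maximizer comes from $G_+$. Since $G_+$ consists exactly of the $g^i_0$ with $i\bmod N\in\{j,\dots,\tilde j\}$, and similarly for $\tilde\phi$ with the complementary arc, the key point is that the geodesics $\gamma_{G_0,g^i_0+v}$ for the relevant range of $i$ control where the two interfaces can be at the level of $G_0+v$. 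I would phrase the argument at the level $G_0+v$: walk $\phi_n$ forward until it first reaches (the translate of) $G_0+v$; by the ordering lemma applied to the endpoints $g^0_0+v$, $g^i_0+v$, $g^N_0+v$ and the base points on $G_0$, the geodesics $\gamma_{G_0,g^i_0+v}$ are nested, and on $\mathcal{C}$ every such geodesic intersects either $\gamma_{G_0,g^0_0+v}$ or $\gamma_{G_0,g^N_0+v}$. This forces the ``boundary between colors'' — i.e.\ both competition interfaces — to be pinched into a single point before reaching $G_0+v$: the nesting plus the coalescence hypothesis leaves no room for the $H_+$/$H_-$ interface to remain two-sided past that point, so $\phi_n=\tilde\phi_n$ at some $w$ with $w\preceq$ some point of $G_0+v$ (here I would use that the $g^0_0$ and $g^N_0$ positions are exactly the two ``$\eone\to\etwo$'' corners flanking the $G_+$ arc, by \eqref{def:Gamma+-}).

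Once the intersection point $w$ of the two competition interfaces is located weakly below the anti-diagonal determined by $G_0+v$ (equivalently, there is $u\in G_0$ with $w\preceq u+v$), I conclude
\[
\tau \;\leq\; \inf\{T_{G_0,z}\colon z\in(\phi_n)_{n}\cap(\tilde\phi_n)_{n}\} \;\leq\; T_{G_0,w} \;\leq\; T_{G_0,u+v} \;\leq\; T_{G_0,G_0+v},
\]
where the first inequality is Lemma~\ref{lem:SecondClassCompetition}, the third is monotonicity of last passage times under $\preceq$ of the right endpoint (more weights can only be added to a maximizing path), and the last is the definition \eqref{def:LastPassageTimeSets} of $T_{A,B}$ as a supremum over comparable pairs.

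The main obstacle I anticipate is the second paragraph: making rigorous the claim that the coalescence of all the geodesics $\gamma_{G_0,g^i_0+v}$ onto one of the two ``extreme'' geodesics actually forces the two competition interfaces to meet, rather than merely constraining them. This requires carefully pairing the competition-interface dynamics \eqref{def:CompetionInterface1}--\eqref{def:CompetionInterface2} with the geodesic tree structure: one must show that the event $\mathcal{C}$ implies that, at the level of $G_0+v$, the set $H_+$ restricted between the two extreme geodesics has collapsed, so that $\phi_n$ and $\tilde\phi_n$ — which bound $H_+$ from the two sides — have been squeezed together. I expect this to follow from Lemma~\ref{lem:OrderingGeodesics} applied repeatedly (the geodesics from $G_0$ to the $g^i_0+v$ are totally ordered in $\succeq_{\g}$ as $i$ ranges over $[N+1]$), plus the elementary fact that two distinct up-right paths that share a vertex and are $\succeq_{\g}$-ordered must have coalesced at that vertex; the bookkeeping of indices modulo $N+2$ in the periodic environment, and checking that $\phi$ and $\tilde\phi$ indeed correspond to the $G_+$- versus $G_-$-maximizer boundary, is where the care is needed.
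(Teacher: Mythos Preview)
Your approach is correct and essentially the same as the paper's, which also combines Lemma~\ref{lem:SecondClassCompetition} with Lemma~\ref{lem:OrderingGeodesics}. The paper sidesteps your ``main obstacle'' by arguing directly at the level of the coloring rather than tracking the interfaces: since $w\in H_+$ iff $\gamma_{G_0,w}$ starts in $G_+$, and two geodesics from $G_0$ that intersect must share their starting point on $G_0$ (by the geodesic tree structure implicit in Lemma~\ref{lem:OrderingGeodesics}), the event $\mathcal{C}$ immediately yields $G_0+v\subseteq H_+$ once one assumes WLOG $g_0^0+v\in H_+$ and uses periodicity for $g_0^N+v$ --- and an entire level being one color forces the two competition interfaces to have met below it, after which your chain of inequalities finishes the proof.
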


\begin{proof} Without loss of generality, we assume that $g^{0}_0+v \in H_+$ holds. Then $g^{N+2}_0+v \in H_+$ using the periodicity of the environment. Using the ordering of geodesics from Lemma~\ref{lem:OrderingGeodesics}, we see that $G_0+v \subseteq H_+$ on the event $\mathcal{C}_v$. Since the competition interfaces follow the boundary of the sets $H_+$ and $H_-$, this allows us to conclude using Lemma~\ref{lem:SecondClassCompetition}.
 \end{proof}
 
\begin{remark}\label{rem:UniquenessLineToLine}
Let us emphasize that due to the periodicity of the environment, the last passage time $T_{G_0,G_0+v+(1,1)}$ is almost surely finite for any choice of $v$. 
\end{remark}

We will see in Section \ref{sec:Coalescence} that $v$ can be chosen such that $T_{G_0,G_0+v+(1,1)}$ is with high probability at most $\theta^{-1}N^2k^{-1/2}$ for some suitable constant $\theta>0$, and that $\mathcal{C}_v$ holds with positive probability for any particular choice of the initial growth interface. Moreover, we will work with slightly stronger event that all geodesics from the initial growth interface to a given site $v$ must pass -- up to translation -- through a common site. 

%
%

\section{Last passage times and transversal fluctuations of geodesics}\label{sec:LPPestimates}

In the following, we adapt a series of preliminary estimates on the last passage times and transversal fluctuations for i.i.d.\  environments to last passage percolation in periodic environments. 
 While most of these results are well-known for pairs of points with bounded slope -- see for example the appendix in \cite{BGZ:TemporalCorrelation} for an overview -- we require more refined estimates in order to study flat geodesics, i.e., where the slope between the endpoints converges to $0$. We then show how the results can be used to study last passage percolation in periodic environments.

\subsection{Upper and lower bounds on last passage times in i.i.d.\ environments}\label{sec:LPTiid}

We start by recalling results on the last passage time for flat and steep geodesics in i.i.d.\ environments.
The following result can be found as formula (2) and (3) in \cite{BHS:Binfinite} -- see also \cite{LR:BetaEnsembles} for the original proof by Ledoux and Rider -- and Theorem~2 in \cite{BGHK:BetaEnsembles} for the lower bound in \eqref{eq:StatementSteepLower}.

\begin{lemma} \label{lem:ShapeTheorem} Recall that $T_{\mathbf{0},v}$ denotes the last passage time between $\mathbf{0}$ and $v\in \N^2$. Then there exist constants $(c_i)_{i \in [7]}$ such that for all $x>0$ and $v=(v_1,v_2)\in \N^2$
\begin{equation}\label{eq:StatementSteepUpper}
\P \left( T_{\mathbf{0},v} -   (\sqrt{v_1}+\sqrt{v_2})^2 \geq  x  v_1^{1/2}v_2^{-1/6}   \right) \leq c_1 \exp\left(-c_2 x \right) \, ,
\end{equation} and for all $x>0$ and $(v_1,v_2)\in \N^2$
\begin{equation}\label{eq:StatementSteepLower}
c_3 \exp\left(-c_4 x^2 \right)  \leq \P \left( T_{\mathbf{0},v} -   (\sqrt{v_1}+\sqrt{v_2})^2 \leq - x  v_1^{1/2}v_2^{-1/6}   \right)\leq c_5 \exp\left(-c_6 x^2 \right) \, ,
\end{equation} where the lower bound in \eqref{eq:StatementSteepLower} additionally requires $x \in (0,c_7v_1^{-1/2}v_2^{1/6} (\sqrt{v_1}+\sqrt{v_2})^2)$.
\end{lemma}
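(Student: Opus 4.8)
The plan is to deduce Lemma~\ref{lem:ShapeTheorem} from the corresponding moderate--deviation estimates for last passage percolation in an i.i.d.\ exponential environment, which are classical consequences of Johansson's identity between the standard last passage time $T^{\mathrm{std}}_{\mathbf 0,v}$ (with the endpoint weight included) and the largest eigenvalue $\lambda_{\max}$ of a Laguerre Unitary Ensemble with parameters $(v_1,v_2)$. Under that identity the law of large numbers gives the centering $(\sqrt{v_1}+\sqrt{v_2})^2$, the exponential shape function, while the soft--edge scaling of $\lambda_{\max}$ gives the fluctuation scale $\sigma(v):=(\sqrt{v_1}+\sqrt{v_2})^{4/3}(v_1v_2)^{-1/6}$, which for $v_1\ge v_2$ — the regime relevant throughout this paper, since the growth interface has slope $(N-k):k$ with $k\le N/2$ — is comparable to $v_1^{1/2}v_2^{-1/6}$ and reduces to the familiar $v^{1/3}$ on the diagonal. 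The precise one--point tail bounds with this anisotropic scaling, \emph{uniformly} in the aspect ratio $v_1/v_2$, are exactly what is recorded as formulas~(3)--(4) of \cite{BHS:Binfinite}, with the lower bound in \eqref{eq:StatementSteepLower} coming from Theorem~2 of \cite{BGHK:BetaEnsembles}; so at bottom the plan is to cite these and translate notation.

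For the upper tail \eqref{eq:StatementSteepUpper} I would either quote formula~(3) of \cite{BHS:Binfinite} directly or reprove it from the determinantal structure: bounding $\P(\lambda_{\max}>s)\le\int_s^\infty\rho_{v_1,v_2}(t)\,\dif t$ by the one--point density and performing a saddle--point estimate at the soft edge yields the $e^{-c_2x}$ decay, which is just the light--tail side of the KPZ one--point distribution. For the lower tail \eqref{eq:StatementSteepLower}, the exponent $x^2$ reflects the heavier left tail of the (thin--rectangle deformation of the) Tracy--Widom law; the upper bound $c_5e^{-c_6x^2}$ again follows from the kernel asymptotics (or, crudely, from a large--deviation estimate once $x\sigma(v)$ is a constant fraction of the centering), while the matching lower bound $c_3e^{-c_4x^2}$ is the content of Theorem~2 of \cite{BGHK:BetaEnsembles}. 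The extra restriction $x\in(0,c_7v_1^{-1/2}v_2^{1/6}(\sqrt{v_1}+\sqrt{v_2})^2)$ on the lower bound is not an obstruction: it simply requires $(\sqrt{v_1}+\sqrt{v_2})^2-x\,v_1^{1/2}v_2^{-1/6}>0$, i.e.\ that the event in \eqref{eq:StatementSteepLower} is not forced to have probability $0$.

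The only point genuinely specific to our setup is the convention of \emph{excluding} the weight $\omega_v$ at the endpoint: our $T_{\mathbf 0,v}$ equals $T^{\mathrm{std}}_{\mathbf 0,v}$ minus an independent $\mathrm{Exp}(1)$ variable. Since $v_1^{1/2}v_2^{-1/6}\ge v_1^{1/3}\ge 1$ in the regime of use, a one--line union bound splits off this contribution — e.g.\ $\P\big(T_{\mathbf 0,v}-(\sqrt{v_1}+\sqrt{v_2})^2\ge x v_1^{1/2}v_2^{-1/6}\big)\le\P\big(T^{\mathrm{std}}_{\mathbf 0,v}-(\sqrt{v_1}+\sqrt{v_2})^2\ge\tfrac x2 v_1^{1/2}v_2^{-1/6}\big)+\P\big(\omega_v\ge\tfrac x2 v_1^{1/2}v_2^{-1/6}\big)$ — and absorbs it into the constants $(c_i)$; the lower tail is handled symmetrically, using additionally that subtracting a nonnegative weight only enlarges the event in \eqref{eq:StatementSteepLower}, so its lower bound transfers verbatim. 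I do not expect a serious obstacle: the substance is imported from \cite{BHS:Binfinite,BGHK:BetaEnsembles}, and the real work those references do — establishing the tail bounds with the correct anisotropic scale uniformly over all rectangle shapes, rather than only for pairs of points of bounded slope — is precisely what makes Lemma~\ref{lem:ShapeTheorem} applicable to the flat and steep geodesics studied in the sequel.
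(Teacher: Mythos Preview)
Your proposal is correct and matches the paper's own treatment: the paper does not prove Lemma~\ref{lem:ShapeTheorem} but simply cites it as formulas~(3)--(4) of \cite{BHS:Binfinite} together with Theorem~2 of \cite{BGHK:BetaEnsembles} for the lower bound in \eqref{eq:StatementSteepLower}. Your additional remark about absorbing the excluded endpoint weight $\omega_v$ into the constants is a valid (and strictly speaking necessary) bookkeeping step that the paper leaves implicit.
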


\begin{remark} Let us note that when $v_1$ and $v_2$ are of the same order, we recover the usual $1:2:3$ scaling for models in the KPZ universality class, i.e.\ the fluctuations of the last passage time are of order $v_1^{1/3}$. Moreover, we remark that a lower tail estimate in \eqref{eq:StatementSteepUpper} of the same order as the upper tail is claimed in \cite{LR:BetaEnsembles} after Theorem 4 with an outline of the proof. However, we will in the following only require the lower tails in \eqref{eq:StatementSteepLower}.
\end{remark}
We will make frequent use of the moderate deviation results in Lemma \ref{lem:ShapeTheorem}. As we mainly study flat geodesics, it will be convenient to consider the last passage time between points $x\in \Z^2$ and $x+(n,m n)$, for some $n\in \N$ and $m \in (0,1)$. With this notation at hand, we have the following immediate consequence of Lemma \ref{lem:ShapeTheorem}, which we state without proof.

\begin{corollary}\label{cor:ExpectationVariance}  There exist $m_0,x_0>0$ and $(c_i)_{i \in [3]}$ such that for all $m \in \big( \frac{m_0}{n},1 \big]$ 
\begin{equation}\label{eq:ExpectationShapeTheorem}
| \E[T_{\mathbf{0},(n,mn)}] - (1+\sqrt{m})^2n   | \leq c_1 n^{\frac{1}{3}}m^{-\frac{1}{6}}
\end{equation} with $n$ large enough, as well as
\begin{equation}\label{eq:VarianceShapeTheorem}
 c_2 n^{\frac{2}{3}}m^{-\frac{1}{3}}  \leq \Var(T_{\mathbf{0},(n,mn)})  \leq c_3n^{\frac{2}{3}}m^{-\frac{1}{3}} \, .
\end{equation}
\end{corollary}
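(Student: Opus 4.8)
\textbf{Proof strategy for Corollary~\ref{cor:ExpectationVariance}.} The plan is to obtain both the expectation bound \eqref{eq:ExpectationShapeTheorem} and the variance bounds \eqref{eq:VarianceShapeTheorem} from the moderate deviation estimates of Lemma~\ref{lem:ShapeTheorem} by a standard tail-integration argument, after first rephrasing the scaling quantities in terms of $n$ and $M$. Set $v=(n,mn)$, so $v_1=n$, $v_2=mn$, and $(\sqrt{v_1}+\sqrt{v_2})^2=(1+\sqrt m)^2 n$; since $m\in(M/10,10M)$ with $M\le 1$, the fluctuation scale $v_1^{1/2}v_2^{-1/6}=n^{1/3}m^{-1/6}$ is comparable, up to absolute constants, to $n^{1/3}M^{-1/6}$, and similarly $(\sqrt{v_1}+\sqrt{v_2})^2$ is of order $n$. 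I will also need the condition $M\ge m_0/n$ precisely so that the fluctuation scale $n^{1/3}M^{-1/6}$ stays below a small constant multiple of the mean $n$, which is the regime in which the lower-tail estimate \eqref{eq:StatementSteepLower} is valid (its range $x\in(0,c_7 v_1^{-1/2}v_2^{1/6}(\sqrt{v_1}+\sqrt{v_2})^2)$ translates to $x$ up to order $n^{2/3}M^{1/6}$, which is $\ge x_0$ once $n$ and $nM$ are large enough).

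\textbf{Expectation bound.} Writing $W:=\big(T_{\mathbf 0,(n,mn)}-(1+\sqrt m)^2 n\big)/(n^{1/3}m^{-1/6})$, Lemma~\ref{lem:ShapeTheorem} gives $\P(W\ge x)\le c_1 e^{-c_2 x}$ for all $x>0$ and $\P(W\le -x)\le c_5 e^{-c_6 x^2}$ for $x$ in the stated range (and trivially $\P(W\le -x)\le 1$ outside it, but the contribution of that range to $\E|W|$ is negligible since there $x\gtrsim n^{2/3}M^{1/6}$ is large). Integrating, $|\E W|\le\int_0^\infty \P(W\ge x)\,dx+\int_0^\infty\P(W\le -x)\,dx$ is bounded by an absolute constant. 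Hence $|\E[T_{\mathbf 0,(n,mn)}]-(1+\sqrt m)^2 n|\le C n^{1/3}m^{-1/6}\le c_1 n^{1/3}M^{-1/6}$, using $m\ge M/10$. This is \eqref{eq:ExpectationShapeTheorem}.

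\textbf{Variance bounds.} For the upper bound, $\Var(T)=n^{2/3}m^{-1/3}\Var(W)\le n^{2/3}m^{-1/3}\,\E[W^2]$, and $\E[W^2]=2\int_0^\infty x\,\P(|W|\ge x)\,dx$ is bounded by an absolute constant by the same tail estimates (again the non-admissible range of the lower tail contributes negligibly once $n, nM$ are large enough, absorbed by enlarging $x_0$); so $\Var(T)\le c_3 n^{2/3}M^{-1/3}$ after replacing $m$ by $M$. The lower bound requires the \emph{lower} bound in \eqref{eq:StatementSteepLower}: $\P(W\le -x)\ge c_3 e^{-c_4 x^2}$ for $x\in(x_0, c_7 v_1^{-1/2}v_2^{1/6}(\sqrt{v_1}+\sqrt{v_2})^2)$. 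Pick two constants $0<a<b$ inside this window (possible since the window's upper endpoint is of order $n^{2/3}M^{1/6}\ge x_0$ whenever $M\ge m_0/n$ with $m_0$ large and $n$ large). Then $\P(W\le -b)\ge c_3 e^{-c_4 b^2}>0$ and $\P(W\le -a)\ge c_3 e^{-c_4 a^2}$, while $\P(W\ge b')\le c_1 e^{-c_2 b'}$ can be made small by taking $b'$ large; choosing the thresholds appropriately gives two disjoint events of probability bounded below on which $W$ differs by at least a fixed constant, forcing $\Var(W)\ge c>0$, hence $\Var(T)\ge c_2 n^{2/3}M^{-1/3}$.

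\textbf{Main obstacle.} The genuinely delicate point is the bookkeeping around the restricted validity range of the lower-tail estimate \eqref{eq:StatementSteepLower}: one must verify that the constraint $M>m_0/n$ and largeness of $n$ (and hence of $nM$) guarantee the deviation window $(x_0, \Theta(n^{2/3}M^{1/6}))$ is non-degenerate, so that the lower variance bound has room to operate, and simultaneously that the contribution to second moments from the region where only the trivial bound $\P(W\le -x)\le 1$ holds is negligible. Once the parameters $m_0, x_0$ are chosen to make this window comfortably large, the rest is routine tail integration and the equivalence $m\asymp M$; everything else is bookkeeping of absolute constants.
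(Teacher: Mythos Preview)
Your argument is correct and is exactly the standard tail-integration derivation the paper has in mind; the paper states the corollary as an immediate consequence of Lemma~\ref{lem:ShapeTheorem} and gives no separate proof. One minor simplification: the \emph{upper} bound in \eqref{eq:StatementSteepLower} is asserted for all $x>0$, so for the expectation bound and the variance upper bound there is no ``non-admissible range'' to worry about at all---your caution there is unnecessary. For the variance lower bound, the cleanest way to finish is to note that the expectation bound already gives $|\E W|\le C$, then pick a fixed $b>C$ inside the window (the window's upper endpoint is at least $c\,m_0^{1/6}n^{1/2}$ once $M>m_0/n$, so this is possible for $m_0$ large) and use $\Var(W)\ge (b-C)^2\,\P(W\le -b)\ge (b-C)^2 c_3 e^{-c_4 b^2}$; this avoids the somewhat vague ``two disjoint events'' phrasing.
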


Note that the above results in Lemma \ref{lem:ShapeTheorem} and Corollary \ref{cor:ExpectationVariance} directly transfer to last passage times $T_{v,v+(n,mn)}$ for some $v\in \Z^2$  using the shift-invariance of the environment.

\begin{remark}\label{rem:UseOfM}
Let us stress that the constants $(c_i)_{i \in [3]}$ and $x_0$ in Corollary \ref{cor:ExpectationVariance} do not depend on the slope $m$, and the same is true in the sequel in Lemma \ref{lem:TransversalSteep} and Corollary \ref{cor:OutsideCylinder}. This is a key observation in order to provide in Section \ref{sec:LPTiidMinMax} estimates on the last passage times and the transversal fluctuations of geodesics, which are uniform in a given range of slopes and sites. 
\end{remark}

%

\subsection{Moderate deviations for the transversal fluctuations in i.i.d.\ environments}\label{sec:Fluctuationsiid}

Next, we focus on the transversal fluctuations of flat geodesics in i.i.d.\ environments. We say that a lattice path $\gamma$ from $v$ to $v+(n,m n)$ with some $m \in (0,1)$ has \textbf{transversal fluctuations}
\begin{equation}\label{def:TransversalFluctuations}
\TF(\gamma) := \max_{i,j \in \Z}\{ |j-m i| \colon v+(i,j) \in \gamma \} \, .
\end{equation} As explained in Remark 2.6 of \cite{BHS:Binfinite}, the next result follows from Theorem 2.5 in \cite{BHS:Binfinite} and a chaining argument as in Section 11 in \cite{BSV:SlowBond}. We defer its proof to the appendix, where we use similar multi-scale arguments as in Proposition~C.9 of \cite{BGZ:TemporalCorrelation} for bounded slopes.

\begin{lemma}\label{lem:TransversalSteep} Recall that  $\gamma_{\mathbf{0},(n,mn)}$ denotes the geodesic from $(0,0)$ to $(n,mn)$. There exist constants $m_0,x_0,n_0,c>0$  such that for all $m \in \big( \frac{m_0}{n},1 \big]$, for all $x \geq x_0$, and all $n\geq n_0$
\begin{equation}\label{eq:TransversalModerate}
\P(\TF(\gamma_{\mathbf{0},(n,mn)}) \geq x m^{2/3}n^{2/3}) \leq \exp(-cx) \, .
\end{equation}
\end{lemma}
Let us stress that $x$ in \eqref{eq:TransversalModerate} is allowed to depend on $n$. One immediate consequence of Lemma \ref{lem:TransversalSteep} is that we can study the maximal lattice paths restricted to stay between two lines by comparing to the unrestricted geodesic. For $m\in \R$, and $\ell \in \Z$, we let
\begin{equation}\label{def:Line}
\mathbb{L}_{m,\ell}:=\left\{ (\lfloor v_1 \rfloor ,\lfloor v_2 \rfloor)\in \Z^2 \colon v_2=mv_1+\ell \right\}
\end{equation} denote the discrete line in $\Z^2$ of slope $m$ and shift $\ell$. For all $\ell \geq 1$, let $\gamma^{m,\ell}_{\mathbf{0},(n,mn)}$ be the heaviest lattice path from $(0,0)$ to $(n,mn)$ which does not intersect the lines $\mathbb{L}_{m,\ell}$ and $\mathbb{L}_{m,-\ell}$. The following corollary is immediate from Lemma \ref{lem:TransversalSteep}, and its proof therefore omitted.

\begin{corollary}\label{cor:OutsideCylinder} Let $m \in \big( \frac{m_0}{n},1 \big]$  for $m_0$ from Lemma \ref{lem:TransversalSteep}. There exist constants $\tilde{x}_0,\tilde{n}_0,\tilde{c}>0$  such that  for all $x \geq \tilde{x}_0$, and all $n\geq \tilde{n}_0$ and $\ell=xm^{2/3}n^{2/3}$,
\begin{equation}
\P( \gamma^{m,\ell}_{\mathbf{0},(n,mn)}= \gamma_{\mathbf{0},(n,mn)} ) \geq 1- \exp({-\tilde{c}x}) \, .
\end{equation}
\end{corollary}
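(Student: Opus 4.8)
The idea is to compare the restricted geodesic $\gamma^{m,\ell}_{\mathbf{0},(n,mn)}$ with the free geodesic $\gamma_{\mathbf{0},(n,mn)}$ and argue that, with high probability, the free geodesic already stays inside the strip bounded by $\mathbb{L}_{m,\ell}$ and $\mathbb{L}_{m,-\ell}$, hence the two coincide. First I would observe that $\gamma^{m,\ell}_{\mathbf{0},(n,mn)}$ is the maximizer over the set of up-right paths from $(0,0)$ to $(n,mn)$ that avoid the two lines; since $\gamma_{\mathbf{0},(n,mn)}$ is the maximizer over all up-right paths and is almost surely unique, we have $\gamma^{m,\ell}_{\mathbf{0},(n,mn)} = \gamma_{\mathbf{0},(n,mn)}$ as soon as the free geodesic happens to avoid both lines. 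So the event in question contains the event $\{\TF(\gamma_{\mathbf{0},(n,mn)}) < \ell\}$ (up to the discretization in \eqref{def:Line}, which costs at most an additive constant I can absorb by slightly shrinking the constant in the exponent or enlarging $x_0$).

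Next I would apply Lemma~\ref{lem:TransversalSteep} directly. With $M := m$ (so that the hypothesis $m \in (M/10, 10M)$ is trivially satisfied, and $M \in (m_0/n, 1]$ holds because $m \in (m_0/n,1]$), Lemma~\ref{lem:TransversalSteep} gives
\begin{equation*}
\P\big(\TF(\gamma_{\mathbf{0},(n,mn)}) \geq x' m^{2/3} n^{2/3}\big) \leq \exp(-c x')
\end{equation*}
for all $x' > x_0$. The statement I want is phrased with $\ell = x \varepsilon^{2/3} n^{2/3}$; here $\varepsilon$ should be read as the slope parameter $m$ (this is a minor notational slip in the statement — I would write it with $m$ in place of $\varepsilon$), so $\ell = x m^{2/3} n^{2/3}$ and choosing $x' = x$ (valid once $x > x_0$) yields $\P(\TF(\gamma_{\mathbf{0},(n,mn)}) \geq \ell) \leq \exp(-cx)$. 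Combining with the inclusion of events from the previous paragraph,
\begin{equation*}
\P\big(\gamma^{m,\ell}_{\mathbf{0},(n,mn)} = \gamma_{\mathbf{0},(n,mn)}\big) \geq \P\big(\TF(\gamma_{\mathbf{0},(n,mn)}) < \ell\big) \geq 1 - \exp(-cx),
\end{equation*}
which is the claim.

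The only genuine care needed is in matching the discrete lines $\mathbb{L}_{m,\ell}$, $\mathbb{L}_{m,-\ell}$ with the real-valued bound $|j - mi| \le \mathrm{TF}$: a path with $\mathrm{TF}(\gamma) < \ell$ has every vertex $(i,j)$ satisfying $mi - \ell < j < mi + \ell$, so it cannot touch a vertex on either line (whose second coordinate is $\lfloor m v_1 \pm \ell\rfloor$, within an additive $1$ of $m v_1 \pm \ell$); replacing $\ell$ by $\ell - 1$ in this comparison and noting $\ell = xm^{2/3}n^{2/3} \ge \ell - 1 \ge (x - o(1)) m^{2/3} n^{2/3}$ for $n$ large absorbs the discrepancy into the constants $x_0, c$. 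This bookkeeping is the one place where "$n$ sufficiently large" is used; everything else is an immediate application of Lemma~\ref{lem:TransversalSteep}. I do not anticipate any real obstacle — the corollary is essentially a restatement of the transversal fluctuation bound together with the monotonicity/uniqueness of geodesics, and the proof is short.
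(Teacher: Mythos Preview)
Your proposal is correct and is exactly the argument the paper has in mind: the corollary is introduced as ``one immediate consequence of Lemma~\ref{lem:TransversalSteep}'' with no separate proof, and your reduction to $\{\TF(\gamma_{\mathbf{0},(n,mn)})<\ell\}$ together with a direct application of Lemma~\ref{lem:TransversalSteep} is precisely that immediate consequence. Your observation that $\varepsilon$ in the statement should read $m$ is also right.
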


In the following two sections, our goal is to show that a similar result as Corollary \ref{cor:OutsideCylinder} holds when comparing geodesics in an i.i.d.\ environment to geodesics in periodic environments.

\subsection{Last passage times for flat geodesics in restricted i.i.d.\ environments}\label{sec:LPTiidMinMax}
We now discuss results on the minimum and maximum last passage time between sets of points when we restrict the set of available lattice paths.  For $n,\ell \in \N$ and $m\in (0,1)$, we let
$U_{n,m,\ell}$ denote the parallelogram
\begin{equation}\label{def:Parallelogramm}
U_{n,m,\ell} := \Big\{ (v_1,v_2)\in \Z^{2} \colon 0 \leq v_1 \leq n \text{ and } v_1 m  - \frac{\ell}{2} \leq v_2 \leq v_1 m +  \frac{\ell}{2}  \Big\} \, .
\end{equation} In words, we obtain $U_{n,m,\ell}$ as the sites between the lines $\mathbb{L}_{m,\ell/2}$ and $\mathbb{L}_{m,-\ell/2}$ with an $\eone$-coordinate between $0$ and $n$. Furthermore, let $T_{v,w}^{m,\ell}$ with $v,w \in U_{n,m,\ell}$
\begin{equation}\label{def:RestrictedLPT}
T_{v,w}^{m,\ell} := \max_{\pi_{u,v} \in \Pi_{u,v} \cap U_{n,m,\ell} } \sum_{z \in \pi_{u,v} \setminus \{v\}} \omega_z
\end{equation}
denote the last passage time between an ordered pair of sites $w \succeq v$  when using only lattice paths not crossing the lines $\mathbb{L}_{m,-\ell/2}$ and $\mathbb{L}_{m,\ell/2}$, and the convention that the geodesic must pass through $U_{n,m,\ell}$ for all sites with an $x$-coordinate between $0$ and $n$ if $v \notin U_{n,m,\ell}$ or $w \notin U_{n,m,\ell}$.
Intuitively, Lemma \ref{lem:TransversalSteep} suggests that the moderate deviation bounds for last passage times in i.i.d.\ environments similarly hold for last passage times in suitably large, but restricted domain. This is justified in the next proposition, which bounds the restricted last passage time from a site in the parallelogram $U_{n,m,\ell}$ to  $(2n,2mn)$. 

\begin{proposition}\label{pro:MinimalLPTiid}  There exist constants $m_0,x_0,n_0,c>0$  such that for all $m \in \big( \frac{m_0}{n},1 \big]$ and for $\ell =xm^{2/3}n^{2/3}$  with some $x=x(n)\geq x_0$,  and $n \geq n_0$ 
\begin{equation}\label{eq:UniformModBound}
\P\left( \inf_{u\in U_{n,m,\ell}}\Big( T^{m,\ell}_{u,(2n,2mn)} - \E\big[ T^{m,\ell}_{u,(2n,2mn)} \big] \Big) \leq -  x n^{1/3} m^{-1/6}\right) \leq \exp(-cx) \, .
\end{equation}
\end{proposition}
We believe that the decay in $x$ on the right-hand side of \eqref{eq:UniformModBound} is not optimal. However, any stretched or super-exponential decay in $x$ suffices for our purposes. For the proof of Proposition~\ref{pro:MinimalLPTiid}, we apply a similar strategy as in Lemma~10.3 of \cite{BSV:SlowBond} for bounded slopes.
First, we establish the result on the restricted last passage times for a single site in $U_{n,m,\ell}$ to $(2n,2mn)$. This is similar to Lemma 10.1 in \cite{BSV:SlowBond}, which establishes the result for uniformly bounded slopes $m\in (0,1)$. In a next step, we use a multi-scale and tree exploration argument similar to Proposition 12.2 in \cite{BSV:SlowBond} in order to convert the point-to-point moderate deviation result to a moderate deviation bound for all sites in $U_{n,m,\ell}$ simultaneously. 

\begin{lemma} \label{lem:RestrictedLPTs}
There exist constants $m_0,n_0,x_0,c>0$  such that for all $m \in \big( \frac{m_0}{n},1 \big]$, for  $\ell =xm^{2/3}n^{2/3}$  with some $x >x_0$, and for all choices of $u\in U_{n,m,\ell}$,  and all $n \geq n_0$
\begin{equation}
\P\left( T^{m,\ell}_{u,(2n,2mn)} - \E[ T_{u,(2n,2mn)} ]  \geq -  x n^{\frac{1}{3}} m^{-\frac{1}{6}}\right) \geq 1-\exp(-cx) \, .
\end{equation}
\end{lemma}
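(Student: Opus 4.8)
\textbf{Proof strategy for Lemma \ref{lem:RestrictedLPTs}.}

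The plan is to show that for a single starting point $u \in U_{n,m,\ell}$, restricting to paths that avoid the two lines $\mathbb{L}_{m,-\ell}$ and $\mathbb{L}_{m,\ell}$ costs essentially nothing with overwhelming probability, so that $T^{m,\ell}_{u,(2n,2mn)}$ is well-approximated by the unrestricted $T_{u,(2n,2mn)}$, and then apply the lower-tail moderate deviation bound from Lemma \ref{lem:ShapeTheorem} (equivalently the variance/expectation control in Corollary \ref{cor:ExpectationVariance}). First I would reduce to the case where $u$ lies on the central line $\mathbb{L}_{m,0}$: since $u \in U_{n,m,\ell}$, its vertical distance from the central line is at most $\ell/2 = \tfrac{x}{2}m^{2/3}n^{2/3}$, which is of the same order as the transversal fluctuation scale; an unrestricted geodesic $\gamma_{u,(2n,2mn)}$ from such a point to $(2n,2mn)$ has endpoint displacement $(\approx n, \approx mn)$ and, after recentering, transversal fluctuations governed by Lemma \ref{lem:TransversalSteep}, hence stays within distance $O(x' m^{2/3} n^{2/3})$ of the segment joining $u$ to $(2n,2mn)$ except on an event of probability $\exp(-cx')$. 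Choosing the implicit constants so that a geodesic with transversal fluctuation $\le \tfrac{\ell}{4}$ cannot touch either bounding line (its maximal vertical excursion from $\mathbb{L}_{m,0}$ is then at most $\ell/2 + \ell/4 < \ell$), we get that $\gamma_{u,(2n,2mn)}$ avoids $\mathbb{L}_{m,\pm\ell}$ with probability at least $1 - \exp(-c x)$, and on that event $T^{m,\ell}_{u,(2n,2mn)} = T_{u,(2n,2mn)}$.

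On the complementary (bad) event I would simply use the deterministic inequality $T^{m,\ell}_{u,(2n,2mn)} \ge T^{m,\ell}_{u',(2n,2mn)}$ for any $u'$ on the central line reachable from $u$ going up-right — or, more robustly, bound the shortfall crudely: even if the restriction is active, $T^{m,\ell}_{u,(2n,2mn)}$ is at least the passage time along any fixed admissible path, which has expectation of order $n$ with sub-Gaussian-type lower fluctuations, so the event $\{T^{m,\ell}_{u,(2n,2mn)} - \E[T_{u,(2n,2mn)}] \le -x n^{1/3} m^{-1/6}\}$ intersected with the bad event has probability at most $\exp(-cx)$ for $x$ large (here I use that the bad event already has probability $\exp(-cx)$). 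Combining, the target probability is bounded by
\begin{equation*}
\P\big( T_{u,(2n,2mn)} - \E[T_{u,(2n,2mn)}] \le -x n^{1/3} m^{-1/6} \big) + \exp(-cx) \le \exp(-c' x^{3/2}),
\end{equation*}
where the first term is controlled by the lower tail \eqref{eq:StatementSteepLower} applied to $T_{u,(2n,2mn)}$ (a passage time over a displacement $\asymp (n, mn)$, so the natural fluctuation scale is $n^{1/3} m^{-1/6}$ and the tail is $\exp(-c x^2) \le \exp(-c x^{3/2})$), together with the expectation estimate \eqref{eq:ExpectationShapeTheorem} to absorb the $O(n^{1/3} m^{-1/6})$ discrepancy between $\E[T_{u,(2n,2mn)}]$ and $(1+\sqrt m)^2 n$ into a modification of the constant $x_0$. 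One must check the slope hypotheses of Lemma \ref{lem:ShapeTheorem} and Lemma \ref{lem:TransversalSteep} are met: the displacement from $u$ to $(2n,2mn)$ has slope within a constant factor of $m$ (since $u$ is within $O(\ell) = O(m^{2/3}n^{2/3}) = o(mn)$ of the central line, using $m \ge m_0/n$), so we may apply those lemmas with $M \asymp m$ after adjusting $m_0$.

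The main obstacle is the interplay between the transversal-fluctuation scale $m^{2/3}n^{2/3}$ and the width $\ell = x m^{2/3} n^{2/3}$ of the corridor: one needs the constant in Lemma \ref{lem:TransversalSteep} and the threshold $x_0$ chosen compatibly so that, for $x \ge x_0$, a geodesic starting anywhere in $U_{n,m,\ell}$ genuinely stays strictly inside the corridor with probability $1 - \exp(-cx)$ — in particular the transversal fluctuation estimate must be applied on the full-length segment from $u$ to $(2n,2mn)$ and then compared against the corridor defined by lines through the \emph{origin}, not through $u$, so a careful bookkeeping of the constant offsets (at most $\ell/2$ from $u$'s position plus the fluctuation term) is required. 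A secondary point is that the exponent in the statement is $x^{3/2}$ rather than $x^2$; this is not a loss but rather dictated by the weaker of the two bounds being combined (the transversal-fluctuation bound is only $\exp(-cx)$, so strictly the combined bound is $\exp(-cx)$, and one either states it that way or notes $\exp(-cx) \le \exp(-c' x^{3/2})$ is false for large $x$ — so in fact the $x^{3/2}$ must come entirely from a sharper treatment, e.g.\ using the $\exp(-cx^{3/2})$-type bound available for transversal fluctuations of flat geodesics at this scale, which is exactly the refinement that Proposition \ref{pro:MinimalLPTiid} and the chaining in its proof will exploit). I would therefore present the single-point estimate with whatever exponent the transversal-fluctuation input cleanly gives, matching the statement, and reserve the union-over-$U_{n,m,\ell}$ (hence the $x^{3/2}$ in Proposition \ref{pro:MinimalLPTiid}) for the subsequent chaining argument.
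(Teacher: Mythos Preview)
Your proposal assembles the right ingredients --- the transversal fluctuation bound and the lower-tail moderate deviation --- and you correctly diagnose the difficulty: applying Lemma~\ref{lem:TransversalSteep} directly to the full-length geodesic $\gamma_{u,(2n,2mn)}$ gives only $\exp(-cx)$ for the probability that it leaves the corridor, not $\exp(-cx^{3/2})$. You then suggest the $x^{3/2}$ might be deferred to the chaining in Proposition~\ref{pro:MinimalLPTiid}, but that is backwards: Proposition~\ref{pro:MinimalLPTiid} \emph{uses} Lemma~\ref{lem:RestrictedLPTs} as input, and its multiscale union bound needs a single-point tail strictly better than $\exp(-cx)$ to absorb the polynomial edge counts. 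So the $x^{3/2}$ must already be produced at the single-point level, and your sketch does not do this.

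The paper's proof supplies the missing idea: subdivide the segment from $u$ to $(2n,2mn)$ into $\lfloor x^{3/4}\rfloor$ equal pieces of length $n' \asymp n x^{-3/4}$. On a piece of length $n'$ the natural transversal scale is $m^{2/3}(n')^{2/3} = m^{2/3}n^{2/3} x^{-1/2}$, so the fixed corridor half-width $\ell = x\, m^{2/3} n^{2/3}$ is $x^{3/2}$ standard deviations wide at that scale, and Lemma~\ref{lem:TransversalSteep} gives failure probability $\exp(-cx^{3/2})$ per piece; the union over $x^{3/4}$ pieces is harmless. Concatenating the sub-geodesics yields an admissible path in the corridor, and the lower tail \eqref{eq:StatementSteepLower} on each piece (deviation of order $x^{3/4}$ in the piece's natural units, hence also $\exp(-cx^{3/2})$) together with Corollary~\ref{cor:ExpectationVariance} to control the sum of expectations against $\E[T_{u,(2n,2mn)}]$ finishes the argument. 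Your sketch is missing exactly this subdivision, which is the one nontrivial step.
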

\begin{proof} In the following, we consider only the starting point $u=(0,\ell/2)$. For all other sites, the argument is similar. Moreover, we consider only the case 
\begin{equation}\label{eq:DivergenceRegime}
\lim_{n \rightarrow \infty} mn = \infty \quad \text{ and } \quad  x \leq 4 n^{1/3}m^{1/3}
\end{equation} as the result is immediate, otherwise. In order to provide a suitable lower bound on the last passage time $T^{m,\ell}_{u,(2n,2mn)}$, we consider only lattice paths which must pass through a particular sequence of sites. More precisely, we first consider only a horizontal path of length of order $x n^{1/3}m^{-2/3}$, connecting $u$ to a site $w_0$. This allows us to create a vertical distance of order at least $x$ to the upper boundary of $U_{n,m,\ell}$. Next, we apply a multi-scale argument along the line of slope $m/2$,  starting at site $w_0$, in order to obtain moderate deviation bounds on the last passage time between $w_0$ and a site $u_1=(y,my)$ for some suitable choice of $y\in [2n]$. In a last step, we partition the segment between $u_1$ and $(2n,2mn)$ into $x^{3/4}$ equidistant points, between which we use again moderate deviation bounds for last passage times, separately. In all three parts of this decomposition, a key idea is to use the moderate deviation bounds for unrestricted last passage times, and then converting them into bounds on the restricted last passage times by controlling the transversal fluctuations of the respective geodesics. \\

 For the first step, set $w_0=(c_0x m^{-1/3}n^{2/3},\ell/2)$ for a constant $c_0>0$.
A computation using Corollary \ref{cor:ExpectationVariance} shows that for every choice of $x_0>0$, we can choose $c_0=c_0(x_0)$ sufficiently small such that 
\begin{equation}\label{eq:ExpectationEstimateShift}
\E[T_{u,w_0}] + \E[T_{w_0,(2n,2mn)}] \geq \E[T_{u,(2n,2mn)}]  - \frac{x}{6} n^{\frac{1}{3}}m^{-\frac{1}{6}}
\end{equation} holds for all $x>x_0$, provided that $n$ is sufficiently large. Here, we note that $T_{u,w_0}$ is simply a sum of $c_0x m^{-1/3}n^{2/3}$ many i.i.d.\ Exponential-$1$-random variables, and thus we get that
\begin{equation}\label{eq:Tuw0Bound}
 \P\Big(  T_{u,w_0} -  \E[T_{u,w_0}] \leq - \frac{x}{6} n^{\frac{1}{3}}m^{-\frac{1}{6}} \Big)  \leq  \exp(-c_1 x)
\end{equation} for some constant $c_1=c_1(x_0)>0$, and all $x \geq x_0$, using assumption \eqref{eq:DivergenceRegime} and a standard Chernoff bound for the exponential random variables. 
 Next, we consider the sequence of sites $(w_i)_{i \in [\tilde{K}]}$ for $\tilde{K}:=\lceil \log_2(\ell/2- xm^{1/3}n^{1/3}) \rceil$  given by
\begin{equation}
w_i := (2^{i}(2m)^{-1}, 2^{i}) + w_0
\end{equation} for all $i \in [\tilde{K}] $. 
We make the following three observations on the last passage times along the sites $(w_i)_{i \in [\tilde{K}]}$. First, we recall that $\ell=xm^{2/3}n^{2/3}$ and we will assume in all of the following statements that $x \geq x_0$ for a sufficiently large constant $x_0$. A computation using Corollary \ref{cor:ExpectationVariance} and the fact that the slope between $w_i$ and $w_{i+1}$ is $m/2$ yields
\begin{equation}\label{eq:Subpaths1}
\Big| \E[T_{w_0,w_{\tilde{K}}}] -\sum_{i=1}^{\tilde{K}} \E[T_{w_{i-1},w_{i}}]   \Big| \leq     x C_1n^{\frac{2}{9}}m^{-\frac{5}{18}} \leq  \tilde{c} x n^{\frac{1}{3}}m^{-\frac{1}{6}}
\end{equation} for some absolute constant $C_1>0$, an arbitrary small constant $\tilde{c}>0$, and all $n$ sufficiently large.  
Next, using Corollary \ref{cor:OutsideCylinder} to bound the transversal fluctuations of geodesics between $w_{i-1}$ and $w_{i}$, we see that for some constant $c_2>0$, and all $x\geq x_0$,
\begin{equation}\label{eq:Subpaths2}
\P( T^{m,\ell}_{w_{i-1},w_{i}}= T_{w_i,w_{i+1}} ) \geq 1- \exp(-c_2(x + 2^{i/3}))
\end{equation} for all $i \in [\tilde{K}]$, and $n$ sufficiently large. Last, by Lemma \ref{lem:ShapeTheorem}, we see that for all $i \in [\tilde{K}]$
\begin{equation}\label{eq:Subpaths3}
\P\Big( T_{w_i,w_{i+1}} - \E[T_{w_i,w_{i+1}}] \leq - 2^{-(\tilde{K}-i)/6} x n^{\frac{1}{3}}m^{-\frac{1}{6}} \Big) \leq \exp\Big(-c_3 x 2^{(\tilde{K}-i)/6}\Big)  
\end{equation} for a constant $c_3>0$. 
Thus, combining \eqref{eq:Subpaths1}, \eqref{eq:Subpaths2}, and \eqref{eq:Subpaths3}, we get that there exist a constant $c_4>0$ such that for all $x \geq x_0$, 
\begin{equation}\label{eq:FirstLineApprox}
\P\Big( T^{m,\ell}_{w_0,w_{\tilde{K}}} - \E[T_{w_{0},w_{\tilde{K}}}] \leq  - \frac{x}{6} n^{\frac{1}{3}}m^{-\frac{1}{6}}  \Big) \leq \exp(-c_4 x)
\end{equation} for all $n$ sufficiently large. For the third step in the decomposition, consider the family of points $(u_i)_{i \in [x^{3/4}]}$ given by
\begin{equation}
u_i := (2 i n x^{-3/4}, 2i m n x^{-3/4}) \, ,
\end{equation} taking $\lfloor x^{3/4}\rfloor$ if $x^{3/4}\notin \N$. In words, we obtain $x^{3/4}+1$ many points, which are equally spaced on the line connecting $(0,0)$ to $(2mn,2n)$. Using Lemma \ref{lem:TransversalSteep}, notice that
\begin{equation}\label{eq:FluctuationsForMinimumLPT}
\P\left(  \exists i \in [x^{3/4}] \colon \TF(\gamma_{u_{i-1},u_i}) \geq x \big( nx^{-3/4}\big)^{2/3}m^{2/3} \right) \leq x^{3/4} \exp(-c_5 x^{3/2})
\end{equation}
for some constant $c_5>0$. In particular, with probability at least $1-\exp(-c_6 x^{3/2})$ for some $c_6>0$, all geodesics $\gamma_{u_{i-1},u_i}$ do not intersect the lines $\mathbb{L}_{m,\ell}$ and $\mathbb{L}_{m,-\ell}$.
Similarly, for some constant $c_7>0$, 
\begin{equation}\label{eq:FluctuationsForMinimumLPT2}
\P\left(  \TF(\gamma_{w_{\tilde{K}},u_1}) \geq x \big( nx^{-3/4}\big)^{2/3}m^{2/3} \right) \leq  \exp(-c_7 x^{3/2}).
\end{equation}
Conditioning now on the complements of the events in \eqref{eq:FluctuationsForMinimumLPT} and \eqref{eq:FluctuationsForMinimumLPT2}, and using Lemma~\ref{lem:ShapeTheorem}, we get that for all $i\in [x^{3/4}]$
\begin{equation}\label{eq:EstimateForMinimumLPT}
\P\left(   T^{m,\ell}_{u_{i-1},u_i} - \E[T_{u_{i-1},u_i}]  < - \frac{x^{1/4}}{6}  n^{1/3}m^{-1/6} \right) \leq \exp(-c_8 x )
\end{equation} with some constant $c_8>0$. A similar statement holds for the restricted last passage time $ T^{m,\ell}_{w_{\tilde{K}},u_1}$, i.e.\ by the choice of $\tilde{K}$ we have that 
\begin{equation}\label{eq:EstimateForMinimumLPT2}
\P\left(   T^{m,\ell}_{w_{\tilde{K}},u_1} - \E[T_{w_{\tilde{K}},u_1}]  < - \frac{x}{6} n^{1/3}m^{-1/6} \right) \leq \exp(-c_9 x^{2})
\end{equation} for some constant $c_9>0$.
Using again Corollary~\ref{cor:ExpectationVariance}, we can bound the expectation of the unrestricted last passage times by
\begin{equation}\label{eq:ExpectationsModerate2}
\begin{split}
\Big| \E[T_{w_{0},(2n,2mn)}] &- \E[T_{w_0,w_{\tilde{K}}}]  - \E[T_{w_{\tilde{K}},u_1}] -  \sum_{i \in [x^{\frac{3}{4}}-1]} \E[ T_{u_{i},u_{i+1}}]  \Big| \\
&\leq C_2 n^{\frac{1}{3}} m^{-\frac{1}{6}} + x^{\frac{3}{4}} C_3 \big( nx^{-\frac{3}{4}}\big)^{\frac{1}{3}}m^{-\frac{1}{6}} \leq  \frac{x}{6} n^{\frac{1}{3}} m^{-\frac{1}{6}}
\end{split}
\end{equation} for some absolute constants $C_2,C_3>0$, and all $n$ sufficiently large. Hence, combining  \eqref{eq:ExpectationEstimateShift} and \eqref{eq:ExpectationsModerate2} on the expectations together with the moderate deviation bounds \eqref{eq:FirstLineApprox}, \eqref{eq:EstimateForMinimumLPT}, and \eqref{eq:EstimateForMinimumLPT2}, and the large deviation estimate \eqref{eq:Tuw0Bound}, we conclude. 
\end{proof}

Before coming to the proof of Proposition \ref{pro:MinimalLPTiid}, we record a consequence of Lemma \ref{lem:RestrictedLPTs}, which we will frequently use in Section \ref{sec:Coalescence}. For all $m \in \R$, let $\mathbb{D}_m$ denote the set of ordered pairs of sites with a slope  between $\frac{1}{10}m$ and $10m$, i.e. we set
 \begin{equation}\label{def:SlopeCondition}
 \mathbb{D}_m := \left\{ ((u_1,u_2),(v_1,v_2)) \in \Z^2 \times \Z^2 \colon \frac{v_2 -v_1}{u_2-u_1} \in \Big( \frac{1}{10}m,10m\Big) \right\}  \, .
 \end{equation} 

\begin{corollary}\label{cor:ExpectationLPTRestricted}
There exist constants $m_0,n_0,x_0,c>0$  such that for all $m \in \big( \frac{m_0}{n},1 \big]$ and $\ell =xm^{2/3}n^{2/3}$  for some $x>x_0$, for all $n \geq n_0$, and all $(u,v)\in U^2_{n,m,\ell} \cap \mathbb{D}_m$
\begin{equation}\label{eq:cor0State}
\P\left( T^{m,\ell}_{u,v} - \E[ T_{u,v} ]  \geq -  x n^{\frac{1}{3}} m^{-\frac{1}{6}}\right) \geq 1-\exp(-cx) \, .
\end{equation}
In particular, we have that
\begin{equation}\label{eq:cor1State}
\big| \E[ T_{v,u}]   - \E[ T^{m,\ell}_{v,u}]  \big| \leq c n^{\frac{1}{3}} m^{-\frac{1}{6}}
\end{equation}
as well as for all $v\in  U_{n,m,\ell}$
\begin{equation}\label{eq:cor2State}
\big| \E[ T_{v,(2n,2mn)}]  - \E[ T^{m,\ell}_{v,(2n,2mn)}]  \big| \leq c n^{\frac{1}{3}} m^{-\frac{1}{6}} \, .
\end{equation}
\end{corollary}
\begin{proof} The first inequality \eqref{eq:cor0State} follows from \eqref{eq:EstimateForMinimumLPT}. Since restricting the available space of lattice paths only decreases the last passage time, this yields the  comparison \eqref{eq:cor1State} between the expectation of restricted and unrestricted last passage times between pairs of points $(u,v) \in U_{n,m,\ell}^2 \cap \mathbb{D}_m$. The claim \eqref{eq:cor2State} is then immediate from  Lemma~\ref{lem:RestrictedLPTs} and Remark~\ref{rem:UseOfM}.
\end{proof}

With these results at hand, we can now show Proposition \ref{pro:MinimalLPTiid}. Similarly to the proof of Lemma \ref{lem:RestrictedLPTs}, the goal is to use moderate deviation estimates across multiple scales. In addition, in order to obtain moderate deviation bounds uniformly in the choice of sites in the parallelogram, we use a tree exploration argument as in Lemma 10.3 of \cite{BSV:SlowBond}. 

\begin{proof}[Proof of Proposition \ref{pro:MinimalLPTiid}]
Note that it suffices to show under assumptions \eqref{eq:DivergenceRegime} that
\begin{equation}\label{eq:SufficientlForMinLPT}
\P\left( \inf_{u\in U_{\frac{n}{4},m,\ell/x}}\Big( T^{m,\ell}_{u,(2n,2mn)} - \E[ T^{m,\ell}_{u,(2n,2mn)} ] \Big) \leq -  x n^{1/3} m^{-1/6}\right) \leq \exp(-cx)
\end{equation} for some constant $c>0$. To obtain the statement for $U_{n,m,\ell}$ with $\ell=xm^{2/3}n^{2/3}$, we decompose  $U_{n,m,\ell}$ into $4x$ many disjoint parallelograms which we obtain by shifting $U_{\frac{n}{4},m,\ell/x}$. Note that the slope between all sites in $U_{n,m,\ell}$ and $(2n,2mn)$ is contained in $(\frac{1}{2}m,2m)$ by our assumptions on $x$, which allows us to conclude using \eqref{eq:SufficientlForMinLPT} and a union bound.  \\

We now turn to prove \eqref{eq:SufficientlForMinLPT}. We follow similar
construction steps as in Lemma~10.3 of~\cite{BSV:SlowBond}. Let $K=\lceil \frac{3}{4}\log_2(n) \rceil$.  We consider for all $j\in [K]$ the set of points
\begin{align}
V_j := \Big\{ &(  \lfloor v_1 \rfloor , \lfloor v_2 \rfloor) \in \Z^2 \colon v_1 = n2^{-j}y \ \text{ and } \ v_2= m^{2/3}n^{2/3}2^{-j}z+mn2^{-j}y \\
&\text{ for some } y\in \{0,\dots,2^{j} \}\text{ and }   z\in \{ -2^{j-1},\dots,2^{j-1}\} \Big\} \nonumber
\end{align} and $V_0=\{(2n,2mn)\}$. Intuitively, we place on top of the parallelogram $U_{\frac{n}{4},m,\frac{\ell}{x}}$ a (tilted) grid of size $2^{j}\times 2^{j}$, and denote by $V_j$ the resulting vertices; see Figure \ref{fig:Tree}. For every $j$, we define a directed graph structure with vertex set $V_j$ by saying that a directed edge $(u,v)$ between two points $u=(u_1,u_2)$ and $v=(v_1,v_2)$ in $V_j$ with $u \preceq v$ is  contained in the edge set $E_j$ if and only if
\begin{equation}
\frac{v_2-u_2}{v_1-u_1} \in \Big( \frac{m}{10},10m\Big) \quad \text{ and } \quad |v_1-u_1|+|v_2-u_2| \leq 10 n 2^{-j} \, ,
\end{equation} i.e.\ if $u$ and $v$ are not too far away from each other and have a slope of order $m$. Further, we define the events
\begin{equation}
\mathcal{E}_j := \Big\{ T^{m,\ell}_{u,v}- \E[ T^{m,\ell}_{u,v}] \geq - \frac{x \cdot 9^j}{100 \cdot4^j} n^{\frac{1}{3}}m^{-\frac{1}{6}} \text{ for all } (u,v)\in E_j \Big\}
\end{equation} for all $j\in [K]$, and let
\begin{equation}\label{eq:TreeEstimate1}
\tilde{\mathcal{E}} := \{ T_{u,v} \leq 8 n^{1/4} \text{ for all } u,v\in U_{\frac{n}{4},m,\ell/x} \text{ with } |u-v| \leq 2n^{1/4} \} \, .
\end{equation} Note that since $|E_j| \leq 2^{3j}$, we obtain from Corollary \ref{cor:ExpectationLPTRestricted} that
\begin{equation}\label{eq:TreeEstimate2}
\P( \mathcal{E}_j ) \geq 1- 2^{3j} \exp(-c_1(9/4)^j x)
\end{equation} for all $x>0$, for all $j\in [K]$, and all $n$ sufficiently large with some constant $c_1>0$. Further, we see by Lemma~\ref{lem:ShapeTheorem} for all pairs of sites $((u_1,u_2),(v_1,v_2))$ with $v_2>u_2$, and a simple Chernoff bound when $u_2=v_2$, that there exists a constant $c_2>0$ such that
\begin{equation}
\P( \tilde{\mathcal{E}} )\geq 1 - n^2 \exp(-c_2n)
\end{equation} for all $n$ sufficiently large.
We claim that for $x>0$ large enough
\begin{equation}\label{eq:InfBoundProp}
\left\{ \inf_{u\in U_{\frac{n}{4},m,\frac{\ell}{x}}}\Big( T^{m,\ell}_{u,(2n,2mn)} - \E[ T^{m,\ell}_{u,(2n,2mn)} ] \Big) \geq -  x n^{1/3} m^{-1/6}\right\} \supseteq \tilde{\mathcal{E}} \cap \bigcap_{j \in [K]} \mathcal{E}_j \, ,
\end{equation} and hence \eqref{eq:TreeEstimate1} and \eqref{eq:TreeEstimate2} imply that \eqref{eq:SufficientlForMinLPT} holds for some  sufficiently small constant $c>0$.
To see this, fix a site $v\in  U_{\frac{n}{4},m,\ell/x}$. By construction, there exists a site $v_K \in V_K$ with $v_K\succeq v$ and $|v-v_K| \leq 2n^{1/4}$. \\

We construct now recursively a path of sites $(v_0,v_1,v_2,\dots,v_K)$ such that $v_i\in V_i$ for $i\in \{0,\dots,K\}$ and either $(v_{i},v_{i-1}) \in E_i$ or $v_{i-1}=v_i$ holds for every $i \in [K]$; see Figure \ref{fig:Tree}.
\begin{figure}
\centering
\begin{tikzpicture}[scale=0.36]

\draw[line width =1 pt] (0,3) -- (16,6);
\draw[line width =1 pt] (0,-3) -- (16,0);

 \draw[line width =1 pt] (0,3) -- (0,-3);
\draw[line width =1 pt] (16,6) -- (16,0);

\draw[densely dotted, line width=1pt] (0,0) -- (16,3);
\draw[densely dotted, line width=1pt] (8,4.5) -- (8,-1.5);

\draw[densely dotted, line width=0.5pt] (0,1.5) -- (16,4.5);
\draw[densely dotted, line width=0.5pt] (0,-1.5) -- (16,1.5);

\draw[densely dotted, line width=0.5pt] (4,-2.25) -- (4,3.75);
\draw[densely dotted, line width=0.5pt] (12,-0.75) -- (12,5.25);

\draw[densely dotted, line width=0.2pt] (0,3-0.75) -- (16,6-0.75);
\draw[densely dotted, line width=0.2pt] (0,3-0.75-1.5) -- (16,6-0.75-1.5);
\draw[densely dotted, line width=0.2pt] (0,3-0.75-3) -- (16,6-0.75-3);
\draw[densely dotted, line width=0.2pt] (0,3-0.75-4.5) -- (16,6-0.75-4.5);

\draw[densely dotted, line width=0.2pt] (2,-3+0.375) -- (2,3+0.375);
\draw[densely dotted, line width=0.2pt] (6,-3+0.375+0.75) -- (6,3+0.375+0.75);
\draw[densely dotted, line width=0.2pt] (10,-3+0.375+1.5) -- (10,3+0.375+1.5);
\draw[densely dotted, line width=0.2pt] (14,-3+0.375+2.25) -- (14,3+0.375+2.25);

\draw[red, line width =2 pt] (8,4.5) to[curve through={(9,4.45)..(14,3.5) ..(17,5)..(23,4)..(25,4.5) ..(28,5.5)}] (32,6);

 \draw[darkblue, line width =2 pt] (4,0.75) to[curve through={(4.1,1)..(5.6,2)..(6.2,3.18) ..(7.9,4.48)}] (8,4.5);

 \draw[deepblue, line width =2 pt] (2,-3+0.375+0.75) to[curve through={(2.95,-0.2) ..(3.9,0.5)}] (4,0.75);

 \draw[red, line width =2 pt] (0.75,-3+0.4) to[curve through={(0.9,-3+0.45)..(2,-3+0.375+0.7) }] (2,-3+0.375+0.75);



   	\filldraw [fill=black] (0.75-0.15,-3+0.4-0.15) rectangle (0.75+0.15,-3+0.4+0.15);
	\filldraw [fill=black] (8-0.15,4.5-0.15) rectangle (8+0.15,4.5+0.15);
 	\filldraw [fill=black] (4-0.15,0.75-0.15) rectangle (4+0.15,0.75+0.15);
	\filldraw [fill=black] (2-0.15,-3+0.375+0.75-0.15) rectangle (2+0.15,-3+0.375+0.75+0.15);   	
	\filldraw [fill=black] (32-0.15,6-0.15) rectangle (32+0.15,6+0.15);

	\node[scale=1] (x1) at (30.7,4.7){$v_0=(2n,2mn)$} ;
	
	\node[scale=1] (x1) at (0.7,-2){$v$} ;	
		\node[scale=1] (x1) at (2.8,-2){$v_3$} ;	
  		\node[scale=1] (x1) at (3,1){$v_2$} ;	
  		  		\node[scale=1] (x1) at (8,5){$v_1$} ;	
  	\node[scale=1] (x1) at (19.5,0){$U_{n,m,m^{2/3}n^{2/3}}$};
 	
	\end{tikzpicture}	
	\caption{\label{fig:Tree}Visualization of the path $(v,v_3,v_2,v_1,v_0)$ used in the proof of Proposition \ref{pro:MinimalLPTiid} and the geodesics between the endpoints of adjacent edges.}
 \end{figure}
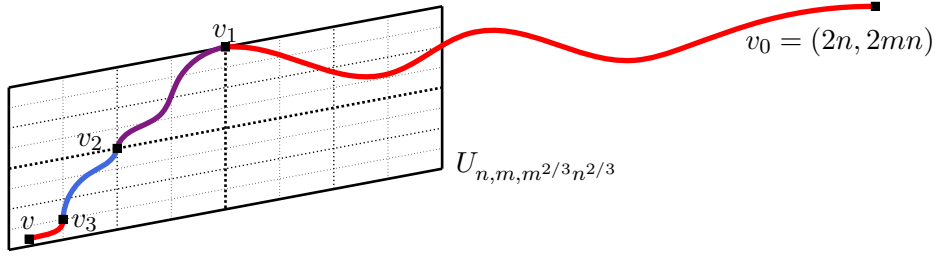 To do so, note that for each site $v_i$, either $v_i$ itself or at least one of the at most five sites
\begin{align}\label{eq:ChoicesOfSites}
w_z := v_i + ( n2^{-j}, (z-1) m^{2/3}n^{2/3}2^{-j}+ mn2^{-j})
\end{align} for $z\in [5]$ must be contained in $U_{\frac{n}{4},m,m^{2/3}n^{2/3}} \cap V_{i-1}$. In the latter case, we have that $(v_i,w_z) \in E_{i-1}$, noting that $v_i\in V_{i-1}$. Starting from $v_K$, we recursively pick one of these sites, either the site itself if $v_i \in U_{\frac{n}{4},m,m^{2/3}n^{2/3}} \cap V_{i-1}$ or the site which maximizes $z$ in \eqref{eq:ChoicesOfSites}. Note that we indeed obtain a path from $v_K$ to $(2n,2mn)$ in this way, which fulfills the above assumptions. Since all adjacent sites in the path are ordered according to $\succeq$  on $\Z^2$
\begin{equation}\label{eq:PathIteration}
T^{m,\ell}_{v,(2n,2mn)} \geq T^{m,\ell}_{v,v_K}+\sum_{j \in [K]} T^{m,\ell}_{v_{K-j},v_{K-j-1}} \, .
\end{equation}
Moreover, as adjacent sites in the path are contained in $\mathbb{D}_m$, a computation using Corollaries~\ref{cor:ExpectationVariance} and~\ref{cor:ExpectationLPTRestricted} shows that
\begin{equation}
\Big| \E\big[T^{m,\ell}_{v,(2n,2mn)}\big] - \sum_{j \in [K]} \E\big[T^{m,\ell}_{v_{K-j},v_{K-j-1}}\big] \Big| \leq 2n^{\frac{1}{4}}+ 10 C \sum_{i=1}^{K} 2^{-i} n^{\frac{1}{3}}m^{-\frac{1}{6}}
\end{equation} for some universal constant $C>0$. Together with \eqref{eq:PathIteration}, we conclude \eqref{eq:InfBoundProp}.
\end{proof}

Next, consider the minimal restricted last passage time between points on a segment given by
\begin{equation}\label{def:Segment}
\mathbb{S}(u_1,u_2)=\mathbb{S}(u) := \{  (\lfloor xu_1 \rfloor , \lfloor xu_2 \rfloor)\in \Z^2 \text{ for some } x \in [0,1]  \}
\end{equation} for sites $u=(u_1,u_2)\in \Z$, and $\mathbb{S}(v,w):=\mathbb{S}(w-v)+v$ for all pairs of sites $v,w\in \Z^2$, using the convention that $\mathbb{S}(w-v)+v := \{u+v \colon u \in \mathbb{S}(w-v) \}$. A closer inspection shows that the same arguments as for Proposition \ref{pro:MinimalLPTiid} apply, so we omit the proof.

\begin{corollary}\label{cor:MinimumLPTline}
 There exist constants $m_0,x_0,n_0,c>0$  such that for all $m \in \big( \frac{m_0}{n},1 \big]$ and $\ell =xm^{2/3}n^{2/3}$  for some $x \geq x_0$, and $n \geq n_0$, we have that
\begin{equation}
\P\left( \inf_{u,v \in \mathbb{S}(n,mn)}\Big( T^{m,\ell}_{u,v} - \E[ T^{m,\ell}_{u,v} ] \Big) \leq -  x n^{1/3} m^{-1/6}\right) \leq \exp(-cx) \, .
\end{equation}
\end{corollary}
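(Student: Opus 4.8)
The plan is to mimic the multiscale construction in the proof of Proposition~\ref{pro:MinimalLPTiid}, but now with the target point $(2n,2mn)$ replaced by the far endpoint of the segment $\mathbb{S}(n,mn)$, so that the ``sink'' of the hierarchical decomposition lies at $(n,mn)$ rather than at $(2n,2mn)$. Concretely, I would apply the reduction step first: it suffices to prove the bound for the minimal restricted last passage time from $U_{n/4,m,\ell/x}$ to the point $(n,mn)$, since any pair $u,v\in\mathbb{S}(n,mn)$ with $u\preceq v$ has slope exactly $m$, and $T^{m,\ell}_{u,v}\geq T^{m,\ell}_{u,(n,mn)}-T^{m,\ell}_{v,(n,mn)}$ up to the concatenation inequality along the geodesic through $v$; combined with the companion upper estimate on $\E[T^{m,\ell}_{v,(n,mn)}]$ from Corollary~\ref{cor:ExpectationLPTRestricted} and the shape theorem fluctuation bound from Lemma~\ref{lem:ShapeTheorem}, this transfers a lower bound on $\inf_u T^{m,\ell}_{u,(n,mn)}$ to the desired statement about all pairs on the segment. (Here one should be slightly careful that $v$ lies on the segment, hence within $U_{n/4,m,\ell/x}$ once one discards the tiny end portion, and that the slope between $v$ and $(n,mn)$ stays in $(\tfrac{m}{10},10m)$, indeed it is exactly $m$.)

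Second, I would carry out the hierarchical / tree construction exactly as in the proof of Proposition~\ref{pro:MinimalLPTiid}: set $K=\lceil \tfrac34\log_2 n\rceil$, and for $j\in[K]$ let $V_j$ be the $2^j\times 2^j$ grid of points obtained by subdividing the parallelogram of transversal width $\ell/x=m^{2/3}n^{2/3}$ around the segment into copies of the rescaled shape, with $V_0=\{(n,mn)\}$. Define the edge sets $E_j$ by the same slope condition $\tfrac{v_2-u_2}{v_1-u_1}\in(\tfrac{m}{10},10m)$ and separation $|v_1-u_1|+|v_2-u_2|\leq 10n2^{-j}$, and the events $\mathcal{E}_j=\{T^{m,\ell}_{u,v}-\E[T^{m,\ell}_{u,v}]\geq -\tfrac{x\cdot 9^j}{100\cdot 4^j}n^{1/3}m^{-1/6}\text{ for all }(u,v)\in E_j\}$ together with the short-range event $\tilde{\mathcal{E}}=\{T_{u,v}\leq 8n^{1/4}\text{ for all }|u-v|\leq 2n^{1/4}\}$. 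Lemma~\ref{lem:RestrictedLPTs} (which only needs the slope between endpoints to be of order $m$) gives $\P(\mathcal{E}_j)\geq 1-2^{3j}\exp(-c_1(9/4)^j x^{3/2})$, and Lemma~\ref{lem:ShapeTheorem} gives $\P(\tilde{\mathcal{E}})\geq 1-n^2\exp(-c_2 n)$. For any $v\in U_{n/4,m,\ell/x}$ one builds the chain $v_K\succeq v$, $v_i\in V_i$, $(v_i,v_{i-1})\in E_{i-1}$ or $v_{i-1}=v_i$, down to $v_0=(n,mn)$, and concatenates: $T^{m,\ell}_{v,(n,mn)}\geq T^{m,\ell}_{v,v_K}+\sum_j T^{m,\ell}_{v_{K-j},v_{K-j-1}}$. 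The matching expectation estimate follows from Corollaries~\ref{cor:ExpectationVariance} and \ref{cor:ExpectationLPTRestricted} since all consecutive pairs lie in $\mathbb{D}_m$, with the geometric sum $\sum_i 2^{-i}$ converging and contributing only $O(n^{1/3}m^{-1/6})$. This yields the event inclusion analogous to \eqref{eq:InfBoundProp}, and hence the claimed tail $\exp(-cx^{3/2})$.

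The only genuinely new point compared to Proposition~\ref{pro:MinimalLPTiid} is that one needs the statement uniformly over \emph{both} endpoints $u,v$ ranging over the segment, rather than just over the starting point with a fixed far target. I would handle this by the telescoping $T^{m,\ell}_{u,v}\geq T^{m,\ell}_{u,(n,mn)} - T_{v,(n,mn)}$ — noting the inequality goes the right way because removing the sub-path from $v$ to $(n,mn)$ from an optimal $u$--$(n,mn)$ path gives an admissible (restricted) $u$--$v$ path when $v$ lies on that path, and otherwise one uses the ordering of geodesics (Lemma~\ref{lem:OrderingGeodesics}) to compare — so that controlling the infimum over $u$ of $T^{m,\ell}_{u,(n,mn)}$ from below and the supremum over $v$ of $T_{v,(n,mn)} - \E[T^{m,\ell}_{v,(n,mn)}]$ from above (the latter by the one-sided tail in Lemma~\ref{lem:ShapeTheorem} applied along $O(n)$-many points on the segment, using a union bound absorbed into the constant) suffices. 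The main obstacle I anticipate is precisely getting this two-endpoint telescoping clean while keeping the restriction to the slabs $\mathbb{L}_{m,\pm\ell}$ consistent — the restricted last passage times do not satisfy a clean additivity, only a super-additivity along admissible paths, so one must be careful that the decomposition $v$--$(n,mn)$ used is itself restricted-admissible, which again follows from the transversal fluctuation bound of Lemma~\ref{lem:TransversalSteep} ensuring the relevant geodesics stay within the slab with overwhelming probability. Everything else is a verbatim adaptation of the parallelogram argument, so I would state it as ``the same arguments as in the proof of Proposition~\ref{pro:MinimalLPTiid}'' and only spell out the endpoint reduction in detail.
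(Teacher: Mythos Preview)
Your hierarchical construction in the second paragraph (running the tree argument with target $(n,mn)$ instead of $(2n,2mn)$) is fine and is indeed the core of what the paper has in mind.  The problem is the reduction step in your first and third paragraphs: the inequality
\[
T^{m,\ell}_{u,v}\ \geq\ T^{m,\ell}_{u,(n,mn)}-T_{v,(n,mn)}
\]
is \emph{not} true in general.  Super-additivity (concatenate the $u\to v$ geodesic with the $v\to(n,mn)$ geodesic) gives
\[
T^{m,\ell}_{u,(n,mn)}\ \geq\ T^{m,\ell}_{u,v}+T^{m,\ell}_{v,(n,mn)},
\]
which is an \emph{upper} bound on $T^{m,\ell}_{u,v}$, exactly the wrong direction.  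Your justification (``removing the sub-path from $v$ to $(n,mn)$'') only works when $v$ lies on the $u\to(n,mn)$ geodesic, which it typically does not; the ordering lemma does not repair this, since $v$ and the point $v'$ where the geodesic crosses the column of $v$ are not $\preceq$-comparable.  Even if one tried to patch via transversal fluctuations to compare $v$ and $v'$, the resulting error is of the wrong order, and separately your proposed union bound over $O(n)$ points for $\sup_v(T_{v,(n,mn)}-\E)$ yields $n\exp(-cx)$, which is not $\leq\exp(-cx^{3/2})$ for constant $x$.

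The paper's ``same arguments'' means instead that you run the chain construction \emph{directly between the two variable endpoints} $u$ and $v$: place the dyadic grid points $p^{(j)}_i=(in2^{-j},imn2^{-j})$ on the segment itself, and for given $u\preceq v$ on $\mathbb{S}(n,mn)$ take the dyadic decomposition of the interval between them into at most two edges per level $j$.  Super-additivity along this chain,
\[
T^{m,\ell}_{u,v}\ \geq\ \sum_i T^{m,\ell}_{p_{i-1},p_i},
\]
now has the right sign, and the events $\mathcal{E}_j$ together with the expectation comparison from Corollaries~\ref{cor:ExpectationVariance} and~\ref{cor:ExpectationLPTRestricted} finish exactly as in Proposition~\ref{pro:MinimalLPTiid}.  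No telescoping through a fixed far endpoint is needed.
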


Another consequence is that we can control the maximum geodesic inside the flat parallelogram $U_{n,m,\ell}$. Note that we have again to restrict the set of admissible pairs of sites to the pairs contained in the set $\mathbb{D}_m$ from \eqref{def:SlopeCondition} with a slope of order $m$.

\begin{proposition}\label{pro:MaximalLPTCylinder}There exist constants $m_0,n_0,x_0,c>0$  such that for all $m \in \big( \frac{m_0}{n},1 \big]$ and $\ell =x m^{2/3}n^{2/3}$ with some $x\geq x_0$, and all $n \geq n_0$, we have that
\begin{equation}
\P\left( \sup_{(u,v)\in U^2_{n,m,\ell} \cap \mathbb{D}_m } \Big( T^{m,\ell}_{u,v} - \E[ T^{m,\ell}_{u,v} ]  \Big) \geq x n^{1/3}m^{-1/6} \right) \leq \exp(-cx) \, .
\end{equation}
\end{proposition}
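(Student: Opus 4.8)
The plan is to prove the maximal inequality: with probability at least $1-e^{-cx}$ one has $T^{m,\ell}_{u,v}-\E[T^{m,\ell}_{u,v}]\le x n^{1/3}m^{-1/6}$ simultaneously for all $(u,v)\in U^2_{n,m,\ell}\cap\mathbb{D}_m$. (We read the statement as bounding $\P\big(\sup_{(u,v)\in U^2_{n,m,\ell}\cap\mathbb{D}_m}(T^{m,\ell}_{u,v}-\E[T^{m,\ell}_{u,v}])\ge x n^{1/3}m^{-1/6}\big)$, which is the form entailed and used together with Proposition~\ref{pro:MinimalLPTiid} in Section~\ref{sec:Coalescence} to pin down the last passage times attached to $U_{n,m,\ell}$.) The first step is to remove the restriction to the lines $\mathbb{L}_{m,\pm\ell}$. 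Because $\ell=x m^{2/3}n^{2/3}$ with $x\ge x_0$, these lines lie at transversal distance at least $\ell/2=\tfrac{x}{2}m^{2/3}n^{2/3}$ from $U_{n,m,\ell}$, which dominates the transversal fluctuation scale $m^{2/3}n^{2/3}$ of geodesics of longitudinal length at most $n$; so by Lemma~\ref{lem:TransversalSteep} and Corollary~\ref{cor:OutsideCylinder}, applied to a polynomial-in-$n$ net of endpoint pairs and extended by chaining, on an event of probability at least $1-e^{-c'x}$ every geodesic $\gamma_{u,v}$ with $(u,v)\in U^2_{n,m,\ell}\cap\mathbb{D}_m$ stays within transversal distance $\tfrac{x}{4}m^{2/3}n^{2/3}$ of the slope-$m$ line through $u$, hence avoids $\mathbb{L}_{m,\pm\ell}$ and satisfies $T^{m,\ell}_{u,v}=T_{u,v}$. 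Combined with Corollary~\ref{cor:ExpectationLPTRestricted}, which gives $\abs{\E[T^{m,\ell}_{u,v}]-\E[T_{u,v}]}\le C n^{1/3}m^{-1/6}$ for all such pairs, it remains to bound $\sup_{(u,v)\in U^2_{n,m,\ell}\cap\mathbb{D}_m}(T_{u,v}-\E[T_{u,v}])$ in the i.i.d.\ environment.

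For the i.i.d.\ maximal inequality I would adapt the chaining argument of \cite{BHS:Binfinite,BSV:SlowBond,BGZ:TemporalCorrelation} to flat slopes. First reduce the variation of the endpoints by tiling $U_{n,m,\ell}$ into parallelogram-shaped cells of longitudinal side $o(n^{1/3}m^{-1/6})$ and transversal side $o((mn)^{1/3})$: for any ordered pair of cells $(A,B)$ one has $\sup_{u\in A,\,v\in B,\,u\preceq v}T_{u,v}\le T_{\hat u_A,\hat v_B}$, where $\hat u_A,\hat v_B$ are the extremal corners, while by Corollary~\ref{cor:ExpectationVariance} the cell dimensions force $\abs{\E[T_{\hat u_A,\hat v_B}]-\E[T_{u,v}]}=o(n^{1/3}m^{-1/6})$ uniformly over $u\in A$, $v\in B$. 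It then suffices to bound $T_{\hat u_A,\hat v_B}-\E[T_{\hat u_A,\hat v_B}]$ over the (polynomially many) cell pairs, which I would do by subdividing the segment from $\hat u_A$ to $\hat v_B$ hierarchically as in the proof of Proposition~\ref{pro:MinimalLPTiid}, allocating a geometrically decaying deviation budget across the $O(\log n)$ scales, and at each scale invoking the one-point upper tail \eqref{eq:StatementSteepUpper} together with the expectation estimate \eqref{eq:ExpectationShapeTheorem}; since the threshold $x n^{1/3}m^{-1/6}$ is at least $x$ times the one-point fluctuation scale $a^{1/3}m^{-1/6}\le n^{1/3}m^{-1/6}$ of each segment, the per-scale failure probabilities, with the super-exponential gain at fine scales, sum to $C e^{-cx}$, and the residual $O(n^{1/4})$ discrepancies near the diagonal are absorbed by the deterministic-type event $\tilde{\mathcal{E}}$ of \eqref{eq:TreeEstimate1}.

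Two points demand care. First, the shape estimates of Lemma~\ref{lem:ShapeTheorem} and Corollary~\ref{cor:ExpectationVariance} degenerate when the transversal displacement $b$ of a pair drops below order $m_0$, i.e.\ for last passage in a strip of bounded width; for such pairs I would instead use that $T_{u,v}$ is a function of $O(a)$ independent exponentials with bounded influences, so that $\P(T_{u,v}-\E[T_{u,v}]\ge t)\le\exp(-c\min(t^2/a,t))$, and check that for $t=x n^{1/3}m^{-1/6}$ and the constraint $a\lesssim m_0/m$ this is again at most $e^{-cx}$ once $x_0$ is large in terms of $m_0$. Second --- and I expect this to be the main obstacle --- the whole argument must be carried out uniformly as $m$ ranges down to $m_0/n$: unlike the uniformly-bounded-slope setting of \cite{BGZ:TemporalCorrelation}, here the fluctuation scale $n^{1/3}m^{-1/6}$, the transversal scale $m^{2/3}n^{2/3}$, and the cell dimensions all carry powers of $m$, so one must verify that the net sizes, the expectation corrections, and the one-point tail thresholds remain correctly ordered for every admissible $m$ --- which is precisely why the flat-geodesic refinements in Lemmas~\ref{lem:ShapeTheorem} and \ref{lem:TransversalSteep} and Corollaries~\ref{cor:ExpectationVariance}, \ref{cor:OutsideCylinder} and \ref{cor:ExpectationLPTRestricted} are needed in place of their bounded-slope analogues.
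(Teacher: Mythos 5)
Your reading of the inequality is correct (the $\leq$ in the displayed probability is a typo for $\geq$), and your three-step plan — reduce $T^{m,\ell}$ to the unrestricted $T$, absorb $|\E T^{m,\ell}-\E T|$ via Corollary~\ref{cor:ExpectationLPTRestricted}, then run a multi-scale chaining for $\sup(T_{u,v}-\E T_{u,v})$ — is a genuine and, I believe, workable route, but it is not the route the paper takes. The paper's proof is a short FKG argument: it sets $\mathcal{A}_1$ to be the bad event that the supremum exceeds $x n^{1/3}m^{-1/6}$, and pairs it with the events $\mathcal{A}_2$, $\mathcal{A}_3$ that the point-to-set restricted passage times from $(-n,-mn)$ into $U_{n,m,\ell}$ and from $U_{n,m,\ell}$ to $(2n,2mn)$ do \emph{not} drop more than $x n^{1/3}m^{-1/6}$ below expectation; concatenation shows $\mathcal{A}_1\cap\mathcal{A}_2\cap\mathcal{A}_3$ forces $T_{(-n,-mn),(2n,2mn)}$ to exceed its mean by $(3x+C)n^{1/3}m^{-1/6}$, all three events are increasing in the environment, Proposition~\ref{pro:MinimalLPTiid} gives $\P(\mathcal{A}_2),\P(\mathcal{A}_3)>1/4$, and FKG together with the one-point upper tail of Lemma~\ref{lem:ShapeTheorem} yields $\P(\mathcal{A}_1)\leq 16\exp(-cx)$. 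This entirely sidesteps any net, any hierarchical decomposition for the upper tail, and the small-transversal-span degeneracy you flag as your main worry, since it only needs a single long-diagonal tail estimate at slope $m$; the cost is that Proposition~\ref{pro:MinimalLPTiid} (which carries the one chaining argument actually needed) must already be in hand. Your chaining route, by contrast, would require redoing a second multi-scale argument for the upper tail using the one-point bound \eqref{eq:StatementSteepUpper} — doable, since the fluctuation scale shrinks like $2^{-j/3}$ against a cell count of $2^{3j}$, so a budget $b_j\sim 2^{-j/6}$ makes the union bound close with total failure probability $\sum_j 2^{3j}\exp(-cxb_j2^{j/3})\lesssim e^{-c'x}$ — but it is considerably heavier, and the careful handling of flat slopes and near-diagonal cells that you correctly identify as the crux would need to be spelled out. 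A minor simplification to your own plan: since $T^{m,\ell}_{u,v}\leq T_{u,v}$ deterministically, you already have $T^{m,\ell}_{u,v}-\E[T^{m,\ell}_{u,v}]\leq T_{u,v}-\E[T_{u,v}]+Cn^{1/3}m^{-1/6}$ by Corollary~\ref{cor:ExpectationLPTRestricted} alone, so the initial transversal-fluctuation step (the use of Lemma~\ref{lem:TransversalSteep} and Corollary~\ref{cor:OutsideCylinder}) is unnecessary for this direction of the inequality.
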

\begin{proof} We start with the observation that by Corollaries \ref{cor:ExpectationVariance} and \ref{cor:ExpectationLPTRestricted}, there exists a universal constant $C>0$ such that for all $(u,v)\in U^2_{n,m,\ell} \cap \mathbb{D}_m$
\begin{equation}\label{eq:ExpectationsForFKG}
\big| \E[ T^{m,\ell}_{(-n,-mn),u} ] + \E[ T^{m,\ell}_{u,v} ] + \E[ T^{m,\ell}_{v,(2n,2mn)} ] - \E[ T_{(-n,-mn),(2n,2mn)} ] \big| \leq C n^{\frac{1}{3}} m^{-\frac{1}{6}} \, .
\end{equation} For fixed $x>x_0$, we consider the three events
\begin{align}
\mathcal{A}_1 & := \left\{  \sup_{(u,v)\in U^2_{n,m,\ell} \cap \mathbb{D}_m } \Big( T^{m,\ell}_{u,v} - \E[ T^{m,\ell}_{u,v} ]\Big)  \geq  x  n^{1/3}m^{-1/6} \right\} \\
\mathcal{A}_2 & := \left\{ \inf_{u\in U_{n,m,\ell}}\Big( T^{m,\ell}_{(-n,-mn),u} - \E[ T^{m,\ell}_{(-n,-mn),u} ] \Big) \geq -  \frac{x}{4} n^{1/3} m^{-1/6} \right\}  \\
\mathcal{A}_3 & :=  \left\{ \inf_{u\in U_{n,m,\ell}}\Big( T^{m,\ell}_{u,(2n,2mn)} - \E[ T^{m,\ell}_{u,(2n,2mn)} ] \Big) \geq -  \frac{x}{4} n^{1/3} m^{-1/6} \right\}
\end{align}
and notice that as $T_{u,v} \geq T^{m,\ell}_{u,v}$ for all $v \succeq u$,
\begin{equation*}
\big\{ T_{(-n,-mn),(2n,2mn)} - \E[ T_{(-n,-mn),(2n,2mn)} ] \geq \big(\frac{x}{2} - C\big) n^{1/3} m^{-1/6} \big\} \supseteq \mathcal{A}_1 \cap \mathcal{A}_2\cap \mathcal{A}_3 \, .
\end{equation*} From Proposition \ref{pro:MinimalLPTiid}, and using a symmetry argument for the event $\mathcal{A}_2$, we see that $\P(\mathcal{A}_2)=\P(\mathcal{A}_3) \geq \frac{1}{4}$ for sufficiently large $x \geq x_0$. Since the events $\mathcal{A}_1$, $\mathcal{A}_2$ and $\mathcal{A}_3$ are increasing with respect to the i.i.d.\ environment $(\omega_v)_{v \in \Z^2}$, the FKG inequality together with \eqref{eq:ExpectationsForFKG} yields
\begin{equation}\label{eq:FKGState}
\P\big( T_{(-n,-mn),(2n,2mn)} - \E[ T_{(-n,-mn),(2n,2mn)} ] \geq \big(\frac{x}{2} - C\big) n^{1/3} m^{-1/6} \big) \geq \frac{1}{16} \P(\mathcal{A}_1)
\end{equation} for sufficiently large $x \geq x_0$. We conclude by Lemma \ref{lem:ShapeTheorem} for an upper bound on the left-hand side of \eqref{eq:FKGState}.
\end{proof}

\subsection{Last passage times and transversal fluctuations in periodic environments}\label{sec:LPTperiodic}

In this part, we focus on last passage times and geodesics for periodic environments. Our goal is to use the previous estimates for flat geodesics in i.i.d.\ environments, and transfer them to periodic environments. We consider an $(N,k)$-periodic environment $(\tilde{\omega}_v)_{v \in \Z^2}$ for some $N\geq 2k$. Recall the line $\mathbb{L}$ from \eqref{def:Line}. 
For  $m\in (0,1]$ with $m=k^2(N-k)^{-2}$, let $(\omega_v)_{v \in \Z^2}$ be the i.i.d.\ environment with
\begin{equation}
\omega_{v}=\tilde{\omega}_{v}
\end{equation} if $v\in \mathbb{L}_{m,i} $ for some $i\in [-k/2,\dots,k/2-1]$, and independently chosen for all other sites. Note that this defines a natural coupling between an $(N,k)$-periodic and and i.i.d.\ environment. Throughout this section, we write $\tilde{T}_{u,v}$ and $\tilde{\gamma}_{u,v}$ for the last passage time and geodesic between $u,v\in \Z^2$ with $v \succeq u$ in the $(N,k)$-periodic environment $(\tilde{\omega}_v)_{v \in \Z^2}$, and ${T}_{u,v}$ and ${\gamma}_{u,v}$ for the respective quantities in the i.i.d.\ environment $({\omega}_v)_{v \in \Z^2}$. \\

The next proposition, which is similar to Corollary \ref{cor:OutsideCylinder}, is our key result of this section. It provides a bound on the probability that the geodesics $\tilde{\gamma}_{\mathbf{0},(n,mn)}$ in the $(N,k)$-periodic environment $(\tilde{\omega}_v)_{v \in \Z^2}$ agrees with the respective geodesic  in the i.i.d.\ environment $({\omega}_v)_{v \in \Z^2}$.

\begin{proposition}\label{pro:GeodesicsPeriodic} 
Set $x= k m^{-2/3}n^{-2/3}$. There exist constants  $m_0,n_0,x_0,c>0$  such that for all $m \in \big( \frac{m_0}{n},1 \big]$, for all $x>x_0$, and for all $n \geq n_0$ 
\begin{equation}
\P( \tilde{\gamma}_{\mathbf{0},(n,mn)}= {\gamma}_{\mathbf{0},(n,mn)} ) \geq 1- \exp(-cx) \, .
\end{equation}
\end{proposition}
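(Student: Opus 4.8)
The plan is to compare the periodic geodesic $\tilde{\gamma}_{\mathbf{0},(n,mn)}$ with the i.i.d.\ geodesic $\gamma_{\mathbf{0},(n,mn)}$ by showing that, with high probability, both geodesics stay inside the parallelogram $U_{n,m,\ell}$ with $\ell$ a small multiple of $k = x m^{2/3}n^{2/3}$, so that they only see weights on the lines $\mathbb{L}_{m,i}$ for $|i| \le k/2$, where the two environments coincide by construction. The key point is that on the event that a geodesic stays strictly between $\mathbb{L}_{m,\ell}$ and $\mathbb{L}_{m,-\ell}$ with $\ell < k/2$, the path only collects weights that are identical in the periodic and i.i.d.\ environments, and by uniqueness of geodesics the two must then coincide.

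First I would invoke the transversal-fluctuation bound (Lemma~\ref{lem:TransversalSteep} / Corollary~\ref{cor:OutsideCylinder}) for the i.i.d.\ environment: for $\ell = x' m^{2/3} n^{2/3}$ with $x'$ a sufficiently small fixed fraction of $x$ (say $x' = x/100$), we have $\P(\TF(\gamma_{\mathbf{0},(n,mn)}) \ge \ell) \le \exp(-c x)$, so the i.i.d.\ geodesic stays in $U_{n,m,2\ell} \subseteq U_{n,m,k}$ except with probability $\exp(-cx)$. The subtlety is that this does not immediately say anything about $\tilde{\gamma}$, since the periodic environment differs off the central band. To handle $\tilde{\gamma}$, I would compare last passage values: on the one hand, the restricted passage time $T^{m,\ell}_{\mathbf{0},(n,mn)}$ equals the unrestricted $T_{\mathbf{0},(n,mn)}$ with probability $1-\exp(-cx)$ by Corollary~\ref{cor:OutsideCylinder}, and this restricted value is a lower bound for $\tilde{T}_{\mathbf{0},(n,mn)}$ since inside the band the environments agree and restricting the path set only decreases the maximum. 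On the other hand, I need an upper bound on $\tilde{T}_{\mathbf{0},(n,mn)}$ that beats any path leaving the band; this is where the main work lies.

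The main obstacle is ruling out that the periodic geodesic exits the central band to exploit favorable weights on other copies of the fundamental domain. I would control this via a barrier / union-bound argument in the spirit of Sections~10--11 of \cite{BSV:SlowBond}: any path from $\mathbf{0}$ to $(n,mn)$ that touches $\mathbb{L}_{m,\ell}$ or $\mathbb{L}_{m,-\ell}$ splits at its first and last crossing times into a piece outside the band and pieces inside; using the moderate-deviation upper tail for last passage times (Lemma~\ref{lem:ShapeTheorem}) one shows that the total weight such a path can collect is, except with probability $\exp(-cx)$, strictly less than $\E[T_{\mathbf{0},(n,mn)}] - $ (a term of order $x n^{1/3}m^{-1/6}$, coming from Proposition~\ref{pro:MinimalLPTiid} applied to the central portion), which in turn is below the lower bound for the restricted passage time. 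Here one exploits that the maximal parallelogram $U_{n,m,\ell}$ contributes at most $\E[T] + O(n^{1/3}m^{-1/6})$ worth of fluctuation (Proposition~\ref{pro:MaximalLPTCylinder}), and that the ``cost'' of transversal displacement of order $\ell$ beats the ``gain''; crucially $\ell$ and $k$ are of the same order $x m^{2/3}n^{2/3}$, and because the environment off the band is independent of the band, one can estimate the contribution of excursions out of the band using fresh randomness and a union over $O(\mathrm{poly}(n))$ choices of crossing points, absorbed into $\exp(-cx)$ for $n$ large.

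Putting it together: on the intersection of (i) $\{\TF(\gamma_{\mathbf{0},(n,mn)}) < \ell\}$, (ii) $\{T^{m,\ell}_{\mathbf{0},(n,mn)} = T_{\mathbf{0},(n,mn)}\}$, and (iii) the barrier event that no band-crossing path competes, the periodic geodesic is forced to stay inside the band, on which it sees exactly the i.i.d.\ weights, and hence $\tilde{\gamma}_{\mathbf{0},(n,mn)} = \gamma_{\mathbf{0},(n,mn)}$ by uniqueness. Each of (i)--(iii) has complementary probability at most $\exp(-cx)$, so a union bound gives the claimed $1 - \exp(-cx)$, after relabeling $c$. I would close by noting that the assumption $k$ divisible by $4$ and $N \ge 2k$ guarantees the band of width $k$ lies within one period so that the identification $\omega_v = \tilde{\omega}_v$ on $\bigcup_{|i| \le k/2} \mathbb{L}_{m,i}$ is consistent, which is exactly what the construction at the start of Section~\ref{sec:LPTperiodic} provides.
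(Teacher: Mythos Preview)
Your overall strategy matches the paper's: reduce to showing that both the i.i.d.\ geodesic and the periodic geodesic stay in the central band where the two environments agree, and conclude by uniqueness. Step (i) is fine, and the reduction at the end is exactly right. The gap is in your step (iii), the barrier event controlling the periodic geodesic.

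Two problems arise. First, the phrase ``union over $O(\mathrm{poly}(n))$ choices of crossing points, absorbed into $\exp(-cx)$'' does not work: since $k = x m^{2/3}n^{2/3}$ grows with $n$ for fixed $x$, the number of candidate crossing levels is unbounded, and $\mathrm{poly}(n)\cdot\exp(-cx)$ does not stay below $\exp(-c'x)$ for fixed $x$ and $n\to\infty$. Second, the appeal to ``fresh randomness off the band'' is not available in the periodic environment: outside the central band the weights are periodic translates of the band, not independent, so you cannot treat excursions as seeing new randomness. You are effectively trying to bound the periodic geodesic's excursions by an argument that only applies to the i.i.d.\ environment.

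The paper closes this gap by exploiting the periodicity itself. The key observation is that the periodic shift $(N-k,-k)$ moves a site on $\mathbb{L}_{m,i}$ to $\mathbb{L}_{m,i-2\ell}$ with $\ell=(k+(N-k)m)/2$, so the ``band-crossing'' events $\tilde{\mathcal{D}}_i$ (defined on shifted copies of the segment $\mathbb{S}(n,mn)$) satisfy $\tilde{\mathcal{D}}_i=\tilde{\mathcal{D}}_{i+4}$ identically. Hence only four distinct events need to be controlled, and each of these is handled by Lemma~\ref{lem:NoCrossingLemma}, which shows (via the uniform min/max estimates of Propositions~\ref{pro:MinimalLPTiid} and~\ref{pro:MaximalLPTCylinder}, not a pointwise union bound) that any path touching $\mathbb{L}_{m,\pm\ell/2}$ loses at least $x n^{1/3}m^{-1/6}$ compared to the restricted optimum, with probability $\ge 1-\exp(-cx)$. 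A union over four events then gives the result. The ingredient you are missing is precisely this reduction from infinitely many crossing levels to four via periodicity; without it the excursion argument cannot be closed.
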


Before we come to the proof of Proposition~\ref{pro:GeodesicsPeriodic}, we introduce some additional notation.  
Fix parameters $n\in \N$ and $m\in(0,1)$. Recall the parallelogram $U_{n,m,\ell}$ from \eqref{def:Parallelogramm} and fix $\ell=(k+(N-k)m)/2$. For sites $u \preceq w \preceq v$ in $U_{n,m,\ell}$ and $x>0$, we define the event
\begin{equation}\label{def:Bwuvx}
\mathcal{B}^{w}_{u,v}(x) := \{ T^{m,\ell}_{u,v} \geq T^{m,\ell}_{u,w}  + T^{m,\ell}_{w,v} + x n^{1/3} m^{-1/6} \} \, .
\end{equation} In words, this states that the last passage time between sites $u,v$ in the parallelogram $U_{n,m,\ell}$ is much larger than the corresponding last passage time conditioned to pass through~$w$; see also Figure~\ref{fig:CrossingEvent}. Let us stress at this point that the event $\mathcal{B}^{w}_{u,v}(x)$ is defined with respect to the i.i.d.\ environment $(\omega_v)_{v \in \Z^2}$.
\begin{figure}
\centering
\begin{tikzpicture}[scale=0.45]

\draw[line width =1 pt] (-1.6,3-0.3) -- (16,6);
\draw[line width =1 pt] (-1.6,-3-0.3) -- (16,0);


\draw[densely dotted, line width=1pt] (-1.6,-0.3) -- (16,3);

\draw[densely dotted, line width=0.5pt] (-1.6,1.5-0.3) -- (16,4.5);
\draw[densely dotted, line width=0.5pt] (-1.6,-1.5-0.3) -- (16,1.5);


\draw[densely dotted, line width=0.5pt] (0-0.8,3+0.45) -- (8+0.8,-1.5-0.45);

  	\node[scale=1] (x1) at (8+0.8+1,-1.5-0.8){$\mathbb{L}_{-\sqrt{m},0}+v$};




 \draw[darkblue, line width =2 pt] (4,0.75) to[curve through={(4.1,1)..(5.6,2)..(6.2,2) ..(7.9,2.98)..(8,3)..(9,2.85)..(13,3.15)..(14-0.2,3-0.375)}] (14,3-0.375);


\draw[red, line width =2 pt] (4,0.75) to[curve through={(4.2,0.77) .. (5.7,1) ..(7.9,1.98)..(9,1.85)..(13,2.15)..(14-0.2,3-0.375)}] (14,3-0.375);

\draw[darkblue, line width =2 pt] (4,0.75) to[curve through={(4.1,1)..(5.6,2)}] (6.2,2);



   	\filldraw [fill=black] (6-0.15,0.75-0.15-1.1) rectangle (6+0.15,0.75+0.15-1.1);
 	\filldraw [fill=black] (2-0.15,0.75-0.15+1.1) rectangle (2+0.15,0.75+0.15+1.1);
   	
	\filldraw [fill=black] (8-0.15,3-0.15) rectangle (8+0.15,3+0.15);
 	\filldraw [fill=black] (4-0.15,0.75-0.15) rectangle (4+0.15,0.75+0.15);
 	
    \filldraw [fill=black] (14-0.15,3-0.375-0.15) rectangle (14+0.15,3-0.375+0.15);

	\node[scale=1] (x1) at (4,0){$u$} ;	
	\node[scale=1] (x1) at (14,1.9){$v$} ;	
		\node[scale=1] (x1) at (2,1.1){$u^+$} ;	
  		\node[scale=1] (x1) at (6,-1.1){$u^-$} ;	
  		  		\node[scale=1] (x1) at (8.45,3.5){$w$} ;	
  	\node[scale=1] (x1) at (18,0){$\mathbb{L}_{m,-\ell}$};
  	\node[scale=1] (x1) at (18,6){$\mathbb{L}_{m,\ell}$};
  	
  	  	\node[scale=1] (x1) at (-3.7,1.5-0.3){$\mathbb{L}_{m,\ell/2}$};
  	\node[scale=1] (x1) at (-3.7,-1.5-0.3){$\mathbb{L}_{m,-\ell/2}$};
	\end{tikzpicture}	
	\caption{\label{fig:CrossingEvent}Visualization of the parameters evolved in the crossing event $\mathcal{B}^{w}_{u,v}(\cdot)$ and the proof of Lemma \ref{lem:NoCrossingLemma}. The geodesic between $u$ and $v$ is drawn in red, while the geodesic from $v$ to $u$ via $w$ is drawn in purple. }
 \end{figure}
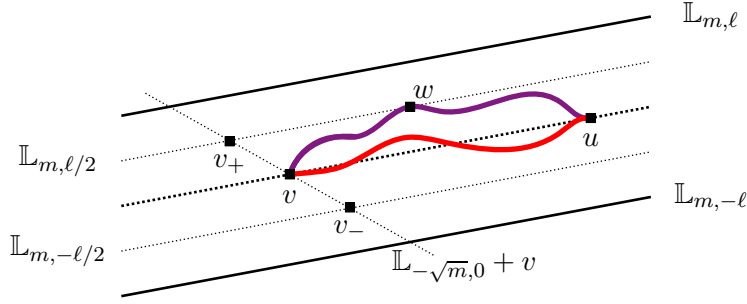
 In order to show Proposition \ref{pro:GeodesicsPeriodic}, we rely on the following lemma, which quantifies the weight of paths with large transversal fluctuations.


\begin{lemma} \label{lem:NoCrossingLemma}
There exist  $m_0,x_0,n_0,c>0$  such that for all $m \in \big( \frac{m_0}{n},1 \big]$, for all $x>x_0$, and all $n \geq n_0$, with $k = xm^{2/3}n^{2/3}$ and $\ell=(k+(N-k)m)/2$, 
\begin{equation*}
\P\left( \mathcal{B}^{w}_{u,v}(x) \text{ for all } u,v \in \mathbb{S}(n,mn)  \text{ and } w\in \mathbb{L}_{m,-\ell/2} \cup \mathbb{L}_{m,\ell/2}\right) \geq 1- \exp(-cx)  .
\end{equation*}
\end{lemma}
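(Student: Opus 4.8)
The plan is to show that any admissible up-right path from a point $v \in \mathbb{S}(n,mn)$ to a point $u \in \mathbb{S}(n,mn)$ (with $v \preceq u$) which is forced through a site $w$ on one of the two ``middle'' lines $\mathbb{L}_{m,\pm\ell/2}$ must pay a macroscopic penalty in last passage time compared to the free restricted geodesic from $v$ to $u$. Since $\ell/2 = (k+(N-k)m)/4$ is of order $k/4 + mN/4 \asymp k \asymp x m^{2/3} n^{2/3}$ (using $N \geq 2k$ and $m \le 1$, so $mN \lesssim m \cdot 2k/m \cdot \ldots$ — more carefully, $\ell/2 \asymp (1+\sqrt m)^{-2}\ell$ type bookkeeping will put $\ell/2$ at scale $x m^{2/3}n^{2/3}$ up to constants), forcing the path to reach $\mathbb{L}_{m,\pm\ell/2}$ means it has transversal fluctuation of order (const)$\cdot x m^{2/3} n^{2/3}$ relative to the straight line joining $v$ and $u$, which is (const)$\cdot x^{1/3}$ times the natural KPZ scale $m^{2/3}n^{2/3}$. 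The strategy, following Lemma 10.3-style arguments from \cite{BSV:SlowBond}, is to decompose a path through $w$ as $v \to w \to u$ and bound each half from above, while bounding the direct passage time $T^{m,\ell}_{u,v}$ from below.

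First I would set up the two-piece bound. Fix $w \in \mathbb{L}_{m,\ell/2}$ (the case $\mathbb{L}_{m,-\ell/2}$ is symmetric). Introduce auxiliary reference points $v_- $ below and $v_+$ above the segment $\mathbb{S}(n,mn)$ at distance of order $\ell$ (precisely at the lines $\mathbb{L}_{m,\pm\ell/2}$ or just outside, chosen so that $v_- \preceq v \preceq v_+$ and similarly near $u$), exactly as in Figure~\ref{fig:CrossingEvent}. The idea is:
\begin{itemize}
\item[(i)] Lower-bound $T^{m,\ell}_{u,v}$ uniformly over $u,v \in \mathbb{S}(n,mn)$ by $\E[T^{m,\ell}_{u,v}] - x n^{1/3}m^{-1/6}$ with probability $\ge 1-\exp(-cx^{3/2})$, using Corollary~\ref{cor:MinimumLPTline}.
\item[(ii)] Upper-bound $T^{m,\ell}_{v,w} + T^{m,\ell}_{w,u'}$ (for the half-paths through $w$) by comparing to passage times between the reference points $v_-, v_+$ and $(2n,2mn)$-type anchors. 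Here the slopes from $v$ to $w$ and from $w$ to $u$ differ from $m$ by a multiplicative factor, but since $w$ sits at vertical displacement $\asymp x m^{2/3}n^{2/3}$ off the line while horizontal displacement is $\Theta(n)$, the slope stays within a constant multiple of $m$ once $x$ is not too large — this is where the hypothesis $k = x m^{2/3}n^{2/3}$ with $x > x_0$ but implicitly $x = o(m^{1/3}n^{1/3})$ (forced by $k \le N/2$ and $m \le 1$) keeps us inside $\mathbb{D}_m$.
\end{itemize}
For the upper bound on the half-paths I would invoke Proposition~\ref{pro:MaximalLPTCylinder} (giving $\sup_{(a,b)\in U^2 \cap \mathbb{D}_m} (T^{m,\ell}_{a,b} - \E[T^{m,\ell}_{a,b}]) \le x n^{1/3}m^{-1/6}$ with probability $\ge 1-\exp(-cx)$) applied on appropriate sub-parallelograms containing the $v\to w$ and $w \to u$ pieces. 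The crux is then the deterministic inequality on expectations: the curvature of the shape function forces
\[
\E[T^{m,\ell}_{v,w}] + \E[T^{m,\ell}_{w,u}] \le \E[T^{m,\ell}_{v,u}] - c\, x^2 n^{1/3}m^{-1/6}
\]
(or at least $-10 x n^{1/3}m^{-1/6}$ for $x$ large), because routing through a point at transversal distance $\Theta(x^{1/3} m^{2/3}n^{2/3})$ off-axis costs, by the quadratic expansion of $(v_1,v_2)\mapsto (\sqrt{v_1}+\sqrt{v_2})^2$ around the diagonal direction, a deficit of order (distance)$^2/n \cdot (\text{scaling})$. This expansion, made quantitative via Corollary~\ref{cor:ExpectationVariance} and Corollary~\ref{cor:ExpectationLPTRestricted} (to pass between restricted and unrestricted expectations, paying only $O(n^{1/3}m^{-1/6})$), is the main computational input.

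Combining: on the intersection of the events from (i) and (ii), for any $u,v \in \mathbb{S}(n,mn)$ and $w \in \mathbb{L}_{m,\pm\ell/2}$,
\[
T^{m,\ell}_{u,v} \ge \E[T^{m,\ell}_{u,v}] - x n^{1/3}m^{-1/6}
\ge \E[T^{m,\ell}_{v,w}] + \E[T^{m,\ell}_{w,u}] + (c x^2 - x) n^{1/3}m^{-1/6}
\ge T^{m,\ell}_{v,w} + T^{m,\ell}_{w,u} + x n^{1/3}m^{-1/6},
\]
where the last step uses the Proposition~\ref{pro:MaximalLPTCylinder} upper bounds (each contributing at most $x n^{1/3}m^{-1/6}$) and absorbs the $O(n^{1/3}m^{-1/6})$ restricted-vs-unrestricted corrections into the quadratic gain, valid once $x > x_0$. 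This is exactly $\mathcal{B}^w_{u,v}(x)$. Taking a union bound over the $O(\mathrm{poly}(n))$ choices of $w$ on the two lines, of $u,v$ discretely on $\mathbb{S}(n,mn)$ — only $O(n)$ each — the failure probability is $\mathrm{poly}(n)\cdot(\exp(-cx) + \exp(-cx^{3/2})) \le \exp(-c'x)$ for $n$ large, since $x$ is allowed to be taken large (or one absorbs the $\mathrm{poly}(n)$ by noting $x \gtrsim \log n$ is the regime of interest, or by using the stronger $\exp(-cx^{3/2})$ tails where available).

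\textbf{Main obstacle.} The delicate point is the quadratic lower bound on $\E[T^{m,\ell}_{v,u}] - \E[T^{m,\ell}_{v,w}] - \E[T^{m,\ell}_{w,u}]$: one must Taylor-expand the shape function $(a,b) \mapsto (\sqrt a + \sqrt b)^2$ in the flat (anisotropic) regime $b/a = m \to 0$, track that the off-diagonal displacement $\ell/2 \asymp x m^{2/3}n^{2/3}$ really produces a gain of order $x^2 n^{1/3}m^{-1/6}$ (not less), and ensure the three pairs $(v,w),(w,u),(v,u)$ all stay in $\mathbb{D}_m$ so that Corollaries~\ref{cor:ExpectationVariance} and~\ref{cor:ExpectationLPTRestricted} apply with their $O(n^{1/3}m^{-1/6})$ error bars — which must be dominated by the $x^2$ gain, forcing the threshold $x_0$. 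Getting the bookkeeping of the parameter $\ell = (k+(N-k)m)/2$ versus $m^{2/3}n^{2/3}$ consistent (using $N \ge 2k$, $m\le 1$, and $k = xm^{2/3}n^{2/3}$) is the second fiddly ingredient.
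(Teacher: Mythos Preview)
Your overall strategy matches the paper's --- lower-bound the direct restricted passage time via Corollary~\ref{cor:MinimumLPTline}, upper-bound the two-piece passage through $w$, and use the shape-function curvature for the gap --- but step (ii) has a real gap. You plan to control $T^{m,\ell}_{v,w}$ and $T^{m,\ell}_{w,u}$ via Proposition~\ref{pro:MaximalLPTCylinder}, which only applies to pairs in $\mathbb{D}_m$. For a general $w\in\mathbb{L}_{m,\ell/2}$ with $v\preceq w$, the slope of $(v,w)$ is $m+\tfrac{\ell/2}{w_1-v_1}$, which exceeds $10m$ whenever $w_1-v_1$ is small compared to $\ell/m$; your claim that ``horizontal displacement is $\Theta(n)$'' only covers $w$ near the midpoint and fails near the endpoints. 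So Proposition~\ref{pro:MaximalLPTCylinder} gives you nothing there, and these are exactly the $w$ you cannot ignore.

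The paper circumvents this by never bounding $T_{v,w}$ directly. For each $w$ it takes the point $\tilde w\in\mathbb{S}(n,mn)$ whose $(-\sqrt m)$-slope projection onto $\mathbb{L}_{m,\pm\ell/2}$ equals $w$ (these are the $v^\pm$ you introduce but then abandon), and controls $T_{u,\tilde w^\pm}$ through a distant anchor: $T_{u,\tilde w^-}\le T_{(-n,-mn),\tilde w^-}-T_{(-n,-mn),u}$. Both pairs on the right now have slope $\Theta(m)$ because the anchor is far away, so Propositions~\ref{pro:MinimalLPTiid} and~\ref{pro:MaximalLPTCylinder} apply uniformly (the paper's events $\mathcal{D}_\pm$). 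The $(-\sqrt m)$ direction is chosen so that $\E[T_{(-n,-mn),\cdot}]$ is constant to first order along it, making the expectation deficit come out cleanly as $\Theta(x)n^{1/3}m^{-1/6}$. A minor second point: your closing union bound over $O(\mathrm{poly}(n))$ sites cannot be absorbed for fixed $x>x_0$; but this disappears once you use the uniform events $\mathcal{D}_1,\mathcal{D}_2,\mathcal{D}_3$ as the paper does, since the cited propositions are already suprema/infima over the full sets.
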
 In words, Lemma \ref{lem:NoCrossingLemma} states that for any two points on the discrete  segment $\mathbb{S}(n,mn)$, the weight of a path which has larger transversal fluctuations than $\ell/2$ is much smaller than the respective last passage time between the two points.

\begin{proof}[Proof of Lemma \ref{lem:NoCrossingLemma}]

We start by defining for each site $v\in \mathbb{S}(n,mn)$ a pair of points $(v^+,v^-) \in \mathbb{L}_{m,\ell/2} \times \mathbb{L}_{m,-\ell/2}$ given by
\begin{equation*}
v^+ := v +  \left(-  \frac{\ell}{2(m+\sqrt{m})} , \frac{\ell\sqrt{m}}{2(m+\sqrt{m})} \right)  \quad v^- := v -  \left( - \frac{\ell}{2(m+\sqrt{m})} , \frac{\ell\sqrt{m}}{2(m+\sqrt{m})} \right) \, .
\end{equation*} Intuitively, we obtain $v^+$ and $v^-$ by intersecting the line $v+ \mathbb{L}_{-\sqrt{m},0}$ with the lines $\mathbb{L}_{m,-\ell/2}$ and $\mathbb{L}_{m,\ell/2}$, respectively; see Figure \ref{fig:CrossingEvent}. Note that the slope of $-\sqrt{m}$ for the line $v+ \mathbb{L}_{-\sqrt{m},0}$ is chosen such that by Corollary \ref{cor:ExpectationVariance}, the expected last passage time from $u$ to $v$ in an i.i.d.\ environment is close to the expected last passage time from $u$ to the line $\mathbb{L}_{-\sqrt{m},v}$. \\

We claim that in order to prove Lemma \ref{lem:NoCrossingLemma}, it suffices to show that the following three events occur with probability at least $1-\exp(-cx)$ for all $x$ sufficiently large, and some constant $c>0$:
\begin{align}
\mathcal{D}_1 &:= \Big\{ \inf_{u,v\in  \mathbb{S}(n,mn)} \big(T^{m,\ell}_{u,v} - \E[T_{u,v}] \big) \geq - x n^{1/3}m^{-1/6} \Big\} \\
\mathcal{D}_2 &:= \Big\{ \sup_{u,v\in \mathbb{S}(n,mn)} \big(\max(T_{u,v^+},T_{u,v^-}) - \E[T_{u,v}]\big) < -  x n^{1/3}m^{-1/6} \Big\} \\
\mathcal{D}_3 &:= \Big\{ \sup_{u,v\in \mathbb{S}(n,mn)} \big(\max(T_{u^+,v},T_{u^-,v}) - \E[T_{u,v}] \big) < -  x n^{1/3}m^{-1/6} \Big\} \, .
\end{align}

To see that indeed the events $\mathcal{D}_1$, $\mathcal{D}_2$ and $\mathcal{D}_3$ imply the statement of Lemma \ref{lem:NoCrossingLemma}, suppose that the event $\mathcal{B}_{u,v}^w(x)$ does not hold for some $u,v,w$. This means that we can choose some $\tilde{w}$ with $u \preceq \tilde{w} \preceq v$ such that either $\tilde{w}^+=w$ or $\tilde{w}^-=w$. In both cases, note that
\begin{equation}
\E[T_{u,v}] \geq \E[T_{u,\tilde{w}}] +  \E[T_{\tilde{w},v}] \, ,
\end{equation} where the expectation is taken with respect to the environment $(\omega_{v})_{v \in \mathbb{Z}^2}$. Hence, for sufficiently large $x>0$, at least one of the events $\mathcal{D}_1$, $\mathcal{D}_2$ or $\mathcal{D}_3$ must not hold. \\

It remains to give a lower bound on the probability of the events $\mathcal{D}_1$, $\mathcal{D}_2$ and $\mathcal{D}_3$. Let us start with the event $\mathcal{D}_1$. It is immediate from Corollaries \ref{cor:ExpectationLPTRestricted} and \ref{cor:MinimumLPTline} that
\begin{equation}
\P( \mathcal{D}_1 ) \geq 1- \exp(-c_1x)
\end{equation} for some constant $c_1>0$ and sufficiently large $x>0$, noting that the last passage time $T^{m,\ell}_{u,v}$ in $\mathcal{D}_1$ agrees for $(N,k)$-periodic and i.i.d.\ environments as the $(N,k)$-periodic environment between $\mathbb{L}_{m,\ell}$ and $\mathbb{L}_{m,-\ell}$ contains only mutually independent random variables by our choice of $m,k$ and $\ell$. \\

Next, consider the events $\mathcal{D}_2$ and $\mathcal{D}_3$. We will in the following only argue that
\begin{equation}\label{eq:SufficientForD2}
\P\Big( \sup_{u,v\in \mathbb{S}(n,mn)} \big( T_{u,v^-} - \E[T_{u,v}] \big) < -  x n^{1/3}m^{-1/6} \Big) \geq 1- \exp(-c_2x)
\end{equation} holds for some constant $c_2>0$ as the remaining cases are similar. In order to show \eqref{eq:SufficientForD2}, consider the two events
\begin{align}
\mathcal{D}_+ &:= \Big\{ \sup_{v\in U_{n,m,\ell}} \big( T_{(-n,-mn),v} - \E[T_{(-n,-mn),v}]  \big) \leq  \frac{x}{5} n^{1/3}m^{-1/6} \Big\} \\
\mathcal{D}_- &:=  \Big\{ \inf_{u\in U_{n,m,\ell}} \big( T_{(-n,-mn),u} - \E[T_{(-n,-mn),u}] \big)  \geq -  \frac{x}{5} n^{1/3}m^{-1/6}  \Big\} \, .
\end{align} Proposition \ref{pro:MinimalLPTiid} for a lower bound on the last passage time in $\mathcal{D}_-$, Proposition  \ref{pro:MaximalLPTCylinder} for an upper bound on the last passage time in $\mathcal{D}_+$ with a suitably adjusted parallelogram, and  Corollary \ref{cor:ExpectationLPTRestricted} to compare the expected last passage times yield for large enough $x>0$
\begin{equation}\label{eq:SufficientForSufficient}
\P(\mathcal{D}_+ \cap \mathcal{D}_- ) \geq 1- \exp(-c_3x)
\end{equation} for some constant $c_3>0$. By Corollary \ref{cor:ExpectationVariance}, we have that for all $u,v\in \mathbb{S}(n,mn)$
\begin{equation}
\E[T_{u,v}] \geq \E[ T_{(-n,-mn),v^-} ] +\frac{x}{2}n^{1/3}m^{-1/6} - \E[ T_{(-n,-mn),u} ] - 2 \tilde{C} n^{1/3}m^{-1/6}
\end{equation} for some universal constant $\tilde{C}>0$ and all $x>0$ sufficiently large.  Together with the fact that 
\begin{equation}
T_{u,v^-} + T_{(-n,-mn),u} \leq T_{(-n,-mn),v^-}
\end{equation} we see that \eqref{eq:SufficientForSufficient} indeed implies \eqref{eq:SufficientForD2}.
\end{proof}

\begin{proof}[Proof of Proposition \ref{pro:GeodesicsPeriodic}] Recall from Lemma \ref{lem:TransversalSteep} that
\begin{equation}
\P\Big(\TF({\gamma}_{\mathbf{0},(n,mn)}) \leq \frac{k}{2}\Big) \geq 1- \exp(-c_1x)
\end{equation} for some constant $c_1>0$.   Hence, it suffices to show that
\begin{equation}
\P\Big(\TF(\tilde{\gamma}_{\mathbf{0},(n,mn)}) \leq \frac{k}{2}\Big) \geq 1- \exp(-c_2x)
\end{equation}
 for some constant $c_2>0$. To this end, we use the following path decomposition. We consider the sites $V_+$ where $\tilde{\gamma}_{\mathbf{0},(n,mn)}$ intersects a line $\mathbb{L}_{m,i\ell/2}$ for some $i\in\Z$ for the first time after previously intersecting a line $\mathbb{L}_{m,j\ell/2}$ for some $j\in\Z$ with $|j - i|=1$. Similarly, let  $V_-$ denote the sites where $\tilde{\gamma}_{\mathbf{0},(n,mn)}$ intersects a line $\mathbb{L}_{m,i\ell/2}$ for some $i\in\Z$ for the last time before intersecting a line $\mathbb{L}_{m,j\ell/2}$ for some $j\in\Z$ with $|j - i|=1$. Note that a site $v\in \tilde{\gamma}_{\mathbf{0},(n,mn)}$ may be both contained in $V_+$ and $V_-$. \\

Recall the events $\mathcal{B}^{w}_{u,v}(x)$ from \eqref{def:Bwuvx}. For all $i\in \Z$, set $v_i := ( i(N-k)/4,-ik/4 )$ and recall from \eqref{def:Segment} that $\mathbb{S}(n,mn)+v_i$ denotes a segment from $(0,0)$ to $(n,mn)$, shifted to $v_i$. In particular, note that $\mathbb{S}(n,mn)+v_i$ is parallel to the lines $\mathbb{L}_{m,-\ell/2}$ $\mathbb{L}_{m,\ell/2}$. We claim that whenever the events
 \begin{equation}
 \tilde{\mathcal{D}}_{i} := \Big\{  \mathcal{B}^{w}_{u,v}(x) \text{ for all } u,v \in \mathbb{S}(n,mn)+v_i \text{ and } w- v_i \in \mathbb{L}_{m,-\ell/2} \cup \mathbb{L}_{m,\ell/2} \Big\}
 \end{equation} hold for all $i\in \Z$ with respect to the periodic environment $(\tilde{\omega}_v)$, 
 then the geodesic $\tilde{\gamma}_{\mathbf{0},(n,mn)}$ in the respective environment has transversal fluctuations of at most $\ell/2$. To see this, let $w$ be the site in $V_+$ with the furthest distance to the line $\mathbb{L}_{m,0}$. If $w\in \mathbb{L}_{m,0}$, there is nothing to show. Let $w\in \mathbb{L}_{m,i\ell/2}$ for some $i\in \Z \setminus \{0\}$, and assume without loss of generality that $i\in \N$. Then there exist sites $u \in \mathbb{L}_{m,(i-1)\ell/2} \cap V_-$ and  $v \in \mathbb{L}_{m,(i-1)\ell/2} \cap V_+$ with $u \preceq w \preceq v$ such that the geodesic from $u$ to $v$ goes through $w$. This implies that the event $\tilde{\mathcal{D}}_{i-1}$ can not hold in this case. It remains to argue that
\begin{equation}
\P \Big( \bigcup_{i \in \Z} \tilde{\mathcal{D}}_{i} \Big) \geq 1 - \exp(-cx)
\end{equation} is satisfied for some constant $c>0$. Notice that when the environment is $(N,k)$-periodic, we get $\tilde{\mathcal{D}}_{i}=\tilde{\mathcal{D}}_{i+4j}$ for all $j\in \Z$. Since $\tilde{\mathcal{D}}_{i}$ has the same probability under an i.i.d.\ and a periodic environment, we conclude by Lemma \ref{lem:NoCrossingLemma}.
 \end{proof}

As an immediate consequence of Proposition \ref{pro:GeodesicsPeriodic}, we obtain the following two moderate deviation estimates for $(N,k)$-periodic environments, which are of independent interest, and which we will use frequently in Section~\ref{sec:Coalescence}.

\begin{corollary}\label{cor:SupremumPeriodic}  There exist constants $m_0,x_0,n_0,c>0$  such that for all $m \in \big( \frac{m_0}{n},1 \big]$ with $k = xm^{2/3}n^{2/3}$ for some $x>x_0$, and for all $n \geq n_0$ 
\begin{equation}\label{eq:StatementSteepUpperPeriodic}
\P \left( |\tilde{T}_{\mathbf{0},(n,mn)} - \E [T_{\mathbf{0},(n,mn)}] |  \geq  x  n^{1/3}m^{-1/6}   \right) \leq \exp\left(-c x \right) \, .
\end{equation}
\end{corollary}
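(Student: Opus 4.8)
The plan is to deduce Corollary~\ref{cor:SupremumPeriodic} directly from Proposition~\ref{pro:GeodesicsPeriodic} together with the i.i.d.\ moderate deviation bounds already collected in Section~\ref{sec:LPTiid}. The key observation is that on the event $\{\tilde\gamma_{\mathbf 0,(n,mn)} = \gamma_{\mathbf 0,(n,mn)}\}$ the two last passage times coincide as well, since the geodesic in the periodic environment never leaves the strip $\bigcup_{i\in[-k/2,k/2-1]}\mathbb L_{m,i}$ on which $\tilde\omega$ and $\omega$ agree by construction; hence its weight, computed with either set of weights, is the same, and it equals $T_{\mathbf 0,(n,mn)}$. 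Therefore $\tilde T_{\mathbf 0,(n,mn)} = T_{\mathbf 0,(n,mn)}$ on that event, which by Proposition~\ref{pro:GeodesicsPeriodic} has probability at least $1-\exp(-cx)$.

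First I would record the deterministic identity: restricting the family of admissible paths only decreases the last passage value, so $\tilde T_{\mathbf 0,(n,mn)} \ge$ (weight in the periodic environment of $\gamma_{\mathbf 0,(n,mn)}$) and the latter equals $T_{\mathbf 0,(n,mn)}$ whenever $\gamma_{\mathbf 0,(n,mn)}$ stays inside the strip where the environments agree; conversely, on $\{\tilde\gamma = \gamma\}$ the maximising path in the periodic environment also stays in that strip, giving the reverse inequality, hence equality. Then I would split according to the event $\mathcal G := \{\tilde\gamma_{\mathbf 0,(n,mn)} = \gamma_{\mathbf 0,(n,mn)}\}$. On $\mathcal G$ we have $|\tilde T_{\mathbf 0,(n,mn)} - \E[T_{\mathbf 0,(n,mn)}]| = |T_{\mathbf 0,(n,mn)} - \E[T_{\mathbf 0,(n,mn)}]|$, which is controlled by the two-sided tail bound: the upper tail from \eqref{eq:StatementSteepUpper} of Lemma~\ref{lem:ShapeTheorem} applied with $v = (n,mn)$ (noting $v_1^{1/2}v_2^{-1/6} = n^{1/3}m^{-1/6}$ up to constants and $m\asymp\varepsilon$ in the notation of the statement, where here $\varepsilon=m$), and the lower tail from \eqref{eq:StatementSteepLower}; both give probability at most $c_1\exp(-c_2 x)$ for $x>x_0$, and the range restriction on the lower tail is satisfied since $x$ is polynomially bounded in $n$. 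On $\mathcal G^c$ we use $\P(\mathcal G^c)\le \exp(-cx)$ from Proposition~\ref{pro:GeodesicsPeriodic}. A union bound over these contributions yields $\exp(-c'x)$ for a possibly smaller constant $c'>0$.

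The only mild subtlety — and the place where I would be careful rather than the place where real difficulty lies — is matching the normalisations and the slope hypotheses: Lemma~\ref{lem:ShapeTheorem} is stated for arbitrary $v\in\N^2$, but to get the clean tail $\exp(-cx)$ rather than a mixture of $\exp(-cx)$ and $\exp(-cx^2)$, one wants $x$ in the moderate deviation regime, which is guaranteed by $k = xm^{2/3}n^{2/3}\le k \le N/2$ forcing $x \le \tfrac12 N m^{-2/3}n^{-2/3}$, comfortably inside the allowed window; and $m\in(m_0/n,1]$ matches the hypothesis of Proposition~\ref{pro:GeodesicsPeriodic}. I do not expect any genuine obstacle here: the corollary is essentially a transfer statement, and all the work has been done in Proposition~\ref{pro:GeodesicsPeriodic}; the proof is a short combination of a deterministic path-weight identity, the coupling event, and the i.i.d.\ shape-theorem tails.
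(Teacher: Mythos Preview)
Your proposal is correct and follows exactly the paper's approach: the paper's proof reads in full ``This is immediate from Proposition~\ref{pro:GeodesicsPeriodic} and Lemma~\ref{lem:ShapeTheorem},'' and you have simply spelled out the union bound and the deterministic path-weight identity that make this immediate. The one point worth tightening is that the event $\{\tilde\gamma=\gamma\}$ alone does not a priori force the common path to stay in the strip; but the proof of Proposition~\ref{pro:GeodesicsPeriodic} in fact establishes the stronger event $\{\TF(\tilde\gamma)\le k/2\}\cap\{\TF(\gamma)\le k/2\}$ with the same bound, and on that event $\tilde T=T$ is clear.
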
\begin{proof}
This is immediate from Proposition \ref{pro:GeodesicsPeriodic} and Lemma \ref{lem:ShapeTheorem}.
\end{proof}

\begin{corollary}\label{cor:FluctuationsPeriodic}  There exist constants $m_0,n_0,x_0,c>0$  such that for all $m \in \big( \frac{m_0}{n},1 \big]$ with $k = xm^{2/3}n^{2/3}$ for some $x>x_0$, and for all $n \geq n_0$ 
\begin{equation}
\P(\TF(\tilde{\gamma}_{\mathbf{0},(n,mn)}) \geq x m^{2/3}n^{2/3}) \leq \exp(-cx) \, .
\end{equation}
\end{corollary}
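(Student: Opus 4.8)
The plan is to deduce this directly from Proposition~\ref{pro:GeodesicsPeriodic} together with the transversal fluctuation bound for i.i.d.\ environments in Lemma~\ref{lem:TransversalSteep}. The key observation making the argument painless is that the parameter $x$ in the statement is pinned to the relation $k = x m^{2/3} n^{2/3}$, so it is the \emph{same} $x$ that appears in both inputs; there is no mismatch of scales to reconcile, only a comparison of constants at the end.

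First I would invoke Proposition~\ref{pro:GeodesicsPeriodic}, which under the hypotheses $m \in (m_0/n, 1]$, $x > x_0$, $k = x m^{2/3} n^{2/3}$ and $n$ large gives
\[
\P\big( \tilde{\gamma}_{\mathbf{0},(n,mn)} = \gamma_{\mathbf{0},(n,mn)} \big) \geq 1 - \exp(-c_1 x)
\]
for the coupled i.i.d.\ environment $(\omega_v)_{v\in\Z^2}$ fixed at the start of Section~\ref{sec:LPTperiodic}. On the event $\{ \tilde{\gamma}_{\mathbf{0},(n,mn)} = \gamma_{\mathbf{0},(n,mn)} \}$ the two paths coincide pointwise, and hence in particular $\TF(\tilde{\gamma}_{\mathbf{0},(n,mn)}) = \TF(\gamma_{\mathbf{0},(n,mn)})$.

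Next I would apply Lemma~\ref{lem:TransversalSteep} with $M = m$, which is admissible since $m \in (m_0/n,1]$ and trivially $m \in (M/10, 10M)$. This yields
\[
\P\big( \TF(\gamma_{\mathbf{0},(n,mn)}) \geq x m^{2/3} n^{2/3} \big) \leq \exp(-c_2 x)
\]
for $x$ above the threshold in that lemma. Combining the two estimates by a union bound,
\begin{align*}
\P\big( \TF(\tilde{\gamma}_{\mathbf{0},(n,mn)}) \geq x m^{2/3} n^{2/3} \big)
&\leq \P\big( \tilde{\gamma}_{\mathbf{0},(n,mn)} \neq \gamma_{\mathbf{0},(n,mn)} \big) + \P\big( \TF(\gamma_{\mathbf{0},(n,mn)}) \geq x m^{2/3} n^{2/3} \big) \\
&\leq \exp(-c_1 x) + \exp(-c_2 x),
\end{align*}
which is at most $\exp(-cx)$ with $c := \tfrac12 \min(c_1, c_2)$ once $x_0$ is enlarged so that $\exp(-c_1 x) + \exp(-c_2 x) \leq \exp(-cx)$ for all $x > x_0$.

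There is essentially no serious obstacle here: the genuine work — transferring the i.i.d.\ fluctuation control to the $(N,k)$-periodic geodesic via the crossing/barrier argument of Lemma~\ref{lem:NoCrossingLemma} — was already carried out in the proof of Proposition~\ref{pro:GeodesicsPeriodic}, which indeed shows $\TF(\tilde{\gamma}_{\mathbf{0},(n,mn)}) \le k/2$ off an event of probability $\exp(-c_2 x)$. The only points requiring minimal care are (i) checking that the choice $M = m$ meets the hypotheses of Lemma~\ref{lem:TransversalSteep}, and (ii) tracking the two exponential error terms so they can be absorbed into a single $\exp(-cx)$; both are routine.
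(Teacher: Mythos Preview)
Your proposal is correct and matches the paper's own proof, which simply states that the result is immediate from Proposition~\ref{pro:GeodesicsPeriodic} and Lemma~\ref{lem:TransversalSteep}. Your union-bound write-up just makes this one-line deduction explicit.
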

\begin{proof}
This is immediate from Proposition \ref{pro:GeodesicsPeriodic} and Lemma \ref{lem:TransversalSteep}.
\end{proof}

%
%
%

\section{Coalescence of geodesics in periodic environments} \label{sec:Coalescence}

In this section, we quantify the coalescence of geodesics in periodic environments. We start by outlining the strategy using ideas from \cite{BSS:Coalescence} on the coalescence of semi-infinite geodesics in i.i.d.\ exponential environments. The main idea is to construct a barrier in the periodic environment of the dynamics such that each path must either avoid the barrier, or it is of much smaller weight than a typical path. In order to construct this barrier, we rely on the results for last passage times and the transversal fluctuations from Section~\ref{sec:LPPestimates} in periodic and i.i.d.\ environments.   \\

While several of the results in Section \ref{sec:LPPestimates} are stated for general parameters $n,m$, we will in the following only use specific choices with respect to the parameters $N,k$ from the $(N,k)$-periodic environment. More precisely, for fixed $k,N\in \N$, we set $m=k^2/(N-k)^2$. We let $n=\theta^{-1}N^{2}k^{-1/2}$ for some constant $\theta>0$, which will be chosen later on. Furthermore, let $m^{\prime}:=- k/(N-k)$ be the slope of lines on which the random variables repeat infinitely often, and set $\ell^{\prime}:=(m-m^{\prime})n$. As our focus lies on periodic environments, with a slight abuse of notation, we write in the following  $T_{u,v}$ and ${\gamma}_{u,v}$ instead of $\tilde{T}_{u,v}$ and $\tilde{\gamma}_{u,v}$, respectively,  for the last passage time and geodesic between $u,v\in \Z^2$ with $v \succeq u$ in an $(N,k)$-periodic environment~$(\tilde{\omega}_v)_{v \in \Z^2}$. \\

We start with a basic observation on $(N,k)$-periodic environments, which we will frequently use implicitly during this section. For a given lattice path $\gamma$, we denote by
\begin{equation}\label{def:Rectangle}
R(\gamma) := \bigcup_{w \in \gamma} \mathbb{S}(w,w+(N-k,-k))
\end{equation} the sites between the path $\gamma$ and its periodic translate $\gamma+(N-k,-k)$. Let $\partial R(\gamma)$ and $R^{\circ}(\gamma)$ denote the boundary and interior of $R(\gamma)$, respectively. The following lemma is immediate from the construction of the $(N,k)$-periodic environment, and hence stated without proof.

\begin{lemma}\label{lem:IndependentStrip} Fix an $(N,k)$-periodic environment $(\tilde{\omega}_v)_{v \in \Z^2}$. Then for all lattice paths $\gamma$, the sets $(\tilde{\omega}_v)_{v \in R^{\circ}_{\gamma}}$ and $(\tilde{\omega}_v)_{v \in \gamma}$ are mutually independent Exponential-$1$-random variables.
\end{lemma}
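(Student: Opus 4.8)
The plan is to reduce the statement to an elementary fact about the period lattice $\rho\Z$, where $\rho:=(N-k,-k)$, and then to check that fact by a short case analysis on how far a site can be translated by $\rho$. By construction of the $(N,k)$-periodic environment, $\tilde\omega_u=\tilde\omega_v$ exactly when $u-v\in\rho\Z$, and $(\tilde\omega_v)_{v\in\Z^2}$ consists of one independent $\textup{Exp}(1)$ variable per coset of $\rho\Z$ in $\Z^2$. Hence for any $A,B\subseteq\Z^2$ the families $(\tilde\omega_v)_{v\in A}$ and $(\tilde\omega_v)_{v\in B}$ are mutually independent provided no coset of $\rho\Z$ meets both $A$ and $B$: each family is measurable with respect to the $\sigma$-algebra generated by the (independent) coset variables it sees, and those two collections of cosets are disjoint. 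Taking $A=R^\circ(\gamma)$ and $B=\gamma$, and recalling that the interior $R^\circ(\gamma)$ is $R(\gamma)$ with the two bounding staircases $\gamma$ and $\gamma+\rho$ removed, it therefore suffices to show that $v+j\rho\notin\gamma$ for every $v\in R^\circ(\gamma)$ and every $j\in\Z$.

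To prove this, fix $v\in R^\circ(\gamma)$. By definition of $R(\gamma)$ there are a vertex $w\in\gamma$ and a point $s$ of the staircase $\mathbb{S}(0,\rho)$, so with $s\in\{0,\dots,N-k\}\times\{-k,\dots,0\}$, such that $v=w+s$. Suppose $v+j\rho=w'$ with $w'\in\gamma$; the case $j=0$ is impossible since $v\notin\gamma$, so assume $j\neq 0$. Then $w'-w=s+j\rho$, and since $w,w'$ both lie on the directed up-right path $\gamma$ the vector $w'-w$ must be $\succeq$-comparable to $0$, i.e.\ have both coordinates $\geq 0$ or both $\leq 0$. Reading off the coordinate ranges of $s$: for $j\geq 1$ the first coordinate of $s+j\rho$ is at least $N-k>0$ while the second is at most $-k<0$, and for $j\leq -2$ the first coordinate is at most $-(N-k)<0$ while the second is at least $k>0$, both incompatible with comparability. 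The only remaining case is $j=-1$, where $s-\rho$ has first coordinate in $[-(N-k),0]$ and second coordinate in $[0,k]$; comparability forces one of these to vanish, and a direct check shows that in either case $v=w'+\rho$, i.e.\ $v\in\gamma+\rho$, contradicting $v\in R^\circ(\gamma)$. This yields the lemma.

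I expect the only point needing genuine care to be the case $j=-1$: this is exactly where one must use that $v$ lies strictly between the two bounding staircases, since otherwise a point of the forward staircase $\gamma+\rho$ translates one period back onto $\gamma$. Everything else is immediate from the mixed signs of $\rho$ together with the fact that any two vertices of a directed up-right path are $\succeq$-comparable — the same comparability also shows that the translates $\gamma+j\rho$, $j\in\Z$, are pairwise disjoint, which is why the strips $R(\gamma+j\rho)$ partition (a region of) $\Z^2$ with overlaps only along shared staircases, and hence why the statement is, as asserted, essentially immediate from the construction.
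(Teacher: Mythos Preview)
Your argument is correct. The paper gives no proof beyond asserting the lemma is ``immediate from the construction of the $(N,k)$-periodic environment,'' so there is nothing to compare against; your coset analysis is exactly the natural way to make that assertion precise. One small simplification: the case $j=-1$ needs no comparability discussion at all, since $v-\rho\in\gamma$ tautologically gives $v\in\gamma+\rho\subseteq\partial R(\gamma)$, which you have already excluded from $R^\circ(\gamma)$. If the statement is read in the stronger sense that \emph{all} the variables $(\tilde\omega_v)_{v\in R^\circ(\gamma)\cup\gamma}$ are jointly independent --- which is how the lemma is used later in the paper --- the same sign argument on $\rho$ shows that no two distinct points of $R^\circ(\gamma)\cup\gamma$ share a $\rho\Z$-coset, so no additional idea is required.
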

Next, for an $(N,k)$-periodic environment, we let for all $v\in \Z^2$
\begin{equation}\label{def:PeriodicTranslate}
\TR(v) := \{ w \in \Z^2 \text{ such that } w= v+(i(N-k),-ik) \text{ for some } i\in \Z \} 
\end{equation}
be the set of $\mathbf{(N,k)}$\textbf{-periodic translates}, and we define the lattice path
\begin{equation}\label{def:PeriodicPath}
\Gamma_{u,v} :=  \gamma_{u,w^{\ast}} \ \text{ where } w^{\ast} \text{ satisfies } \  T_{u,w^{\ast}} = \sup_{w \in \TR(v), w \succeq u} T_{u,w}
\end{equation} for all pairs of sites $v \succeq u$. Note that the supremum in \eqref{def:PeriodicPath} is attained almost surely for some unique $w^{\ast}$ as we have finitely many points $w\in \TR(v)$ with $w \succeq u$.  Similarly, for all pairs of sites $v \succeq u$, we define the lattice path 
\begin{equation}\label{def:PeriodicPathReverse}
\Gamma^{\prime}_{u,v} :=  \gamma_{w_{\ast},v} \ \text{ where } w_{\ast} \text{ satisfies } \ T_{w_{\ast},v} = \sup_{w \in \TR(u), w \preceq v} T_{w,v} .
\end{equation} With a slight abuse of notation, we refer to $\Gamma_{u,v}$ and $\Gamma_{u,v}^{\prime}$ as \textbf{point-to-set and set-to-point geodesics}, respectively. We will in the following focus on $\Gamma_{u,v}$ to simplify notations, but all statements apply for the path $\Gamma_{u,v}^{\prime}$ mutatis mutandis; see also Remark \ref{rem:GammaReverse}. \\

In order to define the main event $\mathcal{B}$ on quantifying the coalescence of geodesics, we introduce some notation. Recall the line $\mathbb{L}$ defined in \eqref{def:Line}. For a fixed lattice path $\bar{\gamma} \in \Pi_{\mathbf{0},(3n,3mn)}$, we consider three sites $u_1,\bar{u},u_2 \in \bar{\gamma}$  chosen such that
\begin{equation}\label{def:u1u2}
u_1 \in \bar{\gamma} \cap \mathbb{L}_{m^{\prime},\ell^{\prime}} \quad  \bar{u} \in \bar{\gamma} \cap \mathbb{L}_{m^{\prime},3\ell^{\prime}/2} \quad u_2 \in \bar{\gamma} \cap \mathbb{L}_{m^{\prime},2\ell^{\prime}}  .
\end{equation} Note that such sites $u_1,u_2$ and $\bar{u}$ must always exist as the endpoint of $\bar{\gamma}$ is contained in $\mathbb{L}_{m^{\prime},3\ell^{\prime}}$ by construction. Intuitively, the event $\mathcal{B}$ will have to ensure that the following properties of last passage times and geodesics are satisfied:
\begin{enumerate}
\item The path $\bar{\gamma}=\gamma_{\mathbf{0},(3n,3mn)}$ has between the sites $u_1$, $\bar{u}$, and $u_2$ a typical weight. 
\item The transversal fluctuations of the path $\Gamma_{\mathbf{0},(3n,3mn)}$ are at most $k/2$. In particular, we have that $\Gamma_{\mathbf{0},(3n,3mn)}=\gamma_{\mathbf{0},(3n,3mn)}$. 
\item Every path between the lines $\mathbb{L}_{m^{\prime},\ell^{\prime}}$ and $\mathbb{L}_{m^{\prime},3\ell^{\prime}/2}$, and between $\mathbb{L}_{m^{\prime},3\ell^{\prime}/2}$ and $\mathbb{L}_{m^{\prime},2\ell^{\prime}}$, which does not intersect $\gamma_{\mathbf{0},(3n,3mn)}$ is of much smaller weight. 
\end{enumerate}
To formalize these properties, we fix $\theta>0$, and define the events $\mathcal{B}_1(\theta)$ and $\mathcal{B}_2(\theta)$ by
\begin{align*}    \mathcal{B}_1(\theta)  &:= \left\{  T_{u,\TR(v)} - n(1+\sqrt{m})^2 \geq -  4\theta n^{1/3} m^{-1/6} \ \text{ for all } u\in \mathbb{L}_{m^{\prime},0} \text{ and } v\in \mathbb{L}_{m^{\prime},\ell^{\prime}} \right\}  \\
\mathcal{B}_2(\theta)  &:= \left\{  T_{u,v} -  n(1+\sqrt{m})^2 \leq  \theta n^{1/3} m^{-1/6} \ \text{ for all } u \in \mathbb{L}_{m^{\prime},0} \text{ and } v\in \mathbb{L}_{m^{\prime},\ell^{\prime}} \right\}    \, .
\end{align*} In other words, the events $\mathcal{B}_1(\theta)$ and $\mathcal{B}_2(\theta)$ guarantee that the minimal and maximal last passage time between a site on the line $\mathbb{L}_{m^{\prime},0}$ to an $(N,k)$-periodic translate of a site on the line  $\mathbb{L}_{m^{\prime},\ell^{\prime}}$ are typical. Similarly, let
\begin{align*}    \mathcal{B}_3(\theta) &:= \left\{  T_{\TR(u),v} -  n(1+\sqrt{m})^2  \geq -  4\theta n^{1/3} m^{-1/6} \ \text{ for all } u\in \mathbb{L}_{m^{\prime},2\ell^{\prime}} \text{ and } v\in \mathbb{L}_{m^{\prime},3\ell^{\prime}} \right\}  \\
\mathcal{B}_4(\theta)  &:= \left\{  T_{u,v} -  n(1+\sqrt{m})^2 \leq  \theta n^{1/3} m^{-1/6} \ \text{ for all } u\in  \mathbb{L}_{m^{\prime},2\ell^{\prime}} \text{ and } v\in \mathbb{L}_{m^{\prime},3\ell^{\prime}} \right\}
\end{align*} denote the events that the minimal and maximal last passage time from an $(N,k)$-periodic translate of a site on the line $\mathbb{L}_{m^{\prime},2\ell^{\prime}}$ to a site on the line $\mathbb{L}_{m^{\prime},3\ell^{\prime}}$ is typical. Furthermore, we let
\begin{align*}    \mathcal{B}_5(\theta) &:= \left\{  T_{u,v} -  \frac{1}{2}n(1+\sqrt{m})^2 \leq  \theta n^{1/3} m^{-1/6} \ \text{ for all } u\in  \mathbb{L}_{m^{\prime},\ell^{\prime}} \text{ and } v\in \mathbb{L}_{m^{\prime},3\ell^{\prime}/2} \right\}      \\
\mathcal{B}_6(\theta)  &:= \left\{  T_{u,v} -  \frac{1}{2}n(1+\sqrt{m})^2 \leq  \theta n^{1/3} m^{-1/6} \ \text{ for all } u\in  \mathbb{L}_{m^{\prime},3\ell^{\prime}/2} \text{ and } v\in \mathbb{L}_{m^{\prime},2\ell^{\prime}} \right\}
\end{align*}
be the events that the last passage times between the lines $\mathbb{L}_{m^{\prime},\ell^{\prime}}$  and $\mathbb{L}_{m^{\prime},3\ell^{\prime}/2}$, respectively between the lines $\mathbb{L}_{m^{\prime},3\ell^{\prime}/2}$ and $\mathbb{L}_{m^{\prime},2\ell^{\prime}}$, are not too large. Next, we set
\begin{equation*}
\mathcal{B}^{\bar{\gamma}}_7(\theta) := \Big\{ T_{u,v} \geq n(1+\sqrt{m})^2 - \theta n^{1/3} m^{-1/6} \text{ for some } u \in \bar{\gamma} \cap \mathbb{L}_{m^{\prime},\ell^{\prime}} \text{  and } v \in \bar{\gamma} \cap \mathbb{L}_{m^{\prime},2\ell^{\prime}} \Big\} . 
\end{equation*}
for a lattice path $\bar{\gamma} \in \Pi_{\mathbf{0},(3n,3mn)}$, and consider the event $\mathcal{D}_{\bar{\gamma}}$ 
\begin{equation}\label{def:DbarEvent}
\mathcal{D}_{\bar{\gamma}} = \left\{ \gamma_{\mathbf{0},(3n,3mn)} = \bar{\gamma}\right\} 
\end{equation}
that the geodesic $\gamma_{\mathbf{0},(3n,3mn)}$ from $\mathbf{0}$ to $(3n,3mn)$ equals a fixed path $\bar{\gamma} \in \Pi_{0,(3n,3mn)}$. Moreover, we let  
\begin{equation}\label{eq:FromPathToGeodesic}
\mathcal{B}_7(\theta):= \mathcal{B}^{\gamma_{\mathbf{0},(3n,3mn)}}_7(\theta) = \bigcup_{\bar{\gamma} \in \Pi_{\mathbf{0},(3n,3mn)}} \big( \mathcal{B}^{\bar{\gamma}}_7(\theta) \cap \mathcal{D}_{\bar{\gamma}} \big)
\end{equation}
 be the event that the geodesic between the origin and $(3n,3mn)$, restricted between the lines $\mathbb{L}_{m^{\prime},\ell^{\prime}}$ and $\mathbb{L}_{m^{\prime},2\ell^{\prime}} $, has a typical weight. 
Combining the above events, we define
\begin{equation}\label{def:EventConnection} \mathcal{B}_{\textup{Con}}^{\bar{\gamma}}(\theta) := \mathcal{B}^{\bar{\gamma}}_7(\theta) \cap  \bigcap_{i \in [6]}\mathcal{B}_i(\theta) , 
\end{equation} and similarly $\mathcal{B}_{\textup{Con}}(\theta)$ when $\bar{\gamma}= \gamma_{\mathbf{0},(3n,3mn)}$, to be the event that all of the above described estimates on the last passage times hold.  We will argue in Section \ref{sec:LineToLineLPT} that $\mathcal{B}_{\textup{Con}}(\theta)$ occurs with probability tending to $1$ as $\theta \rightarrow \infty$. \\  

Next, we replicate the idea from \cite{BSS:Coalescence} that paths which surpass a certain barrier must have a much smaller weight than the geodesic. In periodic environments every path will have to go through the barrier or intersect $\gamma_{\mathbf{0},(3n,3mn)}$. However, in contrast to i.i.d.\ environments, a path may for periodic environments intersect with several periodic translates of $\gamma_{\mathbf{0},(3n,3mn)}$. The following event ensures that such paths have a smaller weight than $\gamma_{\mathbf{0},(3n,3mn)}$. Let
\begin{equation}\label{def:EventNoCrossingBarrier}
\mathcal{B}_{\textup{NC}}^{\bar{\gamma}}(\theta) := \{ T_{u,v}  > T_{u,w}\text{ for all } w \in \TR(v)\setminus \{v\} \text{ and all } u,v \in \bar{\gamma} \text{ with } v \succeq u \} .
\end{equation} As in the definition \eqref{eq:FromPathToGeodesic}, we let $\mathcal{B}_{\textup{NC}}(\theta):= \mathcal{B}_{\textup{NC}}^{\gamma_{\mathbf{0},(3n,3mn)}}(\theta)$ be the event that no crossing between the geodesic $\gamma_{\mathbf{0},(3n,3mn)}$ and one of its $(N,k)$-periodic translates gives a larger last passage time than following $\gamma_{\mathbf{0},(3n,3mn)}$. In particular, note that \begin{equation}
\mathcal{B}_{\textup{NC}}(\theta) \subseteq \{\Gamma_{\mathbf{0},(3n,3mn)}=\gamma_{\mathbf{0},(3n,3mn)} \} \, .
\end{equation} We argue in Section \ref{sec:NoCrossing} that $\mathcal{B}_{\textup{NC}}(\theta)$ occurs  with probability tending to $1$ when $\theta \rightarrow \infty$, provided that $N$ is sufficiently large. \\

As a last step, we define $\mathcal{B}_{\textup{Bar}}(\theta)$ as the event that we see a barrier between the lines $\mathbb{L}_{m^{\prime},\ell^{\prime}}$ and  $\mathbb{L}_{m^{\prime},2\ell^{\prime}}$, i.e.\ we consider the event that the weight of the path $\bar{\gamma}_{u_1,u_2}$ for $u_1$ and $u_2$ from \eqref{def:u1u2} is much larger than the weight of any path between the lines $\mathbb{L}_{m^{\prime},\ell^{\prime}}$ and  $\mathbb{L}_{m^{\prime},2\ell^{\prime}}$ which does not intersect  $\bar{\gamma}_{u_1,\bar{u}}$ and $\bar{\gamma}_{\bar{u},u_2}$. To formally define $\mathcal{B}_{\textup{Bar}}(\theta)$, recall $\mathcal{D}_{\bar{\gamma}}$ from \eqref{def:DbarEvent},  and  $R=R(\bar{\gamma})$ from \eqref{def:Rectangle}. For a fixed path $\bar{\gamma} \in \Pi_{\mathbf{0},(3n,3mn)}$, we set $R_-=R_-(\bar{\gamma}):=R(\bar{\gamma}_{u_1,\bar{u}})$ and $R_+=R_+(\bar{\gamma}):=R(\bar{\gamma}_{\bar{u},u_2})$, where $\bar{\gamma}_{x,y}$ denotes the path $\bar{\gamma}$ restricted between sites $x$ and $y$. For $u,v\in S \subseteq \Z^2$ with $v \succeq u$, and some finite region $S \subseteq \Z^2$, let $T^{S}_{u,v}$ denote the last passage time between $u$ and $v$ using only lattice paths which lie in the interior of $S$.
For $\theta>0$, and a fixed lattice path $\bar{\gamma}$,  let 
\begin{align*}
\mathcal{B}^{\bar{\gamma}}_-(\theta)  :=  \Big\{   & T^{R_-}_{u,v} -  \frac{n}{2}(1+\sqrt{m})^2  \leq - 15 \theta n^{1/3} m^{-1/6} \text{ for all } u,v \in R_-  \text{ with } u \preceq v \Big\}
\end{align*}
denote the event that the last passage time using only paths in the interior of $R_-(\bar{\gamma})$ is small. Similarly, let  $\mathcal{B}_+(\theta)$ be the event 
\begin{align*}
\mathcal{B}^{\bar{\gamma}}_+(\theta)  :=  \Big\{ T^{R_+}_{u,v} -  \frac{n}{2}(1+\sqrt{m})^2  \leq - 15\theta n^{1/3} m^{-1/6} \text{ for all } u, v\in  R_+ \text{ with } u \preceq v  \Big\}
\end{align*} that the last passage time, restricted to paths in the interior of $R_+$, is small. 
Combining the above conditions on the last passage times, we set
\begin{equation}
\mathcal{B}_{\textup{Bar}}^{\bar{\gamma}}(\theta):=  \mathcal{D}_{\bar{\gamma}} \cap \mathcal{B}^{\bar{\gamma}}_+(\theta) \cap \mathcal{B}^{\bar{\gamma}}_-(\theta)  .
\end{equation}
Finally, summing over the different choices of $\bar{\gamma}$, we define 
\begin{equation}
\mathcal{B}_{\textup{Bar}}(\theta):= \bigcup_{\bar{\gamma} \in \Pi_{\mathbf{0},(3n,3mn)} } \mathcal{B}_{\textup{Bar}}^{\bar{\gamma}}(\theta) . 
\end{equation} In words, the event $\mathcal{B}_{\textup{Bar}}$ guarantees that the events $\mathcal{B}^{\bar{\gamma}}_-(\theta)$ and $\mathcal{B}^{\bar{\gamma}}_+(\theta)$ holds with respect to the geodesic $\bar{\gamma}=\gamma_{\mathbf{0},(3n,3mn)}$.
We argue in Section \ref{sec:Barrier} that $\mathcal{B}_{\textup{Bar}}(\theta)$ occurs with positive probability for any $\theta>0$, provided that $N$ is sufficiently large.  
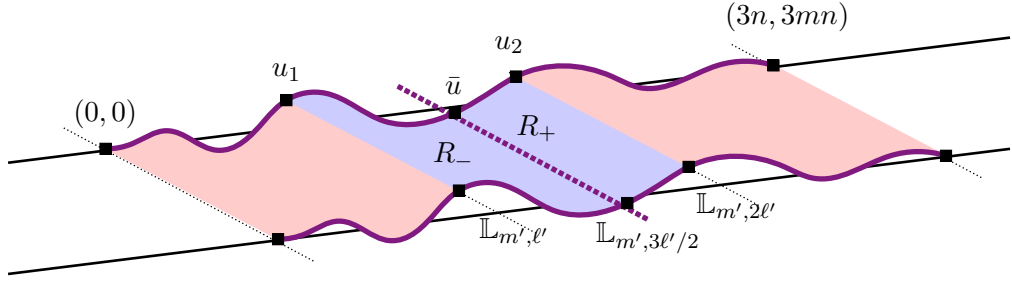
\begin{figure}
\centering
\begin{tikzpicture}[scale=0.37]

\draw[line width =1 pt] (-10,2-1.25) -- (26,6.25-1);
\draw[line width =1 pt] (-10,-2-1.25) -- (26,0.25+1);





\draw[densely dotted, line width=0.5pt] (-8,3-1) -- (1,-1.8-1);

\draw[densely dotted, line width=0.5pt] (0,3) -- (9,-1.8);

\draw[densely dotted, line width=0.5pt] (4,3+0.5) -- (13,-1.8+0.5);

\draw[densely dotted, line width=0.5pt] (8,3+1) -- (17,-1.8+1);

\draw[densely dotted, line width=0.5pt] (16,3+2) -- (25,-1.8+2);






\filldraw[red!20] (0-6.5,0+1.25) to[curve through={(1-6.5,0.2+1.25)..(2.2-6.5,0.65+1.25)..(4-6.5,-0.05+1.25) .. (5.7-6.5,1+1.25)}](0,3)--(0+6.2,3-3.25) to[curve through={ (5.7-6.5+6.2,1+1.25-3.25)..(4-6.5+6.2,-0.05+1.25-3.25)..(2.2-6.5+6.2,0.65+1.25-3.25)..(1-6.5+6.2,0.2+1.25-3.25)}](0-6.5+6.2,0+1.25-3.25) -- (0-6.5,0+1.25);

\filldraw[blue!20] (0,3) to[curve through={(7.9-6.5,2+1.25)..(10-6.5,1+1.25)..(6.05,2.55)..(14-6.5,2.15+1.25)}](8.25,3.85)--(8.25+6.2,3.85-3.25) to[curve through={ (14-6.5+6.2,2.15+1.25-3.25)..(6.05+6.2,2.55-3.25)..(10-6.5+6.2,1+1.25-3.25)..(7.9-6.5+6.2,2+1.25-3.25)}](0+6.2,3-3.25)  -- (0,3);

\filldraw[red!20] (8.25,3.85) to[curve through={(18.2-6.5,2.6+1.25)..(19.2-6.5,2.2+1.25)..(22-6.5,3+1.25)}](24-6.5,3+1.25)--(24-6.5+6.2,3+1.25-3.25) to[curve through={ (22-6.5+6.2,3+1.25-3.25)..(19.2-6.5+6.2,2.2+1.25-3.25)..(18.2-6.5+6.2,2.6+1.25-3.25)}](8.25+6.2,3.85-3.25) -- (8.25,3.85);

\draw[darkblue, line width =2 pt] (0-6.5,0+1.25) to[curve through={(1-6.5,0.2+1.25)..(2.2-6.5,0.65+1.25)..(4-6.5,-0.05+1.25) .. (5.7-6.5,1+1.25) ..(0,3)..(7.9-6.5,2+1.25)..(10-6.5,1+1.25)..(6.05,2.55)..(14-6.5,2.15+1.25)..(8.25,3.85)..(18.2-6.5,2.6+1.25)..(19.2-6.5,2.2+1.25)..(22-6.5,3+1.25)}] (24-6.5,3+1.25);

\draw[darkblue, line width =2 pt] (0-6.5+6.2,0+1.25-3.25) to[curve through={(1-6.5+6.2,0.2+1.25-3.25)..(2.2-6.5+6.2,0.65+1.25-3.25)..(4-6.5+6.2,-0.05+1.25-3.25) .. (5.7-6.5+6.2,1+1.25-3.25) ..(0+6.2,3-3.25)..(7.9-6.5+6.2,2+1.25-3.25)..(10-6.5+6.2,1+1.25-3.25)..(6.05+6.2,2.55-3.25)..(14-6.5+6.2,2.15+1.25-3.25)..(8.25+6.2,3.85-3.25)..(18.2-6.5+6.2,2.6+1.25-3.25)..(19.2-6.5+6.2,2.2+1.25-3.25)..(22-6.5+6.2,3+1.25-3.25)}] (24-6.5+6.2,3+1.25-3.25);



\draw[densely dotted, line width=2pt,darkblue] (4,3+0.5) -- (13,-1.8+0.5);


   	\filldraw [fill=black] (-6.5-0.2,1.25-0.2) rectangle (-6.5+0.2,1.25+0.2);    	
   	\filldraw [fill=black] (-0.2,3-0.2) rectangle (0.2,3+0.2);    	
   	\filldraw [fill=black] (8.25-0.2,3.85-0.2) rectangle (8.25+0.2,3.85+0.2);    	  	
   	\filldraw [fill=black] (24-6.5-0.2,3+1.25-0.2) rectangle (24-6.5+0.2,3+1.25+0.2);    		
   	\filldraw [fill=black] (-0.2+6.05,2.55-0.2) rectangle (0.2+6.05,2.55+0.2);

   \filldraw [fill=black] (-6.5-0.2+6.2,1.25-0.2-3.25) rectangle (-6.5+0.2+6.2,1.25+0.2-3.25);    	
   \filldraw [fill=black] (-0.2+6.2,3-0.2-3.25) rectangle (0.2+6.2,3+0.2-3.25);    	   	
   \filldraw [fill=black] (8.25-0.2+6.2,3.85-0.2-3.25) rectangle (8.25+0.2+6.2,3.85+0.2-3.25);    	
   \filldraw [fill=black] (24-6.5-0.2+6.2,3+1.25-0.2-3.25) rectangle (24-6.5+0.2+6.2,3+1.25+0.2-3.25);
   \filldraw [fill=black] (-0.2+6.05+6.2,2.55-0.2-3.25) rectangle (0.2+6.05+6.2,2.55+0.2-3.25);

  	\node[scale=1] (x1) at (0,4){$u_1$};	
  	\node[scale=1] (x1) at (6,3.5){$\bar{u}$};	
  	\node[scale=1] (x1) at (8,5){$u_2$};

  	\node[scale=1] (x1) at (6,1){$R_-$};	
  	\node[scale=1] (x1) at (6+3,2){$R_+$};	  	
  	
  	\node[scale=1] (x1) at (0+7.5+0.7,4-3.25-2-0.5){$\mathbb{L}_{m^{\prime},\ell^{\prime}}$};	
  	\node[scale=1] (x1) at (6+7,3.5-3.25-2.2){$\mathbb{L}_{m^{\prime},3\ell^{\prime}/2}$};	
  	\node[scale=1] (x1) at (8+7.5+0.7,5-3.25-2.2-0.4){$\mathbb{L}_{m^{\prime},2\ell^{\prime}}$};

  	\node[scale=1] (x1) at (-6.5,2.5){$(0,0)$};

  	\node[scale=1] (x1) at (18,6){$(3n,3mn)$};
  	
	\end{tikzpicture}	
	\caption{\label{fig:BarrierEvents}Visualization of the parameters evolved in the events $\mathcal{B}_{\textup{Con}}(\theta),\mathcal{B}_{\textup{Bar}}(\theta)$ and $\mathcal{B}_{\textup{NC}}(\theta) $ in the proof of Proposition \ref{pro:ThreeEvents}.}
 \end{figure} A visualization of the setup for all these events is provided in Figure~\ref{fig:BarrierEvents}.   \\

Let us now show how the above events can be used to quantify the coalescence of geodesics. This is the main result of this section, and a key starting point for the proof of the upper bound in Theorem \ref{thm:Main}, where we show that the coalescence of geodesics is closely related to the mixing time of the TASEP on the circle.

\begin{proposition}\label{pro:ThreeEvents} Recall the site $\bar{u}$ from \eqref{def:u1u2}. Suppose that for some $\theta>0$, the event
\begin{equation}
\mathcal{B}(\theta) := \mathcal{B}_{\textup{Con}}(\theta) \cap \mathcal{B}_{\textup{Bar}}(\theta) \cap  \mathcal{B}_{\textup{NC}}(\theta)
\end{equation} occurs. Then on $\mathcal{B}(\theta)$, we have that $\Gamma_{u,v} \cap\TR(\bar{u})$ is non-empty for any pair of sites $u\in \mathbb{L}_{m^{\prime},0}$ and $v\in \mathbb{L}_{m^{\prime},3\ell^{\prime}}$ with $v \succeq u$. Similarly, we have that $\Gamma^{\prime}_{u,v} \cap\TR(\bar{u})$ is non-empty for any pair of sites $u\in \mathbb{L}_{m^{\prime},0}$ and $v\in \mathbb{L}_{m^{\prime},3\ell^{\prime}}$ with $v \succeq u$.
\end{proposition}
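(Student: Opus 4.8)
The plan is to show that on the event $\mathcal{B}(\theta)$, any geodesic $\gamma_{u,v}$ with $u \in \mathbb{L}_{m^{\prime},0}$ and $v \in \mathbb{L}_{m^{\prime},3\ell^{\prime}}$, $v \succeq u$, is forced to pass through a periodic translate of $\bar u$. The key is that the barrier between $\mathbb{L}_{m^{\prime},\ell^{\prime}}$ and $\mathbb{L}_{m^{\prime},2\ell^{\prime}}$ created by $\mathcal{B}_{\textup{Bar}}(\theta)$ makes it advantageous for any competitive path to run along $\gamma_{u_1,\bar u}$ and then $\gamma_{\bar u, u_2}$, while the event $\mathcal{B}_{\textup{NC}}(\theta)$ rules out the possibility that a path could do better by crossing between $\gamma_{\mathbf{0},(3n,3mn)}$ and its translates.

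First I would set up the comparison. Fix such a geodesic $\gamma_{u,v}$, and let $p$ be its first intersection with $\mathbb{L}_{m^{\prime},\ell^{\prime}}$ and $q$ its last intersection with $\mathbb{L}_{m^{\prime},2\ell^{\prime}}$; both exist since $u$ lies below and $v$ above these lines. By ordering of geodesics (Lemma~\ref{lem:OrderingGeodesics}) combined with $\mathcal{B}_{\textup{NC}}(\theta) \subseteq \{\Gamma_{\mathbf{0},(3n,3mn)}=\gamma_{\mathbf{0},(3n,3mn)}\}$ and the periodicity of the environment, one can locate $p$ and $q$, up to a periodic translate, within bounded (transversal) distance of $u_1$ and $u_2$, using the transversal fluctuation bound from $\mathcal{B}_7(\theta)$. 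Then I would decompose $T_{u,v} = T_{u,p} + T_{p,q} + T_{q,v}$ (using the concatenation convention for last passage times) and derive an upper bound: $T_{u,p} \le n(1+\sqrt m)^2 + \theta n^{1/3}m^{-1/6}$ from $\mathcal{B}_2(\theta)$ (applied to the appropriate translate), $T_{q,v} \le n(1+\sqrt m)^2 + \theta n^{1/3}m^{-1/6}$ from $\mathcal{B}_4(\theta)$, and for $T_{p,q}$ a case analysis: either the subpath from $p$ to $q$ passes through $\TR(\bar u)$, in which case we are done, or it avoids $\gamma_{u_1,\bar u}$ and $\gamma_{\bar u, u_2}$ (i.e.\ stays in the interiors $R_-$ and $R_+$ after projecting by periodicity), so $\mathcal{B}_-(\theta)$ and $\mathcal{B}_+(\theta)$ give $T_{p,q} \le n(1+\sqrt m)^2 - 18\theta n^{1/3}m^{-1/6} + O(\theta n^{1/3}m^{-1/6})$. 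In the second case, summing yields $T_{u,v} \le 3n(1+\sqrt m)^2 - c\theta n^{1/3}m^{-1/6}$ for some positive $c$ once $\theta$ is large.

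On the other hand, the competing path that runs $\mathbb{L}_{m^{\prime},0} \to u_1$ (a suitable translate), then along $\gamma_{u_1,u_2}$ (which itself runs through $\bar u$), then $u_2 \to \mathbb{L}_{m^{\prime},3\ell^{\prime}}$ achieves last passage time at least $3n(1+\sqrt m)^2 - O(\theta n^{1/3}m^{-1/6})$ by combining $\mathcal{B}_1(\theta)$, the lower bound on $T_{u_1,u_2}$ in $\mathcal{B}_{\textup{Bar}}(\theta)$, and $\mathcal{B}_3(\theta)$. Choosing $\theta$ large enough that the deficit $c\theta n^{1/3}m^{-1/6}$ in the avoidance case exceeds the additive $O(\theta n^{1/3}m^{-1/6})$ slack, we contradict the maximality of the geodesic $\gamma_{u,v}$; hence the second case is impossible and $\gamma_{u,v}$ must intersect $\TR(\bar u)$.

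The main obstacle is the bookkeeping around periodicity: a path in an $(N,k)$-periodic environment need not stay near a single copy of $\Gamma_{\mathbf{0},(3n,3mn)}$, and the points $p,q$ may sit in different periodic cells than $u_1,u_2$. Managing this requires carefully invoking $\mathcal{B}_{\textup{NC}}(\theta)$ to argue that it is never beneficial for a near-geodesic to ``hop'' between translates of $\Gamma_{\mathbf{0},(3n,3mn)}$, so that one may without loss of generality work within one fundamental domain and apply the line-to-line events $\mathcal{B}_1$–$\mathcal{B}_6$ and the restricted-region events $\mathcal{B}_\pm$ there. Once this reduction is made, the argument is a purely additive comparison of last passage times with the constants engineered so that the $-9\theta$ deficits in $\mathcal{B}_\pm(\theta)$ dominate all the $+\theta$ error terms.
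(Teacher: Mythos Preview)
Your overall strategy---compare the weight of any path avoiding the barrier with that of the reference path through $u_1,\bar u,u_2$---is exactly the paper's approach, and the additive bookkeeping at the end is correct in spirit. However, there is a genuine gap in your case analysis for the middle segment.

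Your dichotomy ``either the subpath from $p$ to $q$ passes through $\TR(\bar u)$, or it stays in the interiors $R_-$ and $R_+$'' is not exhaustive. A geodesic $\gamma_{u,v}$ can touch $\gamma_{u_1,\bar u}$ (or a periodic translate) at some site $w$ strictly between $u_1$ and $\bar u$, leave again before reaching $\bar u$, and then cross $\mathbb{L}_{m',3\ell'/2}$ at a point outside $\TR(\bar u)$. In that scenario the subpath neither contains a translate of $\bar u$ nor lies in the interior of $R_-$, so neither of your cases applies and the bound from $\mathcal{B}_-(\theta)$ is unavailable. The paper closes this gap with a two-step argument: first show that $\gamma_{u,v}$ must intersect $\TR(\gamma_{\mathbf 0,(3n,3mn)})$ at all (this is where the comparison $T^R_{u,v}\le 3n(1+\sqrt m)^2-18\theta n^{1/3}m^{-1/6}$ versus the reference path is used, on the full strip $R=R(\gamma_{\mathbf 0,(3n,3mn)})$); then, using uniqueness of geodesics together with $\mathcal{B}_{\textup{NC}}(\theta)$, conclude that between the first and last intersection points $u_{\textup{in}},u_{\textup{out}}$ the two geodesics coincide, so it remains only to show $u_{\textup{in}}\preceq\bar u\preceq u_{\textup{out}}$. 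That last step is another additive comparison, but now using $\mathcal{B}_5(\theta)$ or $\mathcal{B}_6(\theta)$ (which you never invoke) together with $\mathcal{B}_\pm(\theta)$ to rule out $u_{\textup{in}}\succ\bar u$ or $u_{\textup{out}}\prec\bar u$.

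A secondary point: you write ``choosing $\theta$ large enough'' to make the deficit dominate the slack, but the proposition is a deterministic statement on the event $\mathcal{B}(\theta)$ for each fixed $\theta>0$. The multiplicative constants in the definitions of $\mathcal{B}_1,\dots,\mathcal{B}_7$ and $\mathcal{B}_\pm$ (namely $-4\theta$, $+\theta$, $-9\theta$) are already engineered so that the required inequalities hold for every $\theta$; no further choice is permitted.
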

\begin{proof} We will in the following only show that $\Gamma_{u,v} \cap\TR(\bar{u})$ is non-empty as a similar argument holds for $\Gamma^{\prime}_{u,v} \cap\TR(\bar{u}) $.
As a first step, assume that the event \begin{equation}
\{ \Gamma_{\mathbf{0},(3n,3mn)}=\gamma_{\mathbf{0},(3n,3mn)} \} \supseteq \mathcal{B}(\theta) 
\end{equation} holds, and recall the rectangle $R=R(\gamma_{\mathbf{0},(3n,3mn)})$ from \eqref{def:Rectangle}. We assume without loss of generality that $u \in \mathbb{L}_{m^{\prime},0} \cap R$, as we can apply a shift argument otherwise.
Set
\begin{equation}
R^{+} = R+(N-k,-k)  \quad \text{ and } \quad R^{-} = R-(N-k,-k) . 
\end{equation}
The event $\mathcal{B}_{\textup{NC}}(\theta)$ now implies that for any $u \in \mathbb{L}_{m^{\prime},0} \cap R$ and $v\in \mathbb{L}_{m^{\prime},3\ell^{\prime}}$, we have that 
\begin{equation}
\Gamma_{u,v} \subseteq R \cup R^+ \cup R^- . 
\end{equation}
Next, assume without loss of generality that $\Gamma_{u,v}=\gamma_{u,w}$ for some $w \in \mathbb{L}_{m^{\prime},3\ell^{\prime}} \cap R$ as the other two cases $w \in \mathbb{L}_{m^{\prime},3\ell^{\prime}} \cap R^{+}$ and $w \in \mathbb{L}_{m^{\prime},3\ell^{\prime} }\cap R^{-}$ can be treated analogously.
%
%
%
%
%
%
%
Recall that $T_{u,w}^R$ denotes the last passage time between $u$ and $w$ using only lattice paths in the interior of $R$. Moreover, recall the sites $u_1,\bar{u},u_2$ for $\bar{\gamma}=\gamma_{\mathbf{0},(3n,3mn)}$. The events $\mathcal{B}_2(\theta)$, $\mathcal{B}_4(\theta)$, $\mathcal{B}_{\textup{Bar}}(\theta)$, and the fact that $T_{u,w}^R \leq T_{u,w}$  ensure
\begin{equation}\label{eq:EstimateLowerBoundCoalescence}
T_{u,w}^R \leq 3n(1+\sqrt{m})^2 - 28 \theta n^{1/3} m^{-1/6} \, .
\end{equation} However, using that the events $\mathcal{B}_1(\theta)$, $\mathcal{B}_3(\theta)$, and $\mathcal{B}_7(\theta)$ hold, we note that the last passage time $T_{u,w}$ satisfies 
\begin{equation}\label{eq:LowerBoundGoodPaths}
T_{u,w} \geq 3n(1+\sqrt{m})^2 - 9 \theta n^{1/3} m^{-1/6}
\end{equation} by following first following the geodesic from $u$ to $u_1$, then the path $\gamma_{\mathbf{0},(3n,3mn)}$ between $u_1$ and $u_2$, and then the geodesic from $u_2$ to $w$. Hence, we see that $\gamma_{u,w}$ and $\TR(\gamma_{\mathbf{0},(3n,3mn)})$ have unique first and last intersection points $u_{\textup{in}}$ and $u_{\textup{out}}$ while they agree on a translate of the path $\gamma_{u_{\textup{in}},u_{\textup{out}}}$. We claim that the sites $u_{\textup{in}}$ and $u_{\textup{out}}$ satisfy for some $u^{\prime} \in \TR(\bar{u})$
\begin{equation}\label{eq:IntersectionEvent}
u_{\textup{in}} \preceq u^{\prime} \preceq u_{\textup{out}} .
\end{equation}  We will argue only for the case $u_{\textup{in}} \preceq u^{\prime}$ as the argument for $u^{\prime} \preceq u_{\textup{out}}$ is similar, and the event $\mathcal{B}_{\textup{NC}}(\theta)$ ensures that we can choose the same  $u^{\prime}$ in both cases. Suppose that $u_{\textup{in}} \npreceq \bar{u}$, i.e.\ the geodesic $\gamma_{u,w}$ does not intersect $\TR(\gamma_{\mathbf{0},(3n,3mn)})$ below the line $\mathbb{L}_{m^{\prime},3\ell^{\prime}/2}$. In this case, the events $\mathcal{B}_2(\theta)$, $\mathcal{B}_4(\theta)$, $\mathcal{B}_6(\theta)$, and $\mathcal{B}_{\textup{Bar}}(\theta)$
imply that
\begin{equation}
T_{u,w} \leq 3n(1+\sqrt{m})^2 - 12 \theta n^{1/3} m^{-1/6} . 
\end{equation} This contradicts the lower bound on the last passage time $T_{u,w}$ from \eqref{eq:LowerBoundGoodPaths} and proves \eqref{eq:IntersectionEvent}. In particular, using that every subpath of a geodesic is again a geodesic, we see that $\gamma_{u,w}$ must pass through some site in $\TR(\bar{u})$, which completes the proof. 
\end{proof}

\begin{remark}\label{rem:GammaReverse}
The paths $\Gamma^{\prime}_{u,v}$ take a crucial role in Section \ref{sec:UpperBounds}. Taking the maximum over all possible initial sites $u$, we use  Proposition \ref{pro:ThreeEvents} to show that with positive probability under the last passage percolation coupling from Lemma \ref{lem:CurrentVsGeodesic}, two TASEPs with different initial conditions agree after a time of order $N^{2}k^{-1/2}$ up to a time shift; see Lemma \ref{lem:ShiftExclusion} for a more precise statement. 
\end{remark}

In the following three subsections, our goal is to show that $\mathcal{B}(\theta)$ holds with positive probability for all $\theta>0$ sufficiently large. This allows us to conclude the proof of the upper bound in Theorem \ref{thm:Coalescence} in Section \ref{sec:UpperBoundCoalescence}, and serves as a starting point for the proof of Theorem \ref{thm:Main}.

\subsection{Estimates on the crossing probability}\label{sec:NoCrossing}

We start with the following lemma, which shows that $\Gamma_{\mathbf{0},(3n,3mn)}=\gamma_{\mathbf{0},(3n,3mn)}$ holds with probability tending to $1$ as $\theta \rightarrow \infty$. Recall the choice of the parameters $m$ and $n$ from the beginning of Section \ref{sec:Coalescence} depending on $N,k$, and the constant $\theta$.
\begin{lemma}\label{lem:PointToLineBound}
There exist constants $\theta_0,k_0,c>0$ such that for all $\theta>\theta_0$, for all $k\geq k_0$ and for all $N$ sufficiently large, we have that
\begin{equation}
\P\left(  \Gamma_{\mathbf{0},(3n,3mn)}=\gamma_{\mathbf{0},(3n,3mn)}  \right) \geq 1 - \exp(-c\theta) \, .
\end{equation}
\end{lemma}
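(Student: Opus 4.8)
The plan is to prove the lemma by showing that, with probability at least $1-\exp(-c\theta)$, the maximum of $w\mapsto T_{\mathbf 0,w}$ over the $(N,k)$-periodic translates $w$ of $(3n,3mn)$ with $w\succeq\mathbf 0$ is attained at the direct target $w=(3n,3mn)$ itself; by \eqref{def:PeriodicPath} this is precisely the event $\{\Gamma_{\mathbf 0,(3n,3mn)}=\gamma_{\mathbf 0,(3n,3mn)}\}$. Write $w_i:=(3n,3mn)+i(N-k,-k)=(w_i^{1},w_i^{2})$ for the translates and $f(i):=\big(\sqrt{w_i^{1}}+\sqrt{w_i^{2}}\big)^{2}$ for the associated shape value. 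The deterministic core of the argument is that the slope $m=k^{2}/(N-k)^{2}$ is chosen exactly so that $(3n,3mn)$ is the characteristic point in the direction of the period vector $(N-k,-k)$: a short computation gives $f'(0)=0$ and $f''(0)=-(N-k)N^{2}/(6nk)<0$, and hence, plugging in $n=\theta^{-1}N^{2}k^{-1/2}$ and using $N-k\ge N/2$, that for every admissible $i\neq 0$
\[
f(0)-f(i)\ \ge\ c\,\theta^{4/3}\,i^{2}\,n^{1/3}m^{-1/6}.
\]
Near $i=0$ this is the second-order Taylor expansion, and the same lower bound throughout the bounded admissible range of $i$ follows from an elementary analysis of the strictly concave function $f$. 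In every case this gap dominates the fluctuation scale of $T_{\mathbf 0,w_i}$ once $\theta$ is large (and $k$ is large relative to $\theta$), which is what forces the direct target to win.

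First I would record a lower bound for the direct target. Any lattice path from $\mathbf 0$ to $(3n,3mn)$ that avoids the lines $\mathbb L_{m,\pm k/2}$ lies in the strip on which the $(N,k)$-periodic environment agrees with the coupled i.i.d.\ environment, so $T_{\mathbf 0,(3n,3mn)}\ge T^{m,k/2}_{\mathbf 0,(3n,3mn)}$ holds deterministically, the right-hand side being taken in the i.i.d.\ environment. Corollary~\ref{cor:MinimumLPTline} with $\ell=k/2$ -- whose relevant parameter is of order $\theta^{2/3}$, so that the tail $\exp(-cx^{3/2})$ is $\exp(-c\theta)$ -- together with the expectation estimates of Corollaries~\ref{cor:ExpectationVariance} and \ref{cor:ExpectationLPTRestricted} then gives
\[
\mathbb P\Big(T_{\mathbf 0,(3n,3mn)}\le f(0)-C\theta^{2/3}n^{1/3}m^{-1/6}\Big)\le \exp(-c\theta).
\]

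Next I would bound $T_{\mathbf 0,w_i}$ from above for each admissible $i\neq 0$. For the $O(\theta^{-1}k^{1/2})$ indices whose target $w_i$ still has slope comparable to $m$ (of which there are only boundedly many in $N$), I would pass to the i.i.d.\ environment for this whole nested family of geodesics at once, using the ordering of geodesics (Lemma~\ref{lem:OrderingGeodesics}) and Proposition~\ref{pro:GeodesicsPeriodic}, and then combine \eqref{eq:ExpectationShapeTheorem} with the upper tail \eqref{eq:StatementSteepUpper} to get $\mathbb P\big(T_{\mathbf 0,w_i}\ge f(i)+\tfrac12(f(0)-f(i))\big)\le\exp(-c\theta i^{2})$, which sums over $i$. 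For the remaining near-degenerate translates -- those with $w_i$ very steep or very flat, of which there may be polynomially many in $N$ -- the shape gap $f(0)-f(i)$ is polynomially large in $N$ and thus enormously exceeds the now essentially Gaussian fluctuations of $T_{\mathbf 0,w_i}$, so even a crude bound on $T_{\mathbf 0,w_i}$ -- obtained directly in the periodic environment, using that the weights along any fixed up-right lattice path are i.i.d.\ (no such path visits two sites that are $(N,k)$-periodic translates of one another) -- suffices to kill the union bound. Combining these estimates with the lower bound for the direct target yields $\mathbb P\big(\Gamma_{\mathbf 0,(3n,3mn)}\neq\gamma_{\mathbf 0,(3n,3mn)}\big)\le\exp(-c\theta)$ for $\theta$ and $k$ sufficiently large.

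The step I expect to be the main obstacle is organising the estimates on the translates so that they are uniform over all of them: the near-degenerate directions are polynomially many, so their individual bounds must beat a polynomial factor (which is why one exploits that the shape gap there is itself polynomially large, rather than only $\asymp\theta$), and for those directions the optimal path leaves the strip on which the environment is i.i.d., so the periodic structure must be handled directly instead of being imported from the i.i.d.\ model. A secondary point requiring care is verifying the quantitative shape-function gap across the whole admissible range of $i$, including near its endpoints, where $|f''|$ blows up.
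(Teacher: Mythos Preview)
Your shape-function calculation is correct: the choice $m=k^{2}/(N-k)^{2}$ makes $i=0$ the critical direction, and for integer $i\neq 0$ the gap $f(0)-f(i)\gtrsim \theta^{4/3}i^{2}\,n^{1/3}m^{-1/6}$ comfortably exceeds the fluctuation scale. The lower bound on $T_{\mathbf 0,(3n,3mn)}$ via the restricted last passage time in the strip is also fine.

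The gap is in the upper bound for $\tilde T_{\mathbf 0,w_i}$ in the \emph{periodic} environment. For $i\neq 0$ the endpoint $w_i$ satisfies $|w_i^{2}-m\,w_i^{1}|=|i|\,kN/(N-k)\ge k$, so $w_i$ lies outside every strip of slope $m$ and width $k$ on which the periodic and i.i.d.\ environments agree. Hence the geodesic $\tilde\gamma_{\mathbf 0,w_i}$ must leave any such strip, and the ordering of geodesics cannot sandwich it there either. To invoke \eqref{eq:StatementSteepUpper} you therefore have to couple separately for each direction $w_i$ (with its own slope $m_i$), and Proposition~\ref{pro:GeodesicsPeriodic} then gives only $\exp(-cx_i)$ with $x_i=k/(m_i^{2/3}(w_i^{1})^{2/3})\asymp \theta^{2/3}$ for every $i$ in the comparable-slope range. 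There are $\asymp \theta^{-1}k^{1/2}$ such $i$ (your parenthetical ``boundedly many in $N$'' hides that $k$ can be as large as $N/2$), so the union bound produces a term of order $k^{1/2}\exp(-c\theta^{2/3})$, which is not $\le \exp(-c\theta)$ uniformly in $k$. Your claimed tail $\exp(-c\theta i^{2})$ holds for the i.i.d.\ upper deviation, but the cost of transferring to it is the bottleneck.

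The paper sidesteps this by never bounding the individual $\tilde T_{\mathbf 0,w_i}$. Instead it recycles the crossing argument of Lemma~\ref{lem:NoCrossingLemma}: any path from $\mathbf 0$ to a nontrivial translate must traverse between adjacent levels $\mathbb L_{m,j\ell/2}$, and each such transition is shown (using only \emph{restricted} last passage times, which live in the i.i.d.\ strip) to cost at least $2\theta\,n^{1/3}m^{-1/6}$ while gaining at most $\theta\,n^{1/3}m^{-1/6}$ on the return. Because the penalty is charged per crossing rather than per endpoint, a single event of probability $\ge 1-\exp(-c\theta)$ handles all translates at once. Your shape-function viewpoint is the right heuristic, but to make it uniform in $k$ you would need the same kind of path-decomposition input.
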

\begin{proof}
We use a similar strategy as for the proof of Proposition \ref{pro:GeodesicsPeriodic} with the help of Lemma~\ref{lem:NoCrossingLemma}. Informally speaking, we decompose each lattice path $\gamma$ according to its intersections with the lines $\mathbb{L}_{m,i\ell/2}$ for some $i\in\Z$, where we set $\ell=(k+k^2(N-k)^{-1})/2$. More precisely, we let $V_+$ contain all sites where $\gamma$ intersects a line $\mathbb{L}_{m,i\ell/2}$ for some $i\in\Z$ for the first time after previously intersecting a line $\mathbb{L}_{m,j\ell/2}$ for some $j\in\Z$ with $|j - i|=1$, while $V_-$ denotes the sites where $\gamma$ intersects a line $\mathbb{L}_{m,i\ell/2}$ for some $i\in\Z$ for the last time before intersecting a line $\mathbb{L}_{m,j\ell/2}$ for some $j\in\Z$ with $|j - i|=1$; see also  Figure \ref{fig:PathDecomposition}. Note that this decomposition yields for every lattice path $\gamma$, starting at the origin, a  sequence of sites in $V_{-} \cup V_{+}$ when following the path  $\gamma$ in order.
\begin{figure}
\centering
\begin{tikzpicture}[scale=0.4]

\draw[line width =1 pt] (-1.6,3-0.3) -- (24,6);
\draw[line width =1 pt] (-1.6,-3-0.3) -- (24,0);


\draw[densely dotted, line width=1pt] (-1.6,-0.3) -- (24,3);

\draw[densely dotted, line width=0.5pt] (-1.6,1.5-0.3) -- (24,4.5);
\draw[densely dotted, line width=0.5pt] (-1.6,-1.5-0.3) -- (24,1.5);

		\node[scale=0.8] (x1) at (0,-0.8){$(0,0)$} ;	
		\node[scale=0.8] (x1) at (23,3-1.2){$(3n,3mn)$} ;		

	\node[scale=0.8] (x1) at (14,2.3){$v_3$} ;		
  		\node[scale=0.8] (x1) at (6,-0.1){$v_1$} ;	
  		  		\node[scale=0.8] (x1) at (19,1.6){$v_4$} ;	
  		  		\node[scale=0.8] (x1) at (8.45,3.3){$v_2$} ;	
  	\node[scale=1] (x1) at (26,0){$\mathbb{L}_{m,-\ell}$};
	\node[scale=1] (x1) at (26,3){$\mathbb{L}_{m,0}$};
  	\node[scale=1] (x1) at (26,6){$\mathbb{L}_{m,\ell}$};
  	
  	  	\node[scale=1] (x1) at (-3.7,1.5-0.3){$\mathbb{L}_{m,\ell/2}$};
  	\node[scale=1] (x1) at (-3.7,-1.5-0.3){$\mathbb{L}_{m,-\ell/2}$};

\draw[darkblue, line width =1.7 pt] (0,0) to[curve through={(1,0.2)..(2.2,0.65)..(4,-0.05) .. (5.7,1) ..(6.5,3-1.25)..(7.9,2)..(10,3.5)..(6.05+7.5,2.55)..(14,3.15)..(8.25+6.5,3.85-1.25)..(18.2,2.6)..(19.2,2.2)}] (22,2.75);

		\filldraw [fill=red] (8.55-0.2,2.5-0.2) rectangle (8.55+0.2,2.5+0.2);
 		\filldraw [fill=deepblue] (5.5-0.2,0.705-0.2) rectangle (5.5+0.2,0.705+0.2);
	
    	\filldraw [fill=deepblue] (14-0.2,3.6-0.375-0.2) rectangle (14+0.2,3.6-0.375+0.2);
  	    \filldraw [fill=black] (-0.2,-0.2) rectangle (0.2,0.2); 	 	
  	    \filldraw [fill=black] (22-0.2,2.75-0.2) rectangle (22.2,2.75+0.2); 		
  	    \filldraw [fill=red] (19-0.2,2.35-0.2) rectangle (19+0.2,2.35+0.2); 	
  	
	\end{tikzpicture}	
	\caption{\label{fig:PathDecomposition}Visualization of the sites in the path decomposition of $\gamma_{\mathbf{0},(3n,3mn)}$. Sites in $V_-=\{v_1,v_3\}$ are blue, while sites in $V_+=\{v_2,v_4\}$ are red.}
 \end{figure}
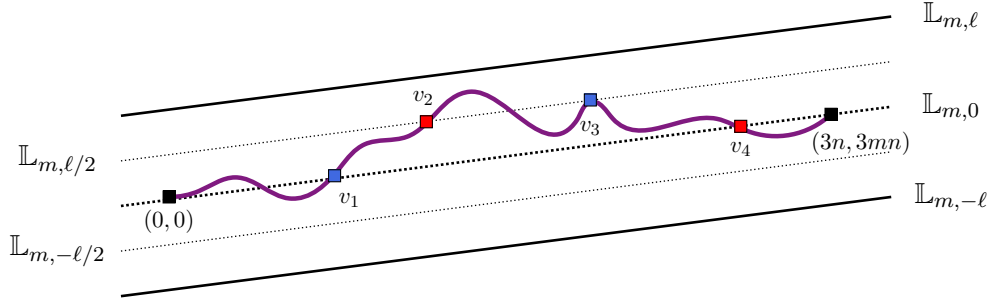
Furthermore, these sites are alternating between $V_{-}$ and $V_+$, starting with a site in $V_{-}$. Moreover, notice that if
\begin{equation}
\Gamma_{\mathbf{0},(3n,3mn)} \neq \gamma_{\mathbf{0},(3n,3mn)}
\end{equation} then this sequence for the path $\Gamma_{\mathbf{0},(3n,3mn)}$ contains at least two elements, where at least one element is in $V_{-}$ and at least one element is in $V_+$. Similar to Lemma~\ref{lem:NoCrossingLemma}, we define for every site $v \in \mathbb{L}_{m,i\ell/2}$ with some $i\in \Z$ its translates
\begin{align}
\overrightarrow{v} &:= v + ((N-k)/4,-k/4) \in \mathbb{L}_{m,(i-1)\ell/2} \\ \overleftarrow{v} &:= v - ((N-k)/4,-k/4) \in \mathbb{L}_{m,(i+1)\ell/2} . 
\end{align}
Recall the restricted last passage times $T^{m,\ell}_{u,\overrightarrow{v}}$ from \eqref{def:RestrictedLPT}. Moreover, recall Corollary \ref{cor:OutsideCylinder} to compare the last passage times in i.i.d.\ and periodic environments, and Corollary \ref{cor:ExpectationLPTRestricted} to compare the expected last passage times. Together with the same arguments as for the events $\mathcal{D}_1$, $\mathcal{D}_2$ and $\mathcal{D}_3$ in the proof of Lemma~\ref{lem:NoCrossingLemma}, this imply that for all $i\in \Z$ the event
\begin{equation*}
\Big\{\max(T^{m,\ell}_{u,\overrightarrow{v}},T^{m,\ell}_{u,\overleftarrow{v}}) -\E[T^{m,\ell}_{u,v}] \leq -2\theta n^{1/3}m^{-1/6} \text{ for all } u,v \in  \mathbb{L}_{m,i\ell/2}\text{ with } \mathbf{0} \preceq u,v \preceq (3n,3mn) \Big\}
\end{equation*}
holds with probability at least $1-\exp(-c_1\theta)$ for some constant $c_1>0$ and all $\theta>\theta_0$ for some sufficiently large constant $\theta_0$. Similarly, we see that for all $\theta>\theta_0$, the event
\begin{equation}
\Big\{ T^{m,\ell}_{u,v} -\E[T^{m,\ell}_{u,v}] \leq \theta n^{1/3}m^{-1/6} \text{ for all }  u,w \in  \mathbb{L}_{m,i\ell/2}\text{ with } \mathbf{0} \preceq u,v \preceq (3n,3mn) \Big\}
\end{equation} holds with probability at least $1-\exp(-c_2\theta)$ for some constant $c_2>0$. Since the environment is $(N,k)$-periodic, we note that whenever the above two events hold for all $i\in [4]$, then they must hold for all $i\in \Z$ almost surely. Let this event be denoted by $\tilde{\mathcal{B}}$, and note that
\begin{equation}\label{eq:BtildeEvent}
\P( \tilde{\mathcal{B}}) \geq  1-\exp(-c_3\theta)
\end{equation} for some constant $c_3>0$, and all $\theta>\theta_0$.
On the event $\tilde{\mathcal{B}}$, we see that in each transition from a site in $V_-$ to a site in $V_+$, the last passage time of a path decreases by at least $2\theta n^{1/3}m^{-1/6}$ compared to the expected length of a path when staying on the current line, while for each transition from $V_+$ to $V_-$, the last passage time increases by at most $\theta n^{1/3}m^{-1/6}$ compared to the expected length. Note that by Corollary \ref{cor:SupremumPeriodic} \begin{equation}\label{eq:SecondEqu}
\P( T_{\mathbf{0},(3n,3mn)}  - \E[T_{\mathbf{0},(3n,3mn)}] \geq - \theta n^{1/3}m^{-1/6} ) \geq 1-\exp(-c_4\theta)
\end{equation} holds for some constant $c_4>0$ and all $\theta>0$ sufficiently large. Since the transitions $V_+$ to $V_-$ and from $V_+$ to $V_-$ alternate, and each path different from $\gamma_{\mathbf{0},(3n,3mn)}$ performs at least one crossing, we conclude with \eqref{eq:BtildeEvent} and \eqref{eq:SecondEqu}.
\end{proof}

\begin{lemma}\label{lem:BoundOnNC}
There exist constants $\theta_0,k_0$ such that for all $\theta>\theta_0$, for all $k\geq k_0$ and for all $N$ sufficiently large, we have that
\begin{equation}
\P(\mathcal{B}_{\textup{NC}}(\theta)) \geq 1 - \P(\Gamma_{\mathbf{0},(3n,3mn)} \neq \gamma_{\mathbf{0},(3n,3mn)}) - \P(\TF(\gamma_{\mathbf{0},(3n,3mn)})> k/4)\, .
\end{equation} Moreover, there exists a constant $c>0$ such that for all $\theta>\theta_0$ we have
\begin{equation}\label{eq:BoundOnNC}
\P(\mathcal{B}_{\textup{NC}}) \geq 1 - \exp(-c\theta) \, .
\end{equation}
\end{lemma}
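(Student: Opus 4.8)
The plan is to establish the set inclusion
\[
\mathcal{B}_{\textup{NC}}(\theta)\ \supseteq\ \{\Gamma_{\mathbf{0},(3n,3mn)}=\gamma_{\mathbf{0},(3n,3mn)}\}\cap\{\TF(\gamma_{\mathbf{0},(3n,3mn)})\le k/4\}
\]
up to a $\P$-null set. The first displayed inequality of the lemma then follows by a union bound, and the second (``in particular'') follows by inserting the estimate $\P(\Gamma_{\mathbf{0},(3n,3mn)}\neq\gamma_{\mathbf{0},(3n,3mn)})\le\exp(-c\theta)$ of Lemma~\ref{lem:PointToLineBound} together with the transversal fluctuation bound of Corollary~\ref{cor:FluctuationsPeriodic}. (I in fact expect the stronger inclusion $\mathcal{B}_{\textup{NC}}(\theta)\supseteq\{\Gamma_{\mathbf{0},(3n,3mn)}=\gamma_{\mathbf{0},(3n,3mn)}\}$, since the no-crossing event is essentially the statement that $(3n,3mn)$ is the maximiser in~\eqref{def:PeriodicPath}; but only the displayed, weaker inclusion is needed.)

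To prove the inclusion I would argue by contradiction on the almost sure event that the maximiser $w^{\ast}$ in~\eqref{def:PeriodicPath} is unique. Suppose $\{\Gamma_{\mathbf{0},(3n,3mn)}=\gamma_{\mathbf{0},(3n,3mn)}\}$ holds, so that $w^{\ast}=(3n,3mn)$, and that $\mathcal{B}_{\textup{NC}}(\theta)$ fails. Writing $\gamma:=\gamma_{\mathbf{0},(3n,3mn)}$, the failure provides sites $u\preceq v$ on $\gamma$ and a periodic translate $w=v+(i(N-k),-ik)$ of $v$ with $i\neq 0$, $w\succeq u$, and $T_{u,w}\ge T_{u,v}$. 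Put $w':=(3n,3mn)+(i(N-k),-ik)\in\TR((3n,3mn))$. The first step is the coordinate bookkeeping that $w'\succeq\mathbf{0}$ (and $w'\neq(3n,3mn)$, which is clear): since every site of $\gamma$ lies in $[0,3n]\times[0,3mn]$ one has $u_2\ge 0$, $v_1\le 3n$, $v_2\le 3mn$, and feeding these into $w\succeq u$ shows, treating $i\ge 1$ and $i\le -1$ separately, that both coordinates of $w'$ are nonnegative. Since also $v\preceq(3n,3mn)$ implies $w\preceq w'$, the chain $\mathbf{0}\preceq u\preceq w\preceq w'$ holds.

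Next I would concatenate the restriction of $\gamma$ to $[\mathbf{0},u]$, the geodesic $\gamma_{u,w}$, and the translate by $(i(N-k),-ik)$ of the geodesic from $v$ to $(3n,3mn)$; the last piece is legitimate because the $(N,k)$-periodic environment is invariant under translation by $(i(N-k),-ik)$, whence $T_{w,w'}=T_{v,(3n,3mn)}$. Using the endpoint convention of~\eqref{def:LastPassageTime}, under which last passage times along concatenated paths add exactly, and the fact that $u,v$ lie on $\gamma$, this gives
\[
T_{\mathbf{0},w'}\ \ge\ T_{\mathbf{0},u}+T_{u,w}+T_{w,w'}\ =\ T_{\mathbf{0},u}+T_{u,w}+T_{v,(3n,3mn)}\ \ge\ T_{\mathbf{0},u}+T_{u,v}+T_{v,(3n,3mn)}\ =\ T_{\mathbf{0},(3n,3mn)}.
\]
Hence $w'\neq(3n,3mn)$ is an admissible competitor in~\eqref{def:PeriodicPath} with last passage time at least that of $(3n,3mn)$, contradicting uniqueness of $w^{\ast}$. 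This proves the inclusion, hence the first displayed bound. For the second, $\P(\Gamma_{\mathbf{0},(3n,3mn)}\neq\gamma_{\mathbf{0},(3n,3mn)})\le\exp(-c\theta)$ is Lemma~\ref{lem:PointToLineBound}, while $\P(\TF(\gamma_{\mathbf{0},(3n,3mn)})>k/4)\le\exp(-c\theta)$ follows from Corollary~\ref{cor:FluctuationsPeriodic} applied with $3n$ in place of $n$ — the choices $m=k^2/(N-k)^2$, $n=\theta^{-1}N^2k^{-1/2}$ place $m,n$ in its admissible regime (using $k\le N/2$) and make the threshold $k/4$ correspond to a deviation parameter that grows with $\theta$ — and a union bound yields $\P(\mathcal{B}_{\textup{NC}}(\theta))\ge 1-\exp(-c\theta)$.

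I expect the only genuinely delicate point to be the coordinate bookkeeping that $w'\succeq\mathbf{0}$ for both signs of $i$ — equivalently, that every periodic translate $w$ of a point $v$ on $\gamma$ which is comparable to a point $u$ on $\gamma$ corresponds to a translate $w'$ of the endpoint comparable to the origin — together with the careful use of the endpoint convention so that the three last passage times add up without a doubled or missing weight. No new probabilistic estimate is required: everything reduces to Lemma~\ref{lem:PointToLineBound} (itself built on Lemma~\ref{lem:NoCrossingLemma}) and Corollary~\ref{cor:FluctuationsPeriodic}. An equivalent, more computational route would instead decompose an arbitrary wrapping path according to its crossings of the lines $\mathbb{L}_{m,i\ell/2}$, exactly as in the proof of Lemma~\ref{lem:PointToLineBound}; there the event $\{\TF(\gamma)\le k/4\}$ enters directly when locating those crossings.
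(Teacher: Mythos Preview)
Your proposal is correct and follows the same approach as the paper: establish the inclusion
\[
\{\Gamma_{\mathbf{0},(3n,3mn)}=\gamma_{\mathbf{0},(3n,3mn)}\}\cap\{\TF(\gamma_{\mathbf{0},(3n,3mn)})\le k/4\}\ \subseteq\ \mathcal{B}_{\textup{NC}}(\theta)
\]
by contradiction, then invoke Lemma~\ref{lem:PointToLineBound} and Corollary~\ref{cor:FluctuationsPeriodic}. Your execution is in fact more careful than the paper's: where the paper simply asserts that a failure of $\mathcal{B}_{\textup{NC}}(\theta)$ forces $\Gamma_{\mathbf{0},(3n,3mn)}$ to pass through the offending translate $w$, you explicitly build the competing endpoint $w'=(3n,3mn)+(i(N-k),-ik)$, verify $\mathbf{0}\preceq w'$ via the constraint $u\preceq w$, and concatenate paths using periodicity to obtain $T_{\mathbf{0},w'}\ge T_{\mathbf{0},(3n,3mn)}$, contradicting almost-sure uniqueness of $w^\ast$. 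Your observation that the transversal-fluctuation hypothesis is not actually needed for the inclusion is correct; the paper's shortcut (arguing that $\Gamma$ ``passes through $w$'' and hence differs from $\gamma$) implicitly relies on $w\notin\gamma$, which is where the paper would appeal to $\TF(\gamma)\le k/4$, whereas your endpoint argument sidesteps this entirely.
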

\begin{proof}
We claim that
\begin{equation}\label{eq:BoundByTwoEvents}
\mathcal{B}_{\textup{NC}}(\theta)^{\complement} \subseteq   \{ \Gamma_{\mathbf{0},(3n,3mn)} \neq \gamma_{\mathbf{0},(3n,3mn)}\} \cup \{ \TF(\gamma_{\mathbf{0},(3n,3mn)})> k/4 \} 
\end{equation} holds for all $\theta>0$. The bound in \eqref{eq:BoundOnNC} then follows from Lemma \ref{lem:PointToLineBound} and Corollary~\ref{cor:FluctuationsPeriodic}. To see this, suppose there exist sites $u,v,w$ such that $u,v\in \gamma_{\mathbf{0},(3n,3mn)}$ and $w\in \TR(v) \setminus \{v\}$ with $T_{u,w} > T_{u,v} $. Without loss of generality, let $u,v$ be the first such pair along $\gamma_{\mathbf{0},(3n,3mn)}$. Since $\TF(\gamma_{\mathbf{0},(3n,3mn)})\leq  k/4$, we see that $w \notin \gamma_{\mathbf{0},(3n,3mn)}$. However, as $T_{u,w} > T_{u,v}$, the path from $\mathbf{0}$ to $w$, and then concatenated with the path in $\TR(\gamma_{v,(3n,3mn)})$ starting at $w$ has a strictly larger weight than $\gamma_{\mathbf{0},(3n,3mn)}$, implying that $\Gamma_{\mathbf{0},(3n,3mn)} \neq  \gamma_{\mathbf{0},(3n,3mn)}$.
\end{proof}
\subsection{Estimates on point-to-line and line-to-line last passage times}\label{sec:LineToLineLPT}

We now study the different events $\mathcal{B}_i$ for $i\in [7]$ and give the following estimate.

\begin{lemma}\label{lem:ConnectionProbabilities} There exist constants $\theta_0,k_0,c>0$ such that for all $\theta>\theta_0$, for all $k\geq k_0$ and for all $N$ sufficiently large, we have that
\begin{equation}
\P\Big( \bigcap_{i\in [7]} \mathcal{B}_i(\theta) \Big) \geq 1 - \sum_{i\in [7]}\big(1-\P( \mathcal{B}_i(\theta)) \big)\geq 1 - \exp(-c\theta^{\frac{2}{3}}) \, .
\end{equation}
\end{lemma}

In the following, we give bounds on probabilities of the different events $(\mathcal{B}_i(\theta))_{i\in [7]}$ separately.
We start with an estimate on the probability of the event $\mathcal{B}_7(\theta)$.
\begin{lemma}\label{lem:TypicalPaths} There exist constants $\theta_0,k_0,\bar{c}>0$ such that for all $\theta>\theta_0$, for all $k\geq k_0$ and for all $N$ sufficiently large, we have that
\begin{equation}
\P(\mathcal{B}_{7}(\theta)) \geq 1-\exp(-\bar{c}\theta) \, .
\end{equation}
\end{lemma}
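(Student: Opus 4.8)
The plan is to deduce $\mathcal{B}_7(\theta)$ directly from the periodic estimates of Section~\ref{sec:LPPestimates}: the moderate deviation bound of Corollary~\ref{cor:SupremumPeriodic}, the expectation bound of Corollary~\ref{cor:ExpectationVariance}, and the transversal fluctuation bound from the proof of Proposition~\ref{pro:GeodesicsPeriodic} (i.e.\ the estimate $\TF(\tilde\gamma_{\mathbf 0,(\cdot,m\cdot)})\le k/2$), all applied at the endpoint $(3n,3mn)$ in place of $(n,mn)$. Since $\mathcal{B}_7(\theta)$ only asserts that $T_{\mathbf 0,(3n,3mn)}$ and $\TF(\gamma_{\mathbf 0,(3n,3mn)})$ take typical values, no input beyond Section~\ref{sec:LPPestimates} is required.

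First I would translate the parameters of Section~\ref{sec:Coalescence} into those of Section~\ref{sec:LPPestimates}. With $m=k^2/(N-k)^2$ and $n=\theta^{-1}N^2k^{-1/2}$ one computes
\begin{equation*}
\frac{k}{m^{2/3}(3n)^{2/3}}=3^{-2/3}\theta^{2/3}\Big(\frac{N-k}{N}\Big)^{4/3}=:x ,
\end{equation*}
and since $k\le N/2$ this gives $3^{-2/3}2^{-4/3}\theta^{2/3}\le x\le\theta^{2/3}$, so $x>x_0$ whenever $\theta>\theta_0$, uniformly in $N$ and $k$. Also $m\le 1$ (again as $k\le N/2$), and $m\ge m_0/(3n)$ holds once $k\ge k_0$ and $N$ is large, the two sides being of order $k^2/N^2$ and $\theta k^{1/2}/N^2$ respectively. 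Thus the hypotheses of Corollaries~\ref{cor:SupremumPeriodic} and~\ref{cor:ExpectationVariance} and of Proposition~\ref{pro:GeodesicsPeriodic} are met with this $x$ and with $n$ replaced by $3n$.

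Next I would treat the two events in $\mathcal{B}_7(\theta)$ separately. By Corollary~\ref{cor:ExpectationVariance}, $\E[T_{\mathbf 0,(3n,3mn)}]\ge 3n(1+\sqrt m)^2-C\,n^{1/3}m^{-1/6}$, and by Corollary~\ref{cor:SupremumPeriodic} the event $|T_{\mathbf 0,(3n,3mn)}-\E[T_{\mathbf 0,(3n,3mn)}]|\le x(3n)^{1/3}m^{-1/6}$ fails with probability at most $\exp(-cx)$. Since $x\le\theta^{2/3}$, one has $C+3^{1/3}x\le\theta$ for $\theta$ large, so on the good event $T_{\mathbf 0,(3n,3mn)}\ge 3n(1+\sqrt m)^2-\theta n^{1/3}m^{-1/6}$, the first event of $\mathcal{B}_7(\theta)$. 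For the fluctuation event, note $k/2=(x/2)\,m^{2/3}(3n)^{2/3}$, so the transversal-fluctuation estimate from the proof of Proposition~\ref{pro:GeodesicsPeriodic}, run at threshold $k/2$ (i.e.\ with deviation parameter of order $x>2x_0$), gives $\P(\TF(\gamma_{\mathbf 0,(3n,3mn)})>k/2)\le\exp(-cx)$. A union bound yields $\P(\mathcal{B}_7(\theta)^c)\le\exp(-c_1 x)$; as $x$ is of order $\theta^{2/3}$ this is a bound of the stated shape, and in any case it tends to $0$ as $\theta\to\infty$, which is all that is used in Section~\ref{sec:Coalescence}.

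The only work is bookkeeping of constants: one must control the $3^{1/3},3^{2/3}$ factors introduced by the endpoint $(3n,3mn)$, the factor $((N-k)/N)^{4/3}\in[2^{-4/3},1]$ relating $x$ to $\theta^{2/3}$, and the slack $M\asymp m$ in Corollary~\ref{cor:ExpectationVariance}, and check that each is absorbed by enlarging $\theta_0$, $k_0$, and $N$. I expect no substantive obstacle, since $\mathcal{B}_7(\theta)$ is precisely the ``typical geodesic'' statement that the periodic estimates of Section~\ref{sec:LPPestimates} were built to furnish.
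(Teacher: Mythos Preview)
Your proposal is correct and essentially identical to the paper's proof, which simply says ``This is immediate from Corollary~\ref{cor:SupremumPeriodic} and Corollary~\ref{cor:FluctuationsPeriodic}''; you invoke the same two periodic estimates (your reference to ``the transversal-fluctuation estimate from the proof of Proposition~\ref{pro:GeodesicsPeriodic}'' is exactly Corollary~\ref{cor:FluctuationsPeriodic}), plus Corollary~\ref{cor:ExpectationVariance} to pass from $\E[T_{\mathbf 0,(3n,3mn)}]$ to $3n(1+\sqrt m)^2$. Your explicit parameter computation, in particular the observation that the cited corollaries yield an exponent of order $\theta^{2/3}$ rather than $\theta$, is a point the paper glosses over; as you note, this does not affect any downstream use in Section~\ref{sec:Coalescence}.
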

\begin{proof}
This is immediate from Lemma \ref{lem:ShapeTheorem} and Corollary~\ref{cor:SupremumPeriodic}. 
\end{proof}

Next, we provide two results which state that for the above choice of parameters, the last passage times between points on the line $\mathbb{L}_{m,0}$ and the set $\TR(u_1)$ are comparable. We start with a lemma on the minimal point-to-line last passage time.

\begin{lemma}\label{lem:MinimumLineToPoint} There exist constants $\theta_0,k_0,c>0$ such that for all $\theta>\theta_0$, for all $k\geq k_0$ and for all $N$ sufficiently large, we have that
\begin{equation}\label{eq:MinimumLineToPointStatement}
\P(\mathcal{B}_{1}(\theta)) \geq 1-\exp(-c \theta) \quad \text{and} \quad  \P(\mathcal{B}_{3}(\theta)) \geq 1-\exp(-c \theta)\, .
\end{equation}
\end{lemma}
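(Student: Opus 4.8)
Since the two statements are related by the reflection $v\mapsto (3n,3mn)-v$, which maps an $(N,k)$-periodic exponential environment to one of the same law and interchanges the configuration defining $\mathcal{B}_1(\theta)$ with the one defining $\mathcal{B}_3(\theta)$, it is enough to bound $\P(\mathcal{B}_1(\theta))$. Write $\sigma:=n^{1/3}m^{-1/6}$, and recall the key identity $m=(m')^2$, which is what makes the argument go through. First condition on the event $\mathcal{G}$ that $\Gamma_{\mathbf{0},(3n,3mn)}=\gamma_{\mathbf{0},(3n,3mn)}$, that this geodesic has transversal fluctuations at most a fixed power of $\theta$ times $m^{2/3}n^{2/3}$, and that $|T_{\mathbf{0},(3n,3mn)}-3n(1+\sqrt m)^2|\le\theta\sigma$; by Lemma~\ref{lem:PointToLineBound}, Corollary~\ref{cor:FluctuationsPeriodic} (obtained from Lemma~\ref{lem:TransversalSteep} and Proposition~\ref{pro:GeodesicsPeriodic}) and Corollary~\ref{cor:SupremumPeriodic}, $\mathcal{G}$ has probability at least $1-\exp(-c\theta^{a})$ for some $a>0$. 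On $\mathcal{G}$ the point $u_1$ is the unique intersection of $\gamma_{\mathbf{0},(3n,3mn)}$ with $\mathbb{L}_{m',\ell'}$; since $m=(m')^2$, the point $(n,mn)$ is at once the intersection of $\mathbb{L}_{m',\ell'}$ with the straight segment $\mathbf{0}\to(3n,3mn)$ and the maximiser of $v\mapsto(\sqrt{v_1}+\sqrt{v_2})^2$ along $\mathbb{L}_{m',\ell'}$, so $u_1$ lies in the corresponding flat neighbourhood of $(n,mn)$, and splitting the geodesic at $u_1$ gives $T_{\mathbf{0},\TR(u_1)}\ge T_{\mathbf{0},u_1}=T_{\mathbf{0},(3n,3mn)}-T_{u_1,(3n,3mn)}\ge n(1+\sqrt m)^2-O(\theta\sigma)$ by Corollary~\ref{cor:ExpectationVariance} and Proposition~\ref{pro:GeodesicsPeriodic} (the displacement $(3n,3mn)-u_1$ has slope in $(m/10,10m)$). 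By $(N,k)$-periodicity the same bound holds for every $u\in\mathbb{L}_{m',0}$ in the $(N,k)$-periodic orbit of $\mathbf{0}$.

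For a general $u\in\mathbb{L}_{m',0}$ one uses that the shift $(N-k,-k)$ lies along $\mathbb{L}_{m',0}$ and fixes $\TR(u_1)$, so $u\mapsto T_{u,\TR(u_1)}$ is periodic and it suffices to treat $u$ in one fundamental domain $\mathcal{F}$, an $O(N-k)$-point segment of $\mathbb{L}_{m',0}$. For $u\in\mathcal{F}$ pick $w_u\in\TR(u_1)$ with $w_u\succeq u$ closest along $\mathbb{L}_{m',\ell'}$ to $u+(n,mn)$; then $T_{u,\TR(u_1)}\ge T_{u,w_u}$, the displacement $w_u-u$ has slope in $(m/10,10m)$ for $N$ large (the error $O(N-k)$ is $o(n)$), and by Corollary~\ref{cor:ExpectationVariance} together with the vanishing of the first derivative and the $(n\sqrt m)^{-1}$ order of the Hessian of the shape function at $(n,mn)$ along $\mathbb{L}_{m',\ell'}$, one gets $\E[T_{u,w_u}]\ge n(1+\sqrt m)^2-C\sigma$ uniformly in $u\in\mathcal{F}$, the $C\sigma$ absorbing the deterministic cost of the $O(N-k)$ spacing of $\TR(u_1)$ along the line. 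This last bookkeeping — verifying that under the scaling $m=k^2/(N-k)^2$, $n=\theta^{-1}N^2k^{-1/2}$ the discretisation cost is of order $\sigma$ times a power of $\theta$ (if necessary a larger power than the linear one written in $\mathcal{B}_1(\theta)$, which is immaterial for Theorem~\ref{thm:Coalescence}) — is the quantitative heart of the argument.

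It then remains to bound $\P\bigl(\inf_{u\in\mathcal{F}}(T_{u,w_u}-\E[T_{u,w_u}])\le-\theta\sigma\bigr)$. The pairs $\{(u,w_u):u\in\mathcal{F}\}$ all lie in $U^2\cap\mathbb{D}_m$ for a parallelogram $U$ of transversal width of order $k=xm^{2/3}n^{2/3}$ with $x$ of order $\theta^{2/3}>x_0$; on an event of probability $\ge1-\exp(-c\theta^{2/3})$ Proposition~\ref{pro:GeodesicsPeriodic} lets one replace each $T_{u,w_u}$ by its i.i.d.\ counterpart (all the relevant geodesics staying inside the strip on which the periodic and i.i.d.\ environments agree), and the infimum-deviation probability is then at most $\exp(-c\theta^{3/2})$ by the line-to-line version of Proposition~\ref{pro:MinimalLPTiid} and Corollary~\ref{cor:MinimumLPTline} — the same chaining argument, which is precisely what avoids a $\log N$ loss over the $O(N-k)$ values of $u$. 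Combining the three steps yields the claimed lower bound on $\P(\mathcal{B}_1(\theta))$, and $\mathcal{B}_3(\theta)$ follows by the reflection. The main obstacle is the interplay in the last two paragraphs: transferring from the periodic to the i.i.d.\ environment \emph{uniformly} over the whole fundamental domain (rather than for one point pair as in Corollary~\ref{cor:SupremumPeriodic}), controlling the line-wide infimum through chaining instead of a union bound, and checking that the deterministic error introduced by the periodic lattice $\TR(u_1)$ stays at the $n^{1/3}m^{-1/6}$ scale — the point at which the identity $m=(m')^2$ and the explicit value of $n$ are essential.
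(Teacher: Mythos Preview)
Your proposal follows essentially the same approach as the paper: reduce to $\mathcal{B}_1(\theta)$ by symmetry, control the expectation via the shape function and the identity $m=(m')^2$, condition on $\mathcal{B}_{\textup{NC}}(\theta)\cap\mathcal{B}_7(\theta)$ to localise $u_1$, reduce to a fundamental domain by periodicity, and control the infimum via Proposition~\ref{pro:MinimalLPTiid} (the paper partitions the domain into segments $\mathbb{S}(w_j,w_{j+1})$ with $w_j=(-(N-k)\theta^{-1}j,k\theta^{-1}j)$ and takes a union bound over these and the remaining choices of $u_1$). The paper works directly with the restricted last passage times $T^{m,\ell}$---which coincide for the periodic and i.i.d.\ environments by construction---rather than invoking Proposition~\ref{pro:GeodesicsPeriodic} for the transfer, which cleanly gives the rate $\exp(-c\theta)$ (your $\exp(-c\theta^{3/2})$ from Proposition~\ref{pro:MinimalLPTiid} should read $\exp(-c\theta)$ since $x\asymp\theta^{2/3}$ there, and your transfer step would only give $\exp(-c\theta^{2/3})$); your separate treatment of $u=\mathbf{0}$ via splitting the geodesic at $u_1$ is correct but unnecessary, and your concern about needing a higher power of $\theta$ in $\mathcal{B}_1(\theta)$ is unfounded---the expectation loss is at most $2\theta\sigma$, leaving room for the $4\theta\sigma$ threshold.
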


\begin{proof} By symmetry, it suffices to give a lower bound on the probability of $\mathcal{B}_{1}(\theta)$. Recall the segment $\mathbb{S}$ from \eqref{def:Segment}, and the last passage times $T^{m,\ell}_{\mathbf{0},v}$ from \eqref{def:RestrictedLPT}.
Using Corollary \ref{cor:ExpectationVariance} and Corollary \ref{cor:ExpectationLPTRestricted}, a computation shows that
\begin{equation}
| \E[ T^{m,\ell}_{\mathbf{0},v} ] - n(1+\sqrt{m})^2 | \leq 2\theta n^{1/3}m^{-1/6}
\end{equation} for all $v\in (n,mn) + \mathbb{S}( (-(N-k)/2,k/2),(N/2,-k/2))$, and $\theta \geq \theta_0$ for a sufficiently large constant $\theta_0$.
Using the shift-invariance of the periodic environment, we see that
\begin{equation}
| \E[ T^{m,\ell}_{v,\TR(u_1)} ] - n(1+\sqrt{m})^2 | \leq 2\theta n^{1/3}m^{-1/6}
\end{equation} holds for all $v\in \mathbb{L}_{m^{\prime},0}$. Next, consider the sites $w_j$ given by
\begin{equation}\label{def:ThetaSites}
w_j := (-\lfloor (N-k)\theta^{-2/3}j \rfloor,\lfloor k\theta^{-2/3}j \rfloor)
\end{equation} for all $j\in \Z$. Observe that for all $j\in \mathcal{J}:= [-\theta^{2/3}/2,\theta^{2/3}/2] \cap \Z$ and for all $u\in (n,mn) + \mathbb{S}( (-(N-k)/2,k/2),(N/2,-k/2))$, we have that
\begin{equation}\label{eq:PartitionedSegments}
\P\Big( \inf_{w \in \mathbb{S}(w_j,w_{j+1})} T^{m,\ell}_{w,u} - n(1+\sqrt{m})^2 \geq -2\theta n^{1/3}m^{-1/6} \Big) \geq 1-\exp(-c\theta)
\end{equation} for some constant $c>0$, and all $\theta>\theta_0$. This follows from Corollary \ref{cor:ExpectationLPTRestricted} to compare the expected last passage time for restricted and unrestricted geodesics, together with Proposition~\ref{pro:MinimalLPTiid} in order to provide a lower bound on the last passage times uniformly in $w \in \mathbb{S}(w_j,w_{j+1})$. Here, we note that the slope between a site in $\mathbb{S}(w_j,w_{j+1})$ and $u$ agrees up to a factor of at most $2$. In order to conclude \eqref{eq:MinimumLineToPointStatement} from  \eqref{eq:PartitionedSegments}, we first condition on the events $\mathcal{B}_{\textup{NC}}(\theta)$ and $\mathcal{B}_7(\theta)$ to hold, and then apply \eqref{eq:PartitionedSegments} together with a union bound over $j\in \mathcal{J}$ and  the respective (at most four) remaining possible choices in $\TR(u)$. The result now follows from Lemma \ref{lem:BoundOnNC} and Lemma \ref{lem:TypicalPaths} for the events $\mathcal{B}_{\textup{NC}}(\theta)$ and $\mathcal{B}_7(\theta)$, and  \eqref{eq:PartitionedSegments}.
\end{proof}


\begin{lemma}\label{lem:MaximumLineToLine}  Let $\theta_0,k_0>0$ be sufficiently large constants. Then there exists some $c_2>0$ such that for all $\theta>\theta_0$, for all $k\geq k_0$, and for all $N$ sufficiently large
\begin{equation}
\P(\mathcal{B}_{2}(\theta)) \geq 1-\exp(-c \theta) \quad \text{and} \quad  \P(\mathcal{B}_{4}(\theta)) \geq 1-\exp(-c \theta) .
\end{equation} Similarly, for all $\theta>\theta_0$, for all $k\geq k_0$ and for all $N$ sufficiently large
\begin{equation}
\P(\mathcal{B}_{5}(\theta)) \geq 1-\exp(-c \theta) \quad \text{and} \quad  \P(\mathcal{B}_{6}(\theta)) \geq 1-\exp(-c \theta)\, .
\end{equation}
\end{lemma}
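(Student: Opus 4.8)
The plan is to reduce all four bounds to one. An $(N,k)$-periodic family of i.i.d.\ $\mathrm{Exp}(1)$ weights is stationary under every translation of $\Z^2$, since its law depends only on the period lattice $\Z(N-k,-k)$; as $\mathcal{B}_4(\theta)$ (resp.\ $\mathcal{B}_6(\theta)$) is the image of $\mathcal{B}_2(\theta)$ (resp.\ $\mathcal{B}_5(\theta)$) under the translation sending $\mathbb{L}_{m^{\prime},0},\mathbb{L}_{m^{\prime},\ell^{\prime}}$ to $\mathbb{L}_{m^{\prime},2\ell^{\prime}},\mathbb{L}_{m^{\prime},3\ell^{\prime}}$, we get $\P(\mathcal{B}_2)=\P(\mathcal{B}_4)$ and $\P(\mathcal{B}_5)=\P(\mathcal{B}_6)$. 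It remains to bound $\P(\mathcal{B}_2(\theta))$ and $\P(\mathcal{B}_5(\theta))$; the two arguments are identical up to replacing the slab width $\ell^{\prime}$ by $\ell^{\prime}/2$ and the target $n(1+\sqrt m)^2$ by $\tfrac12 n(1+\sqrt m)^2$ (the extremal displacement becoming $(n/2,mn/2)$ instead of $(n,mn)$), so I only describe $\mathcal{B}_2(\theta)$. A geodesic between $u\in\mathbb{L}_{m^{\prime},0}$ and $v\in\mathbb{L}_{m^{\prime},\ell^{\prime}}$ with $v\succeq u$ is confined to the slab between the two lines; hence, by periodicity of the environment, I may restrict the supremum defining $\mathcal{B}_2(\theta)$ to $u$ in a fundamental domain $S_0\subseteq\mathbb{L}_{m^{\prime},0}$ for the translation by $(N-k,-k)$.

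The key is an elementary fact about the shape function $\g(a,b):=(\sqrt a+\sqrt b)^2$, which is positively $1$-homogeneous and concave on $\R_{\geq0}^2$, with (constant) gradient $\big(1+\sqrt m,\,1+\tfrac1{\sqrt m}\big)$ along the ray through $(1,m)$. Using $-m^{\prime}=\sqrt m$ and $\ell^{\prime}=(m-m^{\prime})n=(m+\sqrt m)n$, the displacement of a comparable pair $(u,v)$ as above is $w=(w_1,w_2)$ with $w_2=-\sqrt m\,w_1+(m+\sqrt m)n$ and $w_1\in[0,(1+\sqrt m)n]$; a one-line computation gives $(1+\sqrt m)w_1+(1+\tfrac1{\sqrt m})w_2=n(1+\sqrt m)^2$ for every such $w$, i.e.\ the whole family of displacements lies on the supporting line of the level set $\{\g=n(1+\sqrt m)^2\}$ at $(n,mn)$. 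Concavity of $\g$ then yields, for every such $w$ and every $\beta\geq0$,
\be
\g(w)\leq n(1+\sqrt m)^2
\qquad\text{and}\qquad
\g\big(\beta n(1,m)+w\big)\leq \g\big(\beta n(1,m)\big)+(1+\sqrt m)w_1+\big(1+\tfrac1{\sqrt m}\big)w_2=(\beta+1)\,n(1+\sqrt m)^2;
\ee
the first inequality is the statement that the line-to-line shape is maximised at the flat pair $(n,mn)$.

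Now fix $\alpha,\alpha^{\prime}>0$ with $\beta:=\alpha+\alpha^{\prime}$ chosen (of order $1/\sqrt m$) so that the displacements appearing below have slope of order $m$, and for a comparable pair $(u,v)$ set $W_1:=u-\alpha n(1,m)$, $W_2:=v+\alpha^{\prime}n(1,m)$, so $W_1\preceq u\preceq v\preceq W_2$ and
\be
T_{u,v}\ \leq\ T_{W_1,W_2}-T_{W_1,u}-T_{v,W_2}.
\ee
Each of the pairs $(W_1,u)$, $(v,W_2)$, $(W_1,W_2)$ has slope of order $m$; by Corollary~\ref{cor:FluctuationsPeriodic} and the ordering of geodesics their geodesics lie, outside an event of probability at most $\exp(-c\theta)$, in a strip of transversal width $\leq k/2$, and since two $(N,k)$-periodic translates can never both lie in a strip of width below $kN/(N-k)$, on that strip the periodic environment is genuinely i.i.d.; thus the three last passage times agree with their i.i.d.\ counterparts. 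In the i.i.d.\ environment, Proposition~\ref{pro:MaximalLPTCylinder} bounds $\sup\big(T_{W_1,W_2}-\E[T_{W_1,W_2}]\big)$ over the (bounded) range of the $(W_1,W_2)$ occurring here, while the chaining argument behind Proposition~\ref{pro:MinimalLPTiid} and Corollary~\ref{cor:MinimumLPTline} bounds $\inf\big(T_{W_1,u}-\E[T_{W_1,u}]\big)$ and $\inf\big(T_{v,W_2}-\E[T_{v,W_2}]\big)$ from below over $u\in S_0$ and over the relevant $v$ — in each case up to an error $O(\theta n^{1/3}m^{-1/6})$ and with the asserted probability, and crucially these are suprema/infima over whole regions, so no union bound over the (possibly $\asymp N$ many) relevant base points is incurred. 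Replacing expectations by $\g$ via Corollaries~\ref{cor:ExpectationVariance} and~\ref{cor:ExpectationLPTRestricted} and noting $W_2-W_1=\beta n(1,m)+(v-u)$ while $\E[T_{W_1,u}]+\E[T_{v,W_2}]=\beta n(1+\sqrt m)^2+O(n^{1/3}m^{-1/6})$, the second inequality of the displayed pair gives $T_{u,v}\leq n(1+\sqrt m)^2+O(\theta n^{1/3}m^{-1/6})$ uniformly over comparable pairs, which is $\mathcal{B}_2(\theta)$ after rescaling $\theta$ by an absolute constant. Running the same steps with slab width $\ell^{\prime}/2$ and extremal displacement $(n/2,mn/2)$ gives $\mathcal{B}_5(\theta)$.

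The delicate point, and the reason the statement is not immediate from Section~\ref{sec:LPPestimates}, is that $\mathcal{B}_2(\theta)$ requires a bound on $T_{u,v}$ uniform over \emph{all} comparable pairs across the slab — including those whose displacement is nearly horizontal or nearly vertical, which fall outside the scope of Proposition~\ref{pro:MaximalLPTCylinder}, and for which the number of relevant base points may grow with $N$ so that a direct union bound is worthless. The enclosing-pair construction together with the concavity (supporting-line) property of $\g$ is exactly what routes even these pairs through \emph{flat} last passage times, so that the region-wise estimates of Section~\ref{sec:LPPestimates}, which themselves avoid union bounds via the FKG inequality, can be invoked; arranging the parameter $\beta$ and the strip width simultaneously (so that the auxiliary flat parallelograms both meet the hypotheses of Proposition~\ref{pro:MaximalLPTCylinder} and sit inside a strip on which the periodic environment is i.i.d.) is the main bookkeeping burden.
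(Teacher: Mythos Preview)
Your concavity observation about the shape function $\g$ is correct and elegant: the displacements $v-u$ between the two lines of slope $m'=-\sqrt m$ lie exactly on the supporting line of the level set $\{\g=n(1+\sqrt m)^2\}$ at $(n,mn)$, so $\g(v-u)\leq n(1+\sqrt m)^2$ uniformly. The paper uses essentially the same fact (packaged as the computation $\E[T^{m,\ell}_{\mathbf 0,v}]\leq n(1+\sqrt m)^2+Cn^{1/3}m^{-1/6}$ for all $v$ on the target line).

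However, your enclosing-pair construction with $\beta\asymp 1/\sqrt m$ breaks when $m\to 0$ (equivalently, when $k\ll N$). With that choice the auxiliary pair $(W_1,W_2)$ has $\eone$-extent of order $\beta n\asymp n/\sqrt m$ at slope of order $m$, and the typical transversal fluctuation of $\gamma_{W_1,W_2}$ is of order $(n/\sqrt m)^{2/3}m^{2/3}=n^{2/3}m^{1/3}$. But the width of the strip on which the $(N,k)$-periodic environment is genuinely i.i.d.\ is $k\asymp \theta^{2/3}n^{2/3}m^{2/3}$, and the ratio $n^{2/3}m^{1/3}/k\asymp \theta^{-2/3}m^{-1/3}$ diverges as $m\to 0$. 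Consequently, Corollary~\ref{cor:FluctuationsPeriodic} applied at scale $\tilde n=\beta n$ only gives a bound $\exp(-c\theta^{2/3}m^{1/3})$, not $\exp(-c\theta)$, and your claim that the three auxiliary geodesics stay in a width-$k/2$ strip with probability at least $1-\exp(-c\theta)$ fails. The reason you were forced into $\beta\asymp 1/\sqrt m$ is the nearly-vertical extreme $w_1\approx 0$; but for those pairs $T_{u,v}$ is a sum of roughly $\sqrt m\,n$ exponentials, hence of order $\sqrt m\,n\ll n(1+\sqrt m)^2$, and no sandwich is needed. The natural repair is to treat steep (and symmetrically, flat) pairs directly and reserve the sandwich, with $\beta=O(1)$, for pairs whose slope is already of order $m$ --- but this is not what you wrote.

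The paper avoids this altogether by never introducing auxiliary points far outside the slab: it partitions a fundamental domain on each line into order-$\theta$ segments $\mathbb{S}(w_j,w_{j+1})$ of length $N\theta^{-1}$, uses the expected-value bound together with Corollary~\ref{cor:FluctuationsPeriodic} to restrict to segment pairs whose connecting slope is of order $m$, and then applies Proposition~\ref{pro:MaximalLPTCylinder} to each such pair, followed by a union bound over the polynomially-in-$\theta$ many pairs. All the parallelograms in play have $\eone$-extent of order $n$, so the periodic-to-i.i.d.\ comparison goes through with the desired exponent.
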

\begin{proof} We will in the following only show the lower bound on the probability of the event $\mathcal{B}_2(\theta)$ as the remaining cases are similar. Recall the sites $(w_j)$ defined \eqref{def:ThetaSites}. By Corollary~\ref{cor:FluctuationsPeriodic}, we find some constant $\theta_0>0$ such that
\begin{equation}\label{eq:Transversal1}
\P\left(\TF(\gamma_{w_j,w_i+(n,mn)}) \leq \frac{k}{2} \text{ for all } i,j\in [-\theta^{2/3},\theta^{2/3}] \right) \geq 1-\exp(-c_2\theta)
\end{equation} for some $c_2>0$, and all $\theta>\theta_0$. Thus, conditioning on the event $\mathcal{B}_{\textup{NC}}(\theta)$, and using \eqref{eq:Transversal1} to bound the transversal fluctuations between $w_j$ and $(n,mn)+w_i$ as well as between $w_{j+1}$ and $(n,mn)+w_{i+1}$, it suffices to show that
\begin{equation}\label{eq:Target}
\Big\{ T^{m,\ell}_{u,v} - n(1+\sqrt{m})^2 \leq 2\theta n^{\frac{1}{3}}m^{-\frac{1}{6}} \text{ for all } u\in \mathbb{S}(w_j,w_{j+1}), v\in (n,mn)+ \mathbb{S}(w_i,w_{i+1})\Big\}
\end{equation} holds for all $i,j\in [-\theta^{2/3},\theta^{2/3}] \cap \Z$ with probability at least $1-\exp(-c_1\theta)$ for some $c_1>0$, and all $\theta>0$ sufficiently large. Due to Corollary \ref{cor:ExpectationVariance}, there exists some $C>0$ such that
\begin{equation}\label{eq:Expected1}
 \E[ T_{\mathbf{0},v} ] - n(1+\sqrt{m})^2  \leq C n^{1/3}m^{-1/6}
\end{equation} holds for all $v\in (n,mn) + \mathbb{L}_{m^{\prime},0}$. Similarly, by shift-invariance we get that
\begin{equation}\label{eq:Expected2}
 \E[ T_{u,v} ] - n(1+\sqrt{m})^2  \leq C n^{1/3}m^{-1/6}
\end{equation} for all $u\in \mathbb{L}_{m^{\prime},0}$ and $v\in \mathbb{L}_{m^{\prime},0}+(n,mn)$. Combining \eqref{eq:Expected2} with the obvious inequality $T^{m,\ell}_{u,v} \leq T_{u,v}$, and conditioning on the event in \eqref{eq:Transversal1}, we apply Proposition \ref{pro:MaximalLPTCylinder} to obtain the desired bound \eqref{eq:Target}.
\end{proof}

\begin{proof}[Proof of Lemma \ref{lem:ConnectionProbabilities}]
Combine Lemma~\ref{lem:TypicalPaths} for the event $\mathcal{B}_7$, Lemma~\ref{lem:MinimumLineToPoint} for the events $\mathcal{B}_1,\mathcal{B}_3$, and  Lemma~\ref{lem:MaximumLineToLine} for the events $\mathcal{B}_2,\mathcal{B}_4,\mathcal{B}_5,\mathcal{B}_6$ to conclude.
\end{proof}

\subsection{Estimates on the barrier}\label{sec:Barrier}

In Lemma \ref{lem:BoundOnNC} and Lemma \ref{lem:ConnectionProbabilities}, we saw that the events $\mathcal{B}_{\textup{NC}}$ and $\mathcal{B}_{\textup{Con}}(\theta)$ hold with probability tending to $1$ as $\theta \rightarrow \infty$. We  now show that the event $\mathcal{B}(\theta)=\mathcal{B}_{\textup{Bar}}(\theta) \cap \mathcal{B}_{\textup{NC}} \cap \mathcal{B}_{\textup{Con}}(\theta)$ occurs for all $\theta>0$ with strictly positive probability.

\begin{lemma}\label{lem:BarrierEstimate}
There exist constants $\tilde{\theta_0},k_0>0$ such that for all $\theta \geq \tilde{\theta_0}$, we find  $\varepsilon=\varepsilon(\theta,k_0)>0$ such that for all  $k\geq k_0$, and $N$ large enough
$\P(\mathcal{B}_{\textup{Bar}}(\theta) \cap \mathcal{B}_{\textup{Con}}(\theta) \cap \mathcal{B}_{\textup{NC}}(\theta)) \geq \varepsilon$.
\end{lemma}

In order to show Lemma~\ref{lem:BarrierEstimate}, we use the following setup. 
Fix a path $\bar{\gamma} \in \Pi_{\mathbf{0},(3n,3mn)}$. We consider the event $\mathcal{D}_{\bar{\gamma}}$ from \eqref{def:DbarEvent} that $\gamma_{\mathbf{0},(3n,3mn)}=\bar{\gamma}$. 
Recall the rectangles $R_-=R_-(\bar{\gamma})=R(\bar{\gamma}_{u_1,\bar{u}})$ and $R_+=R_+(\bar{\gamma})=R(\bar{\gamma}_{\bar{u},u_2})$ for $R$ from \eqref{def:Rectangle} with respect to $u_1,\bar{u},u_2$ from \eqref{def:u1u2}. We write in the following $R^{\circ}_-$ for the interior of $R_-$, i.e.\ the set of sites which we get by removing from $R_-$ the paths $\TR(\bar{\gamma})$ as well as the lines $\mathbb{L}_{m^{\prime},\ell}$ and $\mathbb{L}_{m^{\prime},3\ell^{\prime}/2}$. Similarly, we write $(R_- \cup R_+)^{\circ}$ for the interior of $R_+ \cup R_-$. 
Furthermore, recall that for all $S \subseteq \Z^2$ we denote by $T_{u,v}^S$ the last passage times using only lattice paths in the interior of $S$. We require the following estimate on restricted last passage times.

\begin{lemma}\label{lem:SmallPartitionsBarrier} For any fixed lattice path $\bar{\gamma} \in \Pi_{\mathbf{0},(3n,3mn)}$ and all $z>0$, 
\begin{equation}\label{eq:GenericSmaller}
\begin{split}
\P\big(   T^{R_-}_{u,v} \leq z   \text{ for all } u , v \in R_- \, \big| \, \mathcal{D}_{\bar{\gamma}}\big) \geq \P\big(  T^{R_-}_{u,v} \leq z \text{ for all } u , v \in R_- \big) ,\\
\P\big(   T^{R_+}_{u,v} \leq z   \text{ for all } u , v \in R_+ \, \big| \,  \mathcal{D}_{\bar{\gamma}}\big) \geq \P\big(  T^{R_+}_{u,v} \leq z \text{ for all } u , v \in R_+ \big). 
\end{split}
\end{equation} Moreover, there exist constants $k_0,\tilde{\theta}_0$ such that for all $\theta>\tilde{\theta}_0$, we find some $c=c(\theta,k_0)>0$ such that for all $N$ sufficiently large and $k\geq k_0$, and $\bar{\gamma}$ with $\TR(\bar{\gamma})\leq k/2$, 
\begin{equation}
\begin{split}\label{eq:GeneralSmallerRelevant}
\P\Big(   T^{R_-}_{u,v} \leq  \frac{n}{2}(1+\sqrt{m})^2  -15\theta n^{1/3}m^{-1/6}   \ \text{ for all } u ,v\in R_- \Big)  &\geq c \\
\P\Big(   T^{R_+}_{u,v} \leq  \frac{n}{2}(1+\sqrt{m})^2 -15\theta n^{1/3}m^{-1/6}   \ \text{ for all } u ,v\in R_+ \Big)  &\geq c . 
\end{split}
\end{equation} 
\end{lemma}
\begin{proof}
In the following, we consider the sites
\begin{equation}\label{eq:RelevantSites}
\mathcal{W}_{\bar{\gamma}} := R(\bar{\gamma}) \setminus (( R_+ \cup R_- )^{\circ} \cup \mathbb{L}_{m^{\prime},\ell^{\prime}} )  , 
\end{equation} which we obtain from $R(\bar{\gamma})$ by removing the interior of $R_+ \cup R_-$ and the line $\mathbb{L}_{m^{\prime},\ell^{\prime}}$. Let $\nu_{\bar{\gamma}}$ denote the law of the $(N,k)$-periodic environment $(\omega_v)_{v \in \mathcal{W}_{\bar{\gamma}}}$ when we condition on the   path $\bar{\gamma}$ to be the geodesic $\gamma_{\mathbf{0},(3n,3mn)}$.
For any event which is independent of the environment on $\mathcal{W}_{\bar{\gamma}}$, and decreasing with respect to the environment outside of $\mathcal{W}_{\bar{\gamma}}$, in particular the event $\{ T^{R_-}_{u,v}\leq z \}$ for some $u,v \in R_-$ and $z>0$,  we see that
\begin{equation}\label{eq:ConditionAway}
\begin{split}
\P\big( T^{R_-}_{u,v} \leq z | \mathcal{D}_{\bar{\gamma}}\big) &= \int \P\Big( T^{R_-}_{u,v} \leq z  \, \Big| \, (\omega_v)_{v \in \mathcal{W}_{\bar{\gamma}}}=W ,\mathcal{D}_{\bar{\gamma}} \Big) \nu_{\bar{\gamma}}(\dif W) \\
&\geq \int  \P\big( T^{R_-}_{u,v} \leq z  \, \big| \, (\omega_v)_{v \in \mathcal{W}_{\bar{\gamma}}}=W  \big)   \nu_{\bar{\gamma}}(\dif W) =\P( T^{R_-}_{u,v} \leq z) . \end{split}
\end{equation}  Here, we use the FKG inequality for the second line as the event $\mathcal{D}_{\bar{\gamma}}$, for a fixed environment on $\mathcal{W}_{\bar{\gamma}}$, is decreasing with respect to $(\omega_v)_{v \in R \setminus \mathcal{W}_{\bar{\gamma}}}$, and the fact that $T^{R_-}_{u,v}$ is independent of $W$ for the last equality. As by symmetry a similar argument applies with respect to $R_+$, this yields the first set of inequalities in \eqref{eq:GenericSmaller}. \\

For the second part \eqref{eq:GeneralSmallerRelevant}, we will only consider the last passage times with respect to $R_-$, as the argument is similar for the last passage times restricted to $R_+$. Using Lemma \ref{lem:IndependentStrip} and our  assumption $\TR(\bar{\gamma})\leq k/2$, we can extend the $(N,k)$-periodic environment $(\omega_v)_{v \in \Z^2}$ restricted to $R^{\circ}_-$ to an i.i.d.\ environment on the sites
\begin{equation}
\mathcal{S}_- := \bigcup_{w \in  \mathbb{S}(\mathbf{0},(3n,3mn))} \mathbb{S}(w+(-(N-k),k),w+(2(N-k),-2k)) , 
\end{equation} for any choice of $\bar{\gamma}$. More precisely, fix a path $\bar{\gamma}$. We denote by $\bar{T}^{\mathcal{S}_-}_{u,v}$ for $u,v \in \mathcal{S}_-$ with $u \preceq v$ the respective last passage times, restricted to $\mathcal{S}_-$, when filling up the remaining sites in $\mathcal{S}_-$ by i.i.d.\ Exponential-$1$-random variables, i.e.\ for every choice of a lattice path $\bar{\gamma} \in \Pi_{\mathbf{0},(3n,3mn)}$ with $\TF(\bar{\gamma})\leq k/2$, we consider the environment $(\bar{\omega}_v)_{v \in \mathcal{S}_-}$, where
\begin{equation}
\bar{\omega}_v := \begin{cases}
\omega_{v} & \text{ if } v \in R^{\circ}_- \\
\omega^{\prime}_v & \text{ if } v \notin R^{\circ}_-
\end{cases}
\end{equation} for all $v \in \mathcal{S}_-$. Here, $(\omega^{\prime}_v)_{v \in \mathcal{S}_-}$ are independent Exponential-$1$-random variables, and  we recall  $(\omega_v)_{v \in R^{\circ}_-}$ as a part of an $(N,k)$-periodic environment. Now $\bar{T}^{\mathcal{S}_-}_{u,v}$ has the law of last passage times in an i.i.d.\ environment, using only paths which are fully contained in $\mathcal{S}^{\circ}_-$. With a slight abuse of notation, we will write again $\P$ for the law under this extended environment $(\bar{\omega}_v)_{v \in \mathcal{S}_-}$. 
Using \eqref{eq:GenericSmaller} and the relation $T^{R_-}_{u,v} \leq \bar{T}^{\mathcal{S}_-}_{u,v}$ for all $u,v \in R_-$, it suffices to show that 
\begin{equation}\label{eq:GenericSmaller3}
\P\left(   \bar{T}^{\mathcal{S}_-}_{u,v} \leq \frac{n}{2}(1+\sqrt{m})^2 - 15\theta n^{\frac{1}{3}}m^{-\frac{1}{6}} \ \forall u \in R_- \cap \mathbb{L}_{m^{\prime},\ell^{\prime}} \text{ , } v \in R_- \cap \mathbb{L}_{m^{\prime},\frac{3}{2}\ell^{\prime}} \right) \geq c
\end{equation} for some constant $c>0$.
 To do so, we follow a similar strategy as in the proof of Lemma~8.3 in \cite{BSV:SlowBond} for uniformly bounded slopes. We partition  $\mathcal{S}_- \cap \mathbb{L}_{m^{\prime},\ell^{\prime}}$ and $\mathcal{S}_- \cap \mathbb{L}_{m^{\prime},3\ell^{\prime}/2}$ into smaller parts, similar to Lemma~\ref{lem:MinimumLineToPoint}. More precisely, let $\tau=\tau(\theta) \in \N$ be a natural number chosen later on. We define the sites
\begin{equation}\label{def:ThetaSites2}
w^{\prime}_j := (\lfloor (N-k)(\tau^{-1}j-1) \rfloor,-\lfloor k(\tau^{-1}j-1) \rfloor)
\end{equation} for all $j \in [3\tau]$, and set in the following for all $i \in [3\tau]$
\begin{align*}
E_i := (n,mn)+\mathbb{S}(w_i^{\prime},w^{\prime}_{i+1}) \quad \text{ and } \quad 
F_i := (3n/2,3mn/2)+\mathbb{S}(w^{\prime}_i,w^{\prime}_{i+1}) , 
\end{align*}  i.e.\ we split the two segments $\mathcal{S}_- \cap \mathbb{L}_{m^{\prime},\ell^{\prime}}$ and $\mathcal{S}_- \cap \mathbb{L}_{m^{\prime},3\ell^{\prime}/2}$ 
into $3\tau$ many equally long parts. 
Using the FKG inequality, it suffices for \eqref{eq:GenericSmaller3} to show that for all $i,j \in  [3\tau]$, 
\begin{equation}\label{eq:TargetBarrierReduced}
\P\left( \sup_{u \in E_i, v \in F_j}\bar{T}^{\mathcal{S}_-}_{u,v} - \frac{n}{2}(1+\sqrt{m})^2 \leq -15\theta n^{1/3}m^{-1/6} \right) \geq c^{\prime}
\end{equation} for some constant $c^{\prime}=c^{\prime}(\theta)>0$. 
In order to show \eqref{eq:TargetBarrierReduced}, we define for each segment $E_i$ and $F_j$ the sites
\begin{align*}
a_i :=  (n,mn) + w_i^{\prime} - \Big( \frac{n}{2\tau},\frac{mn}{2\tau} \Big) \quad \text{ and } \quad 
b_j := (3n/2,3mn/2) + w_i^{\prime} + \Big( \frac{n}{2\tau},\frac{mn}{2\tau} \Big)
\end{align*} for $i,j \in  [3\tau]$. 
As before, with a slight abuse of notation, we write in the following $T^{\mathcal{S}_-}_{a_i,b_j}$ for the last passage time between $a_i$ and $b_j$ in an i.i.d.\ environment using only lattice paths which between the lines $\mathbb{L}_{m^{\prime},\ell^{\prime}}$ and $\mathbb{L}_{m^{\prime},3\ell^{\prime}/2}$ are fully contained in the interior of $\mathcal{S}_-$. Lemma~\ref{lem:ShapeTheorem} and Corollary~\ref{cor:ExpectationLPTRestricted} yield
\begin{align}
\Big| \E[\bar{T}^{\mathcal{S}_-}_{u,v}] - \frac{n}{2}(1+\sqrt{m})^2 \Big| \leq \tilde{C} n^{1/3}m^{-1/6} \\
\Big| \E[\bar{T}^{\mathcal{S}_-}_{a_i,b_j}] - \Big(\frac{1}{2}+\frac{1}{\tau}\Big)n(1+\sqrt{m})^2 \Big| \leq \tilde{C} n^{1/3}m^{-1/6} \label{eq:SecondExpectation}
\end{align} for all $u \in \mathbb{L}_{m^{\prime},\ell^{\prime}} \cap \mathcal{S}_-$ and $v \in \mathbb{L}_{m^{\prime},3\ell^{\prime}/2} \cap \mathcal{S}_-$, and some absolute constant $\tilde{C}>0$. With a similar bound on  $\E[\bar{T}^{\mathcal{S}_-}_{a_i,u}]$ and $\E[\bar{T}^{\mathcal{S}_-}_{v,b_j}]$ uniformly in $u \in E_i$ and $v\in F_j$, we see that there exists some absolute constant $C>0$ such that for all $\tau=\tau(\theta)$ large enough 
\begin{equation}
\Big| \E[\bar{T}^{\mathcal{S}_-}_{a_i,u}] + \E[\bar{T}^{\mathcal{S}_-}_{u,v}] + \E[\bar{T}^{\mathcal{S}_-}_{v,b_j}] - \E[\bar{T}^{\mathcal{S}_-}_{a_i,b_j} ] \Big| \leq C n^{1/3}m^{-1/6}
\end{equation} for all $u\in  E_i, v \in F_j$ with some $i,j \in  [\tau]$.  Combining the above observations, there exists some $\tilde{\theta}_0>0$ such that for all $\theta>\tilde{\theta}_0$, and $\tau$ large enough, we have for all $i,j \in  [\tau]$
\begin{align*}
&\P\Big( \sup_{u \in E_i, v \in F_j}\bar{T}^{\mathcal{S}_-}_{u,v} - \frac{n}{2}(1+\sqrt{m})^2 \leq -15\theta n^{1/3}m^{-1/6} \Big) \nonumber \\
&\geq \P\Big(    \bar{T}^{\mathcal{S}_-}_{a_i,b_j} -\Big(\frac{1}{2}+\frac{1}{\tau}\Big)n(1+\sqrt{m})^2  \leq -20\theta n^{1/3}m^{-1/6} \Big) \nonumber \\
 &- \P\Big(  \inf_{v\in E_i} \Big( \bar{T}^{\mathcal{S}_-}_{a_i,v} - \E[\bar{T}^{\mathcal{S}_-}_{a_i,v}]  \Big) \leq - \theta n^{1/3}m^{-1/6}  \Big)  -  \P\Big( \inf_{w \in F_j} \Big( \bar{T}^{\mathcal{S}_-}_{w,b_j} -  \E[\bar{T}^{\mathcal{S}_-}_{w,b_j}] \Big)  \leq - \theta n^{1/3}m^{-1/6}  \Big) \, . \nonumber
\end{align*}
We claim that there exists some $k_0=k_0(\theta)$ such that for all $k\geq k_0$, uniformly in $i,j \in [\tau]$, 
\begin{equation}\label{eq:LargeMaxDistance}
\P\left(    \bar{T}^{\mathcal{S}_-}_{a_i,b_j} - \Big(\frac{1}{2}+\frac{1}{\tau}\Big)n(1+\sqrt{m})^2   \leq -20\theta n^{1/3}m^{-1/6} \right) \geq \exp(-c^{\prime} \theta^2)
\end{equation}
for some absolute constant $c^{\prime}>0$, and all $\theta$ sufficiently large. 
This follows using Lemma~\ref{lem:ShapeTheorem} for a lower bound on the lower tails of the last passage times $\bar{T}^{\mathcal{S}_-}_{a_i,b_j}$ in an i.i.d.\ environment. Choosing now $\tau=\tau(\theta,c^{\prime})$ to be a sufficiently fast growing polynomial in $\theta$, e.g.\ we set $\tau=\theta^{6}$, Proposition~\ref{pro:MinimalLPTiid} yields 
\begin{align}
\P\big( \inf_{v\in E_i} \bar{T}^{\mathcal{S}_-}_{a_i,v} - \E[\bar{T}^{\mathcal{S}_-}_{a_i,v}]   \leq - \theta n^{1/3}m^{-1/6}  \big) &\leq \frac{c^{\prime}}{4} \\
\P\big( \inf_{w \in F_j}  \bar{T}^{\mathcal{S}_-}_{w,b_j} -  \E[\bar{T}^{\mathcal{S}_-}_{w,b_j}]   \leq - \theta n^{1/3}m^{-1/6}  \big) &\leq \frac{c^{\prime}}{4} , 
\end{align}
for all $\theta$ large enough, allowing us to conclude.
\end{proof}

\begin{proof}[Proof of Lemma \ref{lem:BarrierEstimate}] 
In the following, to simplify the notation, we drop the dependence on $\theta$ in the events $\mathcal{B}_{\cdot}$ defined for Proposition \ref{pro:ThreeEvents}. In order to show Lemma~\ref{lem:BarrierEstimate}, we have the following strategy. First, we fix a path $\bar{\gamma} \in \Pi_{\mathbf{0},(3n,3mn)}$ and condition the environment on the respective sites $\mathcal{W}_{\bar{\gamma}}$ from \eqref{eq:RelevantSites} to take a fixed value $W$.
Note that for any choice of $\bar{\gamma}$, the event $\mathcal{B}^{\bar{\gamma}}_+ \cap \mathcal{B}^{\bar{\gamma}}_-$ does only depend on $(\omega_v)_{R(\bar{\gamma})}$. Moreover, it is independent of the values $W$ and decreasing with the respect to $(\omega_v)_{v \in R(\bar{\gamma}) \setminus \mathcal{W}_{\bar{\gamma}}}$. Furthermore, note that the events $\mathcal{B}_5,\mathcal{B}_6,\mathcal{B}_{\textup{NC}}^{\bar{\gamma}},\mathcal{D}_{\bar{\gamma}}$ also only depend on $(\omega_v)_{R(\bar{\gamma})}$, and are decreasing with respect to the environment $(\omega_v)_{v \in R(\bar{\gamma}) \setminus \mathcal{W}_{\bar{\gamma}}}$. Hence, we apply the FKG inequality with respect to the environment $(\omega_v)_{v \in R(\bar{\gamma}) \setminus \mathcal{W}_{\bar{\gamma}}}$ to see that 
\begin{align}\label{eq:FKGAction}\begin{split}
\P(\mathcal{B}^{\bar{\gamma}}_+,\mathcal{B}^{\bar{\gamma}}_-,\mathcal{B}_5,\mathcal{B}_6,\mathcal{B}^{\bar{\gamma}}_{\textup{NC}},\mathcal{D}_{\bar{\gamma}} \, &| \,  (\omega_v)_{v \in \mathcal{W}_{\bar{\gamma}}}= W) \\
&\geq \P(\mathcal{B}^{\bar{\gamma}}_-)\P(\mathcal{B}^{\bar{\gamma}}_+)  \P(\mathcal{B}_5,\mathcal{B}_6,\mathcal{B}_{\textup{NC}}^{\bar{\gamma}},\mathcal{D}_{\bar{\gamma}} \, | \,  (\omega_v)_{v \in \mathcal{W}_{\bar{\gamma}}}= W) . 
\end{split}
\end{align}
Next, note that the events $\mathcal{B}_1,\mathcal{B}_2,\mathcal{B}_3,\mathcal{B}_4$ and $\mathcal{B}^{\bar{\gamma}}_7$ depend on the path $\bar{\gamma}$ and the values $W$ which the respective environment on the sites $\mathcal{W}_{\bar{\gamma}}$ takes, but not on $(\omega_v)_{v \in R(\bar{\gamma}) \setminus \mathcal{W}_{\bar{\gamma}}}$. Let $\nu_{\bar{\gamma}}$ denote the law of $(\omega_v)_{v \in \mathcal{W}_{\bar{\gamma}}}$, and let $\mathcal{A}$ be the set of pairs $(\bar{\gamma},W)$ such that $\mathcal{B}_1,\mathcal{B}_2,\mathcal{B}_3,\mathcal{B}_4 $ and $\mathcal{B}^{\bar{\gamma}}_7$ occur. Then 
\begin{align*}
\P(\mathcal{B})&= \sum_{\bar{\gamma} \in \Pi_{\mathbf{0},(3n,3mn)} } \int \mathds{1}_{\{(\bar{\gamma},W) \in \mathcal{A}\} } \P(\mathcal{B}^{\bar{\gamma}}_+,\mathcal{B}^{\bar{\gamma}}_-,\mathcal{B}_5,\mathcal{B}_6,\mathcal{B}^{\bar{\gamma}}_{\textup{NC}},\mathcal{D}_{\bar{\gamma}} \, | \,  (\omega_v)_{v \in \mathcal{W}_{\bar{\gamma}}}= W)  \nu_{\bar{\gamma}}(\dif W )  \\
& \geq \sum_{\bar{\gamma} \colon \TF(\bar{\gamma})\leq k/2} \P(\mathcal{B}^{\bar{\gamma}}_-)\P(\mathcal{B}^{\bar{\gamma}}_+) \P( \mathcal{B}_{\text{Con}}^{\bar{\gamma}}  \cap \mathcal{B}_{\textup{NC}}^{\bar{\gamma}} \cap \mathcal{D}_{\bar{\gamma}} )   \\
& \geq c^2 \Big( \Big(  \sum_{\bar{\gamma} \in \Pi_{\mathbf{0},(3n,3mn)}} \P( \mathcal{B}_{7}^{\bar{\gamma}} \cap  \mathcal{B}_{\textup{NC}}^{\bar{\gamma}} \cap \mathcal{D}_{\bar{\gamma}}) \Big) - \P\Big(\TF(\gamma_{\mathbf{0},(3n,3mn)}) > \frac{k}{2}\Big) - \sum_{i\in [6]}\P(\mathcal{B}^{\complement}_i)\Big) \\
&\geq c^2 \Big(  1- \P(\mathcal{B}^{\complement}_{\textup{NC}}) - \P(\mathcal{B}^{\complement}_{7}) - \P\Big(\TF(\gamma_{\mathbf{0},(3n,3mn)}) > \frac{k}{2}\Big) - \sum_{i\in [6]}\P(\mathcal{B}^{\complement}_i)  \Big) > 0 .
\end{align*}
%
For the first line, we use the definition of the event $\mathcal{B}$, for the second line, we use \eqref{eq:FKGAction}.
The constant $c$ in the third line is taken from Lemma~\ref{lem:SmallPartitionsBarrier}, which provides a lower bound on the probability of the events $\mathcal{B}^{\bar{\gamma}}_-$ and $\mathcal{B}^{\bar{\gamma}}_+$ uniformly in $\bar{\gamma}$ with  $\TF(\bar{\gamma}) \leq k/2$. Using  Corollary~\ref{cor:FluctuationsPeriodic},  Lemma \ref{lem:BoundOnNC} and  Lemma~\ref{lem:ConnectionProbabilities} for the last line, we conclude.
\end{proof}

\subsection{Upper bound  on the coalescence of second class particles} \label{sec:UpperBoundCoalescence}

Using the above results, we can now show the upper bound in Theorem \ref{thm:Coalescence} on the coalescence of second class particles.   The following corollary summarizes the main results of Section~\ref{sec:Coalescence}.
\begin{corollary}\label{cor:MainCoalescenceStatement}
There exist a constant $\theta_0>0$ such that for all $\theta>\theta_0$, we find constants  $k_0=k_0(\theta)$ and $c=c(\theta,k_0)>0$ such that for $\bar{u}$ from \eqref{def:u1u2}
\begin{equation}\label{eq:MainCorollaryStatement}
\P(\Gamma^{\prime}_{u,v} \cap\TR(\bar{u}) \neq \emptyset \text{ for all } u\in \mathbb{L}_{m^{\prime},0} \text{ and }v\in \mathbb{L}_{m^{\prime},3\ell^{\prime}} \text{ with }v \succeq u ) \geq c
\end{equation} holds for all $k\geq k_0$ and $N$ sufficiently large. 
\end{corollary}
\begin{proof}
This follows from Proposition \ref{pro:ThreeEvents}, together with Lemmas~\ref{lem:BoundOnNC}, \ref{lem:ConnectionProbabilities} and~\ref{lem:BarrierEstimate}. 
\end{proof}
\begin{proof}[Proof of the upper bound in Theorem \ref{thm:Coalescence}]
 Using the Markov property of the TASEP on the circle, it suffices to show that there exist some $\bar{c},\bar{\varepsilon}>0$ such that
\begin{equation}\label{eq:SimplifiedCoal}
\max_{(\eta_0,\zeta_0)\in \mathcal{X}_{N,k}}\P(\tau \geq \bar{c}N^{2}k^{-1/2}) \leq  1- \bar{\varepsilon} \, .
\end{equation} for all $k\geq k_0=k_0(\bar{\varepsilon},\bar{c})$, and $N$ sufficiently large.  
Recall Lemma \ref{lem:SecondClassCompetition}, and observe that we can choose the respective initial growth interface $G_0$ such that with  $m^{\prime}=\frac{-k}{N-k}$
\begin{equation}
\mathbb{L}_{m^{\prime},-2k} \preceq G_0 \preceq \mathbb{L}_{m^{\prime},0} .  
\end{equation} 
Using Lemma \ref{lem:Coalescence} to ensure the coalescence of second class particles by the coalescence of geodesics in an $(N+2,k+1)$-periodic environment, the claim \eqref{eq:SimplifiedCoal} follows whenever with probability at least $\bar{\varepsilon}$ the event in \eqref{eq:MainCorollaryStatement} occurs jointly with   
\begin{equation}
\mathcal{C}:= \left\{  T_{\mathbb{L}_{m^{\prime},-2k}, \mathbb{L}_{m^{\prime},3\ell^{\prime}}} \leq \bar{c} (N+2)^{2}(k+1)^{-\frac{1}{2}} \right\} . 
\end{equation} Lemma~\ref{lem:MaximumLineToLine} ensures that there exists some constant $\bar{c}>0$ such that $\P(\mathcal{C}) \rightarrow 1$ as $N \rightarrow \infty$. Thus, we can combine this result with Corollary~\ref{cor:MainCoalescenceStatement} to conclude. 
\end{proof}


\section{The upper bound on the mixing time in Theorem \ref{thm:Main}}\label{sec:UpperBounds}

In this section, we introduce a random extension and time shift strategy, which allows us to determine an upper bound on the mixing time of the exclusion process of the correct order $N^2\min(k,N-k)^{-1/2}$. This establishes the upper bound in Theorem \ref{thm:Main}. When combined with the corresponding lower bound on the mixing time in Theorem~\ref{thm:Main}, proven in Section~\ref{sec:LowerBounds}, this will allow us to conclude that the TASEP on the circle does not exhibit cutoff. We believe that our approach is also applicable to other instances of corner growth dynamics. 

\subsection{Strategy for the proof}\label{sec:StrategyRandomExtension}

We start by giving an outline of the proof. Consider two initial configurations $\eta$ and $\zeta$ of the TASEP on the circle, and interpret them as two initial growth interfaces in periodic environments. Without loss of generality, we assume that both initial interfaces pass through the origin. Let
$(\eta_t)_{t \geq 0}$ and $(\zeta_t)_{t \geq 0}$ be two exclusion processes, started from $\eta$ and $\zeta$ respectively, and consider the coupling of the two processes, where we use the same $(N,k)$-periodic environment in the corner growth representation for both $(\eta_t)_{t \geq 0}$ and $(\zeta_t)_{t \geq 0}$. Using the results from Section~\ref{sec:Coalescence} on the coalescence of geodesics, we argue that with positive probability, there exists some $t_{\ast}$ of order $N^2k^{-1/2}$ such that
\begin{equation}\label{eq:ShiftStatement}
\eta_{t} = \zeta_{t+\bar{S}} \ \text{ for all } t \geq t_{\ast} .
\end{equation} Here, $\bar{S}$ is a random variable such that $|\bar{S}|\leq 10 N$ with probability tending to $1$ as $N \rightarrow \infty$; see also Lemma~\ref{lem:ShiftExclusion} and Proposition \ref{pro:EstimateOnShift} (taking $\bar{S}=-S-S_{0,0}=-S$ therein). In other words, after time $t_{\ast}$, the two exclusion processes agree up to a time shift. \\

The main goal is to argue that for any pair of initial configurations, there exists a coupling of the respective TASEPs  such that with positive probability, we can take $\bar{S}=0$ in \eqref{eq:ShiftStatement}. A standard argument, bounding the total variation distance from above using a coupling, then yields the desired upper bound on the total variation mixing time. \\

In order to define this coupling, we modify the (common) periodic environment for the two exclusion processes in two different ways. First, we introduce a random extension of the periodic environment. Second, we apply a random shift to the exponential random variables within the respective (extended) environments. Let us explain both steps now in more detail. \\

For the first step, we cut the environment along a line which is parallel to the initial growth interface, and add a random number of lines with independent Exponential-$1$-random variables. The law of this random number is determined later on; see equation \eqref{def:IterationPoints} and Proposition \ref{pro:EstimateOnShift}. Note that when the number of added rows does not depend on the environment nor the initial interface, the resulting environments are still $(N,k)$-periodic. Furthermore, on the event of coalescence of geodesics, we ensure that the time change by adding a certain number of extra rows is independent of the initial growth interfaces. This is formalized in Lemma \ref{lem:ShiftExclusion}. Using again the results from Section \ref{sec:Coalescence} on the coalescence of geodesics, but now on a finer scale specified later on -- see Lemma \ref{lem:TransversalLocal} -- we argue that the exclusion processes will with positive probability agree after a time of order $N^2k^{-1/2}$ up to a random time shift of order at most $Nk^{-7/8}$; see Lemma \ref{lem:SmallShift}.  \\

In a second step, we modify the random variables within the respective (extended) periodic environments by a Mermin--Wagner style argument. Intuitively, this means that we perturb the environments while maintaining a total variation distance between the original and the perturbed environment laws close to $0$.  More precisely, we multiply each of the Exponential-$1$-random variables by $(1+U\delta N^{-1}k^{-5/16})$, where $U$ is a uniform random variable on the interval $[-1,1]$, and $\delta>0$ a sufficiently small constant. We will verify in Lemma~\ref{lem:MerminWagner} that the resulting environments remain indeed close in total variation distance to an $(N,k)$-periodic environment; see Lemma~\ref{lem:FinalLemma}. At the same time, suitably coupling the involved uniform random variables, the two exclusion processes evolving according to the modified environments coalesce with positive probability after a time of order $N^{2}k^{-1/2}$; i.e.\ we can take $\bar{S}=0$ in \eqref{eq:ShiftStatement} with some $t$ of order $N^{2}k^{-1/2}$. This allows us to conclude.

\begin{remark} A similar strategy can be applied for the TASEP with open boundaries in the maximal current phase, where the mixing time was previously  studied in \cite{S:MixingTASEP}. After presenting the arguments for the TASEP on the circle, we describe  in Section \ref{sec:MixingTASEPOpen} how our techniques yield an upper bound of order $N^{3/2}$ for the  entire maximal current phase of the TASEP with open boundaries.
\end{remark}

\subsection{ A random extension to the exponential environment}\label{sec:RandomExtension}

Let us now formally describe the coupling between two exclusion processes such that they coalesce at a time of order $N^{2}k^{-1/2}$ with positive probability. To do so, we recall some notation from  Section~\ref{sec:Coalescence}. For initial configurations $\eta,\zeta \in \Omega_{N,k}$, let $\gamma_{\textup{ini}}^{\eta}$ and $\gamma_{\textup{ini}}^{\zeta}$ denote the corresponding initial growth interfaces passing through the origin. Recall $m^{\prime}=-k(N-k)^{-1}$ and $m=k^2(N-k)^{-2}$.  For fixed $\theta>0$, we set $n=N^{2}k^{-1/2}\theta^{-1}$ and we partition $\Z^2$ into
\begin{equation}
W_- := \{ u\in \Z^2 \colon u \preceq v \text{ for some }  v \in (n,mn)+ \mathbb{L}_{m^{\prime},0} \}
\end{equation} and $W_+ := \Z^2 \setminus W_-$, i.e., intuitively, we cut the environment along a line roughly parallel to the initial growth interface, going through $(n,mn)$. In particular, note that for fixed $\theta>0$, there exists some $k_0$ such that for all $k\geq k_0$,  and all $N\geq 2k$ sufficiently large, we have that $\gamma_{\textup{ini}}^{\eta},\gamma_{\textup{ini}}^{\zeta} \subseteq W_-$. \\

Next, we fix two independent $(N,k)$-periodic environments $(\omega^{\prime}_v)_{v\in \Z^2}$ and $(\omega^{\prime\prime}_v)_{v\in \Z^2}$, and define for all $i\in \N \cup \{0\}$ the environment $(\omega^{i}_v)_{v\in \Z^2}$ by
\begin{equation}\label{eq:CouplingDifferentI}
\omega^{i}_v := \begin{cases}  \omega^{\prime}_v & \text{ if } v\in W_- \\
 \omega^{\prime}_{v- (i,i)} & \text{ if } v  \in  (i,i) +W_+ \\
 \omega^{\prime\prime}_v & \text{ otherwise} \, .
\end{cases}
\end{equation} In particular, $w_v^{0}=w^{\prime}_v$ for all $v\in \Z^2$. Intuitively, we obtain $(\omega^{i}_v)_{v\in \Z^2}$ by keeping the environment $(\omega^{\prime}_v)_{v\in W_-}$, shifting $(\omega^{\prime}_v)_{v\in W_+}$ by $(i,i)$, and filling up the remaining sites using the random variables $(\omega^{\prime\prime}_v)_{v\in \Z^2}$. Note that for all $i\in \N$, the environments $(\omega^{i}_v)_{v\in \Z^2}$ have the law of an $(N-k)$-periodic environment. Recall from \eqref{def:PeriodicTranslate} the $(N,k)$-periodic translates $\TR(v)$ of a site $v\in \Z^2$, and the set-to-point geodesic $\Gamma^{\prime}_{u,v}$ from \eqref{def:PeriodicPathReverse}. We define the event
\begin{align}
\mathcal{B}^- :=  &\Big\{  T_{u,v}  > T_{u,w}\ \forall u,v \in \gamma_{( n/2  ,  mn/2  ),(n,mn)},  w \in \Z^2 \colon  v \succeq u  , \, w \in \TR(v)\setminus \{v\}\Big\} \cap \nonumber  \\
&\Big\{ \exists w\in (\lfloor 3n/4 \rfloor , \lfloor 3mn/4 \rfloor )+\mathbb{L}_{m^{\prime},0} \colon \Gamma^{\prime}_{u,v} \cap \TR(w) \neq \emptyset \\ & \qquad \qquad \qquad \qquad \forall u \in (\lfloor n/2 \rfloor , \lfloor mn/2 \rfloor )+\mathbb{L}_{m^{\prime},0} ,  v \in (n,\lfloor mn \rfloor)+\mathbb{L}_{m^{\prime},0} \Big\} \nonumber
\end{align} on the crossing and coalescence of geodesics in $(\omega^{\prime}_v)_{v\in W_-}$, as well as the event
\begin{align}
\mathcal{B}_{i}^+ := &\Big\{  T_{u,v}  > T_{u,w}\ \forall u,v \in \gamma_{(n,  mn ),(2n, 2mn )},  w \in \Z^2  \colon v \succeq u , \,  w \in \TR(v)\setminus \{v\}\Big\} \cap \nonumber  \\
&\Big\{ \exists w\in (\lfloor 3n/2 \rfloor, \lfloor 3mn/2 \rfloor)+(i,i)+\mathbb{L}_{m^{\prime},0} \colon \Gamma^{\prime}_{u,v} \cap \TR(w) \neq \emptyset \\ & \forall u \in (n+i+1, \lfloor mn \rfloor+i+1)+\mathbb{L}_{m^{\prime},0}  ,  v \in (2n+i, \lfloor 2mn \rfloor +i)+\mathbb{L}_{m^{\prime},0} \Big\} \nonumber
\end{align} on the crossing and coalescence of geodesics in $(\omega^{\prime}_v)_{v\in (i,i)+W_+}$. Intuitively, the first part of the events resembles a non-crossing  condition, similar to $\mathcal{B}_{\text{NC}},$ while the second part of the events ensures that all geodesics from $\TR(u)$ to some $v$ must pass through $\TR(w)$ for some site $w$.
In the following, we consider the exclusion process $(\eta_t)_{t \geq 0}$ with respect to the environment $(\omega^{i}_v)_{v\in \Z^2}$ for some $i$, while we define $(\zeta_t)_{t \geq 0}$ with respect to the environment $(\omega^{j}_v)_{v\in \Z^2}$ for some $j$.
For fixed $i,j\in \N \cup \{0\}$, the corresponding joint law of $(\omega^{i}_v)_{v\in \Z^2}\times(\omega^{j}_v)_{v\in \Z^2}$ is denoted by $\P_{i,j}$. Note that for all $i,j \in \N$, we obtain the measure $\P_{i,j}$ from the product measure $\P^{\prime} \otimes \P^{\prime\prime}$ for the (independent) environments $(\omega^{\prime}_v)_{v\in \Z^2}$ and $(\omega^{\prime\prime}_v)_{v\in \Z^2}$, using the construction in \eqref{eq:CouplingDifferentI}. 
In the following, $i$ and $j$ will be random, and the joint realization for $(i,j)$ will depend on the initial conditions $\eta$ and $\zeta$, as well as the environment on $W_-$, while the marginal laws of $i$ and $j$ does not depend on $\eta$, $\zeta$ and the environment on $W_-$.
We have the following bound on the probability of the above events with respect to  $\P_{i,j}$.
\begin{lemma}\label{lem:CoalescenceEvents} There exists a constant $\theta_0>0$ such that for all $\theta>\theta_0$, we find $k_0,c>0$, depending only on $\theta$, such that for all $k\geq k_0$, and all $N$ sufficiently large
\begin{equation}\label{eq:CoalescenceEvents}
\P_{i,j}( \mathcal{B}^- \times \mathcal{B}^- \cap \mathcal{B}_{i}^+ \times \mathcal{B}_{j}^+  ) =\P^{\prime}(\mathcal{B}^- \cap \mathcal{B}_{0}^+  )\geq c
\end{equation} holds for all $i,j \in \N$.
\end{lemma}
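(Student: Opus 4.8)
The key observations are the following. First, the identity $\P_{i,j}((\mathcal{B}^-,\mathcal{B}^-)\cap(\mathcal{B}_i^+,\mathcal{B}_j^+))=\P_0(\mathcal{B}^-\cap\mathcal{B}_0^+)$ is \emph{purely a distributional fact} about the construction $(\omega^i_v)_v$, so I would begin by unwinding the definition of $\P_{i,j}$. By construction, under $\P_i$ the restriction $(\omega^i_v)_{v\in W_-}$ equals $(\omega_v)_{v\in W_-}$, which does not depend on $i$; and the event $\mathcal{B}^-$ is measurable with respect to exactly this restriction, since all sites appearing in $\mathcal{B}^-$ (the geodesics $\gamma_{(n/2,mn/2),(n,mn)}$, their translates $\TR(v)$, and the geodesics between $(n/2,mn/2)+\mathbb{L}_{m^{\prime},0}$ and $(n,mn)+\mathbb{L}_{m^{\prime},0}$) lie in $W_-$ for $N$ large. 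Hence $\P_i(\mathcal{B}^-)=\P_0(\mathcal{B}^-)$ for every $i$. Similarly, $\mathcal{B}_i^+$ is measurable with respect to $(\omega^i_v)_{v\in(i,i)+W_+}$, which by construction equals the shift $(\omega_{v-(i,i)})_{v\in(i,i)+W_+}$, i.e.\ a deterministic translate of $(\omega_v)_{v\in W_+}$; and $\mathcal{B}_i^+$ is precisely the translate by $(i,i)$ of the event $\mathcal{B}_0^+$ (all the relevant points and lines are shifted by $(i,i)$). By translation invariance of the i.i.d.\ part of the environment, $\P_i(\mathcal{B}_i^+)=\P_0(\mathcal{B}_0^+)$ and, more importantly, $\P_i(\mathcal{B}^-\cap\mathcal{B}_i^+)=\P_0(\mathcal{B}^-\cap\mathcal{B}_0^+)$, because the pair $\big((\omega_v)_{v\in W_-},(\omega_v)_{v\in W_+}\big)$ in $\P_i$ has the same law as the pair $\big((\omega_v)_{v\in W_-},(\text{shift of }(\omega_v)_{v\in W_+})\big)$. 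Then, since $(\omega^i_v)_v$ and $(\omega^j_v)_v$ are independent under $\P_{i,j}$, the product event factorizes as $\P_i(\mathcal{B}^-\cap\mathcal{B}_i^+)\cdot\P_j(\mathcal{B}^-\cap\mathcal{B}_j^+)$; but I should be careful here — the statement claims equality to $\P_0(\mathcal{B}^-\cap\mathcal{B}_0^+)$ \emph{without} a square, so the correct reading is that $(\mathcal{B}^-,\mathcal{B}^-)$ denotes the event that $\mathcal{B}^-$ holds for \emph{both} processes, which — because both $(\eta_t)$ and $(\zeta_t)$ use the \emph{same} environment $(\omega_v)_{v\in W_-}$ on $W_-$ — is just the single event $\mathcal{B}^-$; and likewise on $W_+$ the processes would differ only through $i$ versus $j$, so I would need to check whether the intended coupling in fact uses a common $W_+$-environment as well. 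Resolving this bookkeeping so that the stated equality (no square) is literally correct is the first thing to pin down.

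Second, granting the reduction to the single-environment estimate $\P_0(\mathcal{B}^-\cap\mathcal{B}_0^+)\geq c$, I would prove this by applying the coalescence machinery of Section~\ref{sec:Coalescence} twice, on the two consecutive ``blocks'' $[(n/2,mn/2),(n,mn)]$ and $[(n,mn),(2n,2mn)]$. Each of the two events $\mathcal{B}^-$ and $\mathcal{B}_0^+$ is of exactly the same shape as the event $\mathcal{B}(\theta)=\mathcal{B}_{\textup{Con}}(\theta)\cap\mathcal{B}_{\textup{Bar}}(\theta)\cap\mathcal{B}_{\textup{NC}}(\theta)$ analyzed in Proposition~\ref{pro:ThreeEvents}, up to translation and rescaling of the relevant length parameters: the ``no-crossing'' clause is a $\mathcal{B}_{\textup{NC}}$-type event (controlled by Lemma~\ref{lem:BoundOnNC}), and the ``$\exists w$ such that all geodesics between two parallel lines hit $\TR(w)$'' clause is exactly the coalescence conclusion produced by Proposition~\ref{pro:ThreeEvents} on the event $\mathcal{B}(\theta)$ (whose probability is bounded below by combining Lemma~\ref{lem:BoundOnNC}, Lemma~\ref{lem:ConnectionProbabilities} and Lemma~\ref{lem:BarrierEstimate}). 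So for each block separately there is a constant $c_1=c_1(\theta)>0$ with $\P(\mathcal{B}^-)\geq c_1$ and $\P(\mathcal{B}_0^+)\geq c_1$, uniformly in $N$ large and $k\geq k_0(\theta)$.

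Third, to combine the two block events into one lower bound on $\P_0(\mathcal{B}^-\cap\mathcal{B}_0^+)$, I would note that $\mathcal{B}^-$ is measurable with respect to the environment in a bounded strip around $[(n/2,mn/2),(n,mn)]$ — concretely, inside the parallelogram cut out by the two lines $\mathbb{L}_{m^{\prime},0}+(n/2,mn/2)$ and $\mathbb{L}_{m^{\prime},0}+(n,mn)$ together with a transversal-fluctuation buffer of width $O(k)$ — while $\mathcal{B}_0^+$ lives in the analogous strip around $[(n,mn),(2n,2mn)]$; these two regions are \emph{disjoint} (they share at most the single line $\mathbb{L}_{m^{\prime},0}+(n,mn)$, a null set's worth of sites relative to what the events actually depend on, and I would arrange the definitions, e.g.\ by using $\gamma_{(n+1,mn+1),\dots}$ as in $\mathcal{B}_i^+$, so that there is no overlap at all). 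Hence $\mathcal{B}^-$ and $\mathcal{B}_0^+$ are independent, giving $\P_0(\mathcal{B}^-\cap\mathcal{B}_0^+)=\P(\mathcal{B}^-)\P(\mathcal{B}_0^+)\geq c_1^2=:c>0$. Alternatively, since both events are increasing in the environment (larger weights only help last-passage lower bounds and, after the reductions via $\mathcal{B}_{\textup{NC}}$, the relevant crossing events are also monotone), one may invoke the FKG inequality instead of independence; I would use whichever is cleaner given the precise final form of the events.

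\textbf{Main obstacle.} The genuine content is already packed into Section~\ref{sec:Coalescence}; the one delicate point here is the exact combinatorial bookkeeping of the claimed equality $\P_{i,j}((\mathcal{B}^-,\mathcal{B}^-)\cap(\mathcal{B}_i^+,\mathcal{B}_j^+))=\P_0(\mathcal{B}^-\cap\mathcal{B}_0^+)$ — in particular verifying that the two coupled processes genuinely see the \emph{same} randomness on the regions where $\mathcal{B}^-$ and (the appropriately shifted versions of) $\mathcal{B}_i^+$ are defined, so that the pair of events for $(\eta_t)$ and $(\zeta_t)$ collapses to a single event with no extra power of $c$, and that the shift by $(i,i)$ is absorbed by translation invariance of the i.i.d.\ weights. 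Once that identity is established, the lower bound $\P_0(\mathcal{B}^-\cap\mathcal{B}_0^+)\geq c$ is a two-block application of Proposition~\ref{pro:ThreeEvents} plus disjointness/FKG, as above.
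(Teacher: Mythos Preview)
Your approach matches the paper's: the equality follows from the construction of the environments (both $\omega^i$ and $\omega^j$ are built from the \emph{same} underlying $\omega,\omega'$ --- they are not independent under $\P_{i,j}$, as you correctly resolve after your initial wobble --- so the paired events collapse to the single events $\mathcal{B}^-$ and $\mathcal{B}_0^+$ on $\omega$), and the lower bound is obtained by applying Proposition~\ref{pro:ThreeEvents} together with Lemmas~\ref{lem:BoundOnNC}, \ref{lem:ConnectionProbabilities}, and \ref{lem:BarrierEstimate} on each of the two blocks, using that $\mathcal{B}^-$ and $\mathcal{B}_0^+$ depend on the environment on the disjoint sets $W_-$ and $W_+$ and are therefore independent.
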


\begin{proof} Note that the equality in \eqref{eq:CoalescenceEvents} follows from the construction of the environments. The inequality in \eqref{eq:CoalescenceEvents} follows from the fact that 
\begin{equation}
\P^{\prime}(\mathcal{B}^- \cap \mathcal{B}_{0}^+  ) = \P^{\prime}(\mathcal{B}^-) \P^{\prime}(\mathcal{B}_{0}^+  )
\end{equation} since the events $\mathcal{B}^- $ and $ \mathcal{B}_{0}^+$ are defined with respect to $(\omega^{\prime}_v)_{v\in \Z^2}$ on disjoint sets of sites $W_-$ and $W_+$, and Corollary \ref{cor:MainCoalescenceStatement} for a lower bound on $\P^{\prime}(\mathcal{B}^-)$ and $\P^{\prime}(\mathcal{B}_{0}^+  )$, respectively.
\end{proof}

The events in Lemma \ref{lem:CoalescenceEvents} allow us to match the corresponding exclusion processes $(\eta_t)_{t \geq 0}$ and $(\zeta_t)_{t \geq 0}$ up to a time shift. More precisely, let us denote by $T^{i}_{u,v}$ for all $i\in \N$ and $v \succeq u$ the last passage time with respect to the environment $(\omega^{i}_v)_{v\in \Z^2}$. For $i,j\in \N$, let
\begin{equation}\label{def:Tmax}
T^{i,j}_{\max} := \max\Big( T^{i}_{\gamma_{\textup{ini}}^{\eta}, (2n+i,2mn+i)+\mathbb{L}_{m^{\prime},0}} , T^{j}_{\gamma_{\textup{ini}}^{\zeta}, (2n+j,2mn+j)+\mathbb{L}_{m^{\prime},0}} \Big)
\end{equation} be the maximal last passage time between the initial growth interfaces $\gamma_{\textup{ini}}^{\eta}$ and $\gamma_{\textup{ini}}^{\zeta}$, and the lines $(2n+i,2mn+i)+\mathbb{L}_{m^{\prime},0}$ and $(2n+j,2mn+j)+\mathbb{L}_{m^{\prime},0}$, respectively. Note that we can choose $\gamma_{\textup{ini}}^{\eta}$ and $\gamma_{\textup{ini}}^{\zeta}$ such that $
 (0,0) \in \gamma_{\textup{ini}}^{\eta} \text{ and }   (0,0) \in \gamma_{\textup{ini}}^{\zeta} $, and hence
\begin{equation}
 T^{i,j}_{\max} \leq \max\Big( T^{i}_{\mathbb{L}_{m^{\prime},-2k}, (2n+i,2mn+i)+\mathbb{L}_{m^{\prime},0}} , T^{j}_{\mathbb{L}_{m^{\prime},-2k}, (2n+j,2mn+j)+\mathbb{L}_{m^{\prime},0}} \Big) . 
\end{equation}
Together with Lemma \ref{lem:MaximumLineToLine}, we get that $T^{i,j}_{\max}$ is of order at most $n(1+\sqrt{m})^{2}+i+j$.

\begin{lemma}\label{lem:ShiftExclusion} Let $c>0$ be the constant from Lemma \ref{lem:CoalescenceEvents}. For all $i,j\in \N \cup \{0\}$, we find a pair of random variables $S$ and $S_{i,j}$ such that with probability at least $c>0$, 
\begin{equation}\label{eq:ShiftExclusion}
\eta_t = \zeta_{t-S-S_{i,j}}
\end{equation} holds for all $t \geq T_{\max}^{i,j}$. Moreover, $S$ can be chosen in such a way that it does not depend on the random variables $(\omega^{\prime}_v)_{v\in W_+}$ and the parameters $i,j\in \N \cup \{0\}$, while we can define the random variables $S_{i,j}$ such that they do not depend on the choice of the initial growth interfaces $\gamma_{\textup{ini}}^{\eta}$ and $\gamma_{\textup{ini}}^{\zeta}$.
\end{lemma}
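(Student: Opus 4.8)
The plan is to combine the coalescence statement from Proposition~\ref{pro:ThreeEvents} with the last-passage-percolation description of the TASEP from Lemma~\ref{lem:CurrentVsGeodesic}, applied on the two disjoint slabs $W_-$ and $W_+$. Recall that on the event $\mathcal{B}^-$, all geodesics from the line $(n/2,mn/2)+\mathbb{L}_{m^{\prime},0}$ to the line $(n,mn)+\mathbb{L}_{m^{\prime},0}$ pass through a common periodic translate $\TR(w)$ for some $w\in (3n/4,3mn/4)+\mathbb{L}_{m^{\prime},0}$; moreover the no-crossing clause guarantees that along the relevant portion of the geodesic $\gamma_{(n/2,mn/2),(n,mn)}$, following the geodesic beats all of its periodic translates. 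By the correspondence between growth interfaces and the exclusion process, this means that for $t$ past the last-passage time to the line $(n,mn)+\mathbb{L}_{m^{\prime},0}$, the growth interfaces $G_t^\eta$ and $G_t^\zeta$ (restricted to the slab $W_-$) have merged up to a translation by $\TR(w)$, which translates into a fixed spatial shift of the exclusion configurations and, correspondingly, a time shift $S$ equal to the difference of the two last-passage times from $\gamma_{\textup{ini}}^\eta$, $\gamma_{\textup{ini}}^\zeta$ to that common point $w$. Since $w\in W_-$ and the environment on $W_-$ is $(\omega_v)_{v\in W_-}$ which is shared by both $\omega^i$ and $\omega^j$ regardless of $i,j$, the shift $S$ depends only on $(\omega_v)_{v\in W_-}$ and on the two initial interfaces, and not on $(\omega_v)_{v\in W_+}$ nor on $i,j$.

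Next I would run the same argument on the slab $(i,i)+W_+$ (for $\eta$) and $(j,j)+W_+$ (for $\zeta$), using the event $\mathcal{B}_i^+$, respectively $\mathcal{B}_j^+$. On these events the geodesics from $(n+i+1,mn+i+1)+\mathbb{L}_{m^{\prime},0}$ to $(2n+i,2mn+i)+\mathbb{L}_{m^{\prime},0}$ all pass through a common translate $\TR(w')$ with $w'\in(3n/2,3mn/2)+(i,i)+\mathbb{L}_{m^{\prime},0}$; the key point, which follows from the construction $\omega^i_v=\omega_{v-(i,i)}$ on $(i,i)+W_+$, is that the geometry inside this slab is just a $(i,i)$-translate of the corresponding geometry for $i=0$. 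Hence the additional time shift $S_{i,j}$ accumulated between the line $(n,mn)+\mathbb{L}_{m^{\prime},0}$ and the line $(2n+i,2mn+i)+\mathbb{L}_{m^{\prime},0}$ (resp. $(2n+j,\dots)$) depends only on the environments $(\omega_v)_{v\in W_+}$ and $(\omega'_v)$ and on the integers $i,j$; in particular, since by this stage both growth interfaces have already merged (they agree on the portion of $\gamma_{(n/2,mn/2),(n,mn)}$ inside $W_-$ up to the fixed shift), the value of $S_{i,j}$ no longer sees the original initial interfaces $\gamma_{\textup{ini}}^\eta$, $\gamma_{\textup{ini}}^\zeta$ individually. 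Concatenating: once we are past $T^{i,j}_{\max}$ — which by \eqref{def:Tmax} is exactly the larger of the two last-passage times from the initial interfaces all the way to $(2n+i,2mn+i)+\mathbb{L}_{m^{\prime},0}$ and $(2n+j,2mn+j)+\mathbb{L}_{m^{\prime},0}$ — both growth interfaces have coalesced up to the total shift $S+S_{i,j}$, and Lemma~\ref{lem:CurrentVsGeodesic} converts this into $\eta_t=\zeta_{t-S-S_{i,j}}$ for all $t\ge T^{i,j}_{\max}$, with probability at least $c$ by Lemma~\ref{lem:CoalescenceEvents}.

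The bookkeeping I would be most careful about is the precise meaning of ``time shift'' when two growth interfaces agree only up to a periodic translate $\TR(w)$. Concretely, if the geodesics to the common point $w$ from $\gamma_{\textup{ini}}^\eta$ and from $\gamma_{\textup{ini}}^\zeta$ have last-passage times $a$ and $b$, then for $t$ exceeding $\max(a,b)$ plus the remaining passage time past $w$, the configuration $\eta_t$ near $w$ equals the configuration $\zeta_{t-(a-b)}$ near the corresponding site; globally, because the environment is $(N,k)$-periodic and both interfaces are within the same slab, this agreement propagates and one reads off $S=a-b$. The main obstacle is to make this ``agreement up to a periodic translate, hence up to a time shift'' rigorous at the level of the full growth interface rather than a single point — i.e.\ to verify that merging of geodesics into a single translate $\TR(w)$ on the whole line-to-line family (which is what the second clause of $\mathcal{B}^-$ gives), together with the no-crossing clause (the first clause of $\mathcal{B}^-$, preventing a later geodesic from exploiting a different translate), forces the growth interfaces to be exact time-shifted copies for all large $t$. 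This is precisely the phenomenon already encoded in Lemma~\ref{lem:SecondClassCompetition} and Lemma~\ref{lem:Coalescence}; I would invoke those, together with the observation that $\mathcal{B}_{\textup{NC}}$-type no-crossing events are what upgrade ``$\Gamma=\gamma$'' to a genuine identification of the two dynamics. Everything else — the independence of $S$ from $(\omega_v)_{v\in W_+}$ and from $i,j$, and of $S_{i,j}$ from the initial interfaces — is immediate from the way $\omega^i$ is glued together out of $\omega$ on $W_-$, the $(i,i)$-shift of $\omega$ on $W_+$, and $\omega'$ elsewhere.
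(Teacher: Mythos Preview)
Your proposal is correct and follows essentially the same approach as the paper: identify a common coalescence point $\bar w_-$ in $W_-$ (your $w$) through which all geodesics from either initial interface to the line $(n,mn)+\mathbb{L}_{m',0}$ must pass, set $S$ to be the difference of the two last-passage times to $\bar w_-$, then repeat with a second coalescence point $\bar w_+$ in $W_+$ to define $S_{i,j}:=T^i_{\bar w_-,\bar w_++(i,i)}-T^j_{\bar w_-,\bar w_++(j,j)}$, and finally verify via Lemma~\ref{lem:CurrentVsGeodesic} that the growth interfaces at times $t$ and $t-S-S_{i,j}$ match above the respective top lines.

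One remark: the paper's verification of the last step is more direct than you suggest. Rather than invoking Lemma~\ref{lem:SecondClassCompetition} or Lemma~\ref{lem:Coalescence}, it simply observes that for any $v$ with $v+(i,i)$ above $(2n+i,2mn+i)+\mathbb{L}_{m',0}$, the geodesic from $\gamma_{\textup{ini}}^\eta$ to $v+(i,i)$ in $\omega^i$ factors through $\bar w_-$ and $\bar w_++(i,i)$, so $T^i_{\gamma_{\textup{ini}}^\eta,v+(i,i)}-T^j_{\gamma_{\textup{ini}}^\zeta,v+(j,j)}=S+S_{i,j}$ independently of $v$ (the portion past $\bar w_+$ is identical in $\omega^i$ and $\omega^j$ up to the $(i,i)$ versus $(j,j)$ shift). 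This immediately gives that $v+(i,i)\in G_t^\eta$ iff $v+(j,j)\in G_{t-S-S_{i,j}}^\zeta$, which is exactly \eqref{eq:ShiftExclusion}. The competition-interface machinery is not needed here.
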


\begin{proof} Whenever the events $\mathcal{B}^- $ and $ \mathcal{B}_{0}^+$ occur with respect to the environment $(\omega^{\prime}_v)_{v\in \Z^2}$, there exists a pair of sites $(\bar{w}_-,\bar{w}_+) \in W_- \times W_+$ such that every geodesic from $\gamma_{\textup{ini}}^{\eta}$ or $\gamma_{\textup{ini}}^{\zeta}$ to a site on the line $(n,mn)+\mathbb{L}_{m^{\prime},0}$ must pass through $\TR(\bar{w}_-)$, and every geodesic from a site in $(n+1,mn+1)+\mathbb{L}_{m^{\prime},0}$ to a site in  $(2n,2mn)+\mathbb{L}_{m^{\prime},0}$ must pass through $\TR(\bar{w}_+)$. Here, we note that every geodesic between two periodic interfaces is unique up to translation by multiples of $(N-k,-k)$. In the following, we set 
\begin{equation}
S := T^{0}_{\gamma_{\textup{ini}}^{\eta},\bar{w}_-} - T^{0}_{\gamma_{\textup{ini}}^{\zeta},\bar{w}_-} \, ,
\end{equation} and note that the value of $S$ only depends on $(\omega^{\prime}_v)_{v\in W_-}$.
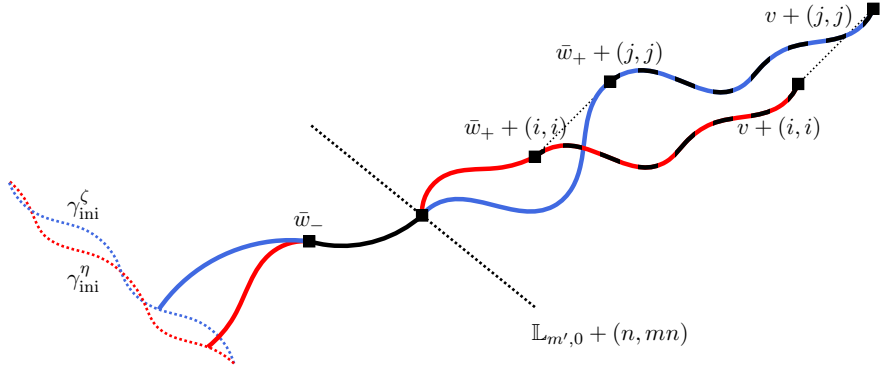
\begin{figure}
\centering
\begin{tikzpicture}[scale=0.5]


\draw[line width =1 pt,deepblue,densely dotted] (4,-3+0.75) to[curve through={(1+2,-3+2.4375-1.3+0.75)..(2,-3+2.4375-1+0.75)..(1+0.6,-3+2.4375-0.8+0.75)}] (1,-3+2.4375+0.75);

\draw[line width =1 pt,red,densely dotted] (4,-3+0.75) to[curve through={(1+2.3,-3+2.4375-2+0.75)..(1+0.9,-3+2.4375-1.4+0.75)..(1+0.6,-3+2.4375-0.8+0.75)}] (1,-3+2.4375+0.75);

\draw[line width =1 pt,red,densely dotted] (1,-3+2.4375+0.75) to[curve through={(1+2.3-3,-3+2.4375-2+2.4375+0.75)..(1+0.9-3,-3+2.4375-1.4+2.4375+0.75)..(1+0.6-3,-3+2.4375-0.8+2.4375+0.75)}] (1-3,-3+2.4375+2.4375+0.75);

\draw[line width =1 pt,deepblue,densely dotted] (1,-3+2.4375+0.75)to[curve through={(1+2-3,-3+2.4375-1.3+2.4375+0.75)..(1+1-3,-3+2.4375-1+2.4375+0.75)..(1+0.6-3,-3+2.4375-0.8+2.4375+0.75)}] (1-3,-3+2.4375+2.4375+0.75);

\draw[line width =1.7 pt,red](1+2.3,-3+2.4375-2+0.75) to[curve through={(4,-1)(5,0.7)}] (6,1);

\draw[line width =1.7 pt,deepblue](1+1,-3+2.4375-1+0.75) to [curve through={(5,1)}](6,1);

\draw[densely dotted, line width=1pt] (12,-0.75)--(6,3+0.375+0.75);





\draw[densely dotted, line width=0.6pt] (20-3+2,4.875/2+2+0.75)--(2+20-3+2,2+4.875/2+2+0.75);
\draw[densely dotted, line width=0.6pt] (16-4,9.75/3)--(12-4+6,-0.75+9.75/3+2.75);

\draw[line width =1.7 pt,deepblue] (14,2+9.75/3)to[curve through={(13.7,2+9.75/3-0.3)..(13,4.875/2)..(12.5,4.875/2-0.5)..(12-3+0.2,-0.75+4.875/2+0.2)}](12-3,-0.75+4.875/2);

\draw[line width =1.7 pt] (12-3,-0.75+4.875/2)to[curve through={(6.5,0.9)}] (6,1);




\draw[line width =1.7pt,deepblue] (2+16-4,2+9.75/3) to[curve through={(2+16-4+0.8,2+9.75/3+0.3)..(2+12-2+6-0.5,2-0.75+4.875/3+2.75-0.5)..(2+12-2+6,2-0.75+4.875/3+2.75)..(2+12-2+6+0.7,2-0.75+4.875/3+2.75+0.5)..(2+20-3+2-0.5,2+4.875/2+2+0.75-0.6)}] (2+20-3+2,2+4.875/2+2+0.75);

\draw[line width =1.7 pt, red] (16-4,9.75/3)to[curve through={(16-4+0.8,9.75/3+0.3)..(12-2+6-0.5,-0.75+4.875/3+2.75-0.5)..(12-2+6,-0.75+4.875/3+2.75)..(12-2+6+0.7,-0.75+4.875/3+2.75+0.5)..(20-3+2-0.5,4.875/2+2+0.75-0.6)}] (20-3+2,4.875/2+2+0.75);

%
%



\draw[line width =1.7 pt,red] (12-3,-0.75+4.875/2)to[curve through={(12-3+0.5,-0.75+4.875/2+1)..(16-4-1,9.75/3-0.3)}] (16-4,9.75/3);

%
%

\foreach \x in {0,...,9} {

\begin{scope}

\clip(12-0.25+\x,2) rectangle (12+0.25+\x,7.5);

\draw[line width =1.7pt] (2+16-4,2+9.75/3) to[curve through={(2+16-4+0.8,2+9.75/3+0.3)..(2+12-2+6-0.5,2-0.75+4.875/3+2.75-0.5)..(2+12-2+6,2-0.75+4.875/3+2.75)..(2+12-2+6+0.7,2-0.75+4.875/3+2.75+0.5)..(2+20-3+2-0.5,2+4.875/2+2+0.75-0.6)}] (2+20-3+2,2+4.875/2+2+0.75);

\draw[line width =1.7 pt] (16-4,9.75/3)to[curve through={(16-4+0.8,9.75/3+0.3)..(12-2+6-0.5,-0.75+4.875/3+2.75-0.5)..(12-2+6,-0.75+4.875/3+2.75)..(12-2+6+0.7,-0.75+4.875/3+2.75+0.5)..(20-3+2-0.5,4.875/2+2+0.75-0.6)}] (20-3+2,4.875/2+2+0.75);

\end{scope}

}






	\filldraw [fill=black] (12-4+6-0.15,-0.75+9.75/3+2.75-0.15) rectangle (12-4+6+0.15,-0.75+9.75/3+2.75+0.15);

	\filldraw [fill=black] (20-3+2-0.15,4.875/2+2+0.75-0.15) rectangle (20-3+2+0.15,4.875/2+2+0.75+0.15);
	
	\filldraw [fill=black] (2+20-3+2-0.15,2+4.875/2+2+0.75-0.15) rectangle (2+20-3+2+0.15,2+4.875/2+2+0.75+0.15);

	\filldraw [fill=black] (6-0.15,1-0.15) rectangle (6+0.15,1+0.15);

	\filldraw [fill=black] (16-4-0.15,9.75/3-0.15) rectangle (16-4+0.15,9.75/3+0.15);

	\filldraw [fill=black] (12-3-0.15,-0.75+4.875/2-0.15) rectangle (12-3+0.15,-0.75+4.875/2+0.15);

	\node[scale=0.8] (x1) at (14,-1.5){$\mathbb{L}_{m^{\prime},0}+(n,mn)$} ;

	\node[scale=0.8] (x1) at (11.5,4){$\bar{w}_++(i,i)$} ;
	\node[scale=0.8] (x1) at (19.3,7){$v+(j,j)$} ;
	\node[scale=0.8] (x1) at (14,6){$\bar{w}_++(j,j)$} ;	
	\node[scale=0.8] (x1) at (18.5,4){$v+(i,i)$} ;
	\node[scale=0.8] (x1) at (6,1.5){$\bar{w}_-$} ;
	\node[scale=0.8] (x1) at (0,0){$\gamma_{\textup{ini}}^{\eta}$} ;
	\node[scale=0.8] (x1) at (0,2){$\gamma_{\textup{ini}}^{\zeta}$} ;

%
%
	\end{tikzpicture}	
	\caption{\label{fig:PathSplitting}Visualization of the parameters in Lemma \ref{lem:ShiftExclusion} provided that $\mathcal{B}^-$ and $\mathcal{B}^+_0$ occur. Red paths refer to $(\omega_{v}^{i})_{v\in \Z^2}$ and blue paths to $(\omega_{v}^{j})_{v\in \Z^2}$. Along (partially) black paths, the geodesics collect the same weights. }
 \end{figure}
 Similarly, let for all $i,j\in \N$
 \begin{equation}
 S_{i,j}:= T^{i}_{\bar{w}_-,\bar{w}_++(i,i)} - T^{j}_{\bar{w}_-,\bar{w}_++(j,j)}
 \end{equation}
 be the difference of the last passage time between $\bar{w}_-$ and $\bar{w}_++(i,i)$ in the environment $(\omega^{i}_v)_{v\in \Z^2}$, and the last passage time between $\bar{w}_-$ and $\bar{w}_++(j,j)$ in the environment $(\omega^{j}_v)_{v\in \Z^2}$. Note that the quantity $S_{i,j}$ does not depend on the choice of the initial growth interfaces $\gamma_{\textup{ini}}^{\eta}$ and $\gamma_{\textup{ini}}^{\zeta}$; see  Figure \ref{fig:PathSplitting} for a visualization. \\

To see that this choice for $S$ and $S_{i,j}$ indeed yields \eqref{eq:ShiftExclusion}, fix some $s\geq T_{\max}^{i,j}$. We consider the growth interfaces, defined in  \eqref{def:GrowthInterface2}, corresponding to both exclusion processes at time $s$, using the environment $(\omega^{i}_v)_{v\in \Z^2}$ for $(\eta_t)_{t \geq 0}$ and the environment $(\omega^{j}_v)_{v\in \Z^2}$ for $(\zeta_t)_{t \geq 0}$, respectively. By the choice of $T_{\max}^{i,j}$ and the uniqueness of geodesics between two $(N,k)$-periodic interfaces up to translation, the growth interface for $(\eta_t)_{t\geq 0}$ at time $s$ lies above the line $(2n+i,2mn+i)+\mathbb{L}_{m^{\prime},0}$, while the growth interface for $(\zeta_t)_{t\geq 0}$ at time $s-S-S_{i,j}$ lies above the line $(2n+j,2mn+j)+\mathbb{L}_{m^{\prime},0}$, respectively. Hence, on the event $\mathcal{B}^{-} \cap \mathcal{B}^{+}_0$, a site $v+(i,i)$ is contained in the growth interface for $(\eta_t)_{t\geq 0}$ at time $s$ if and only if the corresponding site $v+(j,j)$ is contained in the growth interface of $(\zeta_t)_{t\geq 0}$ at time $s-S-S_{i,j}$, coupling the two processes according to $\P_{i,j}$. Using Lemma \ref{lem:CurrentVsGeodesic} to convert this observation to the particle representation of the TASEP on the circle, we conclude.
\end{proof}

\subsection{Coupling of the randomly extended environments}\label{sec:CouplingRandomExtended}

In the following, we explain the choice of $i$ and $j$ for the environments of two exclusion processes $(\eta_t)_{t \geq 0}$ and  $(\zeta_t)_{t \geq 0}$, respectively. The number of lines added to the environments for the processes $(\eta_t)_{t \geq 0}$ and $(\zeta_t)_{t \geq 0}$ is given as a coupled pair of random variables $Y(\eta)$ and $Y(\zeta)$, which are both marginally uniform distributed on the set  \begin{equation}\label{def:IterationPoints}
\mathbb{B}:=\left\{ \lfloor x \ell \rfloor \colon x \in  [k] \right\}\quad  \text{ with } \quad  \ell:= \left\lfloor Nk^{-7/8}\Big( 1- \frac{1-m}{1-m^{\prime}}\Big) \right\rfloor  ,
\end{equation} where we recall the choice of $m=m(N,k)$ and $m^{\prime}=m^{\prime}(N,k)$ from the beginning of Section~\ref{sec:Coalescence}. Here, note that $\ell \in [k^{1/8},2k^{1/8}]$. The choice that $Y(\eta)$ and $Y(\zeta)$ are uniformly distributed on $\mathbb{B}$ is not particularly important. However, to explain this choice, let us provide some intuition in the case when $k$ is of order $N$. First, notice that the number of lines added to the environment must with high probability be of order at least $N$ in order to capture the initial time shift of order $N$. Second, we require for the choice of $\mathbb{B}$ that for any pair $b,\tilde{b} \in \mathbb{B}$ with $\tilde{b}>b$, the last passage times $T_{(b,b)+\mathbb{L}_{m^{\prime},0},(\tilde{b},\tilde{b})+\mathbb{L}_{m^{\prime},0}}$ are concentrated. Informally speaking, this means that the elements in $\mathbb{B}$ must be sufficiently spread out. 
Last, we have to ensure that the distance between two nearest elements in $\mathbb{B}$ is not too large, and that the distribution on $\mathbb{B}$ is sufficiently smooth such that regardless of the initial shift of order $N$, the respective last passage times to a far away site can with high probability be coupled to be close, i.e.\ of order at most $Nk^{-7/8}$.  \\

In order to determine the coupling between $Y(\eta)$ and $Y(\zeta)$, which we will do in Lemma~\ref{lem:SmallShift}, we require the following proposition on the change of the last passage times when adding $\ell$ many lines to the environment. The proof of Proposition \ref{pro:EstimateOnShift} is deferred to the end of Section \ref{sec:CouplingRandomExtended}.

\begin{proposition}\label{pro:EstimateOnShift}
Recall the quantities $S$ and $S_{i,j}$ for some $i,j\in \N \cup \{ 0\}$ from Lemma~\ref{lem:ShiftExclusion}. Then for all $k\geq k_0$ and $\theta\geq \theta_0$ for some constants $k_0,\theta_0,c>0$, we have that
\begin{equation}\label{eq:BoundOnInitialShift}
 \P^{\prime}( |S| > 10N \,  | \, \mathcal{B}^-  ) \leq \exp(-ck^{1/2}) 
\end{equation}
for all $N$ large enough. Furthermore, for all $i,j\in \mathbb{B}$, let $\mathcal{A}_{i,j}:= \mathcal{A}^1_{i,j} \cap \mathcal{A}^2_{i,j}$ with
\begin{align}\begin{split} \label{def:Aij12}
\mathcal{A}^1_{i,j} &:= \big\{ |S_{i,j}-S_{i-\ell,j} - Nk^{-\frac{7}{8}}| \leq Nk^{-\frac{7}{8}-\frac{1}{100}}   \big\} \\
\mathcal{A}^2_{i,j} &:= \big\{ |S_{i,j}-S_{i,j-\ell} - Nk^{-\frac{7}{8}}| \leq Nk^{-\frac{7}{8}-\frac{1}{100}}   \big\} \, . \end{split}
\end{align} Then for all $k$ and $N$ sufficiently large, we have that
\begin{equation}\label{def:UnionBoundIteration}
\sum_{i,j \in \mathbb{B}} \P_{i,j} \big(\mathcal{A}_{i,j}^{\complement} \, \big| \, \mathcal{B}^-\times \mathcal{B}^-, \mathcal{B}_i^+ \times \mathcal{B}_j^+  \big) \leq \frac{1}{k^2} . 
\end{equation}
\end{proposition}
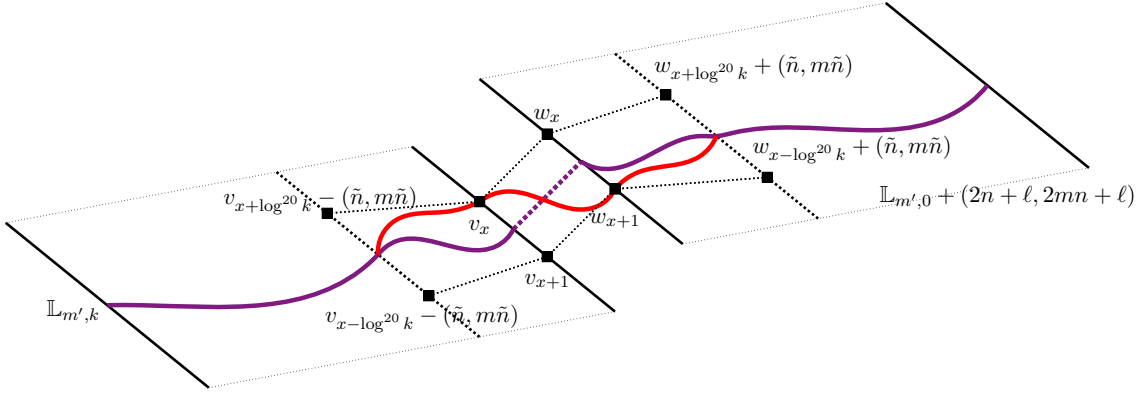
\begin{figure}
\centering
\begin{tikzpicture}[scale=0.45]

\draw[line width =1 pt] (0+4,-3+0.75)--(-6+4,3-1.125+0.75);
\draw[line width =1 pt] (16,0)--(10,3+0.375+1.5);
\draw[densely dotted, line width=1pt] (12,-0.75)--(6,3+0.375+0.75);

\draw[densely dotted, line width=0.2pt](-6+4,3-1.125+0.75)--(10,3+0.375+1.5);
\draw[densely dotted, line width=0.2pt](0+4,-3+0.75)--(16,0);
\draw[densely dotted, line width=0.2pt](-6+18,3-1.125+5)--(10+18-4,3+0.375+1.5+5-0.75);
\draw[densely dotted, line width=0.2pt](0+18,-3+5)--(16+18-4,0+5-0.75);

\draw[line width =1 pt] (0+18,-3+5)--(-6+18,3-1.125+5);
\draw[line width =1 pt] (16+18-4,0+5-0.75)--(10+18-4,3+0.375+1.5+5-0.75);
\draw[densely dotted, line width=1pt] (4+18,-1.5-0.75+5)--(-2+18,3+0.375-0.75+5);

\draw[densely dotted, line width=0.6pt] (12-1.5,-0.75+4.875/4)--(16-2,4.875/3);
\draw[densely dotted, line width=0.6pt] (12-4.5,-0.75+14.625/4)--(16-4,9.75/3);

\draw[densely dotted, line width=0.6pt] (16-1.5+6,4.875/4+2.75)--(12-2+6,-0.75+4.875/3+2.75);
\draw[densely dotted, line width=0.6pt] (16-4.5+6,14.625/4+2.75)--(12-4+6,-0.75+9.75/3+2.75);

\draw[densely dotted, line width=0.6pt] (16-2,4.875/3)--(12-2+6,-0.75+4.875/3+2.75);
\draw[densely dotted, line width=0.6pt] (16-4,9.75/3)--(12-4+6,-0.75+9.75/3+2.75);

\draw[line width =1.7 pt,darkblue] (16-3,4.875/2)to[curve through={(16-3-0.5,4.875/2-0.5)..(12-3+0.2,-0.75+4.875/2+0.2)..(12-3,-0.75+4.875/2)..(12-3-0.2,-0.75+4.875/2-0.2)..(-3+4+1,-3+4.875/2+0.75)}] (-3+4,-3+4.875/2+0.75);

\draw[line width =1.7 pt,darkblue] (16-3+2,4.875/2+2)to[curve through={(16-3+2+0.5,4.875/2+2-0.2)..(20-3+2-0.5,4.875/2+2+0.75)..(20-3+2,4.875/2+2+0.75)..(20-3+2+0.5,4.875/2+2+0.75+0.1)..(-3+32+2-4-0.5,-3+4.875/2+6+2-0.75-0.5)}] (-3+32+2-4,-3+4.875/2+6+2-0.75);

\draw[line width =1.7 pt,darkblue,densely dotted] (16-3,4.875/2)--(16-3+2,4.875/2+2);

\draw[line width =1.7 pt,red] (12-2+6,-0.75+4.875/3+2.75)to[curve through={(12-2+6+0.7,-0.75+4.875/3+2.75+0.5)..(20-3+2-0.5,4.875/2+2+0.75-0.6)}] (20-3+2,4.875/2+2+0.75);

\draw[line width =1.7 pt,red] (12-3,-0.75+4.875/2)to[curve through={(12-3+0.5,-0.75+4.875/2+1)..(16-4-1,9.75/3-0.3)}] (16-4,9.75/3);

\draw[line width =1.7 pt,red] (12-2+6,-0.75+4.875/3+2.75)to[curve through={(12-2+6-0.5,-0.75+4.875/3+2.75-0.5)..(16-4+0.8,9.75/3+0.3)}] (16-4,9.75/3);






	\filldraw [fill=black] (12-1.5-0.15,-0.75+4.875/4-0.15) rectangle (12-1.5+0.15,-0.75+4.875/4+0.15);
	\filldraw [fill=black] (12-4.5-0.15,-0.75+14.625/4-0.15) rectangle (12-4.5+0.15,-0.75+14.625/4+0.15);
	\filldraw [fill=black] (12-2+6-0.15,-0.75+4.875/3+2.75-0.15) rectangle (12-2+6+0.15,-0.75+4.875/3+2.75+0.15);
	\filldraw [fill=black] (12-4+6-0.15,-0.75+9.75/3+2.75-0.15) rectangle (12-4+6+0.15,-0.75+9.75/3+2.75+0.15);

  	\filldraw [fill=black] (16-2-0.15,4.875/3-0.15) rectangle (16-2+0.15,4.875/3+0.15);
	\filldraw [fill=black] (16-4-0.15,9.75/3-0.15) rectangle (16-4+0.15,9.75/3+0.15);
	\filldraw [fill=black] (16-1.5+6-0.15,4.875/4+2.75-0.15) rectangle (16-1.5+6+0.15,4.875/4+2.75+0.15);
	\filldraw [fill=black] (16-4.5+6-0.15,14.625/4+2.75-0.15) rectangle (16-4.5+6+0.15,14.625/4+2.75+0.15);

\draw[densely dotted, line width=0.6pt] (12-1.5,-0.75+4.875/4)--(16-2,4.875/3);
\draw[densely dotted, line width=0.6pt] (12-4.5,-0.75+14.625/4)--(16-4,9.75/3);

\draw[densely dotted, line width=0.6pt] (16-1.5+6,4.875/4+2.75)--(12-2+6,-0.75+4.875/3+2.75);
\draw[densely dotted, line width=0.6pt] (16-4.5+6,14.625/4+2.75)--(12-4+6,-0.75+9.75/3+2.75);

\draw[densely dotted, line width=0.6pt] (16-2,4.875/3)--(12-2+6,-0.75+4.875/3+2.75);
\draw[densely dotted, line width=0.6pt] (16-4,9.75/3)--(12-4+6,-0.75+9.75/3+2.75);

	\node[scale=0.8] (x1) at (0,0){$\mathbb{L}_{m^{\prime},k}$} ;
	\node[scale=0.8] (x1) at (27.6,3.4){$\mathbb{L}_{m^{\prime},0}+(2n+\ell,2mn+\ell)$} ;

	\node[scale=0.8] (x1) at (12,2.5){$v_x$} ;
	\node[scale=0.8] (x1) at (14,0.9){$v_{x+1}$} ;
	\node[scale=0.8] (x1) at (14,5.8){$w_x$} ;	
	\node[scale=0.8] (x1) at (16,2.7){$w_{x+1}$} ;
	\node[scale=0.8] (x1) at (7.3,3.3){$v_{x+\log^{20}k}-(\tilde{n},m\tilde{n})$} ;
	\node[scale=0.8] (x1) at (10.3,-0.2){$v_{x-\log^{20}k}-(\tilde{n},m\tilde{n})$} ;
	\node[scale=0.8] (x1) at (20.1,7.2){$w_{x+\log^{20}k}+(\tilde{n},m\tilde{n})$} ;
	\node[scale=0.8] (x1) at (23,4.8){$w_{x-\log^{20}k}+(\tilde{n},m\tilde{n})$} ;
%
%
	\end{tikzpicture}	
	\caption{\label{fig:Iteration}Visualization of the modified path in the proof of Lemma~\ref{lem:SmallLineBound}. The geodesic started from $v$ in the environment $(\omega^{0}_z)_{z\in \Z^2}$ is drawn in purple, while the modified path in the environment $(\omega^{\ell}_z)_{z\in \Z^2}$ is drawn in red.}
 \end{figure}

Instead of  $(\P_{i,j})_{i,j \in \mathbb{B}}$, it will in the following be convenient to consider the events in \eqref{def:Aij12} with respect to the product measure $\P^{\prime} \otimes \P^{\prime\prime}$ for the (independent) environments $(\omega^{\prime}_v)_{v\in \Z^2}$ and $(\omega^{\prime\prime}_v)_{v\in \Z^2}$,
 which we used in  \eqref{eq:CouplingDifferentI} for the construction of the environments $(\omega^{i}_v)_{v\in \Z^2}$ with  $i\in \N$. 
 We say that the event $\mathcal{A}$ occurs under the measure $\P^{\prime} \otimes \P^{\prime\prime}$ if the event $\mathcal{A}_{i,j}$ holds for all $i,j \in \mathbb{B}$ under the measure $\P_{i,j}$. Note that Proposition~\ref{pro:EstimateOnShift} implies that $(\P^{\prime} \otimes \P^{\prime\prime}) (\mathcal{A}) \geq 1-k^{-2}$. Intuitively, the event $\mathcal{A}$ guarantees that by adding a chunk of $\ell$ lines in one of the environments $(\omega^{i}_v)_{v\in \Z^2}$ or $(\omega^{j}_v)_{v\in \Z^2}$ with $i,j \in \mathbb{B}$, the change of $S_{i,j}$ from Proposition \ref{pro:EstimateOnShift} is concentrated around $Nk^{-7/8}$.  \\
 
 Before presenting the proof of Proposition \ref{pro:EstimateOnShift}, let us give some comments on the above statement. First, let us stress that the exponents $7/8$ and $1/100$ are not optimal, but sufficient for our purposes. Moreover, we emphasize that the choice of $\ell$ will become evident in Section~\ref{sec:MerminWagner} where we discuss a different alternation to the underlying environment and last passage times using a Mermin--Wagner style argument. Last, we stress that we treat $k \geq k_0$ for some suitable constant $k_0$ as a function of $N$, and we study the asymptotics as $N \rightarrow \infty$.  \\

  In order to show Proposition \ref{pro:EstimateOnShift}, we claim that it suffices to give a lower bound of $1-k^{-5}$ on the probability of the events $\mathcal{A}^1_{j,0}$ for some $j \in \mathbb{B} \cup \{0\}$. To see this, note that by the construction of the random variables $S_{i,j}$ in Lemma~\ref{lem:ShiftExclusion}, we have that
 \begin{equation}
 S_{i,j}-S_{i,j-\ell} = S_{i,\ell}-S_{i,0}
 \end{equation} for all $i,j \in \mathbb{B}$. Together with the symmetry in $i$ and $j$, this gives the claim. In the following, in order to simplify the notation, we will focus on the event $\mathcal{A}^1_{\ell,0}$. For the events $(\mathcal{A}^1_{i,0})_{i \in \mathbb{B} \setminus \{\ell\}}$, the same arguments apply. Let us start by outlining the strategy how to give a lower bound on the probability of the event $\mathcal{A}^1_{\ell,0}$. A corresponding upper bound on $S_{i,0}-S_{i-\ell,0}$ comes from the moderate deviation bound on line-to-line last passage times in Lemma~\ref{lem:MaximumLineToLine}. For the lower bound, we will show that with high probability, the geodesics in $(\omega^{i-\ell}_v)_{v\in \Z^2}$ can be modified such that we increase the respective last passage times in the environment $(\omega^{i}_v)_{v\in \Z^2}$ with high probability by at least $Nk^{-7/8}(1-k^{-1/100})$. This modification is also sketched in Figure~\ref{fig:Iteration}. \\

To formalize this strategy, as a first step, we give an estimate on the local transversal fluctuations for the geodesic from some site $v\in \mathbb{L}_{m^{\prime},k}$ to the line $\mathbb{L}_{m^{\prime},0}+(2n+\ell,2mn+\ell)$,  depending on where the geodesic traverses the line $\mathbb{L}_{m^{\prime},0}+(n,mn)$. We define for all $x\in \Z$ 
\begin{align}
v_x := (n,mn) + 2x ((N-k)k^{-7/8},-k^{1/8}) \, ,
\end{align} which allows us to partition the line  $\mathbb{L}_{m^{\prime},0}+(n,mn)$ into segments $\mathbb{S}(v_x,v_{x+1})$ for $x\in \Z$. Here, we recall the definition of the segment $\mathbb{S}$ from \eqref{def:Segment}. 
\begin{lemma}\label{lem:TransversalLocal} Consider the $(N,k)$-periodic environment $(\omega^{\prime}_v)_{v\in \Z^2}$, and let
\begin{equation}
\mathcal{I} := [- 4 k^{7/8}, 4 k^{7/8}] \cap \Z \, .
\end{equation}
Then  we see that for all $v\in \mathbb{S}((0,k),(N-k,0))$
\begin{equation}\label{eq:HittingSegment}
\mathbb{P}^{\prime}\Big( \gamma_{v,\mathbb{L}_{m^{\prime},0}+(2n+\ell,2mn+\ell)} \cap \left(\bigcup_{x\in \mathcal{I}}\mathbb{S}(v_x,v_{x+1}) \right) \neq \emptyset \, \Big| \, \mathcal{B}^-, \mathcal{B}_0^+\Big) = 1.
\end{equation} Moreover, there exists some $k_0$ such that for $\tilde{n}=N^2 k^{-29/16}$ and all $k\geq k_0$, 
\begin{equation}\label{eq:LocalTransversalFluctuations2}
\P^{\prime} \Big( \gamma_{v,v_x} \cap \Big(\mathbb{S}(v_{x-\log^{20}k},v_{x+\log^{20}k}) -(\tilde{n},m\tilde{n}) \Big) \neq \emptyset \, \Big| \, \mathcal{B}^-, \mathcal{B}_0^+ \Big) \geq 1-k^{-5}
\end{equation} holds for all $v\in \mathbb{S}((0,k),(N-k,0))$ and $x\in \mathcal{I}$.
\end{lemma}
\begin{proof} Recalling that the environment $(\omega^{\prime}_v)_{v\in \Z^2}$ is $(N,k)$-periodic, the events $\mathcal{B}^-$ and $\mathcal{B}_0^+$ guarantee that the transversal fluctuations of the geodesic in \eqref{eq:HittingSegment} must be bounded by~$2k$. This yields equation~\eqref{eq:HittingSegment}. For  \eqref{eq:LocalTransversalFluctuations2}, note that due to the coalescence of geodesics in the event $\mathcal{B}^-$, it suffices to consider $v=(0,k)$ as a starting point. Moreover, using that on the event $\mathcal{B}^-$ the transversal fluctuations between $v$ and $v_x$ are at most $k$, it suffices to show the statement in \eqref{eq:LocalTransversalFluctuations2} with respect to an i.i.d.\ environment. This is the content of Theorem~2.7 in \cite{BHS:Binfinite}, which states a moderate deviation bound on the local transversal fluctuations (in the sense of definition \eqref{def:TransversalFluctuations}, but at a level $\tilde{n}$ much smaller than $n$) of flat geodesics in i.i.d.\ environments. In particular, for geodesics of slope $m$ and length $\tilde{n}$, the transversal fluctuations are of order $m^{2/3}\tilde{n}^{2/3} \in (c_1 k^{1/8},c_2 k^{1/8} )$ for some $c_1,c_2>0$, allowing us to conclude.
\end{proof}

Again, let us stress that the choice of the exponents of $-29/16$ and $\log^{20}k$ is not optimized, but sufficient for our arguments. 
Next, we define the sites $(w_x)_{x\in \Z}$ as 
\begin{align}
w_x := (n+\ell,mn+\ell) + 2x ((N-k)k^{-7/8},k^{1/8}) =v_x +(\ell,\ell) \, ,
\end{align}
for all $x\in \Z$. 
In the following, we take $j=0$ and recall that $T^{i}$ denotes the last passage time with respect to the environment $(\omega^{i}_v)_{v\in \Z^2}$, which we obtain from the product measure $\P^{\prime} \otimes \P^{\prime\prime}$. With a slight abuse of notation, let $\mathcal{B}^+$ be the event under $\P^{\prime} \otimes \P^{\prime\prime}$ that $\mathcal{B}^+_0$ holds with respect to $\P^{\prime}$, and similarly for the event $\mathcal{B}^-$. Moreover, we treat  $S_{0,0}$ and $S_{\ell,0}$ (and similarly the last passage times $T^{i}$) as random variables with respect to the environments $(\omega^{\prime}_v)_{v\in \Z^2}$ and $(\omega^{\prime\prime}_v)_{v\in \Z^2}$. We are now ready to provide a lower bound on the difference between $S_{0,0}$ to $S_{\ell,0}$. 

\begin{lemma}\label{lem:SmallLineBound}
Let $k$ and $N$ be sufficiently large. There exists an event $\tilde{\mathcal{B}}$, which occurs with $\P^{\prime} \otimes \P^{\prime\prime}$-probability at least $1-k^{-4}$, such that the events $\mathcal{B}^-,\mathcal{B}^+$, and $\tilde{\mathcal{B}}$ imply that
\begin{equation}\label{eq:LowerBoundExtension}
\Big\{ T^{\ell}_{v,\mathbb{L}_{m^{\prime},0}+(2n+\ell,2mn+\ell)} - T^{0}_{v,\mathbb{L}_{m^{\prime},0}+(2n,2mn)  } \geq  Nk^{-\frac{7}{8}}(1 - k^{-\frac{1}{100}}) \text{ for all } v\in \mathbb{L}_{m^{\prime},k}  \Big\} . 
\end{equation}  In particular, there exists some $k_0$ such that for all $k \geq k_0$, and all $N$ sufficiently large
\begin{equation}\label{eq:LowerBoundExtension2}
(\P^{\prime} \otimes \P^{\prime\prime})\left( S_{\ell,0}-S_{0,0}  \geq  Nk^{-\frac{7}{8}}(1 - k^{-\frac{1}{100}}) \, \big| \, \mathcal{B}^-, \mathcal{B}^+ \right) \geq 1-k^{-4} \, .
\end{equation}
\end{lemma}
\begin{proof} 
We claim that for \eqref{eq:LowerBoundExtension}, it suffices to show that
\begin{equation}\label{eq:ReducesExtension}
\Big\{ T^{\ell}_{(k,0),w+(\ell,\ell)} - T^{0}_{(k,0),w}   \geq  Nk^{-\frac{7}{8}}(1 - k^{-\frac{1}{100}}) \Big\}
\end{equation} holds for all $w\in \{ (2n+x(N-k),2mn-xk) \colon x\in \{ -2,-1,0,1,2\} \}$ with probability at least $1-k^{-4}/5$. To see this, note that the events $\mathcal{B}^-$ and $\mathcal{B}^+$ guarantee that the geodesic from some site in $\mathbb{L}_{m^{\prime},k}$ to the line $\mathbb{L}_{m^{\prime},0}+(2n,2mn)$ (respectively to the line $\mathbb{L}_{m^{\prime},0}+(2n+\ell,2mn+\ell)$ after adding $\ell$ extra lines to the environment) has transversal fluctuations of at most $2k$. Moreover, all such geodesics must intersect $\TR(w_-)$ and $\TR(w_+)$ for some pair of sites $w_- \in W_-$ and $w_+ \in W_+$, respectively $w_+ \in W_++(\ell,\ell)$ after adding  $\ell$ lines, allowing us to take only the states sites for \eqref{eq:LowerBoundExtension} into account. \\

In the following, we only consider $w=(2n,2mn)$ as a similar argument applies for the other choices of $w$. We will now collect several last passage percolation results on the intersection and coalescence of geodesics, which when combined allow us to conclude \eqref{eq:ReducesExtension}. Lemma \ref{lem:TransversalLocal} and a symmetry argument yield that with probability at least $1-k^{-5}/4$, the geodesics $\gamma_{(0,k),(2n,2mn)}$ in the environment $(\omega^{\prime}_v)_{v\in \Z^2}$ must pass through
\begin{equation*}
\mathbb{S}(v_{x-\log^{20}k},v_{x+\log^{20}k}) -(\tilde{n},m\tilde{n}) \, ,\quad \mathbb{S}(v_{x},v_{x+1}) \  \text{, and}\quad \mathbb{S}(v_{x-\log^{20}k},v_{x+\log^{20}k}) +(\tilde{n},m\tilde{n})
\end{equation*} for some unique choice of $x\in \mathcal{I}$. From Corollary  \ref{cor:ExpectationVariance} and Corollary \ref{cor:SupremumPeriodic} in order to bound the expected last passage time in periodic environments, we get that
\begin{align*}
\sup_{v,\bar{v}\in \mathbb{S}(v_{x},v_{x+1})} \big|\E\big[T^0_{u,v}-T^0_{u,\bar{v}}\big] \big| \leq  Nk^{-\frac{7}{8}-\frac{1}{50}} \text{ for all } u\in \mathbb{S}(v_{x-\log^{20}k},v_{x+\log^{20}k}) -(\tilde{n},m\tilde{n}) \\
\sup_{v,\bar{v}\in \mathbb{S}(v_{x},v_{x+1})} \big|\E\big[T^0_{v,w}-T^0_{\bar{v},w}\big] \big| \leq  Nk^{-\frac{7}{8}-\frac{1}{50}} \text{ for all } w\in \mathbb{S}(v_{x-\log^{20}k},v_{x+\log^{20}k}) +(\tilde{n},m\tilde{n}) 
\end{align*} for all $N$ sufficiently large. Here, we recall that $m^{2/3}\tilde{n}^{2/3} \in (c_1 k^{1/8},c_2 k^{1/8} )$ for some $c_1,c_2>0$, and note  that $\tilde{n}^{\frac{1}{3}}m^{-\frac{1}{6}} \in (c_3 Nk^{-15/16}, c_4 Nk^{-15/16})$ for some constants $c_3,c_4>0$. Hence,  with Propositions~\ref{pro:MinimalLPTiid} and~\ref{pro:MaximalLPTCylinder} in order to give a bound on the moderate deviations on the last passage times, we see that  
\begin{align*}
\Big\{ \sup_{v,\bar{v}\in \mathbb{S}(v_{x},v_{x+1})}|T^0_{u,v}-T^0_{u,\bar{v}}| \leq \frac{1}{3}Nk^{-\frac{7}{8}-\frac{1}{100}} \text{ for all } u\in \mathbb{S}(v_{x-\log^{20}k},v_{x+\log^{20}k}) -(\tilde{n},m\tilde{n}) \Big\} \\
\Big\{ \sup_{v,\bar{v}\in \mathbb{S}(v_{x},v_{x+1})}|T^0_{v,w}-T^0_{\bar{v},w}| \leq \frac{1}{3}Nk^{-\frac{7}{8}-\frac{1}{100}} \text{ for all } w\in \mathbb{S}(v_{x-\log^{20}k},v_{x+\log^{20}k}) +(\tilde{n},m\tilde{n}) \Big\}
\end{align*} hold with probability at least $1-k^{-5}/4$ for all choices of $x\in \mathcal{I}$, provided that $k$ and $N$ are sufficiently large. Recall $\ell \in [k^{1/8},2k^{1/8}]$ for $\ell$ defined in \eqref{def:IterationPoints}, and note that
\begin{equation}\label{eq:GapEstimate}
(\P^{\prime} \otimes \P^{\prime\prime})\left( \exists v\in \mathbb{S}(v_x,v_{x+1}), w\in \mathbb{S}(w_x,w_{x+1}) \colon  T^{\ell}_{v,w} \geq Nk^{-\frac{7}{8}}\Big(1-\frac{1}{3}k^{-\frac{1}{100}}\Big) \right) \geq 1-k^{-5}
\end{equation} holds for all $x\in \mathcal{I}$, using the same arguments as for  \eqref{eq:PartitionedSegments} in Lemma \ref{lem:MinimumLineToPoint}, where we provide a lower bound on the minimal last passage time between a site and a line segment. \\

It remains to show that \eqref{eq:ReducesExtension} holds. For $w=(2n,2mn)$, we construct a particular lattice path $\gamma$ from $(0,k)$ to $(2n+\ell,2mn+\ell)$ in the environment $(\omega^{\ell}_v)_{v\in \Z^2}$, and give a lower bound on its weight  $T^{\ell}(\gamma)$. Again, let us stress that the same construction applies for the other choices of $w$, and we let $\tilde{B}$ be the event that the lower bound on the last passage time in \eqref{eq:LowerBoundExtension} holds for all respective choices of $v$ and $w$. In the following, we let $\gamma$ agree with the geodesic $\gamma_{(k,0),(2n,2mn)}$ until the first intersection point with the segment $\mathbb{S}(v_{x-\log^{20}k},v_{x+\log^{20}k}) -(\tilde{n},m\tilde{n})$ for some unique $x\in \mathcal{I}$. We then follow the geodesic from this intersection point to the site $v^{\prime}\in \mathbb{S}(v_x,v_{x+1})$, which maximizes \eqref{eq:GapEstimate}. Afterwards, we continue the path along the geodesic between $v^{\prime}$ and the corresponding site $w^{\prime}\in \mathbb{S}(w_x,w_{x+1})$ which maximizes \eqref{eq:GapEstimate}. Thereafter, we let $\gamma$ follow the geodesic from $w^{\prime}$ to the intersection point of the segment $\mathbb{S}(w_{x-\log^{20}k},w_{x+\log^{20}k})+(\tilde{n},m\tilde{n})$ with the translated path $\gamma_{(k,0),(2n,2mn)}+(\ell,\ell)$. Finally, the remaining part of the path $\gamma$ follows along $\gamma_{(k,0),(2n,2mn)}+(\ell,\ell)$ to the site $(2n+\ell,2mn+\ell)$. A visualization of this concatenation of paths is given in Figure \ref{fig:Iteration}. Combining all of the above events which provide lower bounds on last passage times, and taking a union bound over the at most $9k^{7/8}$ choices for the interval $\mathcal{I}$, this yields the desired lower bound on the probability of the event $\tilde{\mathcal{B}}$, and thus the desired lower bound in \eqref{eq:ReducesExtension}.
\end{proof}

\begin{proof}[Proof of Proposition \ref{pro:EstimateOnShift}] We start with the upper bound on the shift $S$ in \eqref{eq:BoundOnInitialShift}. Assume without loss of generality that both initial growth interfaces $\gamma_{\textup{ini}}^{\eta}$ and $\gamma_{\textup{ini}}^{\zeta}$ pass through the origin. Since
\begin{equation}
\mathbb{L}_{m^{\prime},-2k} \preceq \gamma^{\eta}_{\textup{ini}} , \gamma^{\zeta}_{\textup{ini}} \preceq  \mathbb{L}_{m^{\prime},2k} ,  
\end{equation} 
 we note that on the event $\mathcal{B}_-$
\begin{equation}\label{eq:SBoundLPT}
|S| \leq |T^{0}_{\mathbb{L}_{m^{\prime},-2k},\bar{w}_-} - T^{0}_{\mathbb{L}_{m^{\prime},2k},\bar{w}_-} | \, .
\end{equation} Since by our assumptions $k\geq k_0$ for a sufficiently large constant $k_0$, we obtain the desired bound by Lemma \ref{lem:MaximumLineToLine}. 
Next, we give an estimate on $S_{\ell,j}-S_{0,j}$ by showing that
\begin{equation}\label{eq:A1Bound}
(\P^{\prime} \otimes \P^{\prime\prime})(\mathcal{A}^1_{0,j}) \geq 1-k^{-4} \, .
\end{equation} The desired result \eqref{def:UnionBoundIteration} then follows by symmetry and a union bound. Due to Lemma~\ref{lem:SmallLineBound}, it remains for \eqref{eq:A1Bound} to show that
\begin{equation}
(\P^{\prime} \otimes \P^{\prime\prime})\Big( S_{\ell,j}-S_{0,j} \leq  Nk^{-\frac{7}{8}}(1 + k^{-\frac{1}{100}})  \Big) \geq 1- \frac{k^{-5}}{4} 
\end{equation} for all $N$ large enough. This follows from the same arguments as  Lemma~\ref{lem:MaximumLineToLine}, noting that $S_{i,j}-S_{i+1,j}$ is stochastically dominated by  $T^{0}_{\mathbb{L}_{m^{\prime},0},\mathbb{L}_{m^{\prime},0}+(\ell,\ell)}$.
\end{proof}

Using Proposition \ref{pro:EstimateOnShift}, we can now construct a coupling for the exclusion processes started from the configurations $\eta$ and $\zeta$, respectively, such that the respective processes become time shifted versions of each other, with a time shift of order at most $Nk^{-7/8}$.

\begin{lemma}\label{lem:SmallShift}  Let $\theta_0>0$ be a sufficiently large constant. Fix some $\theta>\theta_0$, and recall the constants $k_0=k_0(\theta)$ and $c=c(\theta)$ from Lemma \ref{lem:CoalescenceEvents}. Then for all $k\geq k_0$, and all $N$ sufficiently large, there exists a coupling between two $(N,k)$-periodic environments $(\tilde{\omega}^{\eta}_v)_{v\in \Z^2}$ and $(\tilde{\omega}^{\zeta}_v)_{v\in \Z^2}$ such that the corresponding exclusion processes $(\eta_t)_{t \geq 0}$ and $(\zeta_t)_{t \geq 0}$ from $\eta$ and $\zeta$ with initial growth interfaces $\gamma_{\textup{ini}}^{\eta}$ and $\gamma_{\textup{ini}}^{\zeta}$ satisfy with probability at least $c/2$
\begin{equation}\label{eq:SmallShiftExclusion}
\eta_t = \zeta_{t-\tilde{S}} \ \text{ for all } t\geq \max_{i,j \in \mathbb{B}}(T^{i,j}_{\max})
\end{equation} for some $\tilde{S}$ with $|\tilde{S}|\leq 4Nk^{-7/8}$, where we recall $T^{i,j}_{\max}$ from \eqref{def:Tmax}.
\end{lemma}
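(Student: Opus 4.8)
The plan is to assemble Lemma~\ref{lem:CoalescenceEvents}, Lemma~\ref{lem:ShiftExclusion} and Proposition~\ref{pro:EstimateOnShift}; the one genuinely new ingredient is the coupling of the line counts $Y(\eta)$ and $Y(\zeta)$. I will use throughout the environments $(\omega_v)_{v\in\Z^2}$, $(\omega'_v)_{v\in\Z^2}$ and the family $(\omega^i_v)_{v\in\Z^2}$, $i\in\mathbb{B}$, from Section~\ref{sec:RandomExtension}. First I record the distributional fact that, for every fixed $i$, $(\omega^i_v)_{v\in\Z^2}$ is again a genuine $(N,k)$-periodic environment: the line $(n,mn)+\mathbb{L}_{m^{\prime},0}$ separating $W_-$ from $W_+$ is parallel to the periodicity vector $(N-k,-k)$, so $W_-$ and $W_+$ are each invariant under that translation, hence $(\omega_v)_{v\in W_-}$ and $(\omega_v)_{v\in W_+}$ share no periodic identifications and are independent; gluing the former, an $(i,i)$-translate of the latter and a fresh copy $(\omega'_v)$ on the gap therefore reproduces the standard $(N,k)$-periodic law. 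By Lemma~\ref{lem:CurrentVsGeodesic} this makes the growth interface in $(\omega^i_v)$ a TASEP on the circle from the prescribed initial condition, and the same remains true if $i$ is chosen measurably with respect to data supported on a region independent of the re-used portion of the environment.

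Now fix $\theta>\theta_0$, $k\ge k_0$ and $N$ large, write $\P$ for the common law of $(\omega^i_v)_{i\in\mathbb{B}}$, and put $\mathcal{G}:=\mathcal{B}^-\cap\mathcal{B}_0^+\cap\mathcal{A}\cap\{|S|\le 10N\}$ with $S$ and $(S_{i,j})$ from Lemma~\ref{lem:ShiftExclusion} and $\mathcal{A}$ from Proposition~\ref{pro:EstimateOnShift}. By Lemma~\ref{lem:CoalescenceEvents}, $\P(\mathcal{B}^-\cap\mathcal{B}_0^+)=\P_0(\mathcal{B}^-\cap\mathcal{B}_0^+)\ge c$, and by Proposition~\ref{pro:EstimateOnShift} together with \eqref{eq:BoundOnInitialShift} the conditional probability of $\mathcal{A}\cap\{|S|\le 10N\}$ given $\mathcal{B}^-\cap\mathcal{B}_0^+$ tends to one, so $\P(\mathcal{G})\ge c/2$ for $k$ large. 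On $\mathcal{G}$, Lemma~\ref{lem:ShiftExclusion} gives for every pair $i,j\in\mathbb{B}$ the identity $\eta_t=\zeta_{t-S-S_{i,j}}$ for $t\ge T^{i,j}_{\max}$ when $(\eta_t)$ runs under $(\omega^i_v)$ and $(\zeta_t)$ under $(\omega^j_v)$; since $i,j\le\max\mathbb{B}=o(n)$ and $m\le 1$, Corollary~\ref{cor:SupremumPeriodic} bounds $T^{i,j}_{\max}$ by a universal constant times $n$ on $\mathcal{G}$, which after absorbing that constant we read as $2n$. It remains to choose $Y(\eta),Y(\zeta)\in\mathbb{B}$, marginally uniform, so that $\tilde S:=S+S_{Y(\eta),Y(\zeta)}$ obeys $|\tilde S|\le 2Nk^{-7/8}$ with conditional probability close to one on $\mathcal{G}$.

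For the coupling, enumerate $\mathbb{B}=\{b_1<\dots<b_k\}$, take $U$ uniform on $\{1,\dots,k\}$ and independent of $(\omega^i_v)_{i\in\mathbb{B}}$, $\gamma_{\textup{ini}}^{\eta}$ and $\gamma_{\textup{ini}}^{\zeta}$, let $\Delta$ be the integer nearest to $S/(Nk^{-7/8})$ on $\mathcal{G}$ — with the sign read off from $\mathcal{A}^2$ (cf.\ Lemma~\ref{lem:SmallLineBound}) — and $\Delta:=0$ elsewhere, and set $Y(\eta):=b_U$ and $Y(\zeta):=b_{1+((U+\Delta-1)\bmod k)}$. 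Both are uniform on $\mathbb{B}$ since $\Delta$ is independent of $U$, and by the first paragraph the processes driven by $(\omega^{Y(\eta)}_v)$ and $(\omega^{Y(\zeta)}_v)$ are TASEPs from $\eta$ and $\zeta$ with the correct laws. On $\mathcal{G}$ intersected with the no-wrap-around event (of conditional probability $\ge 1-|\Delta|/k\ge 1-10k^{-1/8}$, using $|\Delta|\le 10k^{7/8}$), the index of $Y(\zeta)$ exceeds that of $Y(\eta)$ by exactly $\Delta$, so summing $\mathcal{A}^2$ along the $|\Delta|$ increments $j\mapsto j+\ell$ starting from $S_{Y(\eta),Y(\eta)}=0$ yields $S_{Y(\eta),Y(\zeta)}=-\Delta Nk^{-7/8}+E$ with $|E|\le|\Delta|Nk^{-7/8-1/100}$, hence $\tilde S=(S-\Delta Nk^{-7/8})+E$ with $|S-\Delta Nk^{-7/8}|\le Nk^{-7/8}$. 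The hard part is precisely the control of $E$ against the target: the crude bound above only gives $|E|\lesssim Nk^{-1/100}$, so one must either sharpen the a priori estimate on $|S|$ on $\mathcal{B}^-$ — refining \eqref{eq:BoundOnInitialShift} via the line-to-line estimates of Section~\ref{sec:LPPestimates} applied to the width-$O(\min(k,N-k))$ strip containing both initial interfaces, so that the relevant number of increments drops to $O(k^{1/100})$ — or extract cancellation among the per-increment errors; this is the crux, and once it is in place $|\tilde S|\le 2Nk^{-7/8}$ and $\eta_t=\zeta_{t-\tilde S}$ for all $t\ge 2n$ hold on an event of probability at least $c/2$, which is the assertion of the lemma.
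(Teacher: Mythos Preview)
Your reduction to coupling the line counts $Y(\eta),Y(\zeta)$ is correct, and the probability bookkeeping via $\mathcal{G}=\mathcal{B}^-\cap\mathcal{B}_0^+\cap\mathcal{A}\cap\{|S|\le 10N\}$ is fine. The genuine gap is the one you flag yourself: the cyclic-shift coupling $Y(\zeta)=b_{1+((U+\Delta-1)\bmod k)}$ with $\Delta\approx S/(Nk^{-7/8})$ forces you to telescope $|\Delta|$ increments from Proposition~\ref{pro:EstimateOnShift}, and since $|\Delta|$ is of order $k^{7/8}$ the accumulated error $|E|\lesssim k^{7/8}\cdot Nk^{-7/8-1/100}=Nk^{-1/100}$ swamps the target $2Nk^{-7/8}$. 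Neither proposed fix works. The bound $|S|=O(N)$ cannot be improved in general: the two initial interfaces can differ in their last passage time to $\bar w_-$ by as much as the line-to-line passage time across the width-$O(k)$ strip containing them, which is genuinely of order $N$ (this is exactly the content of~\eqref{eq:SBoundLPT}). And Proposition~\ref{pro:EstimateOnShift} gives only absolute-value bounds on the increment errors, with no sign or independence structure, so there is no mechanism for cancellation; even under an optimistic independence heuristic the central-limit scale $\sqrt{k^{7/8}}\cdot Nk^{-7/8-1/100}$ is still far too large.

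The paper avoids the accumulation by a different coupling. Rather than shifting the index by $\Delta$, one bins the indices by the \emph{realised value} of the shift: for each integer $x$ set
\[
\mathcal{I}^{1}_x=\Big\{y\in\mathbb{B}: S_{y,0}-S_{0,0}\in\big[xNk^{-\frac78+\frac1{100}},(x+1)Nk^{-\frac78+\frac1{100}}\big]\Big\},
\]
and define $\mathcal{I}^{2}_x$ analogously for the map $y\mapsto S_{0,y}-S_{0,0}+S$. On $\mathcal{A}$ these are contiguous blocks of $k^{1/100}\pm 2$ indices each. The coupling is: draw a common bin label $X$ uniformly, then a common position $\tilde X$ uniformly in $[k^{1/100}+2]$, and let $\tilde Y_1,\tilde Y_2$ be the $\tilde X$-th elements of $\mathcal{I}^1_X$, $\mathcal{I}^2_X$. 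Because the same $X$ \emph{and} the same $\tilde X$ are used, one now only accumulates at most $k^{1/100}$ increment errors (within a single bin) rather than $k^{7/8}$, giving $|\tilde S|\lesssim k^{1/100}\cdot Nk^{-7/8-1/100}=Nk^{-7/8}$. The price is that $\tilde Y_1,\tilde Y_2$ are no longer exactly uniform on $\mathbb{B}$; the paper shows they are within $O(k^{-1/100})$ of uniform in total variation, and then passes to genuinely uniform $Y(\eta),Y(\zeta)$ via the coupling characterisation of total variation, at an additional cost that is absorbed into the $c/2$.
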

\begin{proof} Let $(\tilde{\omega}^{\eta}_v)_{v\in \Z^2}$ and $(\tilde{\omega}^{\zeta}_v)_{v\in \Z^2}$ be the environments given by $\tilde{\omega}^{\eta}_v:=\omega^{Y(\eta)}_v$ and $\tilde{\omega}^{\zeta}_v:=\omega^{Y(\zeta)}_v$ for all $v\in \Z^2$, respectively. Recall $\mathbb{B}$ from \eqref{def:IterationPoints}. By Lemma \ref{lem:ShiftExclusion}, it suffices to show that on the event $\mathcal{B}^- \cap \mathcal{B}^+$, we can couple a pair of random variables $Y(\eta)$ and $Y(\zeta)$ such that they are marginally uniformly distributed on $\mathbb{B}$, while \eqref{eq:SmallShiftExclusion} holds with probability at least $c/2$ for some suitable random variable $\tilde{S}$. \\

Suppose that the event $\mathcal{A}$ defined after \eqref{def:UnionBoundIteration} occurs, and that we have $|S|\leq 10N$. Recall that $S_{i,j}-S_{0,j}=S_{i,0}-S_{0,0}$ holds for all $i,j\in \mathbb{B}$. Let 
\begin{equation}
\mathcal{J}:=[-20k^{\frac{7}{8}-\frac{1}{100}},k^{\frac{99}{100}}(1+2k^{-\frac{1}{100}})] \cap \Z . 
\end{equation}
 We define for all $x\in \mathcal{J}$ the sets
\begin{align}
\mathcal{I}^{1}_x &:= \Big\{ y\in \mathbb{B} \colon S_{y,0}-S_{0,0} \in \Big[x Nk^{-\frac{7}{8}+\frac{1}{100}},(x+1)Nk^{-\frac{7}{8}+\frac{1}{100}}\Big] \Big\} \\
\mathcal{I}^{2}_x &:= \Big\{ y\in \mathbb{B} \colon S_{0,y}-S_{0,0} + S \in \Big[x Nk^{-\frac{7}{8}+\frac{1}{100}},(x+1)Nk^{-\frac{7}{8}+\frac{1}{100}}\Big] \Big\} \, .
\end{align}
Note that for sufficiently large $N$ and $k$, each $y\in \mathbb{B}$ is contained in some $\mathcal{I}^{1}_x$ and some $\mathcal{I}^{2}_x$. 
 Moreover, on the event $\mathcal{A}$, at least $k^{99/100}(1-5k^{-1/100})$ of the sets $(\mathcal{I}^{1}_x)_{x\in \mathcal{J}}$, respectively  $(\mathcal{I}^{2}_x)_{x\in \mathcal{J}}$, are non-empty and satisfy
\begin{equation}\label{eq:SetIdef}
|\mathcal{I}^{1}_x| \in \Big[ k^\frac{1}{100} - 2, k^\frac{1}{100} + 2 \Big] \quad \text{, respectively } \quad |\mathcal{I}^{2}_x| \in \Big[ k^\frac{1}{100} - 2, k^\frac{1}{100} + 2 \Big] \, .
\end{equation}
We claim that the conditions \eqref{eq:SetIdef} are enough in order to define $(Y(\eta),Y(\zeta))$ with the desired properties.
To see this, we first define two distributions $\pi_1$ and $\pi_2$ on  $\mathbb{B}$, which have a total variation distance to the uniform distribution $\pi_{\mathbb{B}}$ on $\mathbb{B}$ of at most $\bar{c}k^{-1/100}$ for some constant $\bar{c}>0$. At the same time, $\pi_1$ and $\pi_2$ are such that there exists a coupling $(\tilde{Y}_1,\tilde{Y}_2)$ with $\tilde{Y}_1\sim\pi_1$ and $\tilde{Y}_2\sim\pi_2$ with
\begin{equation}\label{eq:ModifiedShift}
| S_{\tilde{Y}_1,\tilde{Y}_2}-S | \leq 4 Nk^{-\frac{7}{8}} \, .
\end{equation} Using the coupling representation of the total variation distance between $\pi_1,\pi_2$ and $\pi_{\mathbb{B}}$ -- see for example Proposition~4.7 in \cite{LPW:markov-mixing} -- together with Proposition~\ref{pro:EstimateOnShift} for a lower bound on the probability of the event $\mathcal{A}$,  there exists an event with probability at least $c/2$ such that we can take $Y(\eta)=\tilde{Y}_1$ and $Y(\zeta)=\tilde{Y}_2$ according to the uniform distribution on $\mathbb{B}$ while \eqref{eq:ModifiedShift} holds. This allows us to conclude. \\


Formally, assume that the event $\mathcal{A}$ occurs, and thus the sets $(\mathcal{I}^{1}_x)_{x\in \mathcal{J}}$ and $(\mathcal{I}^{2}_x)_{x\in \mathcal{J}}$ satisfy \eqref{eq:SetIdef}. We obtain a sample according to  $\pi_1$ and $\pi_2$ as follows. First, we choose some $X$ uniformly at random on the set $\mathcal{J}$ and some $\tilde{X}$ uniformly at random on  $[k^{1/100}+2]$. Now consider the sets $\mathcal{I}^{1}_X$ and $\mathcal{I}^{2}_{X}$. As our sample according to $\pi_1$, we take the $\tilde{X}^{\text{th}}$ smallest element in $\mathcal{I}^{1}_X$, and take the element $\ell \in \mathbb{B}$ if $\mathcal{I}^{1}_X$ contains less than $\tilde{X}$ many elements, or if $\mathcal{I}^{1}_X$ does not satisfy \eqref{eq:SetIdef}. Similarly, as our sample according to $\pi_2$, we take the $\tilde{X}^{\text{th}}$ smallest element in $\mathcal{I}^{2}_X$, and we take instead $\ell \in \mathbb{B}$ if $|\mathcal{I}^{2}_X| < \tilde{X}$, or if $\mathcal{I}^{2}_X$ does not satisfy \eqref{eq:SetIdef}. Note that in this construction
\begin{equation}
  \big| (S_{\tilde{Y}_1,0} - S_{0,0}) - (S_{0,\tilde{Y}_2} - S_{0,0} + S) | \leq 2(k^{\frac{1}{100}}+2) N k^{-\frac{7}{8}-\frac{1}{100}} , 
\end{equation} where the second factor $N k^{-\frac{7}{8}-\frac{1}{100}}$ comes from the event $\mathcal{A}$, controlling the differences in $(S_{i,j})_{i,j \in \mathbb{B}}$, and the first factor by taking into account that there are at most $(k^{\frac{1}{100}}+2)$ many elements in $\mathcal{I}^{1}_{X}$ and $\mathcal{I}^{2}_{X}$, respectively. In particular, this ensures the desired property  \eqref{eq:ModifiedShift}. \\

It remains to bound the total variation distance between $\pi_1$, respectively $\pi_2$, and the uniform distribution $\pi_{\mathbb{B}}$ on the set $\mathbb{B}$. Note that by construction and the definition of the total variation distance in \eqref{def:TVDistance}, we have that
\begin{equation}
\TV{\pi_1 - \pi_{\mathbb{B}} } \leq \pi_1(\ell) \qquad \text{ and } \qquad  \TV{\pi_2 - \pi_{\mathbb{B}} } \leq \pi_2(\ell) .
\end{equation}
Moreover, we see that for some constant $C_1>0$
\begin{align*}
\pi_1(\ell) &\leq \frac{1}{k(1-5k^{-\frac{1}{100}})(k^{\frac{1}{100}}-2)} + \P\Big( |\mathcal{I}^{1}_X| \notin \big[k^{\frac{1}{100}}-2,k^{\frac{1}{100}}+2\big]  \Big) + \P\Big( \tilde{X} \geq k^{\frac{1}{100}}-2 \Big) \\ &\leq \frac{1}{k(1-5k^{-\frac{1}{100}})(k^{\frac{1}{100}}-2)} + \frac{C_1}{k^{\frac{1}{100}}} + \frac{4}{k^{\frac{1}{100}}-2} ,
\end{align*}
where for the second term, we recall that on the event $\mathcal{A}$, at least $k^{99/100}(1-5k^{-1/100})$ of the sets $(\mathcal{I}^{1}_x)_{x\in \mathcal{J}}$, respectively  $(\mathcal{I}^{2}_x)_{x\in \mathcal{J}}$, satisfy \eqref{eq:SetIdef}. 
A similar argument holds for $\pi_2(\ell)$. This allows us to conclude that
\begin{equation}
\TV{\pi_1 - \pi_{\mathbb{B}} }+ \TV{\pi_2 - \pi_{\mathbb{B}} } \leq C k^{-\frac{1}{100}}
\end{equation}
for some constant $C>0$. Taking $k\geq k_0$ sufficiently large finishes the proof. 
\end{proof}

\subsection{Time shift for the randomly extended environment}\label{sec:MerminWagner}

As pointed out in Section \ref{sec:StrategyRandomExtension}, we construct a coupling for the TASEP with different initial conditions in two steps. In the first step (Lemma \ref{lem:SmallShift}), we saw that there exists a coupling  such that with positive probability, the respective TASEPs under this coupling agree after a time of order $N^{2}k^{-1/2}$ up to a time shift of at most $4Nk^{-7/8}$. In a second step, we  show that this time shift can be eliminated using the fluctuations of the exponential random variables of the environment via a Mermin--Wagner style argument; see also Section 2.3 in \cite{DEP:FPP} for a more comprehensive discussion. Intuitively, this means that we perturb the environment while maintaining a total variation distance between the original and the perturbed environment laws close to $0$. In order to define the claimed coupling, we require the following general lemma on the total variation distance of a family of independent Exponential-$1$-random variables, which we learned from Dor Elboim (personal communication). 
\begin{lemma}\label{lem:MerminWagner} Fix some $M\in \N$. Consider the random vector $X=(X_1,\dots,X_M)$ of independent Exponential-$1$-distributed random variables $X_i$. Let $\delta\in [-\varepsilon,\varepsilon]$ for some $\varepsilon>0$, and define $X^{\delta}=(X^{\delta}_1,\dots,X^{\delta}_M)$ where
\begin{equation}
X_i^\delta := X_i (1+\delta)^{-1}
\end{equation} for all $i\in [M]$. Then for all $\varepsilon>0$,  and uniformly in $\delta$,
\begin{equation}
\TV{ \P(X \in \cdot ) - \P( X^{\delta} \in \cdot ) } \leq \left(\left(\frac{(1+\varepsilon)^2}{1+2\varepsilon}\right)^M-1 \right)^{\frac{1}{2}} . 
\end{equation}
\end{lemma}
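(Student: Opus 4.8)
The plan is to bound the total variation distance by bounding a suitable $f$-divergence (Hellinger or $\chi^2$) between the law of $X$ and the law of $X^\delta$, using independence to reduce to a single coordinate, and then optimizing. Write $\mathrm{Exp}(1)$ for the density $e^{-x}$ on $(0,\infty)$. Since $X_i^\delta = X_i(1+\delta)$, the law of $X_i^\delta$ is $\mathrm{Exp}(1/(1+\delta))$ with density $\frac{1}{1+\delta}e^{-x/(1+\delta)}$. First I would recall the standard inequalities relating total variation to the Hellinger distance $H$ and to the $\chi^2$-divergence: $\TV{\mu-\nu}\le H(\mu,\nu)\le \sqrt{\chi^2(\mu\|\nu)}$ up to absolute constants, together with the tensorization property $1-H^2(\bigotimes_i \mu_i,\bigotimes_i\nu_i)=\prod_i(1-H^2(\mu_i,\nu_i))$. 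The latter gives, for $M$ i.i.d.\ factors, $H^2(\mu^{\otimes M},\nu^{\otimes M})\le M\,H^2(\mu,\nu)$, hence $\TV{X-X^\delta}\le \sqrt{M}\cdot H(\mathrm{Exp}(1),\mathrm{Exp}(1/(1+\delta)))$.

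The key computation is then a one-dimensional estimate: the Hellinger affinity of two exponentials with rates $\lambda_1=1$ and $\lambda_2=1/(1+\delta)$ is
\begin{equation}
\int_0^\infty \sqrt{\lambda_1\lambda_2}\, e^{-(\lambda_1+\lambda_2)x/2}\,\dif x = \frac{2\sqrt{\lambda_1\lambda_2}}{\lambda_1+\lambda_2} = \frac{2\sqrt{1+\delta}}{2+\delta}\,,
\end{equation}
so that $H^2 = 1 - \frac{2\sqrt{1+\delta}}{2+\delta}$. A Taylor expansion around $\delta=0$ shows this is $\frac{\delta^2}{16}+O(\delta^3)$, and more precisely one checks by elementary calculus that $1-\frac{2\sqrt{1+\delta}}{2+\delta}\le c_0\delta^2$ for all $\delta\in[-\varepsilon,\varepsilon]$ with some absolute $c_0>0$ (uniformly in $|\delta|$ bounded away from $-1$, which is guaranteed once $\varepsilon<1/2$, say; for $\varepsilon\ge 1/2$ the claimed bound $c(M\varepsilon^2)^{1/4}$ is trivially true by taking $c$ large since total variation is at most $1$ and $(M\varepsilon^2)^{1/4}\ge(1/4)^{1/4}$). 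Combining, $H^2(X,X^\delta)\le c_0 M\delta^2\le c_0 M\varepsilon^2$, hence $\TV{X-X^\delta}\le H(X,X^\delta)\le \sqrt{c_0}\,\sqrt{M}\,\varepsilon$.

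At this point I have the stronger bound $\TV{X-X^\delta}\le c\sqrt{M}\,\varepsilon$, which in particular implies $\TV{X-X^\delta}\le c(M\varepsilon^2)^{1/2}\le c(M\varepsilon^2)^{1/4}$ whenever $M\varepsilon^2\le 1$; and when $M\varepsilon^2\ge 1$ the right-hand side $c(M\varepsilon^2)^{1/4}\ge c\ge 1$ so the inequality is trivial once $c\ge 1$. Either way the stated bound $\TV{X-X^\delta}\le c(M\varepsilon^2)^{1/4}$ follows, uniformly in $\delta\in[-\varepsilon,\varepsilon]$. (One could alternatively run the same argument through the $\chi^2$-divergence: $\chi^2(\mathrm{Exp}(1/(1+\delta))\|\mathrm{Exp}(1))=\frac{1+\delta}{1-\delta^2/...}-1$ after an explicit integral, giving again an $O(\delta^2)$ bound per coordinate; but the Hellinger route is cleaner because of exact tensorization of $1-H^2$, avoiding the fact that $\chi^2$ only sub-tensorizes after passing through $\log(1+\chi^2)$.)

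The only place requiring care — and the main obstacle — is the uniformity of the one-dimensional bound $1-\frac{2\sqrt{1+\delta}}{2+\delta}\le c_0\delta^2$: as $\delta$ approaches $-1$ the rate $1/(1+\delta)$ blows up and the affinity degenerates, so the clean quadratic bound only holds on a compact subinterval of $(-1,\infty)$. Since the lemma is stated for arbitrary $\varepsilon>0$, I would handle $\varepsilon\ge 1/2$ (equivalently $M\varepsilon^2\ge 1/4$) separately by the trivial bound $\TV{\,\cdot\,}\le 1\le c(M\varepsilon^2)^{1/4}$ for $c$ chosen large enough, and reserve the Hellinger estimate for $\varepsilon<1/2$, where $1+\delta\in(1/2,3/2)$ and the elementary inequality is immediate. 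This case split is exactly what makes the weaker exponent $1/4$ in the statement natural, even though the heart of the argument gives exponent $1/2$ in the regime of interest $M\varepsilon^2\to 0$.
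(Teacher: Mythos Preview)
Your proof is correct and in fact yields the sharper bound $\TV{X-X^\delta}\le c(M\varepsilon^2)^{1/2}$, from which the stated $(M\varepsilon^2)^{1/4}$ follows as you note.

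The paper takes a genuinely different route. Rather than tensorizing an $f$-divergence, it first reduces the product TV to a one-dimensional TV via sufficiency: since the law of $(X_1,\dots,X_M)$ conditioned on $\sum_i X_i$ is uniform on the simplex regardless of the common exponential rate, one has $\TV{X-X^\delta}=\TV{\sum_i X_i-\sum_i X_i^\delta}$, i.e.\ the TV between a Gamma$(M,1)$ and a Gamma$(M,(1+\delta)^{-1})$. The paper then bounds this directly: it localizes to an interval of width $C\sqrt{M}$ around the mean via Chebyshev, Taylor-expands the density ratio $e^{-x\delta}(1+\delta)^M$ on that window, and balances the Chebyshev loss $O(C^{-1})$ against the Taylor error $O(C\delta\sqrt{M})$. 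The optimal $C=(\delta^2 M)^{-1/4}$ produces the exponent $1/4$. So the $1/4$ in the paper is an artifact of the Chebyshev localization; your Hellinger argument avoids this loss entirely. The sufficient-statistic reduction is elegant and perhaps more robust to other base distributions, but for exponentials the tensorized Hellinger computation is both shorter and sharper. Either bound suffices for the downstream application, where only $(M\varepsilon^2)^{1/4}\to 0$ is used.
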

\begin{proof} Observe that the random variables $(X_i^{\delta})_{i\in [M]}$ are independent Exponential-$(1+\delta)$-distributed. Let $\pi= \otimes_{i=1}^{M} \pi_i$ be the law of the Exponential-$1$-distributed random variables $(X_i)_{i\in [M]}$, and similarly write $\nu= \otimes_{i=1}^{M} \nu_i$ for the law of $(X_i^{\delta})_{i\in [M]}$.
Then using the Cauchy--Schwarz inequality, we see that
\begin{equation*}
\TV{ \P(X \in \cdot ) - \P( X^{\delta} \in \cdot ) } \leq \Big\lVert \frac{\nu}{\pi} -1 \Big\rVert_{L^2(\pi)} , 
\end{equation*}
where we denote by 
\begin{equation*}
\Big\lVert \frac{\nu}{\pi} -1 \Big\rVert_{L^2(\pi)} := \left( \int_{x \in [0,\infty)^{M}} \left( \frac{\nu(x)}{\pi(x)} - 1 \right)^{2} \pi( \dif x) \right)^{\frac{1}{2}}
\end{equation*} the $L_2$-distance between $\pi$ and $\nu$. Using that $\pi$ and $\nu$ are probability measures, we see that
\begin{align*}
\Big\lVert \frac{\nu}{\pi} -1 \Big\rVert_{L^2(\pi)} &=  \left( \int_{x \in [0,\infty)^{M}}  \frac{\nu(x)^2}{\pi(x)^2} \pi( \dif x)  - 1 \right)^{\frac{1}{2}} =  \left( \left( \int_{y \in [0,\infty)}  \frac{\nu_1(y)^2}{\pi_1(y)^2} \pi_1( \dif y) \right)^M - 1 \right)^{\frac{1}{2}} \\
&= \left(\left(\frac{(1+\delta)^2}{1+2\delta}\right)^M-1 \right)^{\frac{1}{2}} . 
\end{align*} allowing us to conclude.
\end{proof}

\begin{remark}\label{rem:MerminWagner}
Note that the bound in Lemma \ref{lem:MerminWagner} remains valid when $\delta$ is an independent random variable supported on $[-\varepsilon,\varepsilon]$ some fixed parameter $\varepsilon>0$. Moreover, let us remark that the bound in Lemma \ref{lem:MerminWagner} can also be shown by bounding the total-variation distance directly using 
 \begin{equation*}
\TV{ \P(X \in \cdot )-  \P( X^{\delta} \in \cdot ) } = \Big\lVert \P \Big( \sum_{i\in [M]} X_i  \in \cdot \Big) -  \P \Big( \sum_{i\in [M]} X^{\delta}_i   \in \cdot \Big)  \Big\rVert_{\normalfont
 \text{TV}}
\end{equation*}  due to the memory-less property of exponential random variables.
\end{remark}


In the following, we use Lemma \ref{lem:MerminWagner} to remove the remaining time shift of order $Nk^{-7/8}$.
This yields an upper bound $t$ of order $N^{2}k^{-1/2}$ on the $\varepsilon$-mixing time, as it suffices for any pair of initial states to construct a coupling of the two corresponding processes such that they agree with strictly positive probability at some time $s\leq t$; see Corollary 5.5 in \cite{LPW:markov-mixing} for a proof for discrete-time Markov chains, which one-to-one holds for continuous time. Recall from Lemma~\ref{lem:SmallShift} the $(N,k)$-periodic environments $(\tilde{\omega}^{\eta}_v)_{v\in \Z^2}$ and $(\tilde{\omega}^{\zeta}_v)_{v\in \Z^2}$ such that the corresponding exclusion processes, started from $\eta$ and $\zeta$ respectively, agree with strictly positive probability after a time of order $N^{2}k^{-1/2}$ up to a random time shift $\tilde{S}$ which satisfies $|\tilde{S}|\leq 4Nk^{-7/8}$.

\begin{lemma}\label{lem:FinalLemma} 
Let $n=n(N,k,\theta)$ and $m=m(N,k,\theta)$ be defined as in Section \ref{sec:Coalescence} with some sufficiently large constant $\theta$. 
Let $\gamma_{\textup{ini}}^{\eta}$ and $\gamma_{\textup{ini}}^{\zeta}$ be a pair of initial growth interfaces for some initial configurations $\eta,\zeta \in \Omega_{N,k}$. For all $k\geq k_0$ for some constant $k_0$,  there exist constants $\bar{c},\bar{C}_1,\bar{C}_2>0$ with $\bar{C}_1<\bar{C}_2$, and $(N,k)$-periodic environments $(\bar{\omega}^{\eta}_v)_{v\in \Z^2}$ and $(\bar{\omega}^{\zeta}_v)_{v\in \Z^2}$ such that
\begin{align}\label{eq:TVCoupling1}
\TV{ \P\big( (\bar{\omega}^{\eta}_v)_{v\in \Z^2} \in \cdot \big) - \P\big( (\tilde{\omega}^{\eta}_v)_{v\in \Z^2}    \in \cdot \big) } &\leq k^{-1/20} \\
\label{eq:TVCoupling2}\TV{ \P\big( (\bar{\omega}^{\zeta}_v)_{v\in \Z^2} \in \cdot \big) - \P\big( (\tilde{\omega}^{\zeta}_v)_{v\in \Z^2} \in \cdot \big) } &\leq k^{-1/20}
\end{align} 
with $(\tilde{\omega}^{\eta}_v)_{v\in \Z^2}$ and $(\tilde{\omega}^{\zeta}_v)_{v\in \Z^2}$ from Lemma \ref{lem:SmallShift} such that the growth interfaces $(\bar{G}^{\eta}_t)_{t \geq 0}$ and $(\bar{G}^{\zeta}_t)_{t \geq 0}$ started from $\gamma_{\textup{ini}}^{\eta}$ and $\gamma_{\textup{ini}}^{\zeta}$ in the environments $(\bar{\omega}^{\eta}_v)_{v\in \Z^2}$ and $(\bar{\omega}^{\zeta}_v)_{v\in \Z^2}$, respectively, satisfy with probability at least $\bar{c}>0$
\begin{equation}\label{eq:GrowthInterface}
 \bar{G}^{\eta}_s = \bar{G}^{\zeta}_s \quad  \text{ for all } s \in \big[ \bar{C}_1 N^2k^{-1/2},\bar{C}_2 N^2k^{-1/2} \big] . 
\end{equation} 
\end{lemma}
\begin{proof}
Recall the random variables $Y(\eta),Y(\zeta)$ from the beginning of Section~\ref{sec:CouplingRandomExtended}, and defined formally in the proof of Lemma~\ref{lem:SmallShift}, with respect to the environments $(\tilde{\omega}^{\eta}_v)_{v\in \Z^2}$ and $(\tilde{\omega}^{\zeta}_v)_{v\in \Z^2}$ such that for all $v \succeq (n,mn)+\mathbb{L}_{m^{\prime},0}$
\begin{equation}\label{eq:ShiftedEnvironment}
 \tilde{\omega}^{\eta}_{v+(Y(\eta),Y(\eta))} =  \tilde{\omega}^{\zeta}_{v+(Y(\zeta),Y(\zeta))} .
\end{equation} 
Let $\tilde{T}_{\eta}(u,w)$ be the last passage time with respect to the environment $(\tilde{\omega}^{\eta}_v)_{v\in \Z^2}$ between sites $u$ and $w$ with $u \preceq w$. Similarly, we let $\tilde{T}_{\zeta}(u,w)$ be the last passage time with respect to the environment $(\tilde{\omega}^{\zeta}_v)_{v\in \Z^2}$ between sites $u$ and $w$ with $u \preceq w$. Recall the event 
$$\tilde{\mathcal{B}}:=\left( \mathcal{B}^- \times \mathcal{B}^- \right) \cap \left(\mathcal{B}_{Y(\eta)}^+ \times \mathcal{B}_{Y(\zeta)}^+ \right)$$ 
from Lemma \ref{lem:CoalescenceEvents} on the coalescence of geodesics in the environments $(\tilde{\omega}^{\eta}_v)_{v\in \Z^2}$ and $(\tilde{\omega}^{\zeta}_v)_{v\in \Z^2}$, respectively, and that $\tilde{\mathcal{B}}$ holds with strictly positive probability.  
In particular, together with Lemma \ref{lem:SmallShift}, we see that with strictly positive probability
\begin{equation}\label{eq:ShiftedEnvironmentLPT}
\tilde{T}_{\eta}(\gamma_{\textup{ini}}^{\eta},  v+(Y(\eta),Y(\eta))) =  \tilde{T}_{\zeta}(\gamma_{\textup{ini}}^{\zeta},  v+(Y(\zeta),Y(\zeta))) + \tilde{S}
\end{equation} for all sites $v \succeq \mathbb{L}_{m^{\prime},0}+(2n,2mn)$ with a time shift $\tilde{S}$ satisfying $|\tilde{S}| \leq 4 Nk^{-7/8}$. 
%
%
%
%
In order to construct the environments $(\bar{\omega}^{\eta}_v)_{v\in \Z^2}$ and $(\bar{\omega}^{\zeta}_v)_{v\in \Z^2}$, we first define two families of environments $(\tilde{\omega}^{\eta,U}_v)_{v\in \Z^2}$ and $(\tilde{\omega}^{\zeta,U}_v)_{v\in \Z^2}$ with respect to a parameter $U \in [0,1]$ by setting

\begin{equation*}
\tilde{\omega}^{\eta,U}_v := \begin{cases}  (1+  U N^{-1}k^{-5/16} )\tilde{\omega}^{\eta}_v & \text{ if } u \preceq v \preceq w \text{ for some } u \in (2n+Y(\eta),2mn+Y(\eta))+\mathbb{L}_{m^{\prime},0} ,  \\
& \text{ and some } w\in (3n+Y(\eta),3mn+Y(\eta))+\mathbb{L}_{m^{\prime},0} \\
\tilde{\omega}^{\eta}_v & \text{ otherwise} \, ,
\end{cases}
\end{equation*}
\begin{equation*}
\tilde{\omega}^{\zeta,U}_v := \begin{cases}  (1+  U N^{-1}k^{-5/16} )\tilde{\omega}^{\zeta}_v & \text{ if } u \preceq v \preceq w \text{ for some } u \in (2n+Y(\zeta),2mn+Y(\zeta))+\mathbb{L}_{m^{\prime},0} ,  \\
& \text{ and some } w\in (3n+Y(\zeta),3mn+Y(\zeta)) + \mathbb{L}_{m^{\prime},0} ,  \\
\tilde{\omega}^{\zeta}_v & \text{ otherwise} \, .
\end{cases}
\end{equation*}
Furthermore, we define  for all $i\in \N$ the events
\begin{align}
\mathcal{B}^{\ast}_i := &\Big\{  T_{u,v}  > T_{u,w}\ \forall u,v \in \gamma_{(4n+i,  4mn +i ),(5n +i, 5mn +i )},  w \in \Z^2  \colon v \succeq u , \,  w \in \TR(v)\setminus \{v\}\Big\} \cap \nonumber  \\
&\Big\{ \exists w\in (\lfloor 9n/2 \rfloor, \lfloor 9mn/2 \rfloor)+(i,i)+\mathbb{L}_{m^{\prime},0} \colon \Gamma^{\prime}_{u,v} \cap \TR(w) \neq \emptyset \\ & \forall u \in (4n+i+1, 4mn+i+1)+\mathbb{L}_{m^{\prime},0}  ,  v \in (5n+i, 5mn+i)+\mathbb{L}_{m^{\prime},0} \Big\} \nonumber
\end{align} on the coalescence of geodesics between the lines $(4n+i, 4mn+i)+\mathbb{L}_{m^{\prime},0}$ and $(5n+i, 5mn+i)+\mathbb{L}_{m^{\prime},0}$.
Let
\begin{equation*}
\mathbb{V} := \{ v \in \Z^2 \colon  w \succeq v \succeq u  \text{ for some } u \in (5n,5mn)+\mathbb{L}_{m^{\prime},0} \text{ and } w\in (6n,6mn)+\mathbb{L}_{m^{\prime},0} \} . 
\end{equation*}
Let $B_{\star}$ denote the event that $\mathcal{B}^- \cap \mathcal{B}_{Y(\eta)}^+ \cap\mathcal{B}^{\ast}_{Y(\eta)}$ occurs for the environment $(\tilde{\omega}^{\eta}_v)_{v\in \Z^2}$, and that the event $\mathcal{B}^- \cap \mathcal{B}_{Y(\zeta)}^+ \cap \mathcal{B}^{\ast}_{Y(\zeta)}$  for $(\tilde{\omega}^{\zeta}_v)_{v\in \Z^2}$. A key observation is that on the event $B_{\star}$, the coalescence of geodesics ensures that the random variables 
\begin{align*}
\tilde{T}^{U}_{\eta}(v)&:= \tilde{T}_{\eta}\big(\gamma_{\textup{ini}}^{\eta},v+(Y(\eta),Y(\eta))\big)  \\
\tilde{T}^{U}_{\zeta}(v)&:= \tilde{T}_{\zeta}\big(\gamma_{\textup{ini}}^{\zeta},v+(Y(\zeta),Y(\zeta))\big)  
\end{align*} defined with respect to the environments $(\tilde{\omega}^{\eta,U}_v)_{v\in \Z^2}$ and $(\tilde{\omega}^{\zeta,U}_v)_{v\in \Z^2}$, respectively, satisfy for all $v \in \mathbb{V}$ and any value of $U$,
\begin{equation}\label{eq:PrelimitShift}
\begin{split}
 \tilde{T}^{U}_{\eta}(v	)=  \tilde{T}^{U}_{\zeta}(v) + \tilde{S} .
\end{split} 
\end{equation}
Here, $\tilde{S}$ is taken from \eqref{eq:ShiftedEnvironmentLPT} and only depends on the sites $v \preceq (n,mn)+\mathbb{L}_{m^{\prime},0}$. Intuitively, this allows us to change the last passage times at order $N^{-1}k^{-5/16}n=Nk^{-13/16}$, while maintaining the time shift $\tilde{S}$ of order at most $Nk^{-7/8}$. The coalescence events ensure that modifying the environment by changing the parameter $U$ leaves the structure of the geodesic within $\mathbb{V}+(Y(\eta),Y(\eta))$ (respectively $\mathbb{V}+(Y(\zeta),Y(\zeta))$) invariant. In the following, our goal is to bound the slope of the function
\begin{equation}
U \mapsto \tilde{T}^{U}_{\eta}(v) ,
\end{equation} and similarly for $\tilde{T}^{U}_{\zeta}(v)$, as this will allow us to eliminate the remaining time shift $\tilde{S}$. To this end, for all $v\in \mathbb{V}$, set
\begin{align*}
T^{\ast,\eta}_{\min}(v) &:= \min\{ \tilde{T}_{\eta}(\gamma_{\textup{ini}}^{\eta},u_1)+\tilde{T}_{\eta}(u_1,u_2)+\tilde{T}_{\eta}(u_2,v)\colon \\ & u_1 \in (2n+Y(\eta),2mn+Y(\eta))+\mathbb{L}_{m^{\prime},0} 
\text{ and } u_2 \in (3n+Y(\eta),3mn+Y(\eta))+\mathbb{L}_{m^{\prime},0} \}
\end{align*}
as well as 
\begin{align*}
T^{\ast,\eta}_{\max}(v) &:= \max\{ \tilde{T}_{\eta}(\gamma_{\textup{ini}}^{\eta},u_1)+\tilde{T}_{\eta}(u_1,u_2)+\tilde{T}_{\eta}(u_2,v)\colon \\ & u_1 \in (2n+Y(\eta),2mn+Y(\eta))+\mathbb{L}_{m^{\prime},0} 
\text{ and } u_2 \in (3n+Y(\eta),3mn+Y(\eta))+\mathbb{L}_{m^{\prime},0} \}
\end{align*}
as the minimal and maximal last passage times going through the lines $(2n+Y(\eta),2mn+Y(\eta))+\mathbb{L}_{m^{\prime},0}$  and $(3n+Y(\eta),3mn+Y(\eta))+\mathbb{L}_{m^{\prime},0}$ to the site $v$. Since we can bound the minimal and maximal weight collected between the lines $((2+\ell\log^{-2}(k))n+Y(\eta),(2+\ell\log^{-2}(k))mn+Y(\eta))+\mathbb{L}_{m^{\prime},0}$ for $\ell \in [\log^{2}(k)]$ using  Lemma~\ref{lem:MinimumLineToPoint} and Lemma~\ref{lem:MaximumLineToLine}, together with a union bound over $\ell \in  [\log^{2}(k)]$, there exists some constant $k_0>0$ such that
\begin{equation}\label{eq:MinMaxCompare1}
\P\Big(  | T^{\ast,\eta}_{\max}(v) - T^{\ast,\eta}_{\min}(v) |  \leq c_0 Nk^{-\frac{1}{2}}\log^4(k)\text{ for all } v\in \mathbb{V} \Big) \geq 1-  k^{-2} 
\end{equation} for all $k\geq k_0$, and $N$ large enough.
Let $\tau_{\eta}^{2,3}(v)$ denote the weight collected in $\tilde{T}^{0}_{\eta}(v)$  between the lines $(2n+Y(\eta),2mn+Y(\eta))+\mathbb{L}_{m^{\prime},0}$  and $(3n+Y(\eta),3mn+Y(\eta))+\mathbb{L}_{m^{\prime},0}$, and note that the last passage times $\tilde{T}^{U}_{\eta}(v)$  satisfy
\begin{equation}\label{eq:LowerMod}
\tilde{T}^{U}_{\eta}(v) \geq \tilde{T}^{0}_{\eta}(v) + U N^{-1}k^{-5/16}\tau_{\eta}^{2,3}(v)
\end{equation} for all $U\geq 0$ and $v \in \mathbb{V}$. Moreover, note that by changing $U=0$ to $U=1$, we modify each of the random variables along a trajectory of length of order $N^{2}k^{-1/2}$ by an order $N^{-1}k^{-5/16}$ term. Hence, we see from Lemma~\ref{lem:MaximumLineToLine} and a similar decomposition as for \eqref{eq:LowerMod} that with some constant $c_0>0$
\begin{equation}\label{eq:MinMaxCompare2}
\P\Big(  \tilde{T}^{1}_{\eta}(v) - \tilde{T}^{0}_{\eta}(v) \geq c_0 N k^{-\frac{13}{16}}\text{ for all } v\in \mathbb{V} \Big) \geq 1-  k^{-2} 
\end{equation} for all $k\geq k_0$, and $N$ large enough. At the same time, we claim that for all $U \in [0,1]$ and $x\in [0,1-U]$
\begin{equation}\label{eq:UpperMod}
\tilde{T}^{U+x}_{\eta}(v) \leq \tilde{T}^{U}_{\eta}(v)  + x N^{-1}k^{-5/16}\left(\tau_{\eta}^{2,3}(v) +  T^{\ast,\eta}_{\max}(v) - T^{\ast,\eta}_{\min}(v)  \right) .
\end{equation}
To see this, we bound the increase of the weight collected between lines $(2n+Y(\eta),2mn+Y(\eta))+\mathbb{L}_{m^{\prime},0}$  and $(3n+Y(\eta),3mn+Y(\eta))+\mathbb{L}_{m^{\prime},0}$ from above by
\begin{equation}
x N^{-1}k^{-5/16}\left(\tau_{\eta}^{2,3}(v) +  T^{\ast,\eta}_{\max}(v) - T^{\ast,\eta}_{\min}(v) \right),
\end{equation}
while noting that the weight in $\tilde{T}^{U+x}_{\eta}(v)$ collected between $\gamma_{\textup{ini}}^{\eta}$ and the intersection with $(2n+Y(\eta),2mn+Y(\eta))+\mathbb{L}_{m^{\prime},0}$ plus the weight collected between the intersection with the line $(3n+Y(\eta),3mn+Y(\eta))+\mathbb{L}_{m^{\prime},0}$ to $v$ is non-increasing in $x$.  
In particular, combining \eqref{eq:LowerMod} and \eqref{eq:UpperMod}, we get that  for all $U \in [0,1]$
\begin{equation}\label{eq:MaxDisplace}
\Big| \big(\tilde{T}^{U}_{\eta}(v)-\tilde{T}^{0}_{\eta}(v)\big) - U \tau_{\eta}^{2,3}(v) \Big| \leq U N^{-1} k^{-\frac{5}{16}} \big| T^{\ast,\eta}_{\max}(v) - T^{\ast,\eta}_{\min}(v) \big| . 
\end{equation} 
Now let $\mathcal{U}_{\eta}$ be chosen according to the uniform distribution on $[0,1]$, and let $g_{\eta}$ denote the density of the random variable $(\tilde{T}^{\mathcal{U}_{\eta}}_{\eta}(v)-\tilde{T}^{0}_{\eta}(v))/(\tilde{T}^{1}_{\eta}(v)-\tilde{T}^{0}_{\eta}(v))$ on $[0,1]$. Here, we first sample the environment $(\tilde{\omega}^{\eta}_v)_{v\in \Z^2}$, and then independently the random variable $\mathcal{U}_{\eta}$.  Since the function $U \mapsto \tilde{T}^{U}_{\eta}(v)$ is piece-wise linear and convex, we see from \eqref{eq:MinMaxCompare2}, and \eqref{eq:UpperMod} as well as \eqref{eq:MaxDisplace} combined with \eqref{eq:MinMaxCompare1}, that for some constant $C_1>0$
\begin{equation}
g_{\eta}(x) \in \left[ 1 - \frac{C_1\log^{4}(k)}{N} , 1 + \frac{C_1\log^{4}(k)}{N} \right]
\end{equation} holds with probability at least $1-3k^{-2}$ (with respect to the law of the environment) for all $x\in [0,1]$, and uniformly in  $v \in \mathbb{V}$. With a slight abuse of notation to write $\TV{X-Y}$ for the total-variation distribution between the law of two random variables $X$ and $Y$, we get that
\begin{equation}\label{eq:TVUBound}
\P\Big( \Big\lVert \Big(\tilde{T}^{\mathcal{U}_{\eta}}_{\eta}(v)-\tilde{T}^{0}_{\eta}(v)\Big) - \Big(\mathcal{U}_{\eta} (\tilde{T}^{1}_{\eta}(v) - \tilde{T}^{0}_{\eta}(v))\Big)  \Big\rVert_{\textup{TV}} \leq 2C_1\log^4(k)N^{-1} \Big) \geq 1- 3k^{-2} . 
\end{equation}
A similar statement holds with respect to the configuration $\zeta$, the environment $(\tilde{\omega}^{\zeta}_v)_{v\in \Z^2}$, and a Uniform random variable $\mathcal{U}_{\zeta}$. Let $\mathcal{A}$ denote the event that the event in \eqref{eq:MinMaxCompare2} occurs with respect to both $\eta$ and $\zeta$. Fix some $s$ with $|s| \leq 4N k^{-7/8}$. Then on the event $\mathcal{A} \cap B_{\star}$,  uniformly in $v \in \mathbb{V}$, and $k$ large enough 
\begin{equation}
\Big\lVert \big(\mathcal{U}_{\eta} (\tilde{T}^{1}_{\eta}(v) - \tilde{T}^{0}_{\eta}(v))\big)  - \big(\mathcal{U}_{\zeta} (\tilde{T}^{1}_{\zeta}(v) - \tilde{T}^{0}_{\zeta}(v)) - s\big) \Big\rVert_{\textup{TV}} \leq 4k^{-1/16}/c_0 \leq k^{-1/20} . 
\end{equation}
Together with \eqref{eq:TVUBound}, we see that on the event $B_{\star}$, for any fixed $s$ with $|s|\leq 4Nk^{-7/8}$, and any $v\in \mathbb{V}$
\begin{equation}
\P\Big( \Big\lVert \tilde{T}^{\mathcal{U}_{\eta}}_{\eta}(v)   -  \big(\tilde{T}^{\mathcal{U}_{\zeta}}_{\zeta}(v)-s\big)\Big\rVert_{\textup{TV}} \leq 3k^{-1/20} \Big) \leq 1 - 5k^{-2} ,   
\end{equation} with $k$ large enough. 
Thus, with $s=\tilde{S}$, recalling \eqref{eq:ShiftedEnvironmentLPT} and \eqref{eq:PrelimitShift}, 
there exists a coupling between the random variables $U_{\eta}$ and $U_{\zeta}$ such that with positive probability
\begin{equation}
\tilde{T}^{U(\eta)}_{\eta}(v) = \tilde{T}^{U(\zeta)}_{\zeta}(v)  \quad \text{ for all } v \in \mathbb{V}.
\end{equation}  Choose $(\bar{\omega}^{\eta}_v)_{v\in \Z^2}$ and $(\bar{\omega}^{\zeta}_v)_{v\in \Z^2}$ as the environments $(\tilde{\omega}^{\eta,U(\eta)}_v)_{v\in \Z^2}$ and $(\tilde{\omega}^{\zeta,U(\zeta)}_v)_{v\in \Z^2}$ with respect to the coupled pair $(U_{\eta},U_{\zeta})$.
Assume without loss of generality that initial growth interfaces satisfy
\begin{equation}
\mathbb{L}_{m^{\prime},-2k} \preceq \gamma^{\eta}_{\textup{ini}} , \gamma^{\zeta}_{\textup{ini}} \preceq  \mathbb{L}_{m^{\prime},2k} . 
\end{equation} 
Then by Lemma~\ref{lem:MinimumLineToPoint} and Lemma~\ref{lem:MaximumLineToLine} in order to bound the line-to-line last passage times, we find some constants $\bar{C}_1,\bar{C}_2$  with $\bar{C}_1<\bar{C}_2$ such that with probability tending to $1$ as $N \rightarrow \infty$, the growth interfaces at a time $t \in [\bar{C}_1 N^2k^{-1/2},\bar{C}_2 N^2k^{-1/2}]$, started from $\gamma^{\eta}_{\textup{ini}}$ and $\gamma^{\zeta}_{\textup{ini}}$, are fully contained in the sites $\mathbb{V}+(Y(\eta),Y(\eta))$ and $\mathbb{V}+(Y(\zeta),Y(\zeta))$, respectively. This yields the desired bound \eqref{eq:GrowthInterface}. \\

It remains to verify that \eqref{eq:TVCoupling1} and \eqref{eq:TVCoupling2} indeed hold for $(\bar{\omega}^{\eta}_v)_{v\in \Z^2}$ and $(\bar{\omega}^{\zeta}_v)_{v\in \Z^2}$. Since the  environments $(\tilde{\omega}^{\eta}_v)_{v\in \Z^2}$ and $(\tilde{\omega}^{\zeta}_v)_{v\in \Z^2}$ are both $(N,k)$-periodic, we modify at most $2nk\leq 2\theta^{-1}N^{2}k^{1/2}$ Exponential-$1$-random variables in the construction of $(\bar{\omega}^{\eta}_v)_{v\in \Z^2}$ and $(\bar{\omega}^{\zeta}_v)_{v\in \Z^2}$. Thus, for $N$ large enough, observing that the choice of the random variables $(U(\eta),U(\zeta))$ does not affect the value $\tilde{S}$, we apply Lemma~\ref{lem:MerminWagner} and Remark~\ref{rem:MerminWagner} to conclude. 
\end{proof}

\begin{proof}[Proof of the upper bound in Theorem \ref{thm:Main}]

Note that it suffices to show
\begin{equation}\label{eq:FinalStatementEnd}
\max_{\eta^{\prime},\zeta^{\prime}\in \Omega_{N,k}}\P\big( \eta_t = \zeta_t \text{ for some } t \leq C^{\prime}n \, | \, \eta_0 = \eta^{\prime},\zeta_0 = \zeta^{\prime} \big) \geq c^{\prime}
\end{equation} for some constants $c^{\prime},C^{\prime}>0$, and all $k,N$ sufficiently large; see also Proposition 4.7 in \cite{LPW:markov-mixing} for a similar statement for discrete-time Markov chains. Consider the two TASEPs $(\eta_t)_{t \geq 0}$ and $(\zeta_t)_{t \geq 0}$ evolving according to $(\bar{\omega}^{\eta}_v)_{v\in \Z^2}$ and $(\bar{\omega}^{\zeta}_v)_{v\in \Z^2}$ from Lemma \ref{lem:FinalLemma}, respectively. 
The claim follows from Lemma~\ref{lem:CurrentVsGeodesic} to convert \eqref{eq:GrowthInterface} in Lemma~\ref{lem:FinalLemma} into~\eqref{eq:FinalStatementEnd}. 
\end{proof}

\begin{remark}\label{rem:CutoffPart1}
In the above argument, we in fact showed that for any $\theta \geq \theta_0$ sufficiently large, and for all $k\geq k_0=k_0(\theta)$, there exists some $\varepsilon=\varepsilon(\theta,k_0)>0$ such that the $(1-\varepsilon)$-mixing time of the TASEP on a circle of length $N$ with $k$ particles is at most $\theta^{-1}N^{2}k^{-1/2}$ for all $N$ sufficiently large. In other words, the $(1-\varepsilon)$-mixing time goes to $0$ on the scale $N^{2}k^{-1/2}$ when $\varepsilon \rightarrow 0$. Together with the lower bound in Section \ref{sec:LowerBounds}, this proves that the cutoff phenomenon does not occur; see also Remark~\ref{rem:CutoffPart2}.
\end{remark}

\subsection{Mixing times for the TASEP in the maximal current phase} \label{sec:MixingTASEPOpen}
The random extension and time shift strategy introduced in this section for an upper bound on the mixing time of the TASEP on the circle can also be used to study related models. In the following, we consider the TASEP with open boundaries from \cite{S:MixingTASEP} with respect to boundary parameters $\alpha,\beta>0$. In this model, the particles perform a totally asymmetric simple exclusion process on the segment $[N]$. In addition, particles enter at the left at rate $\alpha$, and exit at the right at rate~$\beta$ for some $\alpha,\beta>0$. Formally, the TASEP with open boundaries is the Markov chain on $\{0,1\}^{N}$ given by the generator
\begin{align*}
\cL f(\eta) &=  \sum_{x \in \Z_N}  \eta(x)(1-\eta(x+1))\left[ f(\eta^{x,x+1})-f(\eta) \right] \\
&+ \alpha (1-\eta(1)) \left[ f(\eta^{1})-f(\eta) \right] + \beta \eta(N) \left[ f(\eta^{N})-f(\eta) \right]
\end{align*}
for all measurable functions $f\colon \{0,1\}^{N} \rightarrow \R$. Here, we recall the swapping operation from \eqref{def:Swap}, and denote by $\eta^{x}$ the flipping operation
\begin{equation}\label{def:Flip}
\eta^{x} (z) = \begin{cases}
 \eta (z) & \textrm{ for } z \neq x \\
 1-\eta(x) &  \textrm{ for } z = x \
 \end{cases}
\end{equation} for all $x\in [N]$ and $\eta \in \{0,1\}^{N}$. Let $\bar{t}^{N}_{\textup{\mix}}(\varepsilon)$ denote the $\varepsilon$-mixing time of the TASEP with open boundaries on a segment of length $N$. It was shown in Theorem 1.1 of \cite{S:MixingTASEP} that for all $\alpha,\beta\geq \frac{1}{2}$, called the maximum current phase, the mixing time of the TASEP with open boundaries is at most of order $N^{3/2}\log(N)$. The strategy of using a random extension of the environment together with a Mermin--Wagner style argument, can also be applied for the TASEP with open boundaries to improve the upper bound on the mixing time to $N^{3/2}$ for all $\alpha,\beta \geq \frac{1}{2}$, matching the lower bound in Theorem~1.3 of \cite{S:MixingTASEP}, and giving an alternative proof of Theorem~1.2 in \cite{S:MixingTASEP} when $\alpha=\beta=\frac{1}{2}$. Since large parts follow verbatim the arguments for the TASEP on the circle when replacing the results on the coalescence of geodesics from Section~\ref{sec:Coalescence} by the coalescence results in \cite{S:MixingTASEP} for last passage percolation on a strip, we give only a sketch of proof and leave the details to the reader.

\begin{theorem}\label{thm:MaxCurrent} Let $\alpha,\beta \geq \frac{1}{2}$. For all $\varepsilon \in (0,1)$, there exists a constant $C=C(\varepsilon)>0$ such that the $\varepsilon$-mixing time of the TASEP with open boundaries satisfies
\begin{equation}
\limsup_{N \rightarrow \infty} \frac{\bar{t}^{N}_{\textup{\mix}}(\varepsilon)}{N^{3/2}} \leq C . 
\end{equation}
\end{theorem}
\begin{proof}[Sketch of proof]
Similar to Lemma~\ref{lem:CurrentVsGeodesic}, the TASEP with open boundaries has a natural interpretation as a last passage percolation model on a diagonal strip $\mathcal{S}_N$ of width $N$ given by
\begin{equation}
\mathcal{S}_N := \left\{ (x_1,x_2) \in \Z^2 \colon x_1 \geq 0 \text{ and } N \geq x_1 - x_2 \geq 0 \right\}
\end{equation}
 with independent Exponential-$\alpha$-random variables on the upper diagonal, Exponential-$\beta$-random variables on the lower diagonal, and Exponential-$1$-random variables in the interior of the strip; see also Lemma~3.1 in \cite{S:MixingTASEP}. With a slight abuse of notation, let $\mathbb{L}_{x}$ denote the discrete line in $\mathcal{S}_N$ of slope $-1$ passing through the site $(x,x)$. For $\alpha,\beta \geq \frac{1}{2}$, Proposition 4.3 in \cite{S:MixingTASEP} guarantees that there exists some constant $C>0$ such that with positive probability, there exist a site $w \in \mathcal{S}_N$ such that for every $u \in \mathbb{L}_0$ and $v\in \mathbb{L}_{CN^{3/2}}$, and all $N$ sufficiently large, the geodesic from $u$ to $v$ contains~$w$. This is similar to Corollary~\ref{cor:MainCoalescenceStatement} for the TASEP on the circle. The same arguments as for Lemma~\ref{lem:ShiftExclusion} now yield that for any pair of initial configurations $\eta,\zeta \in \{0,1\}^N$, there exists a coupling between the respective TASEPs
$(\eta_t)_{t \geq 0}$ and $(\zeta_t)_{t \geq 0}$ with open boundaries such that with positive probability, for some $t_{\ast}$ of order $N^{3/2}$, we see that
\begin{equation}\label{eq:TimeShiftOpen}
\eta_{t} = \zeta_{t+S} \ \text{ for all } t \geq t_{\ast} \, ,
\end{equation} where $S$ is a random variable of order at most $N$. Thus, it remains to adopt the random extension and time change strategy introduced in Section~\ref{sec:RandomExtension} in order to remedy the remaining time shift of order $N$. Note that we can extend the environment on the strip $\mathcal{S}_N$  by adding a number of anti-diagonals, drawn uniformly at random from the set
\begin{equation}
\tilde{\mathbb{B}} := \{ i N^{1/10} \colon i\in [N] \},
\end{equation}
reducing $S$ to be of order at most $N^{1/10}$ with positive probability. This is similar to the construction in \eqref{eq:CouplingDifferentI}. At this point, a key step is to establish the analogue of Proposition \ref{pro:EstimateOnShift} on the concentration of the change of last passage times. This follows from a slight modification of Proposition 4.5 in \cite{S:MixingTASEP} to obtain moderate deviation bounds on the last passage times between two lines $\mathbb{L}_{x}$ and $\mathbb{L}_{y}$ for $|x-y|$ of order $N^{1/10}$. More precisely, we use a similar strategy as for Proposition~\ref{pro:MinimalLPTiid}, but with respect to the moderate deviation estimates for point-to-point last passage times on the strip discussed in Section~4 of \cite{S:MixingTASEP}. The Mermin--Wagner style argument from Section \ref{sec:MerminWagner} can be directly applied for last passage percolation on the strip, modifying the value of at most order $N^{5/2}$ many random variables in both environments for the exclusion processes by $U N^{-11/8}$, where $U$ is uniform on $[-1,1]$, and thus changing the value along geodesics of length $N^{3/2}$ at order $N^{1/8}$. Combining the modification of the environment with \eqref{eq:TimeShiftOpen}, this yields a mixing time of order $N^{3/2}$ for the TASEP with open boundaries in the entire maximal current phase.
\end{proof}

\section{The lower bounds in Theorems \ref{thm:Main}  and \ref{thm:Coalescence}}\label{sec:LowerBounds}

In this section, we give a proof of the lower bound of order $N^{2}k^{-1/2}$ on the mixing time of the TASEP on the circle, as well as for the lower bound of order $N^{2}k^{-1/2}$ on the tails of the coalescence time of two second class particles. This completes the proof of Theorem~\ref{thm:Main} as well as the proof of Theorem~\ref{thm:Coalescence}. For both results, we rely on the estimates for last passage times and the coalescence of geodesics established in Sections~\ref{sec:LPPestimates} and~\ref{sec:Coalescence}.

\subsection{The lower bounds on the mixing time in Theorem \ref{thm:Main}}
As in Section \ref{sec:Coalescence}, we write in the following  $T_{u,v}$ and ${\gamma}_{u,v}$ for the last passage time and geodesic between $u,v\in \Z^2$ with $v \succeq u$ in an $(N,k)$-periodic environment.
We start with a simple observation on the number of particles in a sub-interval under the stationary distribution $\mu_{N,k}$ of the TASEP on the circle.
\begin{lemma}\label{lem:NumberParticlesStationary} Let $Z$ be the number of particles of particles in a given sub-interval of $\Z_N=\Z/N\Z$  of length $\lfloor N/5 \rfloor $. Then we have that for all $x>0$
\begin{equation}
\mu_{N,k}\left( Z \in \Big[ \frac{k}{5}-x\sqrt{k}, \frac{k}{5} +x\sqrt{k} \Big] \right) \geq 1 - \frac{5}{x^2}
\end{equation}
\end{lemma}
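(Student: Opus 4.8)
The statement is a Chebyshev-type concentration bound for the number of particles $Z$ in a fixed interval $I \subseteq \Z_N$ of length $N/5$ under the stationary measure $\mu_{N,k}$, which is uniform on $\Omega_{N,k}$. The plan is to compute the mean and variance of $Z$ under $\mu_{N,k}$ exactly (or up to harmless constants) and then apply Chebyshev's inequality. Under the uniform measure on configurations with exactly $k$ ones among $N$ sites, the occupation variables $(\eta(x))_{x\in\Z_N}$ form an exchangeable family; writing $Z = \sum_{x\in I}\eta(x)$ with $|I| = \lfloor N/5\rfloor$, we have $\E_{\mu_{N,k}}[\eta(x)] = k/N$ for each $x$, so $\E_{\mu_{N,k}}[Z] = |I|\,k/N$, which is $k/5$ up to an $O(1)$ rounding error that is negligible against the $x\sqrt{k}$ scale for $k$ large. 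For the covariance, the hypergeometric structure gives $\Cov(\eta(x),\eta(y)) = -\tfrac{1}{N-1}\tfrac{k}{N}\bigl(1-\tfrac{k}{N}\bigr)$ for $x\neq y$ and $\Var(\eta(x)) = \tfrac{k}{N}\bigl(1-\tfrac{k}{N}\bigr)$, so that
\begin{equation}
\Var_{\mu_{N,k}}(Z) = |I|\,\frac{k}{N}\Bigl(1-\frac{k}{N}\Bigr)\Bigl(1 - \frac{|I|-1}{N-1}\Bigr) \leq |I|\,\frac{k}{N} \leq \frac{k}{5}\,.
\end{equation}
Chebyshev's inequality then yields $\mu_{N,k}\bigl(|Z - \E[Z]| \geq x\sqrt{k}\bigr) \leq \Var(Z)/(x^2 k) \leq 1/(5x^2)$, and after absorbing the $O(1)$ difference between $\E[Z]$ and $k/5$ into a slightly enlarged interval (legitimate since we may assume $k$ large, or simply note the bound is only nontrivial for $x$ not too small), we obtain the claimed estimate with the constant $5/x^2$.

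\textbf{Main obstacle.} There is no serious obstacle here; this is an elementary lemma about the hypergeometric distribution. The only points requiring a modicum of care are: (i) confirming the exact hypergeometric covariance formula and the resulting variance bound $\Var(Z) \leq k/5$ with the right constant, so that the Chebyshev bound comes out as $5/x^2$ rather than a larger constant; and (ii) dealing cleanly with the discrepancy between $\E_{\mu_{N,k}}[Z] = \lfloor N/5\rfloor\, k/N$ and the centering point $k/5$ used in the statement. For (ii) the cleanest route is to observe that $|\lfloor N/5\rfloor k/N - k/5| \leq k/N \leq 1$ and that the event in the statement contains the centered event $\{|Z - \E[Z]| \le x\sqrt k - 1\}$, which for the relevant range of $x$ (and $k$ bounded away from small values, as may be assumed in the application) still has probability at least $1 - 5/x^2$ after a trivial adjustment of constants; alternatively one simply states the lemma for $k$ large enough that this rounding is absorbed. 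I would write the proof in three short steps: set up $Z$ as a sum of exchangeable indicators and record $\E[Z]$; compute $\Var(Z)$ via the hypergeometric covariance and bound it by $k/5$; apply Chebyshev and handle the centering rounding.
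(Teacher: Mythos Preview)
Your proposal is correct and follows essentially the same approach as the paper: identify $Z$ as hypergeometric under the uniform measure $\mu_{N,k}$, compute $\E[Z]=k/5$ and bound $\Var(Z)\le k/5$, then apply Chebyshev's inequality. The paper's proof is in fact slightly more terse (it writes $\E[Z]=k/5$ directly without discussing the $\lfloor N/5\rfloor$ rounding), and both arguments actually yield the stronger bound $1/(5x^2)$, which of course implies the stated $5/x^2$.
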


\begin{proof} Since $\mu_{N,k}$ is the uniform distribution on $\Omega_{N,k}$, note that $Z$ has a Hypergeometric law with expectation and variance given by
\begin{equation}
\E[Z]=\frac{k}{5} \quad \text{ and } \quad \Var(Z) = \frac{4k(N-k)}{25(N-1)} \leq \frac{k}{5} \, .
\end{equation}
The claim is now immediate from Chebyshev's inequality.
\end{proof} Without loss of generality, we assume that $k$ is a multiple of $5$.
Using Lemma \ref{lem:NumberParticlesStationary}, we argue that starting from the configuration $\eta_{\textup{ini}}$ with
\begin{equation*}
\eta_{\textup{ini}}(i) := \mathds{1}_{  i \in \{N-k+1,\dots,N\} }
\end{equation*} for all $i \in [N]$, the total variation distance between the law at  time $N^2k^{-1/2}\theta^{-1}$ and the measure $\mu_{N,k}$ tends to $0$ as $\theta \rightarrow \infty$. In the following, we let $\gamma_{\textup{ini}}$ denote the initial growth interface corresponding to $\eta_{\textup{ini}}$. In the following, let $x>0$ and $\theta=\theta(x)>0$ be treated as parameters, which we determine later on. For all $N,k$, we set
\begin{equation*}
v_1:= ( \lfloor N^2k^{-1/2}\theta^{-1} \rfloor, \lfloor k^{3/2}\theta^{-1} \rfloor ) \quad \text{and} \quad v_2:= v_1 + ( \lfloor -(N-k)/5+x\sqrt{k} \rfloor, \lfloor k/5+x\sqrt{k} \rfloor) \, .
\end{equation*}  We have the following estimate on the last passage times between $\gamma_{\textup{ini}}$ and sites $v_1$ and $v_2$, noting that $T_{\gamma_{\textup{ini}},v}=T_{\TR(\mathbf{0}),v}$ for all $v \succeq 0$ (recall that we assert without loss of generality $(0,0) \in \gamma_{\textup{ini}}$).

\begin{lemma}\label{lem:TechnicalGeodesics} Fix some $x>0$. Then there exist constants $c_1,c_2,k_0,\theta_0>0$, such that for all $k\geq k_0$ and $\theta\geq\theta_0$, we find some $c=c(\theta)>0$ such that for all $N\geq 2k$ sufficiently large
\begin{align}
\label{eq:LowerState1}\P( T_{\gamma_{\textup{ini}},v_1} > \E[T_{\mathbf{0},v_1}] - c Nk^{-1/2} ) &\geq 1 - \exp(-c_1 \theta) \\
\label{eq:LowerState2}\P( T_{\gamma_{\textup{ini}},v_2} < \E[T_{\mathbf{0},v_1}] - c Nk^{-1/2} ) &\geq 1 - \exp(-c_2 \theta) \, .
\end{align}
\end{lemma}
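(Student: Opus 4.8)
The plan is to reduce both statements to the moderate deviation estimates for last passage times in periodic environments from Section~\ref{sec:LPPestimates}, combined with the fact that geodesics from different points on the initial growth interface $\gamma_{\textup{ini}}$ coalesce quickly. First I would observe that both target points $v_1$ and $v_2$ lie on a common line $\mathbb{L}_{m^\prime,\ell}$ parallel to the repeated diagonals of the $(N,k)$-periodic environment, at an appropriate height, and that the slope from the origin to $v_i$ is of the right order, namely $m=k^2/(N-k)^2$ up to the prescribed factor $\theta^{-1}$. Indeed $v_1 = (n_\star, m n_\star \theta^{-1} \cdot \text{(const)})$ with $n_\star = N^2 k^{-1/2}\theta^{-1}$, so Corollary~\ref{cor:SupremumPeriodic} applies (with the parameter $x$ there comparable to $\theta$) and gives $|T_{\mathbf{0},v_1} - \E[T_{\mathbf{0},v_1}]| \le \theta n_\star^{1/3} m^{-1/6}$ with probability at least $1-\exp(-c\theta)$; a short computation identifies $n_\star^{1/3}m^{-1/6}$ with a constant multiple of $Nk^{1/2}\cdot(\text{polynomial in }\theta^{-1})$, which is the error term in the statement.

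Next, to pass from $T_{\mathbf{0},v_i}$ to $T_{\gamma_{\textup{ini}},v_i}$, I would use the coalescence of flat geodesics: by Corollary~\ref{cor:FluctuationsPeriodic} the geodesic $\gamma_{\mathbf{0},v_1}$ has transversal fluctuations at most $\frac{k}{2}$, hence $\gamma_{\textup{ini}}$ (which is trapped in a strip of width $O(k)$ around $\mathbb{L}_{m^\prime,0}$, since $\eta_{\textup{ini}}$ has all $k$ particles bunched together) is crossed by every geodesic from a point of $\mathbb{L}_{m^\prime,0}$ to $v_1$; using the ordering of geodesics from Lemma~\ref{lem:OrderingGeodesics}, the maximal such last passage time $T_{\gamma_{\textup{ini}},v_1}$ differs from $T_{\mathbf{0},v_1}$ by at most a line-to-line last passage time over the width-$O(k)$ strip, which by the argument of Lemma~\ref{lem:MaximumLineToLine} is at most $O(\theta n_\star^{1/3}m^{-1/6})$ with probability $1-\exp(-c\theta)$. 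This yields the lower bound in~\eqref{eq:LowerState1}. For~\eqref{eq:LowerState2}, the key point is that $v_2$ is displaced from $v_1$ by $(-(N-k)/5 + x\sqrt k, k/5 + x\sqrt k)$, i.e.\ by one lattice-period step in the down-right direction together with a drift of order $\sqrt k$, so that $v_2$ lies on the same diagonal line as a periodic translate of $v_1$; the straight-line distance and the expected last passage time from $\mathbf{0}$ to $v_2$ are strictly smaller than those to $v_1$ by a macroscopic amount of order $Nk^{1/2}$ (this is the deterministic shape-theorem gap coming from the $\sqrt k$ perpendicular displacement against the curvature of the limit shape, quantified by the second-order term $c n_\star^{1/3}m^{-1/6}$ in Corollary~\ref{cor:ExpectationVariance} applied with the tilted slope). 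Choosing $c$ smaller than this deterministic gap and applying Corollary~\ref{cor:SupremumPeriodic} again to control the fluctuation around $\E[T_{\mathbf{0},v_2}]$, together with the same crossing/coalescence argument to replace $\mathbf{0}$ by $\gamma_{\textup{ini}}$, gives $T_{\gamma_{\textup{ini}},v_2} < \E[T_{\mathbf{0},v_1}] - cNk^{1/2}$ with probability $1-\exp(-c_2\theta)$.

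The main obstacle I anticipate is the bookkeeping in the second inequality: one has to verify carefully that the expected last passage times $\E[T_{\mathbf{0},v_1}]$ and $\E[T_{\mathbf{0},v_2}]$ really differ by a quantity of order $Nk^{1/2}$ and that this dominates \emph{both} the second-order correction terms in Corollary~\ref{cor:ExpectationVariance} \emph{and} the $O(\theta n_\star^{1/3}m^{-1/6})$ fluctuation terms, for a suitable choice of $\theta=\theta(x)$ large and $c=c(\theta)$ small. This is where the precise dependence of the slope $m$ on $k,N$ and the scaling $n_\star = N^2 k^{-1/2}\theta^{-1}$ must be used: one computes $\E[T_{\mathbf{0},v_1}]\approx n_\star(1+\sqrt m)^2$ and expands $\E[T_{\mathbf{0},v_2}]$ around this using the exact shape function $(\sqrt{a}+\sqrt{b})^2$ evaluated at the perturbed coordinates, extracting the leading $-c'Nk^{1/2}$ term from the second-order Taylor expansion in the transversal direction. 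Once this deterministic estimate is pinned down, the probabilistic part is a routine application of the moderate deviation bounds already established, and the conclusion that equilibrium has not been reached (via Lemma~\ref{lem:NumberParticlesStationary}) follows by taking $x$ large enough that $5/x^2 < \varepsilon/2$ and noting that the two events in~\eqref{eq:LowerState1} and~\eqref{eq:LowerState2} force the number of particles in the relevant interval of the growth interface to be atypical under $\mu_{N,k}$.
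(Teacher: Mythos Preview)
Your overall strategy---reduce $T_{\gamma_{\textup{ini}},v_i}$ to $T_{\mathbf{0},v_i}$ and then invoke Corollaries~\ref{cor:ExpectationVariance} and~\ref{cor:SupremumPeriodic}---matches the paper's, and your identification of the deterministic shape-function gap as the crux of~\eqref{eq:LowerState2} is exactly what the paper's ``computation'' refers to. Where you diverge is in the reduction step. The paper does not use coalescence, transversal fluctuations, or line-to-line bounds at all here. Instead it observes that for the specific step initial data $\eta_{\textup{ini}}$, the convex corners of $\gamma_{\textup{ini}}$ are precisely $\TR(\mathbf{0})$, so by periodicity
\[
T_{\gamma_{\textup{ini}},v_i}=\sup_{u\in\TR(\mathbf{0})}T_{u,v_i}=\sup_{w\in\TR(v_i)}T_{\mathbf{0},w},
\]
which is exactly the quantity governed by $\Gamma_{\mathbf{0},v_i}$ from~\eqref{def:PeriodicPath}. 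Lemma~\ref{lem:PointToLineBound} then gives $\Gamma_{\mathbf{0},v_i}=\gamma_{\mathbf{0},v_i}$ with probability $\geq 1-\exp(-c\theta)$, yielding $T_{\gamma_{\textup{ini}},v_i}=T_{\mathbf{0},v_i}$ outright. This is a one-line replacement for your entire coalescence discussion.

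Your route is not wrong, but it is more laboured and in places imprecise: for~\eqref{eq:LowerState1} the reduction is in fact trivial since $\mathbf{0}\in\gamma_{\textup{ini}}$ gives $T_{\gamma_{\textup{ini}},v_1}\geq T_{\mathbf{0},v_1}$ deterministically, so no coalescence or line-to-line bound is needed. For~\eqref{eq:LowerState2} you do need an upper bound on $T_{\gamma_{\textup{ini}},v_2}$, and your proposal to control the difference via a line-to-line last passage time over a width-$O(k)$ strip (as in Lemma~\ref{lem:MaximumLineToLine}) can be made to work, but the paper's $\Gamma=\gamma$ argument is sharper because it gives exact equality rather than an additive error. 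Finally, your last paragraph on Lemma~\ref{lem:NumberParticlesStationary} and equilibrium belongs to the proof of the lower bound in Theorem~\ref{thm:Main}, not to Lemma~\ref{lem:TechnicalGeodesics} itself.
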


\begin{proof} For $u,v\in \Z^2$ with $v \succeq u$, recall the set-to-point geodesic $\Gamma_{u,v}^{\prime}$ from \eqref{def:PeriodicPathReverse}. Lemma~\ref{lem:PointToLineBound}, suitably adjusted for $\Gamma_{u,v}^{\prime}$ instead of $\Gamma_{u,v}$ by symmetry, guarantees that for all $\theta>0$ and $N$ sufficiently large, we have that
\begin{align*}
\P(  \Gamma^{\prime}_{\mathbf{0},v_1}=\gamma_{\mathbf{0},v_1}  ) \geq 1 - \exp(-c_3\theta) \quad \text{and} \quad \P(  \Gamma^{\prime}_{\mathbf{0},v_2}=\gamma_{\mathbf{0},v_2}  ) \geq 1 - \exp(-c_3\theta) \, .
\end{align*} for some constant $c_3>0$. Hence, it suffices to show that \eqref{eq:LowerState1} and \eqref{eq:LowerState2} hold when replacing the quantities $T_{\gamma_{\textup{ini}},v_1}$ and $T_{\gamma_{\textup{ini}},v_2}$ by $T_{\mathbf{0},v_1}$ and $T_{\mathbf{0},v_2}$, respectively. The desired bound in \eqref{eq:LowerState1} and \eqref{eq:LowerState2} now follows from a straight-forward computation using Corollaries \ref{cor:ExpectationVariance} and \ref{cor:SupremumPeriodic} in order to estimate the (expected) last passage time in periodic environments.
\end{proof}


\begin{proof}[Proof of the lower bound in Theorem \ref{thm:Main}]

Let $Z_t$ denote the number of particles between positions $(N^2k^{-1/2}-k^{3/2})\theta^{-1}$ and $(N^2k^{-1/2}-k^{3/2})\theta^{-1}+N/5$ at time $t\geq 0$.
In order to show $t_{\textup{mix}}^{N,k}(\varepsilon)\geq t$ for some $t\geq 0$ and $\varepsilon>0$, it suffices to consider the event
\begin{equation*}
\mathcal{B}_{\text{Low}} := \left\{ \Big|Z_t - \frac{k}{5}\Big| > x\sqrt{k}\right\} \, ,
\end{equation*} and argue that for every $\varepsilon>0$, there exist some $x>0$ and $\theta(x)>0$ such that
\begin{equation}\label{eq:LowerBoundStatements}
\mu_{N,k}\big(\mathcal{B}_{\text{Low}}\big) \leq \frac{\varepsilon}{2} \qquad \text{ and } \qquad
\P\big(\eta_t \in \mathcal{B}_{\text{Low}}\big) \geq 1- \frac{\varepsilon}{2}  .
\end{equation}
Let $t=\E[T_{0,v_1}] - c Nk^{-1/2}$ with the constant $c=c(\theta)$ from Lemma \ref{lem:TechnicalGeodesics}. There exists a constant $\tilde{\theta}_0>0$ such that for all $\theta\geq \tilde{\theta}_0$, Lemma \ref{lem:ShapeTheorem} and Corollary \ref{cor:SupremumPeriodic} yield that
\begin{equation*}
t \geq \frac{1}{2}N^2k^{-1/2}\theta^{-1}
\end{equation*} for all $N$ sufficiently large. Using Lemma \ref{lem:NumberParticlesStationary}, note that for every $\varepsilon>0$, there exists some $x=x(\varepsilon)$ such that the first bound in \eqref{eq:LowerBoundStatements} holds with respect to the stationary distribution $\mu_{N,k}$. For the second bound in \eqref{eq:LowerBoundStatements}, combining Lemma~\ref{lem:TechnicalGeodesics}  with Lemma \ref{lem:CurrentVsGeodesic}, we can choose $\theta=\theta(x)>0$ sufficiently large to conclude.
\end{proof}

\begin{remark}\label{rem:CutoffPart2}
The above argument ensures that for all $k$ and $N$ large enough, we see a lower bound of $c_{\textup{low}}N^{2}k^{-1/2}$ on the $\frac{1}{4}$-mixing time for some absolute constant $c_{\textup{low}}>0$. As pointed out in Remark \ref{rem:CutoffPart1}, there exists some $\varepsilon=\varepsilon(c_{\textup{low}})>0$ such that the 
$(1-\varepsilon)$-mixing time is at most $\frac{1}{2}c_{\textup{low}}N^2k^{-1/2}$ for all $k$ and $N$ sufficiently large. Thus, recalling \eqref{eq:Cutoff}, we see that the TASEP on the circle does not exhibit cutoff.
\end{remark}

\subsection{The lower bound on the coalescence time of second class particles}

Recall from Lemma \ref{lem:SecondClassCompetition} that we can interpret the disagreement process with two second class particles as a last passage percolation model with two competition interfaces. 
We replace the two second class particles in the initial growth interface by two $(0,1)$-pairs and color the initial growth interface according to the $(0,1)$-pairs, as well as the  grid according to where the geodesic to the initial growth interface connects. 
Recall the set $\TR(v)$ from \eqref{def:PeriodicTranslate}, 
and write in the following $\widetilde{\TR}(v)$ for a site $v$ for the periodic translates under the extended environment with two competition interfaces.

\begin{proof}[Proof of the lower bound in Theorem \ref{thm:Coalescence}]

In the following, we consider the case that $N$ and $k$ are both even, as the proof is similar when $N$ or $k$ is odd. 
By Lemma \ref{lem:SecondClassCompetition}, in order to show that the two competition interfaces in an $(N+2,k+1)$-periodic environment, and hence the respective second class particles, have not coalesced by time $N^2k^{-1/2}\theta^{-1}$ with probability at least $1-\varepsilon$ with some $\varepsilon,\theta>0$, it suffices to find a pair of sites $(v_1,v_2)\in \Z^2 \times \Z^2$ with last passage times at least $N^2k^{-1/2}\theta^{-1}$ with respect to the initial growth interface, such that the sites $v_1$ and $v_2$ are with probability at least $1-\varepsilon$ colored differently. 
To achieve this, consider the two initial configurations $\eta_{\textup{ini}}$ and $\zeta_{\textup{ini}}$ given by
\begin{align*}
\eta_{\textup{ini}}(i) &:= \mathds{1}_{  i \in \{1,\dots,\frac{k}{2}\} } + \mathds{1}_{  i \in \{\frac{N}{2}+1,\dots,\frac{N+k}{2} \}}  \\  \zeta_{\textup{ini}}(i) &:= \mathds{1}_{  i \in \{2,\dots,\frac{k}{2}\} } + \mathds{1}_{  i \in \{\frac{N}{2},\dots,\frac{N+k}{2} \} }
\end{align*} for all $i \in [N]$, i.e.\ we see in the corresponding disagreement process second class particles at sites $1$ and $\frac{N}{2}$, while the sites $\{2,\dots, \frac{k}{2}\}$ and $\{ \frac{N}{2}+1,\dots, \frac{N+k}{2} \}$ are occupied by first class particles. Let $G_{\textup{ini}}$ be the respective initial growth interface and note that
\begin{equation*}
T_{G_{\textup{ini}}, v} = \max_{u \in \widetilde{\TR}(( 1,-k/2) ) \cup \widetilde{\TR}( ((N-k)/2,-k) )} T_{u,v}
\end{equation*} for all $v\in \Z^2$ with $v\succeq w$ for some $w \in G_{\textup{ini}}$. Here, we use $\widetilde{\TR}$ to indicate the periodic translates are defined with respect to an $(N+2,k+1)$-periodic environment.  Next, we consider the sites
\begin{equation*}
v_1 = \Big(\Big\lfloor \frac{(N-k)^2}{k^{1/2}\theta} \Big\rfloor , \Big\lfloor \frac{k^{3/2}}{\theta} - \frac{k}{2} \Big\rfloor \Big) \quad \text{and}  \quad v_2 = \Big( \Big\lfloor \frac{(N-k)^2}{k^{1/2}\theta} + \frac{N}{2} \Big\rfloor , \Big\lfloor \frac{k^{3/2}}{\theta} - k  \Big\rfloor\Big) \, .
\end{equation*}
For all $\varepsilon>0$, there exists some $\theta=\theta(\varepsilon)$ such that by Lemma \ref{lem:PointToLineBound}, the event
\begin{align}\label{eq.FirstLow}
\Big\{ \gamma_{(1,-k/2), v_1}= \gamma_{ \widetilde{\TR}( (1,-k/2) ),  v_1 } \Big\} \cup \Big\{  \gamma_{(\frac{N-k}{2},-k), v_2}=\gamma_{ \widetilde{\TR}( (\frac{N-k}{2},-k) ),  v_2 } \Big\}
\end{align} holds with probability at least $1-\varepsilon/3$ for all $N$ and $k$ sufficiently large.  Similarly, the events
\begin{equation}\label{eq:SecondLow}\begin{split}
\Big\{ \gamma_{ \widetilde{\TR}( (\frac{N-k}{2},-k) ),  v_1 } =  \gamma_{(-\frac{N-k}{2},0), v_1} \Big\}  &\cup \Big\{ \gamma_{ \widetilde{\TR}( (\frac{N-k}{2},-k) ),  v_1 } = \gamma_{(\frac{N-k}{2},-k), v_1}  \Big\} \\ \Big\{  \gamma_{ \widetilde{\TR}( (1,-k/2) ),  v_2 } =  \gamma_{(1,-k/2), v_2}  \Big\}  &\cup \Big\{ \gamma_{ \widetilde{\TR}( (1,-k/2) ),  v_2 } =   \gamma_{(N-k+1,-3k/2), v_2} \Big\}
\end{split}
\end{equation} hold with probability at least $1-\varepsilon/6$ for all $N$ sufficiently large. Now by Corollary \ref{cor:ExpectationVariance} and Corollary \ref{cor:SupremumPeriodic}, we see that there exist $\theta_0,k_0$ such that for all $\theta\geq \theta_0$ and $k\geq k_0$, the events
\begin{align*}
\Big\{T_{(1,-k/2),v_1} &> \max\Big( T_{(-\frac{N-k}{2},0), v_1}, T_{(\frac{N-k}{2},-k), v_1}  \Big)\Big\} \\
\Big\{T_{(-\frac{N}{2},0),v_2} &> \max\Big( T_{(N-k+1,-3k/2), v_2}, T_{(1,-k/2), v_2} \Big)\Big\}
\end{align*} hold with probability at least $1-\varepsilon/6$ for all $N$ and $k$ sufficiently large. Combining these observations, we see that for all $\theta,k$ and $N$, we get that
\begin{align*}
 \Big\{\gamma_{G_\textup{ini}, v_1} = \gamma_{(1,-k/2), v_1} \Big\} \cap
\Big\{  \gamma_{G_\textup{ini}, v_2} =\gamma_{(-\frac{N}{2},0),v_2}\Big\}
\end{align*}
holds with probability at least $1-\varepsilon$, allowing us to conclude.
\end{proof}

\begin{acks}[Acknowledgments]
We thank Dor Elboim, Milton Jara, Hubert Lacoin and Yuval Peres for fruitful discussions. Moreover, we are very grateful to the referees for various comments, which helped to significantly improve the paper. 
DS acknowledges the DAAD PRIME program for financial support.  AS was partially supported in part by NSF grant DMS-1855527, a Simons Investigator grant and a MacArthur Fellowship.
\end{acks}
\bibliographystyle{imsart-number} 
\bibliography{TASEPmixing}       

@article{A:TASEPring,
  title = {The stationary measure of a 2-type totally asymmetric exclusion process},
  author = {Angel, Omer},
  year = {2006},
  journal = {Journal of Combinatorial Theory},
  volume = {113},
  number = {4},
  pages = {625–635},
  issn = {0097-3165},
  doi = {10.1016/j.jcta.2005.05.004},
  fjournal = {Journal of Combinatorial Theory. Series A},
  mrclass = {82C22 (60K35)},
  mrnumber = {2216456},
  mrreviewer = {Ellen Saada}
}

@article{BBCS:Halfspace,
  title = {{Pfaffian Schur processes and last passage percolation in a half-quadrant}},
  author = {Baik, Jinho and Barraquand, Guillaume and Corwin, Ivan and Suidan, Toufic},
  year = {2018},
  journal = {Annals of Probability},
  volume = {46},
  number = {6},
  pages = {3015–3089},
  issn = {0091-1798},
  doi = {10.1214/17-AOP1226},
  fjournal = {The Annals of Probability},
  mrclass = {60K35 (05E05 60B20 60G55 82C43)},
  mrnumber = {3857852},
  mrreviewer = {Zhongyang Li}
}

@article{BBS:NonBiinfinite,
  title = {Non-existence of bi-infinite geodesics in the exponential corner growth model},
  author = {Balázs, Márton and Busani, Ofer and Sepp{\"a}l{\"a}inen, Timo},
  year = {2020},
  journal = {Forum of Mathematics. Sigma},
  volume = {8},
  pages = {Paper No. e46, 34},
  doi = {10.1017/fms.2020.31},
  fjournal = {Forum of Mathematics. Sigma},
  mrclass = {60K35 (60K37)},
  mrnumber = {4176750}
}

@article{BC:PSConjecture,
  title = {{Current fluctuations for TASEP: a proof of the Prähofer-Spohn conjecture}},
  author = {Ben Arous, Gérard and Corwin, Ivan},
  year = {2011},
  journal = {Annals of Probability},
  volume = {39},
  number = {1},
  pages = {104–138},
  issn = {0091-1798},
  doi = {10.1214/10-AOP550},
  fjournal = {The Annals of Probability},
  mrclass = {60K35 (60B20 82C22)},
  mrnumber = {2778798},
  mrreviewer = {Maria E. Vares}
}

@article{BE:Nonequilibrium,
  title = {Nonequilibrium steady states of matrix-product form: a solver's guide},
  shorttitle = {Nonequilibrium steady states of matrix-product form},
  author = {Blythe, Richard A. and Evans, Martin R.},
  year = {2007},
  journal = {Journal of Physics. A. Mathematical and Theoretical},
  volume = {40},
  number = {46},
  pages = {R333–R441},
  issn = {1751-8113},
  doi = {10.1088/1751-8113/40/46/R01}
}

@article{BFO:HalfspaceStationary,
  title = {Stationary half-space last passage percolation},
  author = {Betea, Dan and Ferrari, Patrik L. and Occelli, Alessandra},
  year = {2020},
  journal = {Communications in Mathematical Physics},
  volume = {377},
  number = {1},
  pages = {421–467},
  issn = {0010-3616},
  doi = {10.1007/s00220-020-03712-5},
  fjournal = {Communications in Mathematical Physics},
  mrclass = {60K35 (82C43)},
  mrnumber = {4107934}
}

@incollection{BG:TimeCorrelation,
  title = {Time correlation exponents in last passage percolation},
  booktitle = {In and out of equilibrium 3: Celebrating Vladas Sidoravicius},
  author = {Basu, Riddhipratim and Ganguly, Shirshendu},
  year = {2021},
  pages = {101–123},
  publisher = {Springer International Publishing},
  address = {Cham},
  doi = {10.1007/978-3-030-60754-8₅},
  isbn = {978-3-030-60754-8},
  keywords = {No MathSciNet}
}

@article{BGH:AreaConstraint,
  title = {The competition of roughness and curvature in area-constrained polymer models},
  author = {Basu, Riddhipratim and Ganguly, Shirshendu and Hammond, Alan},
  year = {2018},
  journal = {Communications in Mathematical Physics},
  volume = {364},
  number = {3},
  pages = {1121–1161},
  publisher = {Springer},
  keywords = {No MathSciNet}
}

@article{BGHH:Watermelon,
  title={Interlacing and scaling exponents for the geodesic watermelon in last passage percolation},
  author={Basu, Riddhipratim and Ganguly, Shirshendu and Hammond, Alan and Hegde, Milind},
  journal={Communications in Mathematical Physics},
  volume={393},
  number={3},
  pages={1241--1309},
  year={2022},
  publisher={Springer}
}

@article{BGHK:BetaEnsembles,
  title = {Lower deviations in $\beta$-ensembles and law of iterated logarithm in last passage percolation},
  author = {Basu, Riddhipratim and Ganguly, Shirshendu and Hegde, Milind and Krishnapur, Manjunath},
  year = {2021},
  journal = {Israel Journal of Mathematics},
  volume = {242},
  number = {1},
  pages = {291–324},
  issn = {0021-2172},
  doi = {10.1007/s11856-021-2135-z},
  fjournal = {Israel Journal of Mathematics},
  mrclass = {60K35 (60B20 60F15)},
  mrnumber = {4282085}
}

@article{BGZ:TemporalCorrelation,
  title = {{Temporal correlation in last passage percolation with flat initial condition via Brownian comparison}},
  author = {Basu, Riddhipratim and Ganguly, Shirshendu and Zhang, Lingfu},
  year = {2021},
  journal = {Communications in Mathematical Physics},
  fjournal = {Communications in Mathematical Physics},
  keywords = {No MathSciNet}
}

@article{BHS:Binfinite,
  title = {Nonexistence of bigeodesics in integrable models of last passage percolation},
  author = {Basu, Riddhipratim and Hoffman, Christopher and Sly, Allan},
  year = {2021},
  journal = {Communications in Mathematical Physics},
  pages = {1-30},
  keywords = {No MathSciNet}
}

@article{BL:Multipoint,
  title = {{Multipoint distribution of periodic TASEP}},
  author = {Baik, Jinho and Liu, Zhipeng},
  year = {2019},
  journal = {Journal of the American Mathematical Society},
  volume = {32},
  number = {3},
  pages = {609–674},
  issn = {0894-0347},
  doi = {10.1090/jams/915},
  mrnumber = {3981984}
}

@article{BL:PeriodicGeneral,
  title = {{Periodic TASEP with general initial conditions}},
  author = {Baik, Jinho and Liu, Zhipeng},
  year = {2021},
  journal = {Probability Theory and Related Fields},
  volume = {179},
  number = {3-4},
  pages = {1047–1144},
  issn = {0178-8051},
  doi = {10.1007/s00440-020-01004-6},
  mrnumber = {4242631}
}

@article{BL:Subscale,
  title = {{TASEP on a ring in sub-relaxation time scale}},
  author = {Baik, Jinho and Liu, Zhipeng},
  year = {2016},
  journal = {Journal of Statistical Physics},
  volume = {165},
  number = {6},
  pages = {1051–1085},
  issn = {0022-4715},
  doi = {10.1007/s10955-016-1665-y},
  mrnumber = {3575637}
}

@article{BL:TASEPring,
  title = {{Fluctuations of TASEP on a ring in relaxation time scale}},
  author = {Baik, Jinho and Liu, Zhipeng},
  year = {2018},
  journal = {Communications on Pure and Applied Mathematics},
  volume = {71},
  number = {4},
  pages = {747–813},
  issn = {0010-3640},
  doi = {10.1002/cpa.21702},
  mrnumber = {3772401}
}

@article{BLS:LimitingOnePoint,
  author = {Baik, Jinho and Liu, Zhipeng and Silva, Guilherme L. F.},
title = {{Limiting one-point distribution of periodic TASEP}},
volume = {58},
journal = {Annales de l'Institut Henri Poincaré, Probabilités et Statistiques},
number = {1},
publisher = {Institut Henri Poincaré},
pages = {248 -- 302},
keywords = {integrable systems, KP equation, Periodic TASEP},
year = {2022},
doi = {10.1214/21-AIHP1171},
URL = {https://doi.org/10.1214/21-AIHP1171}
}

@article{BN:CutoffASEP,
  title = {{Cutoff profile of ASEP on a segment}},
  author = {Bufetov, Alexey and Nejjar, Peter},
  year = {2022},
  journal = {Probability Theory and Related Fields},
}

@article{BS:OrderCurrent,
  title = {Order of current variance and diffusivity in the asymmetric simple exclusion process},
  author = {Balázs, Márton and Sepp{\"a}l{\"a}inen, Timo},
  year = {2010},
  journal = {Annals of Mathematics. Second Series},
  volume = {171},
  number = {2},
  pages = {1237–1265},
  issn = {0003-486X},
  doi = {10.4007/annals.2010.171.1237},
  mrnumber = {2630064}
}

@article{BSS:Coalescence,
  title = {Coalescence of geodesics in exactly solvable models of last passage percolation},
  author = {Basu, Riddhipratim and Sarkar, Sourav and Sly, Allan},
  year = {2019},
  journal = {Journal of Mathematical Physics},
  volume = {60},
  number = {9},
  pages = {093301, 22},
  issn = {0022-2488},
  doi = {10.1063/1.5093799},
  fjournal = {Journal of Mathematical Physics},
  mrclass = {60K35},
  mrnumber = {4002528},
  mrreviewer = {Daniela Bertacchi}
}

@article{BSS:Invariant,
  title = {{Invariant measures for TASEP with a slow bond}},
  author = {Basu, Riddhipratim and Sarkar, Sourav and Sly, Allan},
  year = {2017},
  journal = {preprint, \url{https://arxiv.org/abs/1704.07799}},
  keywords = {No MathSciNet}
}

@article{BSV:SlowBond,
  title = {Last passage percolation with a defect line and the solution of the slow bond problem},
  author = {Basu, Riddhipratim and Sidoravicius, Vladas and Sly, Allan},
  year = {2014},
  journal = {preprint, \url{https://arxiv.org/abs/1408.3464}},
  eprint = {1408.3464},
  eprinttype = {arxiv},
  primaryclass = {math.PR},
  archiveprefix = {arXiv},
  keywords = {No MathSciNet}
}

@article{C:KPZReview,
  title = {{The Kardar-Parisi-Zhang equation and universality class}},
  author = {Corwin, Ivan},
  year = {2012},
  journal = {Random Matrices Theory Appl.},
  volume = {1},
  number = {1},
  pages = {1130001, 76},
  issn = {2010-3263},
  doi = {10.1142/S2010326311300014},
  fjournal = {Random Matrices. Theory and Applications},
  mrclass = {82B31 (60B20 60K35 60K37)},
  mrnumber = {2930377}
}

@article{CP:BusemannSecondClass,
  title = {Busemann functions and the speed of a second class particle in the rarefaction fan},
  author = {Cator, Eric and Pimentel, Leandro P. R.},
  year = {2013},
  journal = {Annals of Probability},
  volume = {41},
  number = {4},
  pages = {2401–2425},
  issn = {0091-1798},
  doi = {10.1214/11-AOP709},
  fjournal = {The Annals of Probability},
  mrclass = {60K35 (82C21)},
  mrnumber = {3112921},
  mrreviewer = {Tertuliano Franco}
}

@article{CW:TableauxCombinatorics,
  title = {{Tableaux combinatorics for the asymmetric exclusion process and Askey-Wilson polynomials}},
  author = {Corteel, Sylvie and Williams, Lauren},
  year = {2011},
  journal = {Duke Mathematical Journal},
  volume = {159},
  number = {3},
  pages = {385–415},
  issn = {0012-7094},
  doi = {10.1215/00127094-1433385},
  mrnumber = {2831874}
}

@article{DEP:FPP,
  title={Coalescence of geodesics and the bks midpoint problem in planar first-passage percolation},
  author={Dembin, Barbara and Elboim, Dor and Peled, Ron},
  journal={preprint, \url{https://arxiv.org/abs/2204.02332}},
  year={2022}
}

@article{DL:LDPexclusion,
  title = {Exact large deviation function in the asymmetric exclusion process},
  author = {Derrida, Bernard and Lebowitz, Joel L},
  year = {1998},
  journal = {Physical review letters},
  volume = {80},
  number = {2},
  pages = {209},
  keywords = {No MathSciNet}
}

@article{F:EVBoundsSEP,
  title = {{Eigenvalue bounds on convergence to stationarity for nonreversible Markov chains, with an application to the exclusion process}},
  author = {Fill, James A.},
  year = {1991},
  journal = {Annals of Applied Probability},
  volume = {1},
  number = {1},
  pages = {62-87},
  issn = {1050-5164},
  fjournal = {The Annals of Applied Probability},
  mrclass = {60J10 (60K35)},
  mrnumber = {1097464},
  mrreviewer = {Gregory F. Lawler}
}

@article{FGM:RareactionFan,
  title = {Collision probabilities in the rarefaction fan of asymmetric exclusion processes},
  author = {Ferrari, Pablo A. and Goncalves, Patricia and Martin, James B.},
  year = {2009},
  journal = {Annales de l'Institut Henri Poincaré Probabilités et Statistiques},
  volume = {45},
  number = {4},
  pages = {1048–1064},
  issn = {0246-0203},
  doi = {10.1214/08-AIHP303}
}

@article{FM:TASEPmulti,
  title = {Stationary distributions of multi-type totally asymmetric exclusion processes},
  author = {Ferrari, Pablo A. and Martin, James B.},
  year = {2007},
  journal = {Annals of Probability},
  volume = {35},
  number = {3},
  pages = {807–832},
  issn = {0091-1798},
  doi = {10.1214/009117906000000944},
  fjournal = {The Annals of Probability},
  mrclass = {60K35 (82C22 90B22)},
  mrnumber = {2319708},
  mrreviewer = {Cédric Bernardin}
}

@article{FMP:CompetitionInterface,
  title = {A phase transition for competition interfaces},
  author = {Ferrari, Pablo A. and Martin, James B. and Pimentel, Leandro P. R.},
  year = {2009},
  journal = {Annals of Applied Probability},
  volume = {19},
  number = {1},
  pages = {281–317},
  issn = {1050-5164},
  doi = {10.1214/08-AAP542},
  fjournal = {The Annals of Applied Probability},
  mrclass = {60K35 (82B26)},
  mrnumber = {2498679}
}

@article{FP:CompetitionInterface,
  title = {Competition interfaces and second class particles},
  author = {Ferrari, Pablo A. and Pimentel, Leandro P. R.},
  year = {2005},
  journal = {Annals of Probability},
  volume = {33},
  number = {4},
  pages = {1235–1254},
  issn = {0091-1798},
  doi = {10.1214/009117905000000080},
  fjournal = {The Annals of Probability},
  mrclass = {60K35 (82B21 82B24)},
  mrnumber = {2150188},
  mrreviewer = {Nicoletta Cancrini}
}

@article{FS:SpaceTimeCovariance,
  title = {Scaling limit for the space-time covariance of the stationary totally asymmetric simple exclusion process},
  author = {Ferrari, Patrik L. and Spohn, Herbert},
  year = {2006},
  journal = {Communications in Mathematical Physics},
  volume = {265},
  number = {1},
  pages = {1–44},
  issn = {0010-3616},
  doi = {10.1007/s00220-006-1549-0},
  fjournal = {Communications in Mathematical Physics},
  mrclass = {82C22 (60K35 82B41)},
  mrnumber = {2217295},
  mrreviewer = {Ellen Saada}
}

@article{GGS:TASEP,
  title = {{The TASEP on Galton--Watson trees}},
  author = {Gantert, Nina and Georgiou, Nicos and Schmid, Dominik},
  year = {2021},
  journal = {Electronic Journal of Probability},
  volume = {26},
  pages = {1–38},
  keywords = {No MathSciNet}
}

@article{GM:SpectrumTASEP,
  title = {Spectral gap of the totally asymmetric exclusion process at arbitrary filling},
  author = {Golinelli, Olivier and Mallick, Kirone},
  year = {2005},
  journal = {Journal of Physics A: Mathematical and General},
  volume = {38},
  number = {7},
  pages = {1419},
  keywords = {No MathSciNet}
}

@article{GNS:MixingOpen,
author = {Nina Gantert and Evita Nestoridi and Dominik Schmid},
title = {{Mixing times for the simple exclusion process with open boundaries}},
volume = {33},
journal = {The Annals of Applied Probability},
number = {2},
publisher = {Institute of Mathematical Statistics},
pages = {1172 -- 1212},
keywords = {coupling, Exclusion process, Mixing times, second class particles},
year = {2023},
doi = {10.1214/22-AAP1839},
URL = {https://doi.org/10.1214/22-AAP1839}
}

@article{GRS:GeodesicCompetition,
  title = {Geodesics and the competition interface for the corner growth model},
  author = {Georgiou, Nicos and Rassoul-Agha, Firas and Seppäläinen, Timo},
  year = {2017},
  journal = {Probability Theory and Related Fields},
  volume = {169},
  number = {1-2},
  pages = {223–255},
  issn = {0178-8051},
  doi = {10.1007/s00440-016-0734-0},
  fjournal = {Probability Theory and Related Fields},
  mrclass = {60K35 (60K37)},
  mrnumber = {3704769},
  mrreviewer = {Daniel Boivin}
}

@article{GS:SixVertex,
  title = {{Six-vertex model, roughened surfaces, and an asymmetric spin Hamiltonian}},
  author = {Gwa, Leh-Hun and Spohn, Herbert},
  year = {1992},
  journal = {Physical Review Letters},
  volume = {68},
  number = {6},
  pages = {725–728},
  publisher = {American Physical Society},
  doi = {10.1103/PhysRevLett.68.725},
  keywords = {No MathSciNet}
}

@article{HP:EPmixing,
  title = {The exclusion process mixes (almost) faster than independent particles},
  author = {Hermon, Jonathan and Pymar, Richard},
  year = {2020},
  journal = {Annals of Probability},
  volume = {48},
  number = {6},
  pages = {3077–3123},
  issn = {0091-1798},
  doi = {10.1214/20-AOP1455},
  fjournal = {The Annals of Probability},
  mrclass = {60K35 (60J27 82C22)},
  mrnumber = {4164461}
}

@article{J:KPZ,
  ids = {Joh-00},
  title = {Shape fluctuations and random matrices},
  author = {Johansson, Kurt},
  year = {2000},
  journal = {Communications in Mathematical Physics},
  volume = {209},
  number = {2},
  pages = {437–476},
  issn = {0010-3616},
  doi = {10.1007/s002200050027},
  fjournal = {Communications in Mathematical Physics},
  mrclass = {60K35 (05A16 60C05 60F10 82B41)},
  mrnumber = {1737991},
  mrreviewer = {Peter Eichelsbacher}
}

@book{L:Book2,
  ids = {liggett1999stochastic},
  title = {Stochastic interacting systems: contact, voter and exclusion processes},
  author = {Liggett, Thomas M.},
  year = {1999},
  series = {Grundlehren der mathematischen Wissenschaften},
  volume = {324},
  publisher = {Springer-Verlag, Berlin},
  doi = {10.1007/978-3-662-03990-8},
  isbn = {3-540-65995-1},
  mrclass = {60K35 (82B21 82C21 91B12)},
  mrnumber = {1717346},
  mrreviewer = {Ellen Saada}
}

@article{L:CutoffCircle,
  title = {The cutoff profile for the simple exclusion process on the circle},
  author = {Lacoin, Hubert},
  year = {2016},
  journal = {Annals of Probability},
  volume = {44},
  number = {5},
  pages = {3399–3430},
  issn = {0091-1798},
  doi = {10.1214/15-AOP1053},
  mrnumber = {3551201}
}

@article{L:CutoffSEP,
  title = {Mixing time and cutoff for the adjacent transposition shuffle and the simple exclusion},
  author = {Lacoin, Hubert},
  year = {2016},
  journal = {Annals of Probability},
  volume = {44},
  number = {2},
  pages = {1426-1487},
  issn = {0091-1798},
  doi = {10.1214/15-AOP1004},
  fjournal = {The Annals of Probability},
  mrclass = {60J10 (37L60 60K35 82C20)},
  mrnumber = {3474475},
  mrreviewer = {Gérard Letac}
}

@article{L:CycleDiffusiveWindow,
  title = {The simple exclusion process on the circle has a diffusive cutoff window},
  author = {Lacoin, Hubert},
  year = {2017},
  journal = {Annales de l'Institut Henri Poincaré Probabilités et Statistiques},
  volume = {53},
  number = {3},
  pages = {1402–1437},
  issn = {0246-0203},
  doi = {10.1214/16-AIHP759},
  mrnumber = {3689972}
}

@article{L:HeightOnRing,
  title = {{Height fluctuations of stationary TASEP on a ring in relaxation time scale}},
  author = {Liu, Zhipeng},
  year = {2018},
  journal = {Ann. Inst. Henri Poincaré Probab. Stat.},
  volume = {54},
  number = {2},
  pages = {1031–1057},
  issn = {0246-0203},
  doi = {10.1214/17-AIHP831},
  fjournal = {Annales de l'Institut Henri Poincaré Probabilités et Statistiques},
  mrclass = {60K35},
  mrnumber = {3795076}
}

@article{LL:CutoffASEP,
  title = {Cutoff phenomenon for the asymmetric simple exclusion process and the biased card shuffling},
  author = {Labbé, Cyril and Lacoin, Hubert},
  year = {2019},
  journal = {Annals of Probability},
  volume = {47},
  number = {3},
  pages = {1541-1586},
  issn = {0091-1798},
  doi = {10.1214/18-AOP1290},
  fjournal = {The Annals of Probability},
  mrclass = {60J27 (37A25 82C22)},
  mrnumber = {3945753}
}

@article{LL:CutoffWeakly,
  title = {Mixing time and cutoff for the weakly asymmetric simple exclusion process},
  author = {Labbé, Cyril and Lacoin, Hubert},
  year = {2020},
  journal = {Annals of Applied Probability},
  volume = {30},
  number = {4},
  pages = {1847–1883},
  issn = {1050-5164},
  doi = {10.1214/19-AAP1545},
  fjournal = {The Annals of Applied Probability},
  mrclass = {60J27 (37A60 82C22)},
  mrnumber = {4132639}
}

@book{LPW:markov-mixing,
  title = {{Markov Chains and Mixing Times}},
  author = {Levin, David A. and Peres, Yuval and Wilmer, Elizabeth L.},
  year = {2017},
  edition = {Second},
  publisher = {American Mathematical Society, Providence, RI, USA},
  doi = {10.1090/mbk/107},
  isbn = {978-1-4704-2962-1},
  mrclass = {60J10 (60-01 60B15 60C05 60J27 60K35 68U20 82C22)},
  mrnumber = {3726904},
  annotation = {Second edition of [\href{http://www.ams.org/mathscinet-getitem?mr=2466937}{MR2466937}], with a chapter on ``Coupling from the Past'' by James G. Propp and David B. Wilson}
}

@article{LR:BetaEnsembles,
  title = {Small deviations for beta ensembles},
  author = {Ledoux, Michel and Rider, Brian},
  year = {2010},
  journal = {Electronic Journal of Probability},
  volume = {15},
  pages = {no. 41, 1319–1343},
  doi = {10.1214/EJP.v15-798},
  fjournal = {Electronic Journal of Probability},
  mrclass = {60B20 (60F99)},
  mrnumber = {2678393},
  mrreviewer = {Steven Joel Miller}
}

@article{LS:Moderate,
  title={{Tail estimates for the stationary stochastic six vertex model and ASEP}},
  author={Landon, Benjamin and Sosoe, Philippe},
  journal={preprint, \url{https://arxiv.org/abs/2308.16812}},
  year={2023}
}

@article{LY:RandomEnvironment,
  title = {{Mixing time for the ASEP in random environment}},
  author = {Lacoin, Hubert and Yang, Shangjie},
  year = {2021},
  journal = {preprint, \url{https://arxiv.org/abs/2102.02606}},
  keywords = {No MathSciNet}
}

@article{M:StationaryASEP,
  title = {{Stationary distributions of the multi-type ASEP}},
  author = {Martin, James},
  year = {2020},
  journal = {Electronic Journal of Probability},
  volume = {25},
  pages = {1 – 41},
  publisher = {Institute of Mathematical Statistics and Bernoulli Society},
  doi = {10.1214/20-EJP421},
  keywords = {Asymmetric simple exclusion process,matrix product,multi-type,priority queue}
}

@article{MWE:TASEPregTree,
  title = {An exclusion process on a tree with constant aggregate hopping rate},
  author = {Mottishaw, Peter and Waclaw, Bartlomiej and Evans, Martin R.},
  year = {2013},
  journal = {Journal of Physics A},
  volume = {46},
  number = {40},
  pages = {405003, 24},
  issn = {1751-8113},
  doi = {10.1088/1751-8113/46/40/405003},
  fjournal = {Journal of Physics. A. Mathematical and Theoretical},
  mrclass = {81T25 (81S22)},
  mrnumber = {3108993}
}

@article{O:MixingGeneral,
  title = {Mixing of the symmetric exclusion processes in terms of the corresponding single-particle random walk},
  author = {Oliveira, Roberto I.},
  year = {2013},
  journal = {Annals of Probability},
  volume = {41},
  number = {2},
  pages = {871-913},
  issn = {0091-1798},
  doi = {10.1214/11-AOP714},
  fjournal = {The Annals of Probability},
  mrclass = {60J27 (60K35)},
  mrnumber = {3077529},
  mrreviewer = {Mu Fa Chen}
}

@article{P:Duality,
  title = {Duality between coalescence times and exit points in last-passage percolation models},
  author = {Pimentel, Leandro P. R.},
  year = {2016},
  journal = {Annals of Probability},
  volume = {44},
  number = {5},
  pages = {3187–3206},
  issn = {0091-1798},
  doi = {10.1214/15-AOP1044},
  fjournal = {The Annals of Probability},
  mrclass = {60C05 (60F05 60K35)},
  mrnumber = {3551194},
  mrreviewer = {Anatoly Yambartsev}
}

@article{P:FluctuationsTASEP,
  title = {Finite-time fluctuations for the totally asymmetric exclusion process},
  author = {Prolhac, Sylvain},
  year = {2016},
  journal = {Physical review letters},
  volume = {116},
  number = {9},
  pages = {090601},
  publisher = {APS},
  keywords = {No MathSciNet}
}

@incollection{PS:CurrentFluctuations,
  title = {Current fluctuations for the totally asymmetric simple exclusion process},
  booktitle = {In and out of equilibrium (Mambucaba, 2000)},
  author = {Prähofer, Michael and Spohn, Herbert},
  year = {2002},
  series = {Progr. Probab.},
  volume = {51},
  pages = {185–204},
  publisher = {Birkhäuser Boston, Boston, MA},
  mrnumber = {1901953}
}

@article{R:PDEresult,
  ids = {Ros-81},
  title = {Nonequilibrium behaviour of a many particle process: density profile and local equilibria},
  author = {Rost, Herman},
  year = {1981},
  journal = {Z. Wahrsch. Verw. Gebiete},
  volume = {58},
  number = {1},
  pages = {41–53},
  issn = {0044-3719},
  doi = {10.1007/BF00536194},
  fjournal = {Zeitschrift für Wahrscheinlichkeitstheorie und Verwandte Gebiete},
  mrclass = {60K35 (82A05)},
  mrnumber = {635270},
  mrreviewer = {T. M. Liggett}
}

@article{S:CouplingMovingInterface,
  title = {Coupling the totally asymmetric simple exclusion process with a moving interface},
  author = {Sepp{\"a}l{\"a}inen, Timo},
  year = {1998},
  journal = {Markov Processes and Related Fields},
  volume = {4},
  pages = {593–628},
  issn = {1024-2953},
  fjournal = {Markov Processes and Related Fields},
  mrclass = {60K35 (60F10 82C22 82C24)},
  mrnumber = {1677061}
}

@article{S:CutoffExclusion,
   title={Universality of cutoff for exclusion with reservoirs},
  author={Salez, Justin},
  journal={The Annals of Probability},
  volume={51},
  number={2},
  pages={478--494},
  year={2023},
  publisher={Institute of Mathematical Statistics}
}

@article{S:ExistenceGeodesics,
  title = {Existence, uniqueness and coalescence of directed planar geodesics: proof via the increment-stationary growth process},
  author = {Sepp{\"a}l{\"a}inen, Timo},
  year = {2020},
  journal = {Ann. Inst. Henri Poincaré Probab. Stat.},
  volume = {56},
  number = {3},
  pages = {1775–1791},
  issn = {0246-0203},
  doi = {10.1214/19-AIHP1016},
  fjournal = {Annales de l'Institut Henri Poincaré Probabilités et Statistiques},
  mrclass = {60K35 (60K37)},
  mrnumber = {4116707}
}

@article{S:InteractionMP,
  title = {{Interaction of Markov processes}},
  author = {Spitzer, Frank},
  year = {1970},
  journal = {Advances in Mathematics},
  volume = {5},
  number = {2},
  pages = {246-290},
  issn = {00018708},
  doi = {10.1016/0001-8708(70)90034-4},
  langid = {english}
}

@article{S:LectureNotes,
  title = {{Variational formulas, Busemann functions, and fluctuation exponents for the corner growth model with exponential weights}},
  author = {Sepp{\"a}l{\"a}inen, Timo},
  year = {2017},
  journal = {preprint, \url{https://arxiv.org/abs/arXiv:1709.05771}},
  keywords = {No MathSciNet}
}

@article{S:MixingBallistic,
  title = {Mixing times for the simple exclusion process in ballistic random environment},
  author = {Schmid, Dominik},
  year = {2019},
  journal = {Electronic Journal of Probability},
  volume = {24},
  pages = {Paper No. 22, 25},
  issn = {1083-6489},
  doi = {10.1214/19-EJP286},
  fjournal = {Electronic Journal of Probability},
  mrclass = {60K37 (60J27)},
  mrnumber = {3933201}
}

@article{S:MixingTASEP,
  title={{Mixing times for the TASEP in the maximal current phase}},
  author={Schmid, Dominik},
  journal={The Annals of Probability},
  volume={51},
  number={4},
  pages={1342--1379},
  year={2023},
  publisher={Institute of Mathematical Statistics}
}

@article{SS:Coalescence,
  title = {Coalescence estimates for the corner growth model with exponential weights},
  author = {Sepp{\"a}l{\"a}inen, Timo and Shen, Xiao},
  year = {2020},
  journal = {Electronic Journal of Probability},
  volume = {25},
  pages = {Paper No. 85, 31},
  doi = {10.1214/20-ejp489},
  fjournal = {Electronic Journal of Probability},
  mrclass = {60K35 (60K37 82B43)},
  mrnumber = {4125790},
  mrreviewer = {Jere Koskela}
}

@article{W:MixingLoz,
  title = {{Mixing times of Lozenge tiling and card shuffling Markov chains}},
  author = {Wilson, David B.},
  year = {2004},
  journal = {Annals of Applied Probability},
  volume = {14},
  number = {1},
  pages = {274-325},
  issn = {1050-5164},
  doi = {10.1214/aoap/1075828054},
  fjournal = {The Annals of Applied Probability},
  mrclass = {60J10 (52C20 60C05)},
  mrnumber = {2023023},
  mrreviewer = {Jochen Geiger}
}

@incollection{W:Semiinfinite,
  title = {Asymptotic behaviour of semi-infinite geodesics for maximal increasing subsequences in the plane},
  booktitle = {In and out of equilibrium (Mambucaba, 2000)},
  author = {Wüthrich, Mario},
  year = {2002},
  series = {Progr. Probab.},
  volume = {51},
  pages = {205–226},
  publisher = {Birkhäuser Boston, Boston, MA},
  mrclass = {60K35 (82B41)},
  mrnumber = {1901954},
  mrreviewer = {Wolfgang Freudenberg}
}

@article{Z:OptimalCoalescence,
  title = {Optimal exponent for coalescence of finite geodesics in exponential last passage percolation},
  author = {Zhang, Lingfu},
  year = {2020},
  journal = {Electronic Communications in Probability},
  volume = {25},
  pages = {Paper No. 74, 14},
  doi = {10.1214/20-ecp354},
  fjournal = {Electronic Communications in Probability},
  mrclass = {60K35 (82C23 82C43)},
  mrnumber = {4167385}
}

@article{T:cutoff,
  title={Cutoff for the non reversible SSEP with reservoirs},
  author={Tran, Hong-Quan},
  journal={Electronic Journal of Probability},
  volume={28},
  pages={1--24},
  year={2023},
  publisher={The Institute of Mathematical Statistics and the Bernoulli Society}
}


\appendix

\section{Transversal fluctuations of flat geodesics}
We  prove Lemma \ref{lem:TransversalSteep} on the transversal fluctuations of flat geodesics using a chaining argument. We first bound the transversal fluctuations after fixed distances of the path. We then condition on the path to have transversal fluctuations of the typical size in these parts, and iterate. This follows a similar strategy as the proof of Proposition C.9 in \cite{BGZ:TemporalCorrelation}, where such a result is established for fixed slopes $m\in (0,1)$.\\

We start with a bound on the last passage time for flat and steep geodesics, as well as a bound on the transversal fluctuations of a geodesic at its midpoint. As in \eqref{def:Segment}, we denote by $(\lfloor u_1 \rfloor , \lfloor u_2 \rfloor )+\mathbb{S}(\lfloor v_1 \rfloor , \lfloor v_2 \rfloor )$  the segment from $(\lfloor u_1 \rfloor , \lfloor u_2 \rfloor )$ to $(\lfloor u_1 \rfloor , \lfloor u_2 \rfloor )+(\lfloor v_1 \rfloor , \lfloor v_2 \rfloor )$ for $(u_1,u_2), (v_1,v_2) \in \R^2$. For fixed $m \in (0,1]$ and $n\in \N$, we let
\begin{align}
A_v^x := \mathbb{S}( (-xn^{\frac{2}{3}}m^{\frac{1}{6}},xn^{\frac{2}{3}}m^{\frac{2}{3}}),(xn^{\frac{2}{3}}m^{\frac{1}{6}},-xn^{\frac{2}{3}}m^{\frac{2}{3}})) + v
\end{align} for all $x>0$ and $v\in \Z^2$ be the segment of slope $-\sqrt{m}$ and length $x(1+\sqrt{m})n^{\frac{2}{3}}m^{\frac{1}{6}}$, centered at the site $v$. 
The slope $-\sqrt{m}$ for the two segments is chosen such if they are placed with a slope of $m$ apart, the respective last passage times between any pair of points are  comparable. 
More precisely, next lemma, which is similar to Proposition 5.3 in~\cite{BHS:Binfinite}, states a bound on the last passage time between two such segments. Recall $\mathbb{L}$ from \eqref{def:Line}.
\begin{lemma}\label{lem:SegmentToSegment} There exist constants $m_0,x_0,n_0,c>0$ such that for all $m \in \big( \frac{m_0}{n},1 \big]$, for all $w=(w_1,w_2)\in \mathbb{L}_{-\sqrt{m}, n(m+\sqrt{m})}$ such that $w_1/w_2 \in (\frac{1}{10}m, 10m)$,
 for all $x=x(n)\geq x_0$, and all $n \geq n_0$,
\begin{align*}
\P\Big( T_{u,v} - \E[T_{u,v}]  \geq x m^{-\frac{1}{6}}n^{\frac{1}{3}} \text{ for some } u \in A_\mathbf{0}^{1}\text{ and } v \in A^1_{w} \Big) \leq \exp(-cx) \, .
\end{align*}
\end{lemma}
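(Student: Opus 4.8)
The plan is to reduce the segment-to-segment statement of Lemma \ref{lem:SegmentToSegment} to the point-to-point moderate deviation bound of Lemma \ref{lem:ShapeTheorem} via a union bound over a $\Theta(x)$-net on the two segments $A_{\mathbf 0}^1$ and $A_w^1$, together with a crude a~priori control on the oscillation of $u \mapsto T_{u,v}$ and $v \mapsto T_{u,v}$ along the segments. First I would fix notation: the two segments have slope $-\sqrt m$ and length of order $n^{2/3}m^{1/6}$, and by hypothesis $w$ lies on the anti-diagonal line $\mathbb L_{-\sqrt m,n(m+\sqrt m)}$ with $w_1/w_2$ of order $m$, so every pair $(u,v)\in A_{\mathbf 0}^1\times A_w^1$ with $v\succeq u$ has $|v-u|$ of order $n$ and slope of order $m$; in particular $v-u = (n',mn')$ with $n'$ comparable to $n$, which is exactly the regime in which Lemma \ref{lem:ShapeTheorem} and Corollary \ref{cor:ExpectationVariance} apply, giving $\E[T_{u,v}] = (1+\sqrt m)^2 n' + O(n^{1/3}m^{-1/6})$ and the one-sided tail $\P(T_{u,v}-\E[T_{u,v}]\ge t\, n^{1/3}m^{-1/6})\le c_1\exp(-c_2 t)$ uniformly over such pairs.

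The main step is the net argument. Place points $u_1,\dots,u_{K}$ on $A_{\mathbf 0}^1$ and $v_1,\dots,v_K$ on $A_w^1$ spaced at lattice distance of order $n^{2/3}m^{1/6}/x$, so $K = O(x)$. For an arbitrary $(u,v)$ in the segments with $v\succeq u$, pick the nearest net points $u_i\preceq u$ and $v \preceq v_j$ (choosing the net so that such comparable net points always exist, shrinking/enlarging the segments by a constant factor as in the statement). Then $T_{u,v}\le T_{u_i,v_j}$ by monotonicity of last passage times under enlarging the endpoints' interval, while $\E[T_{u,v}]\ge \E[T_{u_i,v_j}] - O(n^{1/3}m^{-1/6})$ again by Corollary \ref{cor:ExpectationVariance} applied to the $O(n^{2/3}m^{1/6})$-length segment-to-point corrections (the points $u_i,u$ differ by a vector of length $O(n^{2/3}m^{1/6})$ with slope of order $-\sqrt m$, hence the expected last passage time from $u_i$ to $u$ is of order $n^{1/3}m^{-1/6}$; likewise for $v,v_j$). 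Combining,
\[
\sup_{u\in A_{\mathbf 0}^1,\, v\in A_w^1,\, v\succeq u}\bigl(T_{u,v}-\E[T_{u,v}]\bigr)\le \max_{i,j}\bigl(T_{u_i,v_j}-\E[T_{u_i,v_j}]\bigr) + C n^{1/3}m^{-1/6},
\]
so a union bound over the $K^2 = O(x^2)$ net pairs, each controlled by Lemma \ref{lem:ShapeTheorem} at level $\tfrac12 x$, yields the claim with $\exp(-cx)$ after absorbing the $x^2$ prefactor into the exponential for $x>x_0$.

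The step I expect to require the most care — and which is the only place the argument is not completely routine — is the bound $\E[T_{u,v}]\ge \E[T_{u_i,v_j}] - C n^{1/3}m^{-1/6}$ uniformly over all the net pairs and all intermediate $u,v$: one needs the segment-to-point expectation estimates to hold with a constant that does not degrade as $m\to 0$ or as the basepoint slides along the segment, which is why the hypothesis $w_1/w_2\in(\tfrac1{10}m,10m)$ and the restriction $m\in(m_0/n,1]$ are imposed, so that every relevant pair stays inside the slope window $\mathbb D_m$ of \eqref{def:SlopeCondition} where Corollary \ref{cor:ExpectationVariance} is valid. A minor technical point is that $w$ need not be comparable to every point of $A_{\mathbf 0}^1$; one restricts to the sub-segment of $A_{\mathbf 0}^1$ (of the same order-of-magnitude length) whose points lie below $w$, which suffices since the statement only concerns pairs with $v\succeq u$. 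Once these uniform expectation bounds are in hand, the discretization and union bound are immediate, and the proof follows the same scheme as Proposition 5.3 in \cite{BHS:Binfinite}, which we cite for the analogous i.i.d.\ computation.
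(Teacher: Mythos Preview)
Your net-plus-monotonicity scheme has a genuine gap at the step ``pick the nearest net points $u_i\preceq u$ and $v\preceq v_j$''. The segment $A_{\mathbf 0}^1$ lies on a line of slope $-\sqrt m<0$, so any two distinct points on it are \emph{incomparable} in the coordinatewise order; there is no $u_i\in A_{\mathbf 0}^1$ with $u_i\preceq u$ other than $u$ itself, and the same obstruction applies on $A_w^1$. Trying to rescue this by shifting the net points off the segment --- say by $\bigl(n^{2/3}m^{1/6}/K,\,n^{2/3}m^{2/3}/K\bigr)$ with $K$ net points --- does produce comparable pairs, but the first--order change in $\E[T_{\tilde u_i,\tilde v_j}]$ along the direction $(1,1)$ is of order $n^{2/3}m^{1/6}/K$ (the cancellation that makes the expectation flat along $A_{\mathbf 0}^1$ is specific to the direction $(-1,\sqrt m)$ and does not survive this shift). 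For this error to be $O(n^{1/3}m^{-1/6})$ you would need $K\gtrsim n^{1/3}m^{1/3}$, and then the union bound over $K^2$ pairs only gives the claim for $x\gtrsim\log n$, not for all $x>x_0$. In short, the discretization route cannot produce $\exp(-cx)$ uniformly in $n$.

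The paper avoids this entirely by a sandwich argument. One fixes the far anchor points $-w$ and $2w$, which \emph{are} comparable to every $u\in A_{\mathbf 0}^1$ and $v\in A_w^1$, partitions the sample space according to which pair $(u,v)$ realises the segment-to-segment maximum, and on each piece uses the independence of $T_{-w,u}$, $T_{u,v}$, $T_{v,2w}$ together with the fact that $\P(T_{-w,u}\text{ not too small})>\tfrac12$ from Lemma~\ref{lem:ShapeTheorem}. This converts the supremum over the two segments into a single point-to-point upper tail $\P\bigl(T_{-w,2w}-\E[T_{-w,2w}]\ge \tfrac{x}{2}n^{1/3}m^{-1/6}\bigr)$ with no polynomial prefactor, and Lemma~\ref{lem:ShapeTheorem} finishes the job. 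The crucial difference from your approach is that the reduction is done via a partition and independence rather than via a union bound, so nothing is lost.
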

\begin{proof} For a given pair of sites $u\in A_\mathbf{0}^{1}$ and $v \in A^1_{w}$, let $\mathcal{G}_{u,v}$ be the event that the maximal last passage time between $A_\mathbf{0}^{1}$ and $A^1_{w}$ is achieved by $T_{u,v}$, and where $u,v$ satisfy $T_{u,v} \geq \mathbb{E}[T_{u,v}]+xm^{-1/6}n^{1/3}$. Further, let $\bar{w}:=(-w_2,w_1)$ and set
\begin{align}
\mathcal{G}_{u}^{-} &:= \Big\{ T_{\bar{w},u} - \E[T_{\bar{w},u}] \geq -\frac{x}{10}m^{-\frac{1}{6}}n^{\frac{1}{3}} \Big\}  \\
\mathcal{G}_{v}^{+} &:= \Big\{ T_{v,2w} - \E[T_{v,2w}] \geq - \frac{x}{10}m^{-\frac{1}{6}}n^{\frac{1}{3}} \Big\}
\end{align} to be the events that the last passage time between  $\bar{w}$ and $u$, respectively between $v$ and $2w$ is not too small. Since we have by Corollary \ref{cor:ExpectationVariance} that
\begin{equation*}
\sup_{u\in A_\mathbf{0}^{1}, v\in A^1_{w}} | \E[T_{\bar{w},u}] + \E[T_{u,v}] + \E[T_{v,2w}] - \E[T_{\bar{w},2w}] | \leq C n^{\frac{1}{3}}m^{-\frac{1}{6}}
\end{equation*} for some absolute constant $C>0$, we obtain that for all $x>0$ sufficiently large
\begin{equation}\label{eq:SandwichAppendix1}
\P\Big( T_{\bar{w},2w} -   \E[T_{\bar{w},2w}] \geq \frac{x}{2} n^{\frac{1}{3}}m^{-\frac{1}{6}} \Big) \geq \sum_{u\in A_\mathbf{0}^{1}, v\in A^1_{w}} \P(\mathcal{G}_{u}^{-} \cap \mathcal{G}_{v}^{+} \cap \mathcal{G}_{u,v}) \, .
\end{equation} Since the events $\mathcal{G}_{u}^{-} $, $ \mathcal{G}_{v}^{+} $ and $\mathcal{G}_{u,v}$ are independent by construction,
and $\P(\mathcal{G}_{u}^{-})>\frac{1}{2}$ and $\P(\mathcal{G}_{v}^{+})>\frac{1}{2}$ holds by Lemma \ref{lem:ShapeTheorem} for all $u\in A_\mathbf{0}^{1}$ and $v \in A^1_{w}$, provided that $x>0$ is large enough, we conclude by Lemma \ref{lem:ShapeTheorem} for the left-hand side in \eqref{eq:SandwichAppendix1}.
\end{proof}

Next, we estimate the weight of paths from $A_\mathbf{0}^{1}$ to $A^1_{w}$ for some $w\in\N^2 \cap (\mathbb{L}_{-\sqrt{m},0}+(n,mn))$, which avoids the segment
$A^x_{w/2}$ with some $x>0$. Note that we have  $A^x_{w/2} \subseteq (\mathbb{L}_{-\sqrt{m},0}+\frac{1}{2}(n,mn))$ by construction of $A^x_{w/2}$. We set in the sequel
\begin{equation}
B^x_{w/2} := \Big(\mathbb{L}_{-\sqrt{m},0}+\frac{1}{2}(n,mn)\Big) \setminus A^x_{w/2} \, .
\end{equation}
The following result quantifies the weight of the largest path from $A_\mathbf{0}^{1}$ to $A^1_{w}$ which passes through $B_{w/2}^x$. It is the analogue to Lemma C.12 in \cite{BGZ:TemporalCorrelation}, which states a similar result for bounded slopes. Recall the convention that $T_{u,v}=-\infty$ whenever $v \nsucceq u$.

\begin{lemma}\label{lem:AvoidMiddle} There exist constants $m_0,x_0,n_0,c_1,c_2>0$ such that for all $m \in \big( \frac{m_0}{n},1 \big]$, for all $w=(w_1,w_2)\in \mathbb{L}_{-\sqrt{m}, n(m+\sqrt{m})}$ such that $w_1/w_2 \in (\frac{1}{10}m, 10m)$ and $w/2 \succeq u$, for all $x>x_0$, and all $n \geq n_0$
\begin{align} \label{eq:AvoidMiddle1}
&\P\left( T_{u,v} - \E[T_{u,w/2}] \leq - c_1 x m^{-\frac{1}{6}}n^{\frac{1}{3}} \text{ for all } u\in A_\mathbf{0}^{1} \text{ , } v\in B^{x}_{w/2} \right)\geq 1 - \exp(-c_2x ) \\
&\P\left( T_{u,v} - \E[T_{w/2,v}] \leq - c_1x m^{-\frac{1}{6}}n^{\frac{1}{3}}\text{ for all } u\in B^{x}_{w/2}\text{ , } v\in A^1_{w}  \right) \geq 1 - \exp(-c_2x ) \, .\label{eq:AvoidMiddle2}
\end{align}
\end{lemma}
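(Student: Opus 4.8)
\textbf{Proof plan for Lemma \ref{lem:AvoidMiddle}.}

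The plan is to prove \eqref{eq:AvoidMiddle1} and \eqref{eq:AvoidMiddle2}; by the up-right symmetry of the last passage percolation picture (reflecting across the midpoint $w/2$ and reversing the order), \eqref{eq:AvoidMiddle2} follows from \eqref{eq:AvoidMiddle1}, so I would concentrate on \eqref{eq:AvoidMiddle1}. The idea is the standard ``a path forced far from its typical corridor loses macroscopic weight'' argument, implemented by comparing $T_{u,v}$ with $u\in A_{\mathbf 0}^1$, $v\in B^x_{w/2}$ to the straight-line unrestricted passage time $\E[T_{u,w/2}]$ and invoking Lemma \ref{lem:SegmentToSegment} as the upper-tail input. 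First I would set up a dyadic decomposition of the set $B^x_{w/2}$: writing a generic point $v$ on the line $\mathbb{L}_{-\sqrt m,0}+\tfrac12(n,mn)$ in terms of its signed displacement $r$ from the center $w/2$ along that line (so $v\in B^x_{w/2}$ means $|r|\geq x n^{2/3}m^{1/6}$ up to the $(1+\sqrt m)$ factor), I would partition $B^x_{w/2}$ into annuli $\{ 2^j x n^{2/3} m^{1/6} \le |r| < 2^{j+1} x n^{2/3} m^{1/6}\}$ for $j\ge 0$ (plus a trivial tail when $|r|$ exceeds order $n$), and cover each annulus by order $2^j$ sub-segments $A^{c}_{v^{(i)}_j}$ of the fixed small length scale $c\,n^{2/3}m^{1/6}$.

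The heart of the estimate is: for a point $v$ at displacement $\asymp 2^j x\, n^{2/3} m^{1/6}$ from $w/2$, the curvature of the limit shape forces
\[
\E[T_{u,v}] \;\le\; \E[T_{u,w/2}] \;-\; c\, (2^j x)^2 \, n^{1/3} m^{-1/6}
\]
for all $u\in A_{\mathbf 0}^1$, with a universal $c>0$. This is exactly the content of Corollary \ref{cor:ExpectationVariance}: the expected passage time along a direction differing from the characteristic by a transversal shift $\Delta$ over length $n$ is smaller than the on-axis value by order $\Delta^2/n$ (here $\Delta = 2^j x\, n^{2/3}m^{1/6}$ gives $\Delta^2/n \asymp (2^jx)^2 n^{1/3}m^{-1/6}$); one has to check the slope between $A^1_{\mathbf 0}$ and $v$ still lies in $(\tfrac1{10}m,10m)$, which holds as long as $2^j x$ is at most a small multiple of $n^{1/3}m^{-1/6}(\sqrt m + \dots)$, i.e.\ in the whole relevant range. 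On each sub-segment $A^c_{v^{(i)}_j}$, Lemma \ref{lem:SegmentToSegment} (rescaled so the local segment plays the role of $A^1$, with the constant $c$ absorbed) gives
\[
\P\Big( \sup_{u\in A_{\mathbf 0}^1,\, v\in A^c_{v^{(i)}_j}} T_{u,v} - \E[T_{u,v}] \;\ge\; \tfrac{c}{2}(2^jx)^2 n^{1/3} m^{-1/6}\Big) \;\le\; \exp\big(-c'(2^jx)^2\big),
\]
since the threshold is of order $(2^jx)^2$ in the natural fluctuation units $m^{-1/6}n^{1/3}$. Summing over the $\asymp 2^j$ sub-segments in annulus $j$ and over $j\ge 0$, the bound $\sum_j 2^j \exp(-c'(2^jx)^2)$ is dominated by its first term and is $\le \exp(-c_2 x)$ for $x>x_0$. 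On the complementary event, every $v\in B^x_{w/2}$ satisfies $T_{u,v} \le \E[T_{u,v}] + \tfrac{c}{2}(2^jx)^2 n^{1/3}m^{-1/6} \le \E[T_{u,w/2}] - \tfrac{c}{2}(2^jx)^2 n^{1/3}m^{-1/6} \le \E[T_{u,w/2}] - c_1 x\, n^{1/3}m^{-1/6}$ with $c_1 = c/2$, which is \eqref{eq:AvoidMiddle1}; the very far tail $|r| \gtrsim n$ is handled crudely by the exponential tail in Lemma \ref{lem:ShapeTheorem} applied to a coarse grid, contributing at most $\exp(-c n)$.

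The main obstacle I anticipate is the bookkeeping at the interface between ``moderate'' displacements, where the curvature gain $(2^jx)^2 n^{1/3}m^{-1/6}$ is the right order and Lemma \ref{lem:SegmentToSegment} applies verbatim, and the ``macroscopic'' regime where $2^j x$ approaches $n^{1/3}m^{-1/6}$ (so $v$ sits near the endpoints $(n,mn)$ or $(-n,-mn)$ region of the line), where one must instead argue that the slope constraint fails or that the passage time is simply far below $\E[T_{u,w/2}]$ by an even larger margin; in that range a direct application of the upper-tail bound \eqref{eq:StatementSteepUpper} of Lemma \ref{lem:ShapeTheorem} to the (now non-flat) pair $u,v$ suffices, because the mean is already smaller than $\E[T_{u,w/2}] - c_1 x n^{1/3} m^{-1/6}$ by a large factor. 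Making the two regimes match at the cutoff scale, uniformly in $n,m$ in the stated ranges, and ensuring all the constants $m_0,x_0$ can be chosen consistently with those from Lemmas \ref{lem:ShapeTheorem}, \ref{lem:SegmentToSegment} and Corollary \ref{cor:ExpectationVariance}, is the delicate part; the probabilistic content is otherwise routine.
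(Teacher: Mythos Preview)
Your approach is correct and follows the same overall strategy as the paper: both reduce to \eqref{eq:AvoidMiddle1} by symmetry, cover $B^{x}_{w/2}$ by short segments, use the curvature of the shape function to show the expected last passage time to any such segment is far below $\E[T_{u,w/2}]$, and then apply Lemma~\ref{lem:SegmentToSegment} segment-by-segment followed by a union bound.

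The organizational difference is in the decomposition. The paper uses an \emph{arithmetic} partition: unit-scale segments $A^1_{u_j}\cup A^1_{v_j}$ centered at displacement $(x+j)n^{2/3}m^{1/6}$ for $j\in\N$, records only the weaker linear loss $\E[T_{u,v}]-\E[T_{u,w/2}]\le -c_3(x+j)m^{-1/6}n^{1/3}$ (which is implied by your quadratic loss since $(x+j)^2\ge x+j$), and obtains the tail bound $\exp(-c_4(x+j))$ per segment before summing over $j$. You use a \emph{dyadic} partition into annuli at displacement $2^j x$, exploit the sharp quadratic curvature loss $(2^jx)^2 m^{-1/6}n^{1/3}$, and get the stronger per-annulus bound $2^j\exp(-c'(2^jx)^2)$. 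Both sums converge comfortably; the paper's version involves slightly less bookkeeping (no sub-segments inside annuli), while yours is the more common template in the LPP literature and yields a nominally better final exponent. For the far regime where the slope between $A^1_{\mathbf 0}$ and the target segment leaves $(\tfrac{1}{10}m,10m)$, the paper does not switch inputs as you propose but instead shifts the source segment back by $\tfrac14\delta(n,mn)$ to restore the slope condition and then reapplies Lemma~\ref{lem:SegmentToSegment}; this is a bit cleaner than matching two regimes, and you could adopt the same trick in your dyadic scheme.
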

\begin{proof}  We will only show \eqref{eq:AvoidMiddle1} as \eqref{eq:AvoidMiddle2} follows by symmetry.
We partition the two half lines in $B_{w/2}^x$ into pairs of segments $B_{w/2}^{x,j} := A^1_{u_j} \cup A^1_{v_j} $ for $j\in \N$, where we set
\begin{align}
u_j &:= w/2 + (-(x+j)n^{\frac{2}{3}}m^{\frac{1}{6}},(x+j)n^{\frac{2}{3}}m^{\frac{2}{3}}) \\ v_j &:= w/2 + ((x+j)n^{\frac{2}{3}}m^{\frac{1}{6}},-(x+j)n^{\frac{2}{3}}m^{\frac{2}{3}}) \, .
\end{align}
A computation using Corollary \ref{cor:ExpectationVariance} shows that there exists a constant $c_3>0$ such that for every $j\in \N$
\begin{equation}\label{eq:ExpectatoinAppendix}
\sup_{u\in A_\mathbf{0}^{1}, v\in B_{w/2}^{x,j}} \big( \E[T_{u,v}] - \E[T_{u,w/2}] \big) \leq -c_3(x+j)m^{-\frac{1}{6}}n^{\frac{1}{3}} \, .
\end{equation} Thus, we see by Lemma \ref{lem:SegmentToSegment} that for some constants  $c_4,c_5>0$ and all $j\in \N$
\begin{equation}
\P\left( \sup_{u\in A_\mathbf{0}^{1}, v\in B_{w/2}^{x,j}} \big(T_{u,v} -  \E[T_{u,w/2}] \big) \leq c_4x m^{-\frac{1}{6}}n^{\frac{1}{3}} \right) \leq \exp(-c_5 (x+j) )
\end{equation}
provided that $x>0$ is sufficiently large and that $u_j=(u_j^1,u_j^2)\in \Z^2$ and $v_j=(v_j^1,v_j^2)\in \Z^2$ satisfy $(u_j,v_j)\in \mathbb{D}_m$ for $\mathbb{D}_m$ from \eqref{def:SlopeCondition}, i.e.\ the slope condition
\begin{equation}\label{eq:SlopeConditionAppendix}
u_j^1/u_j^2 \in \Big(\frac{1}{10}m, 10m\Big) \qquad v_j^1/v_j^2 \in \Big(\frac{1}{10}m, 10m\Big)
\end{equation}
holds. In the case where \eqref{eq:SlopeConditionAppendix} is violated, we can apply a similar  strategy as in the proof of Lemma C.12 in \cite{BGZ:TemporalCorrelation}. First observe that when the slope is larger than $10m$, we can still apply the same arguments as in  Lemma~\ref{lem:SegmentToSegment}. In the case where the slope is less than $m/10$ for some $B_{w/2}^{x,j}$, Corollary \ref{cor:ExpectationVariance} and a computation give  that there exists some $\delta>0$ such that
\begin{equation}
 \E[T_{u,w^{\prime}}] \leq  \E[T_{u,v}] - \delta n(1+m)
\end{equation} for all $w^{\prime} \in B_{w/2}^{x,j}$. Replace now the segment $ A_\mathbf{0}^{1}$ by
\begin{equation}
 \tilde{A}_\mathbf{0}^{1} :=  A_\mathbf{0}^{1} - \frac{1}{4}\delta(n,mn) \, .
\end{equation} This only increases the expectation $\E[T_{u,v}]$ in \eqref{eq:ExpectatoinAppendix} by at most $\delta n(1+m)$, and allows us to apply Lemma \ref{lem:SegmentToSegment} accordingly with some slope order $m$.
\end{proof}

We now outline the remaining strategy of the proof of Lemma \ref{lem:TransversalSteep} following Proposition~C.9 in \cite{BGZ:TemporalCorrelation}. 
By Lemma \ref{lem:ShapeTheorem} and Corollary \ref{cor:ExpectationVariance},  we see that
\begin{equation}
\P\left(T_{\mathbf{0},(n,mn)} \geq n(1+\sqrt{m})^2 - x m^{-\frac{1}{6}}n^{\frac{1}{3}}  \right) \geq 1 - \exp(-c_1x)
\end{equation} for some constant $c_1>0$ and all $x>0$ sufficiently large. Recall that $\Pi_{u,v}$ denotes the set of all lattice paths from $u$ to $v$ for some $v \succeq u$. We show in the following that
\begin{equation}\label{eq:TargetBadFluctuations}
\P\left(\sup_{ \gamma  \in \Pi_{\mathbf{0},(n,mn)} \colon\TF(\gamma)\geq \ell} T(\gamma) \geq n(1+\sqrt{m})^2 - x m^{-\frac{1}{6}}n^{\frac{1}{3}}  \right) \leq \exp(-c_2x)
\end{equation}  holds for some constant $c_2>0$ and all $x>0$ sufficiently large. To do so, recall the parallelogram $U_{n,m,\ell}$ from \eqref{def:Parallelogramm}, and set $\ell=xm^{2/3}n^{2/3}$. Without loss of generality, we assume that $\ell \leq n$. We set in the following
 $$K:=\lceil \log_2(4n^{1/3}m^{-2/3}x^{-1}) \rceil$$ and note that $2^{-K}n\leq xm^{2/3}n^{2/3}/4$. Consider the sequence $(h_j)_{j\in [K]}$ recursively defined by
\begin{equation}\label{def:hjVariable}
h_0 := \frac{1}{2} \left( \prod_{i=1}^{\infty} (1 + 2^{-i/6 }) \right)^{-1} \quad   \text{ and } \quad h_j =  h_{j-1}(1 + 2^{-j/6}) \, .
\end{equation}
Note that $h_j \leq \frac{1}{2}$ for all $j\in \N$. Next, let $\gamma(y)$ denote the site in which $\gamma$ intersects the line $\mathbb{L}_{-\sqrt{m},y(m+\sqrt{m})}$, and for all $j\in \N$ and $i \in [2^j]$,
we define the events
\begin{equation*}
\mathcal{C}_{i,j} := \left\{ \exists \gamma \in \Pi_{\mathbf{0},(n,mn)} \colon T(\gamma) \geq n(1+\sqrt{m})^2 - x m^{-\frac{1}{6}}n^{\frac{1}{3}} \ \text{ with } \  \gamma(i2^{-j}n) \notin U_{n,m,\ell h_j}\right\}
\end{equation*}  of seeing path with large transversal fluctuations at $y(i2^{-j}n)$ which has a typical weight.  For all $j\in [K]$, let
\begin{equation}
\mathcal{C}_j := \bigcup_{i \in [2^j]} \mathcal{C}_{i,j} \, .
\end{equation} By our choice of $K$, it suffices to show
\begin{equation}\label{eq:BigUnionEvents}
\P\Big( \bigcup_{j \in [K]} \mathcal{C}_j \Big) \leq \exp(-cx)
\end{equation} for some $c>0$ in order to obtain \eqref{eq:TargetBadFluctuations}, and thus to conclude Lemma \ref{lem:TransversalSteep}.
It is immediate that $\P(\mathcal{C}_1) \leq \exp(-c_1x)$ for some constant $c_1$ and all $x$ sufficiently large is immediate from Lemma \ref{lem:AvoidMiddle}. Hence, it suffices to show the following lemma in order to obtain \eqref{eq:BigUnionEvents}.
\begin{lemma}\label{lem:ConditionAndComplement}
There exist constants $c_1,c_2>0$ such that for all $j\in [K]$ and $i\in [2^{j}]$
\begin{equation}
\P( \mathcal{C}_{i,j} \cap \mathcal{C}_{j-1}^{\c} ) \leq c_1 4^{-j} \exp(-c_2x)
\end{equation} holds for all $x>0$ sufficiently large.
\end{lemma}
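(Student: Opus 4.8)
\textbf{Proof proposal for Lemma \ref{lem:ConditionAndComplement}.}

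The plan is to carry out a single step of the chaining argument: on the event $\mathcal{C}_{j-1}^{\c}$ any typical-weight path already passes through the parallelograms $U_{n,m,\ell h_{j-1}}$ at the dyadic scale $2^{-(j-1)}n$, and we must pay for refining this control to scale $2^{-j}n$ at the intermediate point $i2^{-j}n$ (with $i$ odd, the even case being inherited from level $j-1$). First I would fix $i\in[2^j]$ odd and set $p := (i-1)2^{-j}n$, $q := (i+1)2^{-j}n$, so that $\tilde{y} := i2^{-j}n$ is the midpoint of $[p,q]$. On $\mathcal{C}_{j-1}^{\c}$, for any path $\gamma$ with $T(\gamma) \geq n(1+\sqrt m)^2 - xm^{-1/6}n^{1/3}$, the crossing points $\gamma(p)$ and $\gamma(q)$ lie in segments of the form $A^{h_{j-1}\text{(scale)}}_{\cdot}$ along the anti-diagonal lines $\mathbb{L}_{-\sqrt m,\cdot}$, translated and rescaled to the $2^{-j}n$-block between them. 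The event $\mathcal{C}_{i,j}$ asks in addition that $\gamma(\tilde y)\notin U_{n,m,\ell h_j}$, i.e.\ $\gamma$ deviates by more than $h_j$ times the local fluctuation scale at the midpoint of this block — in particular it exits $A^{c x}_{w/2}$ into $B^{cx}_{w/2}$ for the appropriate $w$ and a constant multiple of $x$, using that $h_j - h_{j-1} = h_{j-1}2^{-j/6}$ grows like $2^{-j/6}x m^{2/3}n^{2/3}$ which is the correct excess over the block-level fluctuation scale $(2^{-j}n)^{2/3}m^{2/3}$.

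Next I would decompose the weight $T(\gamma)$ along $\gamma(p) \preceq \gamma(\tilde y) \preceq \gamma(q)$, and separately bound the three contributions: the weights on $[0,p]$ and $[q,n]$ are dominated by the unrestricted point-to-point (or segment-to-segment) maxima, handled by Lemma \ref{lem:ShapeTheorem} and Corollary \ref{cor:ExpectationVariance}; the middle weight on $[p,q]$, being forced through $B^{cx}_{w/2}$, is controlled by Lemma \ref{lem:AvoidMiddle} applied at the block scale $2^{-j}n$ in place of $n$, which gives that with probability $\geq 1 - \exp(-c_2 2^{-j/6}\cdot(2^{-j}n)^{1/3}m^{-1/6}\cdot(\text{excess}))$ — more precisely, after unwinding the scaling, probability $\geq 1 - c 4^{-j}\exp(-c'x)$ — the middle segment loses at least $c_1 x m^{-1/6}n^{1/3}$ compared to the straight-line expectation. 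Adding the expectations back via Corollary \ref{cor:ExpectationVariance} (the total expected weight along the concatenation is within $Cn^{1/3}m^{-1/6}$ of $n(1+\sqrt m)^2$), the combined weight is at most $n(1+\sqrt m)^2 - (c_1 - C)xm^{-1/6}n^{1/3}$, which for $x$ large contradicts $T(\gamma)\geq n(1+\sqrt m)^2 - xm^{-1/6}n^{1/3}$. Hence $\mathcal{C}_{i,j}\cap\mathcal{C}_{j-1}^{\c}$ forces the failure of the Lemma \ref{lem:AvoidMiddle}-type event for the block $[p,q]$, whose probability is the claimed $c_1 4^{-j}\exp(-c_2 x)$.

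The main obstacle I anticipate is bookkeeping the scales correctly so that the gain $c_1 x m^{-1/6}n^{1/3}$ genuinely beats the accumulated expectation error $Cn^{1/3}m^{-1/6}$ \emph{uniformly in $j$}, and so that the probability bound carries the decaying factor $4^{-j}$ needed for the union bound \eqref{eq:BigUnionEvents}: one must check that rescaling Lemma \ref{lem:AvoidMiddle} from scale $n$ to scale $2^{-j}n$ replaces the exponent $-c x$ by something like $-c (x \cdot 2^{j/6}\cdot 2^{j/3})$ up to the $2^{-j}$ polynomial losses from the number of blocks and segment pairs, i.e.\ the exponential decay in $j$ overwhelms the $\sim 2^j$ union-bound factors — this is exactly the role of the geometric factors $h_{j-1}2^{-j/6}$ in the definition \eqref{def:hjVariable} of $(h_j)$, which were tuned precisely so that $\sum_j h_{j-1}2^{-j/6}$ converges while each increment still exceeds the block fluctuation scale. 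A secondary nuisance is the slope condition: for odd $i$ near the endpoints $1$ or $2^j - 1$ the block $[p,q]$ may abut the origin or $(n,mn)$ and the relevant anti-diagonal segments can have slope outside $(\tfrac1{10}m,10m)$; this is dealt with exactly as in the proof of Lemma \ref{lem:AvoidMiddle} (and Lemma C.12 of \cite{BGZ:TemporalCorrelation}), by shifting the source segment $A_{\mathbf 0}^1$ by a small multiple of $(n,mn)$ to restore a slope of order $m$, at the cost of an absorbable additive error in the expectation.
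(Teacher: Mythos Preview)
Your proposal is correct and follows essentially the same approach as the paper: reduce to odd $i$, decompose any offending path at the three anti-diagonal crossings $\gamma(p),\gamma(\tilde y),\gamma(q)$, control the middle block via Lemma~\ref{lem:AvoidMiddle} at scale $2^{-j}n$, and bound the outer pieces by segment-to-segment moderate deviations. The paper makes explicit the two bookkeeping steps you flag as obstacles---it discretizes the segments $E_1^{j-1},E_3^{j-1}$ into $J\le x2^{j}$ unit-scale sub-segments $D^1_s,D^3_t$ (so Lemma~\ref{lem:AvoidMiddle} applies with input parameter $\asymp 2^{j/6}x$, giving $\exp(-c\,2^{j/6}x)$ per pair), and controls the outer contributions via events $\mathcal{D}^1_{i,j},\mathcal{D}^2_{i,j}$ bounded through Lemma~\ref{lem:SegmentToSegment} rather than Lemma~\ref{lem:ShapeTheorem}; the union bound over $(s,t)$ then yields the claimed $c_1 4^{-j}\exp(-c_2 x)$.
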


In order to show Lemma \ref{lem:ConditionAndComplement}, note that for all even $i$, we have that $\mathcal{C}_{i,j} \subseteq \mathcal{C}_{j-1}$, so it suffices to consider $i$ being odd.
Fix some $i\in [2^j]$ and let for $z\in [3]$
\begin{equation}
E_z^j := U_{n,m,\ell h_j} \cap \mathbb{L}_{-\sqrt{m},(i+z-2)2^{-j}n(m+\sqrt{m})} \, ,
\end{equation}
i.e.\ we cross $U_{n,m,\ell h_j}$ by $2^j$ many equally spaced lines of slope $-\sqrt{m}$, and consider the $i^{\text{th}}$ resulting segment, counting from bottom left to top right,  as well as the two segments adjacent to it.
Note that every path in the event $\mathcal{C}_{i,j} \cap \mathcal{C}_{j-1}^{\c}$ must pass through $E_1^{j-1}$ and $E_3^{j-1}$, while it passes through the complement $\bar{E}_2^j$ of the segment $E_2^j$, which is given by
\begin{equation}
 \bar{E}_2^j :=  \mathbb{L}_{-\sqrt{m},i2^{-j}n(m+\sqrt{m})} \setminus E_2^j \, .
\end{equation}
Next, we partition the sites in $E^j_1$ and $E_3^j$ from top to bottom into segments of equal size $2^{-2j/3}n^{2/3}m^{2/3}$, and singletons if $2^{-2j/3}n^{2/3}m^{2/3}<1$, where the last segment might be larger due to rounding effects.
For fixed $j\in [K]$, we enumerate these segments from top to bottom and refer to them as $(D_s^1)_{s \in [J]}$ and $(D_{t}^3)_{t \in [J]}$ for some $J=J(j,m,n,\ell)\in \N$. We refer to Figure \ref{fig:PathDecompositionAppendix} for a visualization. \\ 

\begin{figure}
\centering
\begin{tikzpicture}[scale=0.4]

\draw[line width =1 pt,densely dotted] (0,4) -- (24,7);
\draw[line width =1 pt,densely dotted] (0,-4) -- (24,-1);
\draw[line width =1 pt,densely dotted] (0,4) -- (0,3);
\draw[line width =1 pt,densely dotted] (0,-4) -- (0,-3);
\draw[line width =1 pt,densely dotted] (24,6) -- (24,7);
\draw[line width =1 pt,densely dotted] (24,0) -- (24,-1);


\draw[line width =0.8 pt] (12,-1.5)--(8,1) -- (4,3.5);

\draw[line width =2 pt,deepblue] (8,1) -- (6,2.25);

\draw[line width =0.8 pt,densely dotted] (4,3.5)--(0,6);
\draw[line width =0.8 pt,densely dotted] (12,-1.5)--(16,-4);

\draw[line width =0.8 pt] (12+8,-1.5+1)--(8+8,1+1) -- (4+8,3.5+1);
\draw[line width =0.8 pt] (12+4,-1.5+0.5)--(8+4,1+0.5) -- (4+4,3.5+0.5);

\draw[line width =0.8 pt,densely dotted] (4+4,3.5+0.5)--(0+4,6+0.5);
\draw[line width =0.8 pt,densely dotted] (12+4,-1.5+0.5)--(16+4,-4+0.5);

\draw[line width =0.8 pt,densely dotted] (4+8,3.5+1)--(0+8,6+1);
\draw[line width =0.8 pt,densely dotted] (12+8,-1.5+1)--(16+8,-4+1);

\draw[line width =2 pt,deepblue] (6+8,2.25+1) -- (4+8,4.5);

\draw[line width =2 pt,red] (2+4+0.8-0.08,4+1.25-0.5+0.05) -- (0+4,6+0.5);

\draw[line width =2 pt,red] (22-0.8+0.08-4,-3+1.25+0.5-0.05-0.5) -- (16+8-4,-4+1-0.5);

\draw[line width =1 pt] (0,3) -- (0,-3);
\draw[line width =1 pt] (24,0) -- (24,6);
\draw[line width =1 pt] (0,3) -- (24,6);
\draw[line width =1 pt] (0,-3) -- (24,0);

		\node[scale=0.8] (x1) at (-1.2,0){$(0,0)$} ;	
		\node[scale=0.8] (x1) at (22.2,3.3){$(3n,3mn)$} ;		

  	\node[scale=1] (x1) at (26,2.5+2){$U_{n,m,\ell h_j}$};
  	\node[scale=1] (x1) at (26.3,4+3){$U_{n,m,\ell h_{j+1}}$};
  	
  	  	\node[scale=1] (x1) at (16+4+2,-4.5+0.5){$\mathbb{L}_{-\sqrt{m},i2^{-j}n(m+\sqrt{m})}$};
  	\node[scale=1] (x1) at (2+4+0.8-0.08,4+1.25-0.5+0.05+0.9){$\bar{E}^j_2$};
  	\node[scale=1] (x1) at (6.7,0.95){$D_s^1$};  	
  	\node[scale=1] (x1) at (14,4.15){$D_t^3$};

\draw[darkblue, line width =1.7 pt] (0,0) to[curve through={(1,0.2)..(2.2,0.65)..(4,-0.05) .. (5.7,1) ..(6.5,3-1.25)..(7.9,2)..(10,3.5)..(6.05+7.5,2.55)..(14,3.15)..(8.25+6.5+1,3.85-1.25)..(18.2+1,2.6)..(19.2+1,2.2)}] (24,3);

 		\filldraw [fill=deepblue] (6.7-0.2,1.8-0.2) rectangle (6.7+0.2,1.8+0.2);
	
   	\filldraw [fill=deepblue] (14-0.2,3.6-0.375-0.2) rectangle (14+0.2,3.6-0.375+0.2);
  	    \filldraw [fill=black] (-0.2,-0.2) rectangle (0.2,0.2); 	 	
  	    \filldraw [fill=black] (24-0.2,3-0.2) rectangle (24.2,3+0.2); 		
  	
	\end{tikzpicture}	
	\caption{\label{fig:PathDecompositionAppendix}Visualization of parameters involved in the proof of Lemma \ref{lem:ConditionAndComplement}. The sites $D_s^1 \subseteq E_{1}^{j-1}$ and $D_t^3 \subseteq E_{3}^{j-1}$ are blue, while $\bar{E}_2^j$ is drawn in red. }
 \end{figure}
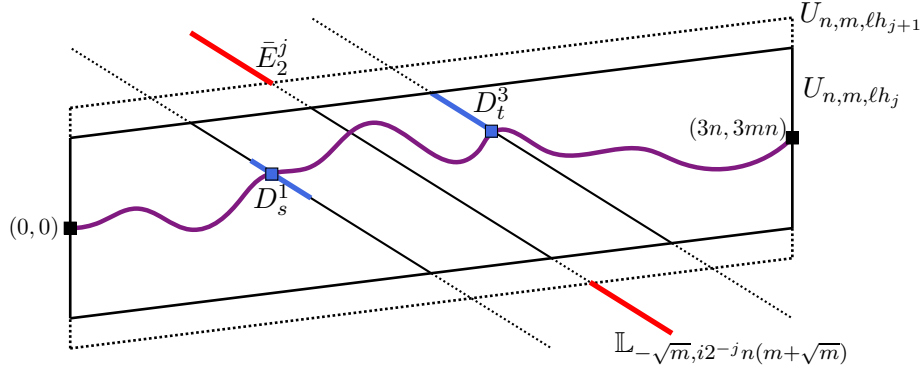
For each pair of intervals $D_s^1$ and $D_{t}^3$, we define the event $\mathcal{D}_{s,t}$ by 
\begin{align}
\mathcal{D}_{s,t} := \Big\{  \sup_{u \in D_s^1, v\in \bar{E}_2^j, w\in D_t^3}T_{u,v} +T_{v,w} \geq  2^{-j+1}n(1+\sqrt{m})^2 - x2^{-\frac{j}{6}} n^{\frac{1}{3}}m^{-\frac{1}{6}}  \Big\}
\end{align} to describe that the last passage time from a site in $D_s^1$ to a site in $D_t^3$ is much smaller than its expectation when we have that the geodesic must pass  through $\bar{E}_2^j$. We have the following bound on the probability of the event $\mathcal{D}_{s,t}$.

\begin{lemma} There exists a constant $c>0$ such that for all $s,t \in [J]$, and $x>0$ sufficiently large
\begin{equation}
\P( \mathcal{D}_{s,t}  ) \leq \exp(-c_1 2^{j/6}x) \, .
\end{equation}
\end{lemma}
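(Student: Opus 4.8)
The plan is to deduce $\mathcal{D}_{s,t}$ from the ``avoid the middle'' estimate of Lemma~\ref{lem:AvoidMiddle}, applied at the rescaled longitudinal separation $L:=2^{-j+1}n$ of the anti-diagonals carrying $E_1^{j-1}$ and $E_3^{j-1}$. First I would record the scales. Since $j\le K$ and, by the definition of $K$, $2^{-K}n\ge xn^{2/3}/(8m^{1/3})$, one has $L\ge xn^{2/3}/(4m^{1/3})$, hence $mL\ge\tfrac14 xm^{2/3}n^{2/3}\ge m_0$ for $n$ large (using $m\ge m_0/n$), so $m\in(m_0/L,1]$ and all the $i.i.d.$ inputs of Sections~\ref{sec:LPTiid}--\ref{sec:Fluctuationsiid} are available at scale $L$; the local transversal unit there is $L^{2/3}m^{2/3}$, and the slope between comparable pairs $u\in D_s^1$, $w\in D_t^3$ lies in $(\tfrac{m}{10},10m)$ except for pairs near the ends of the segments, which will be treated exactly as in the proof of Lemma~\ref{lem:AvoidMiddle}.

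Next I would quantify the transversal excess. By construction $D_s^1\subseteq E_1^{j-1}=U_{n,m,\ell h_{j-1}}\cap(\cdot)$ and $D_t^3\subseteq E_3^{j-1}$, so both lie within $\ell h_{j-1}/2$ of the diagonal $\mathbb{L}_{m,0}$, whereas every $v\in\bar E_2^j$ lies at transversal distance $>\ell h_j/2$ from it. The recursion \eqref{def:hjVariable} gives $h_j-h_{j-1}=h_{j-1}2^{-j/6}$, so any such $v$ is at transversal offset at least $\tfrac12\ell h_{j-1}2^{-j/6}=\tfrac12 xm^{2/3}n^{2/3}h_{j-1}2^{-j/6}$ from the chord joining $D_s^1$ to $D_t^3$; in units of $L^{2/3}m^{2/3}$ this equals $X:=c_X\,x\,2^{j/2}$ with $c_X=h_{j-1}2^{-5/3}\ge h_0 2^{-5/3}>0$.

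Then I would run the argument of Lemma~\ref{lem:AvoidMiddle} at scale $L$ with parameter $X$: partition each of the two half-lines making up $\bar E_2^j$ into the congruent sub-segments $A^1_{u_r}\cup A^1_{v_r}$, $r\ge X$, at successive transversal offsets $\asymp rL^{2/3}m^{2/3}$; by Corollary~\ref{cor:ExpectationVariance} (the second-order cancellation special to the slope $-\sqrt m$) the expected first leg $\E[T_{u,\cdot}]$, $u\in D_s^1$, drops by $\gtrsim rL^{1/3}m^{-1/6}$ on the $r$-th sub-segment, so \eqref{eq:AvoidMiddle1} applied at scale $L$ gives $T_{u,v}\le\E[T_{u,\mathrm{mid}}]-c_1XL^{1/3}m^{-1/6}$ for all $u\in D_s^1$, $v\in\bar E_2^j$ outside probability $\exp(-c_2X)$ (summing $\exp(-c_2r)$ over $r\ge X$), and \eqref{eq:AvoidMiddle2} symmetrically for $T_{v,w}$, $w\in D_t^3$. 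Adding these and using $\E[T_{u,\mathrm{mid}}]+\E[T_{\mathrm{mid},w}]=\E[T_{u,w}]+O(L^{1/3}m^{-1/6})=L(1+\sqrt m)^2+O(L^{1/3}m^{-1/6})$ from Corollary~\ref{cor:ExpectationVariance}, we obtain that outside probability $\exp(-c_2X)$ every comparable triple satisfies $T_{u,v}+T_{v,w}\le 2^{-j+1}n(1+\sqrt m)^2-(2c_1X-C)L^{1/3}m^{-1/6}$. Since $XL^{1/3}\asymp x\,2^{j/6}n^{1/3}$ and, for $x$ large, $(2c_1X-C)L^{1/3}m^{-1/6}\ge x\,2^{j/6}n^{1/3}m^{-1/6}$ (the implicit constant in the threshold of $\mathcal{D}_{s,t}$ being taken small enough relative to $c_1$), this is incompatible with $\mathcal{D}_{s,t}$, whence $\P(\mathcal{D}_{s,t})\le\exp(-c_2X)\le\exp(-c\,2^{j/6}x)$ because $2^{j/2}\ge 2^{j/6}$.

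The remaining points are handled exactly as in the proof of Lemma~\ref{lem:AvoidMiddle}: sub-segments near the ends of $\bar E_2^j$ whose endpoints have slope outside $(\tfrac{m}{10},10m)$ — Lemma~\ref{lem:SegmentToSegment} still applies when the slope exceeds $10m$, and when it is below $m/10$ one first translates $D_s^1$ by $-\tfrac14\delta(L,mL)$ to restore a slope of order $m$ at cost $O(L^{1/3}m^{-1/6})$ to the expectation; rounding in the $D$-segments and the precise index of the middle anti-diagonal relative to $E_1^{j-1},E_3^{j-1}$ are cosmetic. The main obstacle I expect is precisely the bookkeeping step at the end of the third paragraph: showing the $-c_1XL^{1/3}m^{-1/6}$ deficit from Lemma~\ref{lem:AvoidMiddle} dominates the $-x\,2^{j/6}n^{1/3}m^{-1/6}$ in $\mathcal{D}_{s,t}$ uniformly over $j\in[K]$, which hinges on the $2^{-j/6}$ increments in \eqref{def:hjVariable} forcing a transversal excess of local order $x\,2^{j/2}$ (rather than the weaker $x\,2^{j/6}$ one might naively expect) together with a compatible choice of all the constants.
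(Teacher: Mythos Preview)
Your proposal is correct and follows exactly the paper's approach: apply Lemma~\ref{lem:AvoidMiddle} at the rescaled longitudinal separation $L=2^{-j+1}n$, and handle slopes outside $(\tfrac{m}{10},10m)$ by the translation trick from the end of that lemma's proof. The scaling computation you carry out---that the transversal excess is of local order $X\asymp x\,2^{j/2}$, yielding a deficit $XL^{1/3}m^{-1/6}\asymp x\,2^{j/6}n^{1/3}m^{-1/6}$ and a tail $\exp(-cX)\le\exp(-c\,2^{j/6}x)$---is precisely what the paper leaves implicit in its one-line invocation of Lemma~\ref{lem:AvoidMiddle}.
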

\begin{proof}
 When $s,t$ are such that the slope condition
$(u,v) \in \mathbb{D}_m$ holds for all $u\in D_s^1$ and $v\in D_t^3$, noting that $h_{j+1}-h_j=2^{j/6}$, we obtain by Lemma \ref{lem:AvoidMiddle} that
\begin{equation}
\P( \mathcal{D}_{s,t}  ) \leq \exp(-c_12^{j/6}x)
\end{equation} for some constant $c_1>0$ and all $x>0$ sufficiently large. Otherwise, note that we can use the same argument as at the end of the proof of Lemma \ref{lem:AvoidMiddle}, and replace the interval $D_s^1$ by $D_s^1 - \delta 2^{-j}(n,mn)$ for some suitable constant $\delta>0$ to then apply  Lemma \ref{lem:AvoidMiddle} accordingly.
\end{proof}

We have now all tools in order to show Lemma \ref{lem:ConditionAndComplement}. As observed above, together with \eqref{eq:BigUnionEvents} this concludes the proof of Lemma \ref{lem:TransversalSteep}, following the same arguments as Lemma~C.11 in \cite{BGZ:TemporalCorrelation}.

\begin{proof}[Proof of Lemma \ref{lem:ConditionAndComplement}]
 For $j\in [K]$, $i\in [2^{j}]$, recall the events $E_1^j=E_1^j(i)$ and $E_3^j=E_3^j(i)$ from above. We define the events
\begin{align}
\mathcal{D}_{i,j}^{1} &:= \Big\{  \sup_{v\in E_1^j } T_{\mathbf{0},v} - (i-1)2^{-j}n(1+\sqrt{m})^2 \geq \frac{x}{10} 2^{\frac{j}{6}} n^{\frac{1}{3}}m^{-\frac{1}{6}} \Big\} \\
\mathcal{D}_{i,j}^{2} &:= \Big\{  \sup_{v\in E_3^j } T_{v,(n,mn)} - (2^j-i+1)2^{-j}n(1+\sqrt{m})^2 \geq \frac{x}{10} 2^{\frac{j}{6}} n^{\frac{1}{3}}m^{-\frac{1}{6}}  \Big\}
\end{align}  to describe that the last passage time from the origin to a site in $E_1^j=E_1^j(i)$, respectively from a site in $E_3^j=E_3^j(i)$ to $(n,mn)$ is much larger than the expected last passage time of a typical path. We claim that there exists a constant $c>0$ such that for all $j\in [K]$ and $i \in [2^j]$
\begin{equation}
\P(\mathcal{D}_{i,j}^{1}) \leq  \exp(-c2^{j/6}x) \qquad  \P(\mathcal{D}_{i,j}^{2}) \leq  \exp(-c2^{j/6}x) \, .
\end{equation} Let us argue only for $\mathcal{D}_{i,j}^{1}$ as a similar statement follows for $\mathcal{D}_{i,j}^{2}$ by symmetry. For all sites  $v \in E_1^j$ with $(\mathbf{0},v) \in \mathbb{D}_m$, the claim follows by  Lemma \ref{lem:SegmentToSegment}. Again, similarly to the argument at the end of the proof of Lemma \ref{lem:AvoidMiddle} whenever $(\mathbf{0},v) \notin \mathbb{D}_m$, we can shift the segment and then apply Lemma \ref{lem:SegmentToSegment} accordingly. Since there are in total $2^j$ choices for $i$, and $|J| \leq x2^{j}$ by construction, we conclude with a union bound over $i\in [2^{j}]$ for the events $\mathcal{D}_{i,j}^{1}$ and $\mathcal{D}_{i,j}^{2}$, and a union bound over $i\in [2^{j}]$ and $s,t\in [J]$ for the events $\mathcal{D}_{s,t}$.
\end{proof}

\end{document}